\theoremstyle{plain}
\newtheorem{thm}[subsubsection]{Theorem}
\newtheorem{lemma}[subsubsection]{Lemma}
\newtheorem{prop}[subsubsection]{Proposition}
\newtheorem{cor}[subsubsection]{Corollary}
\theoremstyle{definition}
\newtheorem{defn}[subsubsection]{Definition}
\newtheorem{eg}[subsubsection]{Example}
\newtheorem{remark}[subsubsection]{Remark}
\newtheorem*{thank}{{\bf Acknowledgments}}
\newcommand{\nc}{\newcommand}
\def\makeop#1{\expandafter\def\csname#1\endcsname
  {\mathop{\rm #1}\nolimits}\ignorespaces}
\DeclareMathOperator{\Mass}{Mass}
\DeclareMathOperator{\Sim}{sim}
\DeclareMathOperator{\Irr}{Irr}
\DeclareMathOperator{\GU}{GU}
\DeclareMathOperator{\K}{K}
\def\makebb#1{\expandafter\def
  \csname bb#1\endcsname{{\mathbb{#1}}}\ignorespaces}
\def\makebf#1{\expandafter\def\csname bf#1\endcsname{{\bf
      #1}}\ignorespaces} 
\def\makegr#1{\expandafter\def
  \csname gr#1\endcsname{{\mathfrak{#1}}}\ignorespaces}
\def\makescr#1{\expandafter\def
  \csname scr#1\endcsname{{\EuScript{#1}}}\ignorespaces}
\def\makecal#1{\expandafter\def\csname cal#1\endcsname{{\mathcal
      #1}}\ignorespaces} 
\def\doLetters#1{#1A #1B #1C #1D #1E #1F #1G #1H #1I #1J #1K #1L #1M
                 #1N #1O #1P #1Q #1R #1S #1T #1U #1V #1W #1X #1Y #1Z}
\def\doletters#1{#1a #1b #1c #1d #1e #1f #1g #1h #1i #1j #1k #1l #1m
                 #1n #1o #1p #1q #1r #1s #1t #1u #1v #1w #1x #1y #1z}
\def\Gm{{{\bbG}_{\rm m}}}   
\def\Fpbar{\overline{\bbF}_p}
\def\Qpbar{\overline{{\bbQ}_p}}
\def\Qp{{\bbQ}_p}
\def\Zp{{\bbZ}_p}
\def\Qbar{\overline{\bbQ}}
\def\Sh{{\rm Sh}}
\newcommand{\Z}{\mathbb Z}
\newcommand{\Q}{\mathbb Q}
\newcommand{\R}{\mathbb R}
\newcommand{\C}{\mathbb C}
\newcommand{\D}{\mathbb D}
\renewcommand{\H}{\mathbb H}  % \H already defined
\newcommand{\A}{\mathbb A}    % for adele
\newcommand{\F}{\mathbb F}
\newcommand{\<}{\langle}   %\< is not defined yet.
\renewcommand{\>}{\rangle} %\> is already defined.
\newcommand{\isoto}{\stackrel{\sim}{\longrightarrow}}
\nc{\embed}{\hookrightarrow}
\newcommand{\M}{\mathcal M}
\newcommand{\dieu}{Dieudonn\'{e} }
\nc{\ol}{\overline}
\nc{\wt}{\widetilde}
\nc{\opp}{\mathrm{opp}}
\def\ul{\underline}
\def\der{{\rm der}}
\def\wh{\widehat}
\def\sfF{\mathsf{F}}
\def\sfV{\mathsf{V}}
\def\Qpbar{\ol \Q_p}
\def\GmQpbar{\bbG_{\rm m,  \Qpbar}}
\begin{document}
\renewcommand{\thefootnote}{\fnsymbol{footnote}}
\setcounter{footnote}{-1}
\numberwithin{equation}{section}
%\rhomberwithin{section}{chapter}

%\usepackage[notref,notcite]{showkeys}

\title[Mass formulas and the basic locus of  unitary Shimura varieties]{Mass formulas and the basic locus of  unitary  Shimura varieties}
 \author{Yasuhiro Terakado}
\address{
(Terakado)National Center for Theoretical Sciences 
\\
Cosmology Building 
\\
No.~1  Roosevelt Rd. Sec.~4 
\\ 
Taipei, Taiwan, 10617} 
\email{terakado@ncts.ntu.edu.tw} 
  
\author{Chia-Fu Yu}
\address{
(Yu)Institute of Mathematics, Academia Sinica \\
Astronomy Mathematics Building \\
No.~1, Roosevelt Rd. Sec.~4 \\ 
Taipei, Taiwan, 10617} 
\email{chiafu@math.sinica.edu.tw}

\address{
(Yu)National Center for Theoretical Sciences
\\
Cosmology Building
\\
No.~1  Roosevelt Rd. Sec.~4
\\
Taipei, Taiwan, 10617}

%\date{June 23, 2000}

\date{\today}
\subjclass[2010]{} 
\keywords{}

%\subjclass[2000]{11R52,11R58}
\keywords{Mass formula, Hermitian lattices, Affine Deligne-Lusztig varieties, Shimura varieties}  

\begin{abstract}
In this article we compute the mass associated to any  unimodular lattice in a Hermitian space over an arbitrary CM field under a condition at $2$. We study the geometry and arithmetic of the basic locus of the $\GU(r,s)$-Shimura variety associated to an imaginary quadratic field modulo a good prime $p>2$.  
We give explicit formulas for the numbers of irreducible and connected components of the basic locus, and  of points of  the zero-dimensional Ekedahl-Oort (EO) stratum, as well as of the irreducible components of basic EO strata when the signature is either $(1, n-1)$ or $(2,2)$.  
\end{abstract} 

\maketitle 
\tableofcontents
\section{Introduction}
Let $p$ be a prime number and $k=\Fpbar$ an algebraic closure of $\F_p$. 
The classical Eichler-Deuring mass formula  gives a weighted number of the isomorphism classes of the supersingular elliptic curves over $k$:
%(\cite{deuring, eichler:1938}):
\begin{align*}
\sum_{[E] \ : \ {\rm supersingular}} \frac{1}{\lvert \Aut(E) \rvert}=\frac{p-1}{24}.\end{align*}
%Deuring   observed that the number of isomorphism classes of supersingular elliptic curves over $k$ is equal to the class  number of the definite quaternion algebra  over $\Q$ ramified at $p$. 
%The class numbers of definite quaternion algebras were given by 
%Eichler \cite   shortly before  by
%analytical means. 

A higher dimensional generalization was obtained by Ekedahl \cite{Ekedahl} and Katsura-Oort~\cite{KO} (based on the mass formula for  its arithmetic counterpart by Hashimoto-Ibukiyama~\cite{HI}) where supersingular elliptic curves are replaced by principally polarized superspecial abelian varieties. Recall that an abelian variety $A$ over $k$ is said be to \emph{superspecial} (resp.~\emph{supersingular}) if it is isomorphic (resp.~isogenous) to a  product of supersingular elliptic curves. 
Let $\calA_g$ denote the (coarse) moduli space over $k$ of principally polarized abelian  varieties of dimension $g$. 
Extending earlier works of Ibukiyama, Katsura, and Oort \cite{IKO, KO} in lower dimensions, Li and Oort \cite{LO} investigated  the geometry of the supersingular locus $\calS_g$ of $\calA_g$, the closed subvariety parameterizing supersingular points. They determined the dimension and expressed the number of the irreducible components as a class number 
%of either the principal genus or the non-principal genus 
of the quaternion Hermitian lattices in question. For the Siegel moduli spaces $\calA_{g,1,N}$ with a prime-to-$p$ level-$N$ structure, the second author \cite{Yu:mass_siegel} obtained an explicit formula for the number of irreducible components of the supersingular locus.

%Now let $\calA_g$ be the moduli space of principally polarized abelian  varieties of dimension $g$ with symplectic level $N$  structure for $N\geq 3$ with $p \nmid N$  in characteristic $p$. 
% The closed subscheme of $\calA_g$ whose points   consist of isomorphism classes of  superspecial  (resp.~supersingular) abelian varieties is called the  superspecial  locus (resp.~the supersingular locus) of $\calA_g$. 
%Hashimoto-Ibukiyama-Ekedahl's mass formula immediately gives an  explicit  formula for the cardinality of the superspecial locus of $\calA_g$.  
%Katsura-Oort  \cite{KO} and Li-Oort \cite{LO}  investigated  the geometry of the supersingular locus of $\calA_g$. 
%     The   number of its irreducible components was computed by  the second named author in  \cite{Yu:mass_siegel}. 

For Shimura varieties other than Siegel modular varieties, Bachmat and Goren \cite{BG} studied the supersingular locus of Hilbert-Blumenthal modular surfaces. The second author \cite{yu:thesis, yu:mass_hb} studied the supersingular locus of lower-dimensional Hilbert-Blumenthal moduli spaces. 
He also computed the number of superspecial points in an arbitrary PEL-type Shimura variety of type C modulo a good prime $p$ \cite{Yu2}, using  Shimura's mass formula for quaternionic  unitary groups \cite{Shimura2}.\\

%A similar problem  was investigated  for superspecial abelian varieties with multiplication by an indefinite quaternion algebra in  \cite{Yu2},  using  Shimura's   mass formula for  quaternionic  unitary groups  \cite{Shimura2}. 
%Furthermore, 
% Bachmat and Goren \cite{BG} studied the supersingular locus of  Hilbert-Blumenthal modular surfaces. 
% The second author gave in  \cite{yu:mass_hb} a formula for the number of  irreducible
%components of the supersingular locus of some Hilbert-Blumenthal moduli spaces. \\

The goal of this paper  is to  extend the above results to the reduction modulo $p$ of a unitary Shimura variety associated with a unimodular Hermitian lattice. 
   Let $E$ be an imaginary quadratic extension over $\Q$. 
Let $(V, \varphi)$ be 
 a Hermitian space of dimension  $n$ over $E$ with signature $(r,s)$. 
Let $\Lambda \subset V$ an $O_E$-lattice on which $\varphi$ is  unimodular. 
 Let $\mathbf G={\rm GU}(\Lambda, \varphi)$ be the  unitary  similitude group of $(\Lambda, \varphi)$. 
 Let $N \geq 3$ be an integer with $p \nmid N$. We consider the open compact subgroup ${\rm K} \coloneqq  {\bf G}(\Z_p){\rm K}^p(N) \subset {\bf G}(\A_f)$ where  ${\bf G}(\Z_p)$ is a hyperspecial subgroup of $\bfG(\Q_p)$ and 
 ${\rm K}^p(N)$ is  the kernel of the reduction homomorphism ${\bf G}(\widehat{\mathbb Z}^p) \to {\bf G}(\widehat{\mathbb Z}^p/N\widehat{\mathbb Z}^p).$ 
We write $\bfM_{\rm K}$ for the associated moduli space,  with good reduction at $p$,  classifying  \emph{principally} polarized abelian schemes  $A$ of relative dimension $n$ with an $O_E$-action and a level $N$ structure (Section~\ref{moduli}). 
Let $\mathcal M_{{{\rm K}}} \coloneqq \bfM_{\rm K} \otimes k$ denote the base extension of $\mathcal M_{{{\rm K}}}$ to $k$. 
To each $k$-point in  $\M_{\K}$ we can  attach a 
$p$-divisible group  $A[p^{\infty}]$ over $k$ equipped with additional structures. 
Its 
isogeny type defines the \emph{Newton stratification}  on $\mathcal M_{\rm K}$. 
There is a unique closed Newton stratum, called the basic locus  and denoted by $\mathcal M_{\rm K}^{\rm{bas}}$. 
Furthermore, the isomorphism type of the $p$-torsion of $A[p^{\infty}]$ with additional structures defines the  \emph{Ekedahl-Oort (EO)  stratification} \cite{Moonen}. 
There is a unique $0$-dimensional EO stratum, denoted by $\mathcal M_{\rm K}^e$. This is always non-empty and $\mathcal M_{\rm K}^e \subset   \mathcal M_{{{\rm K}}}^{\rm bas}$. 
If $p$ is inert in $E$, or $r=s$ and $p$ is split in $E$, the basic locus $\mathcal M_{\rm K}^{\rm bas}$ (resp.~$\mathcal M^e_{\rm K}$)  coincides with the supersingular locus (resp.~superspecial locus) of $\M_{\K}$. 

The geometry of the basic locus $\M_{\K}^{\bas}$  has been studied  by many people: 
 Vollaard \cite{Vollaard1} and Vollaard-Wedhorn \cite{Vollaard} for $(r,s)=(1, n-1)$ with $p$ inert in $E$; 
 Howard-Pappas \cite{HP0} for $(r,s)=(2,2)$ again with $p$ inert;  Fox \cite{Fox} for $(r,s)=(2,2)$ with $p$ split; 
  Imai and Fox \cite{FI} for $(r,s)=(2,n-2)$ with $p$ inert.
  The geometry of 
  the EO stratification for arbitrary signature $(r,s)$ 
  has been studied by Wooding in her thesis~\cite{Wooding}. 
  Further, 
  G\"{o}rtz-He \cite{GH}  and G\"{o}rtz-He-Nie  \cite{GHN1,GHN2}  developed more  group-theoretic approaches to giving a concrete  description of the basic locus of Shimura varieties. 
  Note that such a description of the basic locus has been used to compute intersection numbers  
of special cycles by Kudla and Rapoport   \cite{KuR1,KuR2}. 
  
 In \cite{DG},  
De~Shalit and Goren   have studied the basic locus  $\M_{\K}^{\bas}$ intensively 
when $(r, s)=(1, 2)$ and $p$ is inert in $E$. 
They 
derived a formula relating the number of irreducible components of $\M_{\K}^{\bas}$ (and the cardinality of  $\calM_{\rm K}^e$)   to the second Chern class %$c_2(\bf M_{{\rm K}}(\C))$
of the complex algebraic surface $\mathbf M_{\rm K} (\C)$.
An explicit formula for the second Chern class of a  connected component of $\bf M_{{\rm K}}(\C)$  was given  by Holzapfel \cite{Holzapfel};   
see  Example \ref{Picard}. 
%for the  formulas with a minor correction. 

In this paper, we study the basic locus $\M_{\K}^{\bas}$  for an arbitrary signature $(r,s)$ (including $rs=0$) and any unramified prime $p>2$. 
We  give  explicit formulas (Theorem  \ref{intro}) for 
 \begin{itemize}
 \item[(i)]  the number of irreducible components  of the basic locus $\M_{\K}^{\bas}$, and 
 \item[(ii)]  the cardinality  of the $0$-dimensional stratum $\M_{\K}^e$. 
 \end{itemize}
      
We also treat irreducible components of basic EO strata of possibly positive dimension when  $(r,s)=(1, n-1)$ and $p$ is inert in $E$. 
    In this case, Vollaard and Wedhorn \cite[Section~6.3]{Vollaard} proved that for each odd integer $1 \leq t \leq n$  
 there exists a unique EO stratum $\mathcal M^{(t)}_{{{\rm K}}}$ of dimension 
 $\frac{1}{2}(t-1)$ 
which are contained in $\calM_{\rm K}^{\rm bas}$. 
 Let  $\overline{\mathcal M}^{(t)}_{{{\rm K}}}$ denote the Zariski closure 
 of $\mathcal M^{(t)}_{{{\rm K}}}$ 
 in $\mathcal M_{{{\rm K}}}$. 
 We give an explicit formula  (Theorem \ref{intro:t})  for 
 \begin{itemize}
     \item[(iii)] the number of irreducible components of $\overline{\M}_{\K}^{(t)}$. 
 \end{itemize}
We remark that if $(r, s)=(1, n-1)$ and  $p$ is split in $E$,  then $\mathcal M^{\rm bas}_{\rm K}=\calM_{\rm K}^e$. 
  
Further we study geometrically  connected components of the moduli space $\bfM_{\K}$. 
We give an explicit formula for the number of connected components of the  complex Shimura variety $\bfM_{\K} \otimes \C$ (Theorem \ref{connected}, Remark \ref{rem:MK},  and Example \ref{rs=0}). 
There exists a smooth compactification of the integral model  $\bfM_{\K}$ by K.-W. Lan~\cite{Lan}, hence $\bfM_{\K} \otimes \C$  has the same number of connected components as the   reduction  $\M_{\K}$ modulo $p$.  
We show that the  basic locus $\M_{\K}^{\bas}$ has the same number of connected components as $\M_{\K}$ has, except when 
$p$ is inert in $E$ and $(r,s)=(1,1)$,  or $p$ is split in $E$ and $\gcd(r,s)=1$. %with $rs>0$, in which cases 
For the exceptional cases we also derive a formula relating the number of connected components of $\M_{\K}^{\bas}$ to the number  of its irreducible components (Theorem \ref{thm:con_comp}). \\

 We give a summary of each section. 
   In Section~\ref{sec:CM}, we study the mass of a unimodular Hermitian lattice over a CM field $E$. 
   We first compute the local densities of  unimodular Hermitian lattices  over local fields using  the results of   Gan and J.-K. Yu \cite{GY} and Cho  \cite{Cho1, Cho2}.  
   Then  we derive an exact mass formula  for a unimodular Hermitian  lattice  over a  CM field $E$ under the assumption that $2$ is unramified in  the  maximal totally real subfield of $E$  (Theorem  \ref{SMS}). 
   We note that 
    an exact formula  for the unimodular lattices defined by  identity matrices was  obtained earlier by Hashimoto and Koseki \cite[Theorem 5.7]{HK}  by  different techniques.
   
 From  Section~\ref{sec:P.2} we  restrict ourselves to the case where $E$ is an imaginary quadratic extension over $\Q$. 
  In Section~\ref{ssec:sim}  
    we describe the similitude factor of   $\bfG(\Z_{\ell})$ for each prime $\ell$.  
    This result will be used in Section~\ref{sec:bas}. 
   Let $(r,s)$ be a pair of non-negative integers, and assume  that  ${\bf G}_{\R}$ is isomorphic to the real Lie group $\GU(r,s)$. 
   When $rs>0$, the  number of connected components of  the  complex Shimura variety associated to $\mathbf G_{\Q}$ of level $\bfG(\widehat{\Z})$ can be expressed by  a  class number of  the quotient torus  $D\coloneqq \bfG_{\Q}/\bfG_{\Q}^{\rm der}$, where  $\bfG_{\Q}^{\rm der}$ denotes the  derived subgroup of $\bfG_{\Q}$. 
   The natural projection $\nu: \bfG_{\Q}\to D$ is   identified with the product of the similitude and  determinant characters. 
   We compute the class number of $D$,   
  using    our description of the  similitude factors and Kirschmer's description of the determinants  \cite{Kirschmer}. 
  We thus obtain  an explicit formula for the number of connected components  of the  complex Shimura variety in question.

 From  Section~\ref{sec:bas} we fix a  prime $p>2$ which is unramified in $E$. 
In this section we study the basic locus $\M_{\K}^{\bas}$ and the $0$-dimensional EO stratum $\M^e_{\K}$. 
%of the reduction modulo $p$ of  a moduli  space of abelian varieties with additional structure.  
% Let $k$ be an algebraic closure of $\F_p$. 
We describe the    \dieu module $M$ of the  $p$-divisible group with additional structures attached  to  a point of $\M^e_{\K}$. 
  Further we compute the group $J_b$ of 
  automorphisms  of  the isocrystal $M[1/p]$ with additional structures, and  
  %which is isomorphic to the Frobenius-twisted centralizer  of a basic element $b$ of $\bfG(L)$ where $L$ denotes the fraction field of the ring of $W(k)$ of Witt vectors. 
also compute the stabilizer of $M$ in $J_b(\Qp)$ (Proposition \ref{J_Minert}).

%Let $W(k)$ be the ring of Witt vectors over $k$  and let $L$ be the fraction field of $W(k)$.  
In Section~\ref{sec:ADLV} 
we study  the affine Deligne-Lusztig variety $X_{\mu}(b)$ associated with a basic element $b$ of ${\bf G}(L)$ and the minuscule coweight $\mu$ given by the Shimura datum. 
The set of its irreducible components ${\rm Irr}(X_{\mu}(b))$  admits an action of $J_b(\Q_p)$. 
%of $b$ in 
%${\bf G}(L)$. 
 By the work of 
Xiao-X.~Zhu \cite{XZ}, Hamacher-Viehmann 
\cite{HV}, Nie \cite{Nie}, and Zhou-Y.~Zhu \cite{ZZ}, 
The set of orbits 
$J_b(\Q_p) \backslash {\rm Irr}(X_{\mu}(b))$ is  in natural bijection with the 
``Mirkovic-Vilonen basis'' of a  certain weight space of a representation of 
the dual group  of ${\bf G}_{\Q_p}$. 
We compute the dimension of this weight space explicitly, and give a formula for the cardinality of the set  $J_b(\Q_p) \backslash {\rm Irr}( X_{\mu}(b))$. 
%(Propositions  \ref{Orbits} and  \ref{Orbits:2}).  
We also study an action of $J_b(\Q_p)$ on the set of    connected components of $X_{\mu}(b)$.

 In Section~\ref{sec:inner} we  first study a mass formula for the inner from associated to the basic locus, and then we state and prove the main theorems.
 For each $k$-point $(A,  \iota, \lambda, \bar{\eta})$ of   $\M_{\rm K}^{\rm bas}$,  we define a similitude group  $I$ of   auto-quasi-isogenies of the tuple $(A, \lambda, \iota)$. 
This group is  an inner form of $\mathbf G_{\Q}$  satisfying  $I_{\Q_p} \simeq J_b$ and $I_{\A_f^p} \simeq {\bf G}_{\A_f^p}$.  
The mass of $I$ with respect to an open compact subgroup $U$ of $I(\A_f)$  is then defined as a weighted cardinality of the  double coset space  
$I(\Q) \backslash I(\A_f)/U$,  and  denoted by $\Mass(I, U)$ (Definition~\ref{mass:I}). 
Let ${\rm I}_p$ be a maximal parahoric subgroup of $J_b(\Q_p)$ and regard ${\rm I}_p \bfG(\widehat{\Z}^p)$ as a subgroup of $I(\A_f)$. 
We  give an explicit formula for  $\Mass(I, {\rm I}_p\bfG(\widehat{\Z}^p))$   (Theorem \ref{Mass_inner}). 
The proof of this formula consists of two steps. 
First we derive an equality  relating the mass of $I$ to a mass of its subgroup $I^1$  consisting of elements with trivial similitude factor. 
Here we use the description of the similitude factor of $\bfG(\widehat{\Z}^p)$ given in Section~\ref{ssec:sim}. 
Then we show that the mass of $I^1$ equals the mass of the   unimodular lattice $\Lambda$  multiplied by   the reciprocal of the volume of $I^1(\Q_p) \cap {\rm I}_p$. 

 For each irreducible component $Z$ of $X_{\mu}(b)$,  its  stabilizer ${\rm I}_p^Z$ in $J_b(\Q_p)$  is a parahoric subgroup with maximum volume by  \cite{HZZ}.  
 Moreover, the $p$-adic uniformization theorem of Rapoport-Zink \cite{RZ} implies that  
\[ \lvert \Irr(\M_{\K}^{\bas}) \rvert=\lvert J_b(\Q_p)\backslash \Irr(X_{\mu}(b)) \rvert \cdot 
\Mass(I, {\rm I}_p^Z \bfG(\wh{\Z}^p)) 
\cdot [\bfG(\wh{\Z}^p) : \K^p(N)], \]
 and thus we obtain an  explicit formula. 
We also derive formulas for   the numbers of points of $\M_{\K}^e$ and 
  connected components of $\M_{\K}^{\bas}$. 
Further we illustrate these results in  low-dimensional examples. 
In particular, we compute the  number of irreducible components of  EO strata in $\M_{\K}^{\bas}$  when  $(r,s)=(1, n-1)$ or $(r,s)=(2,2)$ (Example \ref{2,2}). 
%
%{\bf (Mentioning some examples)}. 
%For with maximal volume, using the work of X.~He-Zhou-Y.~Zhu~\cite{HZZ}. 

In Section~\ref{sec:bound}, we  give an application of the main theorems to the arithmetic of mod $p$  automorphic forms. 
In a letter to Tate \cite{Serre}, Serre  proved that the systems of  Hecke eigenvalues appearing in the space of mod $p$ modular forms are the same as those appearing in the space of  algebraic modular forms on  a  quaternion algebra over $\Q$. 
  This result can be regarded  as a mod $p$ analogue of the Jacquet-Langlands  correspondence.  
Serre's result was generalized to  the  Siegel case  by Ghitza \cite{Ghitza}, to the case of $\GU(r,s)$ Shimura varieties  with $p$ inert by Reduzzi \cite{Reduzzi}, and to the  Hodge type case by the authors \cite{TY}. 
%The systems of Hecke eigenvalues  appearing in  mod $p$ automorphic forms 
% are the same as those appearing in  algebraic modular forms on the stratum $\calM_{\K}^e$. 
 This correspondence,  combined  with 
the formula for the  cardinality of $\calM_{\K}^e$, gives  an explicit upper bound for the number of the systems of Hecke eigenvalues  appearing in automorphic forms on $\M_{\K}$ (Theorem \ref{bound}). 
%As a corollary, if we fix $n=\dim_E V$,  then 
% \[ \mathcal N=O(p^{\frac{n(n+1)}{2}+1}) \ \  {\text{as}}  \ \ p \to \infty.\]
%If $p$ is inert in $E$, this bound  improves Reduzzi's bound.

\begin{thank}
It is a great pleasure to thank M.~Chen, S.~Cho, W.-T.~Gan, Gross, Hamacher, X.~He, Kisin, S.~Nie, Viehmann, Vollaard, Wedhorn, J.-K.~Yu, L.~Xiao, R.~Zhou, X.~Zhu, Y.~Zhu for their works on which the present paper relies. The second author is partially supported by the MoST grant 109-2115-M-001-002-MY3.  
\end{thank}

\section{Mass formula for  unimodular Hermitian lattices}\label{sec:CM}
In this section, we give an exact mass formula for  a  unimodular Hermitian lattice. 
\subsection{Unimodular Hermitian lattices over local fields}
\label{lattice}
\subsubsection{}\label{setting}
Let $F$ be a non-Archimedean local field of characteristic zero,  and let $O_F$ be its ring of integers. 
Let $\F_q$ be the residue field of $O_F$, and   $q$  its cardinality. 
If $2 \mid q$,  we assume that  $F$ is an \emph{unramified} finite extension of  $\Q_2$. 
Let $(E, \bar{\cdot})$ be one of the following $F$-algebras with involution: 
\begin{itemize}
\item 
$E$ is a quadratic field extension of $F$,  $a \mapsto \bar{a}$ is the non-trivial automorphism of $E/F$;
\item 
$E=F \oplus F$, $\overline{(a, b)}=(b, a)$. 
\end{itemize}
Denote by $O_E$ the maximal order in $E$, that is, $O_E$ is the ring of integers of $E$ and $O_E=O_F \oplus O_F$ in each case, respectively. 
We write $\bfN=\bfN_{E/F}$ for the norm map of $E/F$ given by  $\bfN(b)= \bar{b} \cdot b.$ 
\subsubsection{}\label{sec:hermitian}
A \emph{Hermitian space} over $E$ is a free   $E$-module $V$ of  finite rank equipped with a Hermitian form $\varphi  : V \times V \to E$. 
By definition, the form $\varphi$  satisfies that 
\begin{equation}
\label{herm}
\begin{gathered}
\varphi(x+y, z)  =\varphi(x, z)+ \varphi(y, z), \quad 
\varphi(ax, by)  =a \bar {b} \cdot \varphi(x, y), 
\\
\varphi(y, x) =\overline{\varphi(x, y)},
\end{gathered}
\end{equation}
for $x, y, z \in V$ and $a, b\in E$. 
A \emph{Hermitian lattice}   over $O_E$ is an $O_E$-lattice $\Lambda$ (that is, a free $O_E$-module of finite rank) equipped with a form  $\varphi : \Lambda \times \Lambda \to O_E$ satisfying relations  \eqref{herm} 
for $x, y, z \in \Lambda$ and $a, b\in O_E$. 
% We assume that 
In this paper, 
we assume that $\varphi$ is non-degenerate on $V \coloneqq \Lambda  \otimes_{O_F} F$, 
% with respect to  $\varphi$, 
namely, for any $x\in V$, the condition $\varphi(x, V)=0$ implies that $x=0$. A Hermitian lattice  $\Lambda$ is called  \emph{unimodular} if 
$\Lambda$  coincides with its dual lattice 
$\Lambda^{\vee}=\Lambda^{\vee, \varphi}\coloneqq \{x \in V \mid  \varphi(x, \Lambda) \subset O_E\}.$  
%\subsubsection{}\label{split}
%Let $E=F \oplus F$. 
%Let $V$ be a Hermitian space over $E$ and let
% $\{e_i\}$ be a basis. 
%If we write $\varphi(e_i, e_j)=(a_{ij}, a_{ji}) \in E$, 
% the matrix $(a_{ij})$  in $\Mat_n(F)$ is invertible since $V$ is non-degenerate. 
%If we put $(c_{ij})\coloneqq (a_{ij})^{-1} \in \GL_n(F)$ and $f_i\coloneqq \sum_{k=1}^n(c_{ik}, \delta_{ik})e_k$ for $1 \leq i \leq n$, then $\{f_i\}$ is a basis of $V$ over $E$ and   $\varphi(f_i, f_j)=\delta_{ij}$. 
%Similarly, if $\Lambda$ is a Hermitian lattice, 
%then there is a basis $\{f_i\}$ of $\Lambda$ over $O_E$ such that $\varphi(f_i, f_j)=

\subsubsection{} 
Let $E/F$ be a quadratic field extension. 
The \emph{scale} $s(\Lambda)$ and the \emph{norm}  $n(\Lambda)$ of  a Hermitian lattice $\Lambda$ are the $O_{E}$-ideals defined as 
  \[ s(\Lambda):=\{ \varphi(x, y) \mid  x, y \in  \Lambda \}, \quad  
 n(\Lambda):=\sum_{x \in \Lambda} \varphi(x, x) \cdot O_{E}. \]
 If $s(\Lambda)=n(\Lambda)$, 
 the lattice $\Lambda$ is called {\it normal}. 
 Otherwise the lattice $\Lambda$ is called {\it subnormal}. 
 For two lattices  $\Lambda$ and $\Lambda'$, one has
 \[ s(\Lambda \oplus \Lambda')=s(\Lambda) +s(\Lambda'), \quad n(\Lambda \oplus \Lambda')=n(\Lambda) +n(\Lambda').\]
If  $\Lambda$ is unimodular,  then  $s(\Lambda)=O_E$. 

We call a Hermitian space or lattice \emph{isotropic}
 if it contains an element $x$ with $\varphi(x, x)=0$. Otherwise we call it \emph{anisotropic}. 

Let $\{e_i\}$ be a basis of a Hermitian space $V$ over $E$. 
Let $d(V)$ denote the  image of $\det(\varphi(e_i, e_j))$ by the natural projection   $F^{\times} \to F^{\times}/\bfN(E^{\times})$. 
Then $d(V)$ is  independent of the choice of $\{e_i\}$. 
We call $d(V)$ 
 the  \emph{determinant} of $V$. 
 By \cite[Theorem 3.1]{Jacobowitz}, 
the dimension and  determinant are complete isomorphism invariants of Hermitian spaces. 
 For a Hermitian lattice 
$\Lambda$ with basis $\{e_i\}$ over $O_E$, 
we call the matrix $(\varphi(e_i, e_j))$ the \emph{Gram matrix} of $\{e_i\}$. 
 We often write $\Lambda =   (\varphi(e_i, e_j))$.  
 If $\Lambda$ admits an orthogonal basis $\{e_i\}$ such that $\varphi(e_i,e_i)=a_i$, we  also write $\Lambda=  (a_1)\oplus \dots \oplus (a_n)$.
Let $d(\Lambda)$ denote the  image of $\det(\varphi(e_i, e_j))$ by the natural projection   $F^{\times} \to F^{\times}/\bfN(O_E^{\times})$. 
Then $d(\Lambda)$ is  independent of the choice of $\{e_i\}$. 
We call $d(\Lambda)$ 
 the  \emph{determinant} of $\Lambda$. 
 If $\Lambda$ is unimodular, then $d(\Lambda) \in O_F^{\times}/\bfN(O_E^{\times})$. 
 If $E/F$ is unramified, any unimodular lattice may be written $\Lambda =  (1)\oplus \cdots \oplus  (1)$ by \cite[Section 7]{Jacobowitz}. 
 If $E/F$ is ramified and $2 \nmid q$, the rank and determinant are the complete isomorphism invariants of unimodular lattices by \cite[Section 8]{Jacobowitz}. 
 If $E/F$ is ramified and $2 \mid q$, then 
 the rank, norm, and determinant are complete isomorphism invariants of unimodular lattices by 
  \cite[Proposition 10.4]{Jacobowitz}.

\subsubsection{}\label{disc}
Assume now that $E/F$ is  a ramified field extension. 
Let $\mathfrak D=\mathfrak D_{E/F}$ denote  the relative different of  $E/F$, and let $d_{E/K}=\bfN(\mathfrak D)$ denote the discriminant ideal  of $E/F$. 
 When $E/F$ is a ramified  extension, we choose a uniformizer $\pi$ of $O_E$ as follows.  If $2 \nmid q $,   one can choose a uniformizer $\varpi$ of $O_F$ such that  $E=F(\sqrt{\varpi})$, and then 
 the element $\pi \coloneqq \sqrt{\varpi}$ is a uniformizer of $O_E$. 
We have 
 $\mathfrak   D=(2\pi)=(\pi)$ and $d_{E/K}=(\varpi)$.  
  
 Suppose that  $E/F$ is a ramified extension with $2 \mid  q$. 
 Recall we assume  $F/\Q_2$ is an  unramified extension.   
 Then 
we have the following two cases\footnote{RU and RP are the abbreviations for ramified unit and ramified prime  respectively.}:
\begin{itemize}
\item[(RU)] $E=F(\sqrt{1+2u})$  for some unit $u$ in $O_{F}$. 
In this case, the element $\pi \coloneqq  1+\sqrt{1+2u}$ is a uniformizer of $O_{E}$. 
We have 
 $O_{E}= O_{F}[\pi]\simeq  O_{F}[X]/(X^2-2X-2u)$ and hence $\mathfrak D=(2\sqrt{1+2u})=(2)$.  
 It follows that $d_{E/K}=(4)$. 
 % Typo: (Kirschmer Section~3 p. 339)  $1 \le k \le \ord(2)$ should be $0 \le k \le \ord(2)$ 
\item[(RP)]
$E=F 
(\sqrt{2\delta})$ for some  element $\delta \in O_{F}$ with $\delta \equiv 1\pmod 2$. 
The element $\pi \coloneqq  \sqrt{2\delta}$ is a uniformizer of $O_E$. 
 We have $\mathfrak D=(2 \sqrt{2\delta})$ and $d_{E/F}=(8)$. 
 \end{itemize}
\begin{lemma}\label{splitting}  
Assume that $E/F$ is a ramified quadratic field extension.
Let $\Lambda$ be a unimodular lattice of rank $n$ over $O_E$ and $H$ be the rank-two lattice defined by  $H \coloneqq   \begin{pmatrix}
 0  & 1
 \\ 
 1 & 0
\end{pmatrix}$.  

{\rm (1)} If $2 \nmid q$, then  
 \begin{align*}
 \Lambda \simeq \begin{cases}
 H^{(n-1)/2} \oplus ((-1)^{(n-1)/2} \cdot d(\Lambda)) & \text{if $n$ is odd}; 
\\
H^{(n-2)/2} 
\oplus
(1) \oplus ((-1)^{(n-2)/{2}} \cdot d( \Lambda)) & {\text{if $n$ is   even}}. 
\end{cases}
\end{align*}

{\rm (2)} Assume $2 \mid q$ and $F$ is unramified over $\Q_2$. If $n$ is odd, then 
\[  \Lambda\simeq   H^{(n-1)/2} \oplus ((-1)^{(n-1)/2} \cdot d(\Lambda)). \]
If $n$ is even, then
\begin{align*}
 \Lambda\simeq  
 \begin{cases}
% H^{(n-1)/2} \oplus ((-1)^{(n-1)/2} \cdot d(\Lambda)) & \text{if $n$ is odd}; 
H^{(n-2)/{2}} 
\oplus
(1) \oplus ((-1)^{(n-2)/{2}}\cdot d( \Lambda)) & {\text{if $\Lambda$ is normal}}; 
\\
H^{n/{2}}  &  {\text{if   
$\Lambda$ is  
subnormal in RU}}; 
\\
H^{(n-2)/2}\oplus \begin{pmatrix}
2\delta & 1
\\
1 & 2b 
\end{pmatrix}, \quad b \in O_F
& {\text{if $\Lambda$ is subnormal in  RP}}.
\end{cases}
\end{align*}
\end{lemma}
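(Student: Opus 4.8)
The plan is to classify unimodular Hermitian lattices over a ramified quadratic extension $E/F$ by reduction to small rank, using the invariants already identified in \ref{sec:hermitian}--\ref{disc}: the rank $n$, the determinant $d(\Lambda)$, and (when $2\mid q$) the norm $n(\Lambda)$, i.e.\ whether $\Lambda$ is normal or subnormal. By the quoted consequences of \cite{Jacobowitz} (Sections 7--8 and Proposition 10.4), two unimodular lattices with the same such invariants are isomorphic, so it suffices to exhibit, in each case, a decomposition of the stated shape with the correct invariants. The rank-two lattice $H$ is unimodular with $d(H)=(-1)\in O_F^\times/\mathbf N(O_E^\times)$ (its Gram determinant is $-1$), and $H$ is isotropic hence normal; a diagonal summand $(1)$ is unimodular, normal, with determinant $1$; so any orthogonal sum of copies of $H$, one copy of $(1)$, and one rank-one piece $(a)$ has the determinant obtained by multiplying these, which matches the powers of $-1$ written in the statement.

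First I would prove (1). When $2\nmid q$, any unimodular lattice of rank $\geq 2$ represents a unit (e.g.\ because the reduction mod $\pi$ of the Hermitian form is a nondegenerate Hermitian form over the residue field, which is isotropic once the rank is $\geq 2$ in the relevant sense, or by a direct $2$-variable computation), and splitting off a hyperbolic plane $H$ from an isotropic unimodular sublattice is standard (Jacobowitz, Section 8); iterating, one peels off $\lfloor (n-1)/2\rfloor$ copies of $H$. For $n$ odd this leaves a rank-one unimodular lattice $(c)$; comparing determinants, $(-1)^{(n-1)/2}\cdot d(H)^{\text{--}}\cdots = d(\Lambda)$ forces $c \equiv (-1)^{(n-1)/2} d(\Lambda)$ up to $\mathbf N(O_E^\times)$, which by the rank-one classification gives $(c)\simeq((-1)^{(n-1)/2} d(\Lambda))$. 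For $n$ even, the argument leaves a rank-two unimodular lattice; since $2\nmid q$ it is again split off as $H^{(n-2)/2}$ together with a remaining rank-two unimodular lattice, which one checks (using that rank and determinant are complete invariants here, \cite[Section 8]{Jacobowitz}) is isomorphic to $(1)\oplus((-1)^{(n-2)/2} d(\Lambda))$ by matching determinants. The odd case of (2) is formally identical once one knows a unimodular lattice of rank $\geq 3$ over a dyadic ramified $E/F$ still contains an isotropic vector spanning a unimodular plane; this is where the unramifiedness of $F/\Q_2$ enters, via the structure of $O_E$ recorded in (RU)/(RP) in \ref{disc}, and I would cite the relevant splitting results in \cite{Jacobowitz} (the "$H$-splitting" for lattices of norm $O_E$ in the dyadic case) rather than redo them.

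The substantive part is (2) for $n$ even, and I expect the subnormal dyadic cases to be the main obstacle. Here one cannot diagonalize, and the norm ideal $n(\Lambda)\subsetneq s(\Lambda)=O_E$ is an essential third invariant. The strategy: show every even-rank unimodular lattice $\Lambda$ with $2\mid q$ splits as $H^{(n-2)/2}\oplus L$ with $L$ of rank two, reducing to the rank-two classification; this splitting again rests on the dyadic Jacobowitz splitting theorems. Then classify rank-two unimodular $L$: if $L$ is normal it is diagonalizable, $L\simeq(1)\oplus(d(L))$, and $d(L)=(-1)^{0}\cdot d(L)$ matches after absorbing signs through the $H$'s. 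If $L$ is subnormal, the form on $L$ has all diagonal entries in $2O_F$; in case (RU), where $\pi=1+\sqrt{1+2u}$ satisfies $\pi+\bar\pi=2$, a short computation shows the only such unimodular rank-two lattice is the hyperbolic plane $H$, so $L\simeq H$ and $\Lambda\simeq H^{n/2}$; in case (RP), where $\pi=\sqrt{2\delta}$ with $\bar\pi=-\pi$, the constraint of unimodularity with subnormal norm forces the Gram matrix into the form $\begin{pmatrix}2\delta & 1\\ 1 & 2b\end{pmatrix}$ for some $b\in O_F$ (the off-diagonal must be a unit, normalized to $1$, and the diagonal entries lie in $2O_F$ with one of them pinned down mod higher powers by the determinant being a unit norm). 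One must check these representatives are genuinely unimodular and that the list is irredundant and exhaustive; here I would invoke \cite[Proposition 10.4]{Jacobowitz} to guarantee that rank, norm, and determinant suffice to separate classes, so it only remains to verify that the displayed representatives realize all allowed $(n(\Lambda), d(\Lambda))$ and no two realize the same pair. That bookkeeping, together with care about which uniformizer $\pi$ and which defining element $\delta$ or $u$ one has fixed, is the delicate point; everything else is assembling standard dyadic lattice splitting.
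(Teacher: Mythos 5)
Your overall route is viable and is, in substance, a reconstruction of what the paper simply cites: its proof of this lemma is one line, quoting \cite[Section 8]{Jacobowitz} for $2\nmid q$ and \cite[Theorem 2.10]{Cho1} or \cite[Propositions 10.2 and 10.3]{Jacobowitz} for $2\mid q$ --- i.e.\ exactly the splitting statements you set out to re-derive. Your reduction (completeness of the invariants rank and determinant, resp.\ rank, norm and determinant, plus exhibiting representatives with matching invariants) is sound, and for $2\nmid q$ it suffices by itself: since $d(H)=-1$, the displayed lattices are unimodular of rank $n$ with determinant $d(\Lambda)$, so the peeling of hyperbolic planes is not even needed there. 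The cost is that you still lean on the same Jacobowitz machinery (Proposition 10.4 and the dyadic splitting theorems), so the gain over the paper's citation is mainly expository.

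There are, however, concrete gaps. First, the parenthetical claim that a rank-two unimodular lattice is isotropic when $2\nmid q$ is false: $(1)\oplus(-\alpha)$ with $\alpha$ a unit that is not a norm is anisotropic (these are exactly the lattices used in case (iii) of the proof of Lemma \ref{mult}); your argument survives only because isotropy is needed solely in rank $\geq 3$, and it should be invoked in that form. Second, and more seriously, in the dyadic odd-rank case you peel off hyperbolic planes by citing the ``$H$-splitting for lattices of norm $O_E$'', i.e.\ for \emph{normal} lattices, whereas the statement asserts that every odd-rank unimodular lattice has the displayed (normal) form; you must first rule out odd-rank subnormal unimodular lattices. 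Nothing you quote gives this; it follows, for instance, from the observation that a subnormal unimodular Gram matrix reduces mod $\pi$ to a symmetric matrix with zero diagonal over $\F_q$ of characteristic $2$, hence to a nondegenerate alternating matrix, which forces $n$ even --- or from the very references the paper cites. Third, in the RP subnormal rank-two case the unit determinant does not by itself pin the corner entry down to $2\delta$; the clean completion is the one you half-state: invoke \cite[Proposition 10.4]{Jacobowitz} together with the fact that a subnormal unimodular binary lattice has norm exactly $2O_E$ (Jacobowitz (9.1) and Proposition 9.1.a, quoted in the proof of Lemma \ref{subnormal}), and check that $b$ even and $b$ odd in $\begin{pmatrix} 2\delta & 1\\ 1 & 2b\end{pmatrix}$ realize the two determinant classes. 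Likewise, the RU assertion that ``the only subnormal binary lattice is $H$'' requires the small norm computation $1+4O_F\subset \bfN(O_E^{\times})$ (the discriminant in RU is $(4)$), so that every such lattice automatically has determinant $-1$; your ``short computation'' should be made to say this explicitly.
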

\begin{proof}
If $2 \nmid q$, then the assertion follows from  \cite[Section~8]{Jacobowitz}. 
If  $2 \mid q$, the assertion follows from \cite[Theorem 2.10]{Cho1}, or 
 \cite[Propositions 10.2 and 10.3]{Jacobowitz}.  
\end{proof}

\begin{lemma}\label{subnormal}
Suppose that $2 \mid q$ and  $E/F$ is ramified in RP. 
Then there are two distinct isomorphism classes of  unimodular subnormal lattices $L$ of rank two over $O_E$. 
Moreover,  the following conditions are equivalent: 

{\rm (1)} $L \simeq \begin{pmatrix}
2\delta & 1
\\
1 & 2b 
\end{pmatrix}$ for an element  $b \in O_F$ such that 
the  equation $z^2+z \equiv b \pmod {2}$ has a solution 
$z$ in $\F_q=O_F/2O_F$.

{\rm (2)} The quadratic form  $\psi : L/\pi L \to \F_q$  induced by the  reduction $\varphi(x, x) \mod 2$ 
 is isotropic.
 
 {\rm (3)} $d(L) = -1$ as elements of  the quotient group  $O_F^{\times}/\bfN(O_E^{\times})$.
 
 {\rm (4)} $L \simeq H$. 
%{\rm (5)} $L$ is isotropic.  
\end{lemma}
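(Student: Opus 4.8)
The plan is to analyze the rank-two unimodular subnormal lattices in case RP via the quadratic form obtained by reduction modulo $2$, and to connect this invariant to the determinant. First I would recall from Lemma \ref{splitting}(2) that every unimodular subnormal lattice $L$ of rank two in RP has a Gram matrix of the shape $\left(\begin{smallmatrix} 2\delta & 1 \\ 1 & 2b\end{smallmatrix}\right)$ with $b\in O_F$, where $\delta\equiv 1\pmod 2$ is the fixed element with $E=F(\sqrt{2\delta})$. Since $L$ is unimodular, the determinant $\det = 4\delta b - 1$ lies in $O_F^\times$ and $d(L)$ is its class in $O_F^\times/\bfN(O_E^\times)$. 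The key elementary fact is that $O_F^\times/\bfN(O_E^\times)$ has order $2$ for a ramified quadratic extension, so $d(L)\in\{1,-1\}$, and one computes that $-1\in\bfN(O_E^\times)$ iff ... — more precisely, I would pin down that the two classes are distinguished by whether $4\delta b-1$ is a norm, and massage this using $\bfN(\pi)=\bfN(\sqrt{2\delta})=-2\delta$ together with $\bfN(1+\pi x)=1-2\delta x^2$ for $x\in O_F$, to show $d(L)=-1$ is equivalent to solvability of $z^2+z\equiv b\pmod 2$ in $\F_q$.

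Next I would introduce the reduction quadratic form $\psi\colon L/\pi L\to \F_q$, $\psi(\bar x)=\overline{\varphi(x,x)}$. One must check this is well-defined: for $y\in\pi\Lambda$, say $y=\pi z$, we have $\varphi(x+\pi z,x+\pi z)=\varphi(x,x)+\pi\varphi(z,x)+\bar\pi\varphi(x,z)+\bfN(\pi)\varphi(z,z)$, and since $\pi+\bar\pi\in 2O_F$ in RP (as $\bar\pi=-\pi$ here, so $\pi+\bar\pi=0$) and $\bfN(\pi)=-2\delta\in 2O_F$, the value is unchanged mod $2$; additivity of $\psi$ in the appropriate sense and $\F_q$-semilinearity follow similarly, using that $a\bar a\equiv a^2\pmod 2$ for $a\in O_E$ since the residue field extension is trivial in the ramified case. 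In the chosen basis, $\psi$ has Gram-type data $\left(\begin{smallmatrix}2\delta & 1\\ 1 & 2b\end{smallmatrix}\right)\bmod 2 = \left(\begin{smallmatrix}0 & 1\\ 1 & 0\end{smallmatrix}\right)$ off-diagonal with diagonal entries $\bar{2\delta}=0$, $\bar{2b}=0$... so actually $\psi(x_1\bar e_1+x_2\bar e_2)=x_1 x_2$ after identifying the cross term — I would be careful here and instead write $\psi(\bar e_1)=\overline{2\delta}$, etc., working in characteristic $2$ where the quadratic form is genuinely extra data beyond the bilinear form. Then $\psi$ isotropic $\iff$ the equation $\overline{2\delta}x_1^2 + \overline{2b}x_2^2 + (\text{cross term})x_1x_2 = 0$ has a nonzero solution, which after normalization is exactly $z^2+z=\bar b$; this gives (1)$\Leftrightarrow$(2).

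For (2)$\Leftrightarrow$(4): a rank-two quadratic space over $\F_q$ with nondegenerate associated bilinear form that is isotropic is split (a hyperbolic plane), and conversely; lifting, an isotropic subnormal unimodular $L$ contains a primitive isotropic vector, and a standard hyperbolic-splitting argument over $O_E$ (extracting a hyperbolic plane $H$ once one has $\varphi(x,x)\in 2O_E$ and a companion vector) shows $L\simeq H$. Conversely $H$ is visibly subnormal in RP (its norm ideal is generated by the values $\varphi(\lambda,\lambda)$, all in the ideal $(\pi)\cdot$something $\subsetneq O_E$... in fact $H$ has $\varphi(x,x)=2x_1\bar x_2$-type values lying in $2O_E\subsetneq(\pi)=s(H)$, wait $s(H)=O_E$ since the off-diagonal entry is a unit), so $n(H)\subseteq 2O_E\subsetneq O_E=s(H)$, confirming $H$ is subnormal, and its reduction form is hyperbolic hence isotropic. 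Finally, that there are \emph{exactly two} isomorphism classes follows from Lemma \ref{splitting}(2) (which already lists "subnormal in RP" as giving this one parametrized family) combined with the fact that $d$ takes both values $\pm 1$ on this family and, by Jacobowitz \cite[Proposition 10.4]{Jacobowitz}, rank+norm+determinant are complete invariants — so the two determinant values give precisely two classes, completing the equivalence with (3). The main obstacle I expect is the careful bookkeeping of the reduction form $\psi$ in characteristic $2$ — getting the well-definedness and the precise normalization to "$z^2+z\equiv b$" right, since the interplay between $\bfN(\pi)$, $\pi+\bar\pi$, and the nontrivial role of the quadratic (as opposed to bilinear) form is exactly where RP differs from RU and where sign errors are easy.
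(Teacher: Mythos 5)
Your overall architecture (two classes and (3)$\Leftrightarrow$(4) via the Jacobowitz classification, (1)$\Leftrightarrow$(3) by deciding when $-d(L)=1-4\delta b$ is a norm) matches the paper's proof, which in addition pins down $n(L)=2O_E$ before invoking ``rank, norm, determinant are complete invariants'' — a small point you skip but would need, since the invariant list includes the norm ideal. The paper simply cites \cite[Remark 4.6]{Cho2} for (1)$\Leftrightarrow$(2); you instead try to verify it directly, and this is where there is a genuine gap. For a subnormal lattice every value $\varphi(x,x)=2\delta\bfN(x_1)+2b\bfN(x_2)+\Tr(x_1\bar x_2)$ lies in $2O_F$ (each of the three terms does, the last because $\Tr(O_E)=2O_F$), so the literal reduction ``$\varphi(x,x)\bmod 2$'' is \emph{identically zero} on $L/\pi L$: it is neither the hyperbolic form $x_1x_2$ you write down nor anything that ``normalizes to $z^2+z=\bar b$.'' The form that actually detects the dichotomy is $2^{-1}\varphi(x,x)\bmod 2$, exactly as in Cho's construction recalled just before the lemma; it is well defined on $L/\pi L$ precisely because $n(L)\subset 2O_E$, and in your basis it equals $\bar\delta\xi_1^2+\xi_1\xi_2+\bar b\,\xi_2^2=\xi_1^2+\xi_1\xi_2+\bar b\,\xi_2^2$, which is isotropic iff $z^2+z\equiv b\pmod 2$ is solvable. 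Without the division by $2$ your claimed form would be isotropic for \emph{every} subnormal $L$, and then your (2)$\Rightarrow$(4) hyperbolic-splitting argument would force $L\simeq H$ in all cases, contradicting the two-class statement you prove elsewhere — so the error is not just notational, it breaks the equivalence.

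Two smaller points. First, in (1)$\Leftrightarrow$(3) the norms $\bfN(1+\pi x)=1-2\delta x^2$ you propose do not reach $1-4\delta b$; the paper's computation takes a general $x+y\sqrt{2\delta}$, shows $x\equiv 1$, $y\equiv 0\pmod 2$, and reduces to the square $(1+2w)^2=1+4(w+w^2)$, which is where the Artin--Schreier condition $w+w^2\equiv b$ appears — your ``massage'' needs to be this argument (or equivalent), not the $1-2\delta x^2$ family. Second, your lifting step (2)$\Rightarrow$(4) is fine in outline, but its input must be $\varphi(x,x)\in 4O_F$ for some primitive $x$ (isotropy of the \emph{halved} form), after which a Hensel/trace adjustment produces a hyperbolic pair; with the corrected $\psi$ this goes through, and conversely $H$ is subnormal with $d(H)=-1$ as you note.
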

\begin{proof}
Let $L$ be a unimodular  subnormal lattice of rank two. 
By \cite[(9.1) and Proposition 9.1.a]{Jacobowitz}, there are  inclusions  $O_E \supsetneq n(L) \supset n(H)=2O_E$. 
This implies that  $n(L)=2O_E$ since  $n(L)$ can not be $\pi O_E$ by definition.  
Hence the isomorphism types of $L$ are classified only by 
the determinant, which takes value in  $O_F^{\times}/\bfN(O_E^{\times}) \simeq \Z/2\Z$.  
This implies the equivalence of (3) and (4). 
%The equivalence of (4) and (5) follows from \cite[Proposition 9.2.a]{Jacobowitz}. 

As in Lemma  \ref{splitting}, there is an isomorphism $L \simeq \begin{pmatrix}
2\delta & 1
\\
1 & 2b 
\end{pmatrix}$ for some $b \in O_F$. 
The equivalence of (1) and (2) follows from \cite[Remark 4.6]{Cho2}. 
We show the equivalence of (1) and (3). 
Suppose that there exist $x, y \in O_F$ such that 
$\bfN(x+\sqrt{2\delta}y)=-d(L)$, and equivalently $x^2-2\delta y^2=1-4\delta b.$  
Then $x \equiv 1 \pmod2$, and we can write $x=1+2w$ for some $w \in O_F$. 
It follows that $4(w+w^2)-2\delta y^2=-4\delta b$. 
Since $\delta\equiv 1 \pmod 2$, we have $y\equiv 0$ and $w+w^2 \equiv -b \equiv b \pmod 2$.  
Conversely, suppose that $z+z^2\equiv b \equiv - \delta b \pmod 2$ for some $z \in \F_q$. 
By Hensel's lemma, there is an element $w \in O_F$ such that $w+w^2=- \delta b$. 
It follows that \[\bfN(1+2w)=(1+2w)^2=1+4(w+w^2)=1-4 \delta b=-d(L).\]
\end{proof}
\begin{eg}\label{eg:subRP}
Let $E/F$ be as in Lemma \ref{subnormal}. 
The lattice $L = \begin{pmatrix}
2 & 1
\\
1 & 2 
\end{pmatrix}$ is a unimodular subnormal lattice. 
The above proof shows that $L$ satisfies those equivalent conditions  if and only if the equation $z+z^2 =1$ has a solution in $\F_q$, 
that is,  
 $\F_q$ contains the quadratic extension of $\F_2$. 
\end{eg}

\subsection{Unitary groups and local densities}
\subsubsection{}\label{groups} 
Let $(V,\varphi)$ be a Hermitian space over $E$ as in Section~\ref{lattice} and let $\Lambda$ be a unimodular  $O_E$-lattice in $V$. 
Let $G^1={\rm U}(V, \varphi)$ be the \emph{unitary group} associated to $(V,\varphi)$.
%the Hermitian space $(V,\varphi)$ as in Section~\ref{setting}. 
By definition $G^1$ is the reductive group over $F$ whose group of $R$-values for any commutative $F$-algebra $R$ is given by 
\begin{equation}\label{unitary}
G^1(R)= \{ g \in (\End_R(V \otimes_F R))^{\times}  \mid  \varphi(gx, gy)=\varphi(x, y), \quad x, y \in V \otimes _F R\}.
\end{equation}
As well-known, the group $G^1$ is connected. 
We define a naive integral model $\underline{G}'$ over $O_F$ of $G^1$ by 
\[\underline{G}'(R)=\{ g \in (\End_R   (\Lambda  \otimes_{O_{F}} R))^{\times} \mid  \varphi(gx, gy)=\varphi(x, y), \quad x, y \in \Lambda
\otimes_{O_{F}} R \}\]
for any commutative $O_{F}$-algebra $R$. 
By the work of Gan and J.-K. Yu  \cite[Proposition 3.7]{GY}, there exists a unique smooth affine group scheme $\underline{G}^1$ over $O_{F}$ such that $\underline{G}^1 \otimes_{O_{F}} F=G^1$ and $\underline{G}^1(R)=\underline{G}'(R)$ for any \'{e}tale $O_{F}$-algebra $R$. 

\subsubsection{}
Let $\overline{G}^1$ denote the maximal reductive quotient of the special fiber $\underline{G}^1 \otimes_{O_{F}} \F_q$. 
By Gan-Yu \cite[Proposition 6.2.3 and Section~9]{GY}, there is an isomorphism of  groups over $\F_q$: 
\begin{align}\label{unram}
\overline{G}^1
 \simeq \begin{cases*}
{\rm U}_n &  if $E/F$ is an unramified quadratic field extension; 
 \\
 \GL_n &   if $E=F \oplus F$.
 \end{cases*}
 \end{align}
 Here, ${\rm U}_n$ denotes a unitary group in $n$ variables over $\F_q$, which is unique up to isomorphism. 
 
 Assume that  $E/F$ is a ramified quadratic field  extension. Then 
\begin{align}\label{ram}
\overline{G}^1 \simeq 
\begin{cases}
 {\rm O}_n &   {\text{if  $2 \nmid q$, $n$ is odd}}; 
 \\ 
 {\rm O}_n &    {\text{if  $2 \nmid q$, $n$ is even}},  \ d(\Lambda)=(-1)^{n/2}; 
 %(-1)^{n/2}\cdot d( \Lambda)  \bmod  \varpi \in \F_q^{\times 2};
\\
^2  {\rm O}_n &    {\text{if  $2 \nmid q$, $n$ is even}},   \ d(\Lambda)\neq (-1)^{n/2}; 
%(-1)^{n/2}\cdot d( \Lambda) \bmod  \varpi  \notin \F_q^{\times 2}; 
\\
\Sp_{n-1} \times \Z/2\Z &   {\text{if  $2 \mid q$, $n$ is  odd, in RU}};
\\ 
\SO_n \times \Z/2\Z & {\text{if  $2 \mid q$, $n$ is odd,  in RP}};
\\
\Sp_{n-2} \times \Z/2\Z &  {\text{if  $2 \mid q$, $n$ is even, $\Lambda$ is normal in RU}};
\\ 
\SO_{n-1} \times \Z/2\Z  & {\text{if  $2 \mid q$, $n$ is even, $\Lambda$ is normal in RP}};
\\
\Sp_n & {\text{if  $2 \mid q$, $n$ is even, $\Lambda$ is subnormal in RU}};
 \\
 {\rm O}_n & {\text{if  $2 \mid q$, $n$ is even, $\Lambda$ is subnormal in RP}}, \ 
 d(\Lambda) = (-1)^{n/2}; 
  \\
  ^2  {\rm O}_n & {\text{if  $2 \mid q$, $n$ is even, $\Lambda$ is subnormal in RP}}, \  d(\Lambda) \neq  (-1)^{n/2}. 
\end{cases}
\end{align}
Here, ${\rm O}_n$ denotes the split orthogonal group in $n$ variables, $^2{\rm O}_n$  denotes the quasi-split but non-split orthogonal group, 
$\Sp_n$ denotes the  symplectic group, ${\rm SO}_n$  denotes the split special orthogonal group, and
$\Z/2\Z$ denotes  the constant group scheme of order $2$. 
The determinant  $d(\Lambda)$ takes value in the quotient group  $O_F^{\times}/\bfN(O_E^{\times})$. 
%Note that when $2 \nmid q$   the condition $d(\Lambda)=(-1)^{n/2}$ is equivalent to that  $(-1)^{n/2} \cdot d(\Lambda)\bmod \varpi$ is a square in $\F_q^{\times 2}$. 

These isomorphisms were  given by  Gan-Yu   \cite[Proposition 6.2.3]{GY} when $2 \nmid q$, and by Cho  \cite{Cho1, Cho2} when $2 \mid q$. 
Here we recall Cho's construction. 
The function $x \mapsto \varphi(x, x)\pmod 2$ induces a quadratic form $\psi : \Lambda/\pi\Lambda \to \F_q$, which we   regard  as an additive polynomial.  
We define a lattice $B$ as the sublattice of $\Lambda$ such that $B/\pi\Lambda$ is the kernel of  $\psi$. 
By  \cite[Remark 2.11]{Cho1},  one has 
\begin{align*}
 B=
 \begin{cases*}
 H^{(n-1)/{2}} \oplus (\pi)e_1 
  & if $n$ is odd; 
 \\
 H^{(n-2)/{2}}\oplus (\pi)e_1\oplus O_E e_2 & if $n$ is even and $\Lambda$ is normal; 
 \\
 H^{{n}/{2}} & if $n$ is even and $\Lambda$ is subnormal.
 \end{cases*}
 \end{align*}
In the RU case, 
 the  reduction $\varphi\pmod \pi$  induces an alternating and bilinear form on $B/\pi \Lambda$, and we define $Y$ as the sublattice of $\Lambda$ such that $Y/\pi \Lambda$ is the radical of this form. 
In the RP case, the reduction 
  $2^{-1} \psi \pmod 2$ 
  induces a quadratic form on $B/ \pi B$, and 
 we define $Z$ as the sublattice of $\Lambda$ such that $Z/\pi B$ is the radical of this form. 
%  Then 
% \begin{align*}
% Y=
%  \begin{cases*}
% (\pi)H^{\frac{n-1}{2}} \oplus  (\pi)e_1 
%  & if $n$ is odd, RU; 
% \\
% (\pi)H^{n/2-1}\oplus (\pi)e_1\oplus O_E e_2 & if $n$ is even, RU; 
 %\\
% (\pi)H^{n/2} & if $n$ is even, RU, 
% \end{cases*}
% \end{align*}
Then, 
for any \'{e}tale local $O_F$-algebra $R$ and any element $\tilde{m} \in \underline{G}^1(R)$ with reduction $m \in \underline{G}^1(R \otimes_{O_F} \F_q)$,  the action of $\tilde{m}$ on $\Lambda \otimes _{O_F}R$ preserves $B$, $Y$, and $Z$. 
Furthermore, 
there exists a unique   morphism of algebraic groups over $\F_q$
\begin{align*}
f : \underline{G}^1 \otimes \F_q \to 
\begin{cases*}
\Sp(B/Y, \varphi \bmod \pi) & in the  RU case; 
\\ 
{\rm O}(B/Z, 2^{-1}\psi \bmod 2)_{\rm red} & in the  RP case,
\end{cases*}
\end{align*} 
 such that  the image  $f(m) \in 
\GL(B \otimes_{O_F}R/Y\otimes_{O_F}R)$ (resp.~
$\GL(B \otimes_{O_F}R/Z\otimes_{O_F}R)$)  
 is induced by the action of $\tilde{m}$ on $\Lambda \otimes _{O_F}R$. 
 Here, ${\rm O}(B/Z, 2^{-1}\psi \bmod 2)_{\rm red}$ denotes the reduced subgroup scheme of 
 ${\rm O}(B/Z, 2^{-1}\psi \bmod 2)$. 
 There is also a  surjective morphism 
 $g : \underline{G}^1 \otimes \F_q \to (\Z/2\Z)^{\epsilon}$,  where $\epsilon=0$ if $n$ is even and $\Lambda$ is subnormal, or  $\epsilon=1$ otherwise. 
 The product $f \times g$ gives the projection from $\underline{G}^1 \otimes \F_q$ to its maximal  reductive quotient, by  
   \cite[Theorem 4.12]{Cho1} and  \cite[Theorem 4.11]{Cho2}.
  The isomorphism types of  $\Sp(B/Y, \varphi \bmod \pi)$ and 
${\rm O}(B/Z, 2^{-1}\psi \bmod 2)_{\rm red}$ are given in \cite[Remark 4.7]{Cho1} and \cite[Remark 4.6]{Cho2} respectively.
\begin{defn}
The \emph{local density} of $\Lambda$ is the quantity
\[\beta_{\Lambda} \coloneqq 
\lim_{N \to \infty} 
q^{-N \dim G^1} \cdot 
\lvert  \underline{G}'
(O_{F}/\varpi^N O_{F})\rvert, \]
where $\varpi$ is a uniformizer of $O_F$.
\end{defn}
The limit stabilizes for $N$ sufficiently large. 
By the results of Gan-Yu \cite[Theorem 7.3]{GY} and Cho  (\cite[Theorem 5.2]{Cho1},  \cite[Theorem 5.2]{Cho2}, and \cite[Remark 5.3]{Cho1}), the local density of a unimodular lattice $\Lambda$  can be computed via  the formula 
\begin{align}\label{density}
\beta_{\Lambda}= 
q^{-N \dim \overline{G}^1}\cdot \lvert  \overline{G}^1 (\F_q) \rvert, \quad N= \begin{cases} n &  {\text{if $2 \mid q$ and  $\Lambda$ is  subnormal}}; 
\\ 
0 & {\text{otherwise}}.
\end{cases}
\end{align}
In Table \ref{table:finite} we see the dimensions and orders of finite classical groups  appearing in $\overline{G}^1$.  
\newpage
 \begin{table}[htbp]
  \caption{The dimensions and orders of finite classical groups}
  \label{table:finite}
 \renewcommand{\arraystretch}{1.5}
  \centering
  \begin{tabular}{|c|c|c|}
    \hline 
  $U$   & $\dim U$   &   $\lvert U(\F_q) \rvert$ 
    \\ 
    \hline 
    $\GL_n$  & $n^2$ &  $q^{\frac{n(n-1)}{2}} \cdot \prod_{i=1}^n(q^i- 1)$ 
\\
  \hline
 ${\rm U}_n$ & $n^2$ & 
   $q^{\frac{n(n-1)}{2}}
    \cdot  \prod_{i=1}^n (q^i-(-1)^i)$
 \\   
 \hline
 ${\rm O}_{2m+1}$ & 
 $m(2m+1)$ & 
 $q^{m^2} \cdot \prod_{i=1}^m (q^{2i}-1)$ 
 \\ 
   \hline
 ${\rm O}_{2m} $ & $m(2m-1)$  & $2q^{m(m-1)}(q^m-1) \cdot \prod_{i=1}^{m-1}(q^{2i}-1)$ 
 \\ 
   \hline
$^2{\rm O}_{2m}$ & $m(2m-1)$ & $2q^{m(m-1)}(q^m+1) \cdot \prod_{i=1}^{m-1}(q^{2i}-1)$  
 \\
   \hline
 $\Sp_{2m}$ & $m(2m+1)$ & $q^{m^2} \cdot 
 \prod_{i=1}^m (q^{2i}-1)$ 
 \\ 
   \hline
 ${\rm SO}_{2m+1}$ 
 & $m(2m+1)$ & 
 $q^{m^2} \cdot \prod_{i=1}^{m}(q^{2i}-1)$ 
 \\ 
    \hline
  \end{tabular}
\end{table} 

 A computation shows the following:
\begin{lemma}\label{LD}
The local density  $\beta_\Lambda$ of a unimodular lattice $\Lambda$ is given by  
\begin{equation*}
\beta_{\Lambda}=
\begin{cases}
\prod_{i=1}^{n} (1-(-1)^i \cdot q^{-i}) &  \text{if $E/F$ is an unramified quadratic field extension}; 
\\
\prod_{i=1}^n (1-q_v^{-i}) 
&  \text{if $E=F \oplus E$}, 
\end{cases}
\end{equation*}
and if $E/F$ is a ramified quadratic field extension, then 
\begin{equation*}
\beta_{\Lambda}=
\begin{cases}
2 \cdot \prod_{i=1}^{(n-1)/{2}} (1-q^{-2i}) &     \text{if $n$ is odd} ; 
\\ 
2 (1 + q^{-n/2})^{-1}
\cdot 
\prod_{i=1}^{n/2}
(1-q^{-2i}) &  {\text{if $n$ is even}}, \
  2 \nmid q,  \ 
  d(\Lambda)=(-1)^{n/2}; 
   %(-1)^{n/2}\cdot d( \Lambda)  \bmod \varpi \in \F_q^{\times 2}; 
  \\ 
  2  (1 - q^{-n/2})^{-1}
\cdot 
\prod_{i=1}^{n/2}
(1-q^{-2i}) &  {\text{if $n$ is  even}}; \
  2 \nmid q,  \ 
  d(\Lambda)\neq (-1)^{n/2}; 
  % (-1)^{n/2}\cdot  d(\Lambda) \bmod \varpi \notin \F_q^{\times 2}; 
 \\ 
 2  (1-q^{-n})^{-1}\cdot \prod_{i=1}^{n/2}(1-q^{-2i}) &  {\text{if $n$ is even}},   \ 
 2 \mid q,   \  \Lambda \ 
 {\text{normal}};
\\
q^n \cdot \prod_{i=1}^{n/2}
(1- q^{-2i}) &  {\text{if $n$ is even}}, \ 2 \mid q, \   \Lambda \ {\text{subnormal in RU}}; 
\\
2  q^n \cdot (1+ q^{-n/2})^{-1}\cdot \prod_{i=1}^{n/2}(1-q^{-2i}) & {\text{if $n$ is even}},  \ 2 \mid q,  \ 
 \Lambda \ 
  {\text{subnormal in RP}}, \ 
  d(\Lambda) = (-1)^{n/2};
  \\
2  q^n \cdot (1- q_v^{-n/2})^{-1}\cdot \prod_{i=1}^{n/2}(1-q^{-2i}) 
&  {\text{if $n$ is  even}},  \ 2 \mid q,   \ 
 \Lambda \ 
  {\text{subnormal in RP}}, \  d(\Lambda) \neq (-1)^{n/2}. 
\end{cases}
\end{equation*}
\end{lemma}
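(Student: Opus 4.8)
The plan is to derive Lemma \ref{LD} by a direct substitution into the closed-form expression \eqref{density}: once the maximal reductive quotient $\overline{G}^1$ is pinned down by \eqref{unram} and \eqref{ram}, the local density $\beta_\Lambda = q^{-N \dim \overline{G}^1} |\overline{G}^1(\F_q)|$ is a finite computation in each of the nine cases, using only Table \ref{table:finite} and the order of $\Z/2\Z$. So the argument is really a bookkeeping exercise, case by case, and the main task is to organize it so that the common factor $\prod (1-q^{-2i})$ emerges cleanly.

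First I would treat the two unramified cases. For $E/F$ an unramified quadratic field extension, $\overline{G}^1 \simeq \mathrm{U}_n$ with $\dim = n^2$ and $|\mathrm{U}_n(\F_q)| = q^{n(n-1)/2}\prod_{i=1}^n(q^i - (-1)^i)$, and since $N = 0$ here the density is $q^{-n^2}|\mathrm{U}_n(\F_q)| = q^{-n^2 + n(n-1)/2}\prod_{i=1}^n(q^i-(-1)^i) = \prod_{i=1}^n q^{-i}(q^i - (-1)^i) = \prod_{i=1}^n(1-(-1)^iq^{-i})$, which is the claimed formula (the exponent check is $-n^2 + n(n-1)/2 + n(n+1)/2 = 0$). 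The split case $E = F\oplus F$ with $\overline{G}^1 \simeq \GL_n$ is identical with $(-1)^i$ replaced by $1$, giving $\prod_{i=1}^n(1-q^{-i})$; here I would note the typos in the statement ($q_v$ should be $q$, and $E = F\oplus E$ should read $E = F\oplus F$).

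Next I would handle the ramified cases. When $n$ is odd: for $2\nmid q$, $\overline{G}^1 \simeq \mathrm{O}_n = \mathrm{O}_{2m+1}$ with $m = (n-1)/2$, $\dim = m(2m+1)$, and order $q^{m^2}\prod_{i=1}^m(q^{2i}-1)$; since such a lattice is never ``subnormal'' we have $N=0$, and $q^{-m(2m+1)+m^2}\prod(q^{2i}-1) = q^{-m^2-m}\prod(q^{2i}-1) = \prod_{i=1}^m q^{-2i}(q^{2i}-1)\cdot q^{-m^2-m+2\sum i}$; since $2\sum_{i=1}^m i = m(m+1)$ the leftover power is $1$, giving $\prod_{i=1}^{(n-1)/2}(1-q^{-2i})$ — but the stated formula has an extra factor $2$. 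I would resolve this by recalling that for $2\nmid q$, $n$ odd, $\overline{G}^1$ is in fact $\mathrm{O}_n$ (not $\SO_n$), whose order carries a factor $2$; alternatively, in the $2\mid q$ odd cases the factor comes from the $\Z/2\Z$. In all odd cases the orthogonal (or symplectic) factor has the same order as $\mathrm{O}_{2m+1}$ or $\Sp_{2m}$ with $m = (n-1)/2$ up to the $\Z/2\Z$, so the uniform answer $2\prod_{i=1}^{(n-1)/2}(1-q^{-2i})$ follows; I would verify that in the $2\mid q$ RU case, $\Sp_{n-1}\times\Z/2\Z$ has $N = 0$ (lattice normal, since $n$ odd is automatically normal) so no extra $q^n$ appears, and similarly $\SO_n\times\Z/2\Z$ in RP. For $n$ even I would run through the six subcases in parallel: the two $2\nmid q$ subcases use $\mathrm{O}_{2m}$ resp.\ $^2\mathrm{O}_{2m}$ with $m = n/2$, whose orders differ by $(q^m\mp 1)$ versus $(q^m\pm1)$, producing the $2(1\pm q^{-n/2})^{-1}$ prefactors after extracting $\prod(1-q^{-2i})$; the $2\mid q$ normal case uses $\Sp_{n-2}\times\Z/2\Z$ or $\SO_{n-1}\times\Z/2\Z$ — here I would check these two give the same density $2(1-q^{-n})^{-1}\prod_{i=1}^{n/2}(1-q^{-2i})$, which requires $|\Sp_{2m-2}\times\Z/2\Z|$ and $|\SO_{2m-1}\times\Z/2\Z|$ to agree in the relevant normalization, and this is where a careful dimension count is needed; and the three $2\mid q$ subnormal subcases use $\Sp_n$, $\mathrm{O}_n$, $^2\mathrm{O}_n$ respectively, now with $N = n$, so the prefactor picks up $q^{n\dim\overline{G}^1 - N\dim\overline{G}^1}$... more precisely $\beta_\Lambda = q^{-n\dim\overline{G}^1}|\overline{G}^1(\F_q)|$ and since $|\overline{G}^1(\F_q)| \sim q^{\dim\overline{G}^1 + n/2}$-ish, I expect an overall $q^n$ to survive, matching the stated $q^n\prod(1-q^{-2i})$ and $2q^n(1\pm q^{-n/2})^{-1}\prod(1-q^{-2i})$.

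The main obstacle I anticipate is not any single hard step but keeping the power-of-$q$ bookkeeping consistent across all nine cases simultaneously — in particular, correctly tracking (a) the value of $N$ (which is $n$ exactly for the three $2\mid q$ subnormal subcases and $0$ otherwise), (b) the extra factor of $2$ coming sometimes from the disconnectedness of $\mathrm{O}_{2m}$/$^2\mathrm{O}_{2m}$ and sometimes from the explicit $\Z/2\Z$ factor, and (c) verifying that the two ``normal, $2\mid q$, $n$ even'' possibilities (RU giving $\Sp_{n-2}$, RP giving $\SO_{n-1}$) genuinely yield the same density, and likewise that $\Sp_{n-1}$ (RU) and $\SO_n$ (RP) agree in the odd case. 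Since $\Sp_{2m}$ and $\SO_{2m+1}$ have the same order $q^{m^2}\prod_{i=1}^m(q^{2i}-1)$ and the same dimension $m(2m+1)$ (Table \ref{table:finite}), this coincidence is exactly the classical fact that the two are Langlands-dual and have the same order — so the verification reduces to matching indices $m$, which I would do explicitly. Once these three consistency checks pass, the lemma follows by assembling the cases.
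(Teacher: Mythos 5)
Your overall strategy coincides with the paper's: Lemma \ref{LD} is stated there with no argument beyond "a computation shows", i.e.\ one substitutes the groups of \eqref{unram} and \eqref{ram} into \eqref{density} using Table \ref{table:finite}, which is exactly what you do. Your unramified, split, odd ramified, and even ramified non-subnormal cases are carried out correctly: you rightly locate the factor $2$ in the odd ramified case (the order of the full group ${\rm O}_{2m+1}$ is $2q^{m^2}\prod_{i=1}^m(q^{2i}-1)$, so the table entry is missing the $2$, while for $2\mid q$ it comes from the explicit $\Z/2\Z$), you rightly note that odd rank forces normality so $N=0$ there, and the coincidence $\lvert\Sp_{2(m-1)}(\F_q)\rvert=\lvert\SO_{2m-1}(\F_q)\rvert$ with equal dimensions settles the agreement of the RU and RP normal cases. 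The typos you flag in the statement ($q_v$ for $q$, $F\oplus E$ for $F\oplus F$) are genuine.

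The gap is in the three subnormal cases ($2\mid q$, $n$ even). There you read \eqref{density} literally as $\beta_\Lambda=q^{-n\dim\overline G^1}\lvert\overline G^1(\F_q)\rvert$ and argue that since $\lvert\overline G^1(\F_q)\rvert$ is roughly $q^{\dim\overline G^1+n/2}$ an overall $q^n$ should survive. Both halves of this are wrong: $\lvert\overline G^1(\F_q)\rvert$ grows like $q^{\dim\overline G^1}$ (up to bounded factors), and with the exponent $-n\dim\overline G^1$ the powers of $q$ cannot balance at all; note that your own $N=0$ computations silently use the prefactor $q^{-\dim\overline G^1}$ rather than $q^{0}$, so you are not applying the displayed formula consistently. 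The resolution is that \eqref{density} as printed is itself typo'd: the normalization forced by Gan--Yu/Cho, and by your correct $N=0$ cases, is $\beta_\Lambda=q^{\,N-\dim\overline G^1}\cdot\lvert\overline G^1(\F_q)\rvert$, i.e.\ an extra factor $q^{N}=q^{n}$ in the subnormal cases on top of the usual $q^{-\dim}\lvert\cdot\rvert$. Once this is fixed, the three remaining computations are the same bookkeeping as the others: for $\overline G^1=\Sp_{2m}$ with $m=n/2$ one gets $q^{2m}\cdot q^{-m(2m+1)}\cdot q^{m^2}\prod_{i=1}^{m}(q^{2i}-1)=q^{n}\prod_{i=1}^{n/2}(1-q^{-2i})$, and for ${\rm O}_{2m}$ and $^2{\rm O}_{2m}$ the same manipulation as in your $2\nmid q$ even cases yields $2q^{n}(1\pm q^{-n/2})^{-1}\prod_{i=1}^{n/2}(1-q^{-2i})$. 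With that correction your case-by-case computation completes the proof.
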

\subsection{Exact mass formula for unimodular lattices over CM-fields}
\subsubsection{}
Let $F$ be a totally real number field of degree $d$ over $\Q$, with ring of integers $O_F$ and ring of adeles $\A_F$. 
We assume that $2$ is unramified in $F$. 
Let $E$ be a totally imaginary quadratic extension of $F$,  with ring of integers $O_E$. 
The non-trivial automorphism of $E$ over $F$ is denoted by $a \mapsto \bar{a}$. 

For a finite place $v$ of $F$,  $F_v$ denotes the corresponding completion of $F$, $O_{F_v}$ denotes the ring of integers of $F_v$,  $\F_v$ denotes the residue field of $O_{F_v}$,  and $q_v$ denotes the cardinality of $\F_v$.  
We write $E_v=E \otimes_F F_v$ and $O_{E_v}=O_E \otimes_{O_{F}}O_{F_v}$. 
If $E_v/F_v$ is a ramified quadratic field extension, let $\pi_v$ denote a uniformizer of $O_{E_v}$. 

Let $(V, \varphi)$ be a \emph{Hermitian space} over $E$, by which we mean a finite dimensional vector space $V$ over $E$  
equipped with a non-degenerate  Hermitian form $\varphi : V \times V \to E$,  in the sense that $\varphi$ satisfies relations \eqref{herm} for $x, y, z \in V$ and $a, b \in E$. 
Let $G^1={\rm U}(V, \varphi)$ be the unitary group  associated with $(V, \varphi)$, which is a connected reductive group over $F$. 
%whose  group of $R$-values for a  commutative $F$-algebra $R$  is defined as in \eqref{unitary}. 

 \subsubsection{}
 Let $\Lambda$ be a \emph{lattice}  in $V$, by which we mean a finitely generated $O_E$-submodule of $V$ such that $\Lambda\otimes_{O_F}F \simeq V$ and $\varphi(\Lambda, \Lambda)\subset O_E$. 
   For a finite place $v$ of $F$, we write $\Lambda_v=\Lambda \otimes_{O_{F_v}} O_{E_v}$. 
   Let ${\rm K}_v$ be the stabilizer of $\Lambda_v$  in $G^1(F_v)$, and let 
\[ {\rm K}=  G^1(F \otimes \R) \times \prod_{v : {\rm finite}}{\rm K}_v \subset G^1(\A_F).\]
The set of isomorphism classes of the genus of $\Lambda$ is indexed by  $\Sigma \coloneqq  G^1(F) 
\backslash G^1(\A_F)/{\rm K}$. 
For each class $[g] \in \Sigma$, represented by $g \in G^1(\A_F)$,  let 
  $\Gamma_{g} \coloneqq G^1(F \otimes \R)\cap g{\rm K}g^{-1}$. 
  Then the quotient $\Gamma_{g} \backslash G^1(F \otimes \R)$ is of finite volume with respect to any Haar measure on  $G^1(F \otimes \R)$ \cite[Section~10.4]{GY}. 
  
   Let $\mu_{c, \R}$ be the Haar measure on the compact form of the real Lie group  $G^1(F \otimes \R)$ which gives  the group  volume $1$. 
We then transfer $\mu_{c, \R}$ to $G^1(F \otimes \R)$ and 
 obtain a Haar measure on $G^1(F \otimes \R)$, which is denoted again by $\mu_{c, \R}$. 
  \begin{defn}\label{def:mass}
 The \emph{mass of} $\Lambda$ is defined by 
\[\Mass(\Lambda)\coloneqq \sum_{[g] \in \Sigma}
\int_{\Gamma_{g}\backslash G^1(F \otimes \R)}
\mu_{c, \R}. 
\]
\end{defn}

Let $\chi=\chi_{E}$ be the Dirichlet character corresponding to $E/F$. 
It satisfies that 
$
\chi (v)
=1, -1$, or $0$ according as $v$ is  split, inert, or ramified  in $E$. 
We use the convention that $
\chi^j=1$ if $j$ is even, and $\chi^j=\chi$ if $j$ is odd. 
Let $L_{F}(s, \chi^j)$ be the $L$-series 
 over $F$. 
\begin{thm}\label{SMS} 
Let  $\Lambda$ be a  unimodular  lattice in $V$. 
Then we have that
\begin{align*}
\Mass(\Lambda)=\frac{(-1)^s \cdot 2}{2^{nd+w}}
\cdot  
\prod_{j=1}^n L_{F}(1-j, \chi_E^j)  
\cdot 
\prod_{v \mid d_{E/F}} \kappa_v
\end{align*}
where $n \coloneqq \dim_E V$, $d\coloneqq [F : \Q]$,    $d_{E/F}$ denotes  the discriminant of $E/F$,  $w$ denotes the number of places $v$ with $v\mid d_{E/F}$, and the quantities  
$s$ and $\kappa_v$ are given by 
\begin{align*}
s & =\begin{cases}
0 &   {\text{if $n$ is odd}}; 
\\
\frac{nd}{2} &  {\text{if $n$ is  even}}, 
\end{cases} 
\\
\kappa_v & =
\begin{dcases}
1 &
  {\text{if $n$ is  odd}}; 
\\
q_v^{n/2} +1 
 & {\text{if $n$ is  even}},   
  \ v \nmid 2, \ 
  d(\Lambda_v) =(-1)^{n/2}; 
  %(-1)^{n/2}\cdot d( \Lambda_v) \bmod  {\varpi_v}  \in 
 % \F_v^{\times 2};  
\\
q_v^{n/2} -1 
 & 
 {\text{if $n$ is  even}},  \ 
 v \nmid 2, \ 
 d(\Lambda_v) \neq (-1)^{n/2}; 
 %(-1)^{n/2}\cdot  d(\Lambda_v) \bmod  {\varpi_v} 
 %\notin
 % \F_v^{\times 2}; 
\\
q_v^n-1 &  
{\text{if $n$ is even}}, \ v \mid 2, \ \Lambda_{v} \ {\text{normal in  RU}}; 
\\
q_v^{n/2} \cdot (q_v^n -1) 
 &  {\text {if $n$ is  even}}, 
 \ v \mid 2, \ \Lambda_{v} \  \text{normal in   RP}; 
\\ 
2  & 
 {\text{if $n$ is  even}}, 
\ v \mid 2,  \ \Lambda_{v} \ {\text{subnormal in  RU}};  
\\ 
q_v^{n/2} + 1 
& 
 {\text{if $n$ is  even}}, \ v \mid 2, 
\ \Lambda_{v} \ 
{\text{subnormal in   RP}},  \ 
d(\Lambda_v) =  (-1)^{n/2}; 
\\ 
q_v^{n/2} - 1 
& 
 {\text {if $n$ is even}}, \ v \mid 2, 
\ \Lambda_{v}
\ {\text{subnormal in   RP}},  \ d(\Lambda_v) \neq  (-1)^{n/2}. 
\end{dcases}
\end{align*}
Here, the determinant $d(\Lambda_v)$ takes value in the quotient group  $O_{F_v}^{\times}/\bfN_{E_v/F_v}(O_{E_v}^{\times})$. 
%$\varpi_v$ for $v \nmid 2$ denotes a unifomizer of $O_{F_v}$. 
\end{thm}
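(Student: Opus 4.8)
The plan is to deduce the formula from the mass formula of Gan--Yu \cite[Section~10]{GY}, which writes $\Mass(\Lambda)$ as a product of three contributions: the Tamagawa number $\tau(G^1)$ of $G^1=\mathrm U(V,\varphi)$; an archimedean volume factor attached to the compact form of $G^1(F\otimes\R)$ (together with the $|d_F|$-normalization of Tamagawa measure); and the reciprocals $\beta_{\Lambda_v}^{-1}$ of the local densities at all finite places $v$. First I would record that $\tau(G^1)=2$: in the exact sequence $1\to \mathrm{SU}(V,\varphi)\to G^1\to R^1_{E/F}\Gm\to 1$ the simply connected semisimple group $\mathrm{SU}(V,\varphi)$ has Tamagawa number $1$ (Weil's conjecture on Tamagawa numbers), while the norm-one torus $R^1_{E/F}\Gm$ has Tamagawa number $2$; this accounts for the $2$ in the numerator of the asserted formula.

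Next I would feed in the local density computation of Lemma \ref{LD}. At every finite place $v$ where $E_v/F_v$ is split or inert --- the dyadic ones included --- the lemma gives $\beta_{\Lambda_v}=\prod_{i=1}^n\bigl(1-\chi_E(v)^i q_v^{-i}\bigr)$, which is exactly $\prod_{i=1}^n L_{F_v}(i,\chi_E^i)^{-1}$, the reciprocal of the product of the local Euler factors at $v$ of the Hecke $L$-functions $L_F(\cdot,\chi_E^i)$ for $1\le i\le n$. At a ramified place $v\mid d_{E/F}$, the lemma gives $\beta_{\Lambda_v}=c_v\cdot\prod_{i=1}^{\lfloor n/2\rfloor}\!\bigl(1-q_v^{-2i}\bigr)=c_v\cdot\prod_{i=1}^n L_{F_v}(i,\chi_E^i)^{-1}$, where $c_v$ is the explicit local constant occurring there ($c_v=2$ for $n$ odd, $c_v=2(1\mp q_v^{-n/2})^{-1}$ in the two even non-dyadic cases, $c_v=q_v^{n}$ for subnormal RU, $c_v=2q_v^{n}(1\pm q_v^{-n/2})^{-1}$ for subnormal RP, and so on). Multiplying over all finite $v$ --- the product converges because the first Euler factor is that of $L_F(1,\chi_E)$ with $\chi_E$ nontrivial --- gives
\[
\prod_{v\ \text{finite}}\beta_{\Lambda_v}^{-1}=\Bigl(\prod_{v\mid d_{E/F}}c_v^{-1}\Bigr)\cdot\prod_{i=1}^n L_F(i,\chi_E^i).
\]

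It remains to pass from the positive-integer values $\prod_{i=1}^n L_F(i,\chi_E^i)$ to the negative-integer values $\prod_{j=1}^n L_F(1-j,\chi_E^j)$ of the statement, which I would do by the functional equations of the $L_F(\cdot,\chi_E^j)$ --- using $\chi_E^j=\mathbf 1$ for $j$ even, so that $L_F(\cdot,\chi_E^j)=\zeta_F$, and $\chi_E^j=\chi_E$, a totally odd character of conductor $d_{E/F}$, for $j$ odd. One checks that $\zeta_F(1-j)\neq0$ for $j$ even, that $L_F(1-j,\chi_E)\neq0$ for $j$ odd, and that the root number of $\chi_E$ is $+1$, so that the right-hand side is a nonzero rational number. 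In this step the $\Gamma$-factors, the powers of $\pi$, and the powers of $|d_F|$ and $N_{F/\Q}(d_{E/F})$ produced by the functional equations are absorbed, together with the archimedean volume of $\mathrm U(n)^d$ and the Tamagawa-measure normalization, exactly as in the classical orthogonal and symplectic mass formulas; the signs of the relevant $\Gamma$-values at negative (half-)integers assemble into $(-1)^s$ with $s=0$ for $n$ odd and $s=nd/2$ for $n$ even; and a careful accounting of the remaining $2$-powers together with the local constants $c_v^{-1}$ at the ramified places --- where the local exponent of $d_{E/F}$ is $1$ in the non-dyadic case, $2$ in case RU, and $3$ in case RP --- produces precisely the denominator $2^{nd+w}$ and the local factors $\kappa_v$. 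Putting the three contributions together yields the claimed identity; it specializes to the formula of Hashimoto--Koseki \cite[Theorem 5.7]{HK} when $\Lambda=(1)\oplus\cdots\oplus(1)$, and it is independent of the real signature $(r,s)$ because $\mu_{c,\R}$ is defined through the compact form of $G^1(F\otimes\R)$, which equals $\mathrm U(n)^d$ for every signature.

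I expect the main obstacle to be this last bookkeeping step: performing the archimedean and discriminant cancellations cleanly, pinning down the global sign as $(-1)^s$, and --- most delicately --- organizing the interaction at the ramified (especially dyadic) places between the local constants $c_v$ from Lemma \ref{LD}, the conductor exponents, and the powers of $2$, so that they reassemble into the single power $2^{nd+w}$ in the denominator and the local factors $\kappa_v$. Everything else is either a direct substitution or a standard comparison of local measures.
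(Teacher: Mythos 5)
Your proposal follows essentially the same route as the paper's proof: the Gan--Yu mass formula with $\tau(G^1)=2$, the local densities from Lemma~\ref{LD} rewritten as local $L$-factors with correction constants at the ramified places, and the functional equations of $\zeta_F$ and $L_F(\cdot,\chi_E)$ to convert to values at negative integers, with the $\Gamma$-factors, powers of $\pi$, $d_F$, and the conductor exponents assembling into $(-1)^s$, $2^{nd+w}$, and the $\kappa_v$. The final bookkeeping you defer is exactly what the paper carries out in equations \eqref{nu}--\eqref{norm}, and your outline of it is consistent with that computation.
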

\begin{proof}
The mass formula of Gan-Yu \cite[Theorem 10.20]{GY} shows that 
\begin{align}\label{GY}
\Mass(\Lambda)=
\left(\prod_{i=1}^n \frac{(2\pi)^i}{(i-1)!}\right) ^{-d} 
 \cdot  
 \lvert  O_F/d_{E/F} \rvert ^{\frac{n(n+1)}{4}}
\cdot 
\frac{\tau(G^1) \cdot d_F^{\frac{n^2}{2}}}{\prod_{v :  {\rm finite}}\beta_{v}},\end{align}
where $\beta_v=\beta_{\Lambda_v}$ denotes the local density of $\Lambda_v$, $\tau(G^1)$ denotes the  Tamagawa number of $G^1$, and  $d_F \in \Z$ denotes the absolute discriminant of $F$. 
By Kottwitz \cite{Kottwitz:Tamagawa} and Ono \cite[p.~128]{Ono}, we have $\tau(G^1)=2$. 

We describe the product $\prod_{v :  {\rm finite}}\beta_{v}$ by  the $L$-series. 
Let $L_{F_v}(j, \chi^j)$ be the local factor of the $L$-series at a finite place $v$ of $F$:
\[L_{F_v}(j, \chi^j)=\left( 1-\frac{\chi^j(v)}{q_v^j}\right)^{-1}. \]
If $v$ is unramified in $E$, then Lemma 
\ref{LD} implies that % ?
$ \beta_{v}^{-1}=\prod_{j=1}^n L_{F_v}(j, \chi^j).$ 
 For a place $v$ which is ramified in $E$,  we define a rational number $\lambda_v$ 
 by the relation 
\begin{align}\label{nu}
\beta_{v}^{-1} & =\frac{1}{2} \cdot 
\prod_{j=1}^n L_{F_v}(j, \chi^j)  \cdot \lambda_{v}. 
\end{align}
Then, Lemma \ref{LD} implies that 
\begin{align}\label{lambda}
\lambda_v=
\begin{dcases}
1 &
  {\text{if $n$ is  odd}}; 
\\
1+q_v^{-n/2}
 & {\text{if $n$ is even}},
  \ v \nmid 2, \ 
  d(\Lambda_v)=(-1)^{n/2}; 
  %(-1)^{n/2}\cdot d (\Lambda_v) \bmod  {\varpi_v} \in 
 % \F_v^{\times 2}; 
\\
1-q_v^{-n/2}
 & {\text{if $n$ is  even}},  \ 
 v \nmid 2, \ 
 d(\Lambda_v) \neq (-1)^{n/2}; 
 %(-1)^{n/2}\cdot  d( \Lambda_v) \bmod  {\varpi_v}  
 %\notin
 % \F_v^{\times 2}; 
\\
1-q_v^{-n}
 & {\text{if $n$ is  even}}, \ v \mid 2, \ \Lambda_v \ {\rm normal};
\\ 
2 \cdot q_v^{-n} &  {\text{if $n$ is  even}}, 
\ v \mid 2,  \ \Lambda_v \ {\text{subnormal in RU}}; 
\\ 
q_v^{-n} \cdot 
(1+ q_v^{-n/2}) 
& 
{\text{if $n$ is even}}, \ v \mid 2, 
\  
 \Lambda_v \ 
{\text{subnormal in RP}}, \  d(\Lambda_v) = (-1)^{n/2};
\\ 
q_v^{-n} \cdot 
(1- q_v^{-n/2}) 
& 
{\text{if $n$ is  even}}, \ v \mid 2, 
\  \Lambda_v \ {\text{subnormal in  RP}}, \ d(\Lambda_v) \neq  (-1)^{n/2}.
\end{dcases}
\end{align}
The product of the local densities $\beta_{v}$ of $\Lambda_v$ over all finite places $v$ of $F$ can be written as
\begin{equation*}
  \prod_{v : {\rm finite}} 
\beta_{v}^{-1} = \frac{1}{2^w}\cdot 
\prod_{j=1}^n L_F(j, \chi^j) \cdot \prod_{v \mid d_{E/F}}\lambda_v.\end{equation*}

Let $\zeta_F(s)$ and $\zeta_E(s)$ be the Dedekind zeta functions of $F$ and $E$, respectively.
Then 
\begin{equation*}
\zeta_E(s)=\zeta_F(s) \cdot L_F(s, \chi).  
\end{equation*}
The functional equations for $\zeta_F(s)$ and $\zeta_E(s)$ imply  that 
\begin{align*}
L_F(j, \chi^j) & =\zeta_F(j)= 
\left(\frac{(-1)^{\frac{j}{2}}}{2} 
\cdot 
\frac{(2\pi)^j}{(j-1)!} \right)^d
\cdot  \zeta_F(1-j) \cdot d_F^{\frac{1}{2}-j}  & &  {\text{if $j$ is even}};
\\ 
 L_F(j, \chi^j) 
  & =L_F(j, \chi)
 \\
 &  = \left( \frac{(-1)^{\frac{j-1}{2}}}{2} \cdot \frac{(2 \pi)^{j}}{(j-1)!}\right)^d \cdot  L_F(1-j, \chi) \cdot d_F^{\frac{1}{2}-j} \cdot \lvert O_F/ d_{E/F}  \rvert^{\frac{1}{2}-j} & &  {\text{if $j$ is odd}}.
 \end{align*}
 
These equalities and    \eqref{GY} imply   that 
\begin{align}\label{eq:fe}\Mass(\Lambda)=\frac{(-1)^s \cdot 2\cdot \gamma}{2^{nd+w}}
\cdot 
\prod_{j=1}^n L_{F}(1-j, \chi^j) 
\cdot 
\prod_{v \mid d_{E/F}} \lambda_v
\end{align}
where 
\[
 \gamma  =
\begin{cases} 
1 &  {\text{if $n$ is  odd}}; 
\\
\lvert O_F/ d_{E/F} \rvert ^{n/2} &  {\text{if $n$ is  even}}. 
\end{cases}
\]

For a finite place $v$ which is ramified in $E$,   we put  
\begin{align}\label{kappa}
\kappa_v=
\begin{cases}
 \lambda_v & 
  {\text{if $n$ is odd}};
\\
\lvert O_{F_v}/  d_{E_v/F_v} \rvert  ^{n/2} \cdot \lambda_v & {\text{if $n$  is  even}}, 
\end{cases}
\end{align}
where $d_{E_v/F_v}$ denotes the discriminant of $E_v/F_v$. 
Then $\prod_{v \mid d_{E/F}} \kappa_v=\gamma \cdot \prod_{v \mid d_{E/F}} \lambda_v$. 

As in Section ~\ref{disc}, 
if $v$ is   ramified in $E$, then 
 \begin{align}\label{norm}
 \lvert O_{F_v}/ d_{E_v/F_v} \rvert =
 \begin{cases}q_v & {\text{if}} \  v \nmid 2; 
 \\
 q_v^2 & {\text{if $v \mid 2$ and $E_v/F_v$ is RU}}; 
 \\ 
 q_v^3 & {\text{if $v \mid 2$ and $E_v/F_v$ is RP}}.
 \end{cases} 
 \end{align}
 Equalities (\ref{lambda}), (\ref{eq:fe}),  (\ref{kappa}),  and  (\ref{norm}) imply the assertion. 
\end{proof}
%\begin{remark}
%For any finite place $v$, the reductive group $G_v$ admits a unique quasi-split inner form $H_v$ (cf. \cite[Proposition 7.2.12]{Conrad}). 
%The groups $H_v$ give rise to the $L$-function of the motive $M$ of $G$: 
%\begin{align}\label{L}
% L(M)= \prod_{u=1}^n L(1-u, \chi^{u}).
%\end{align} 
%\end{remark}
\section{Connected components of complex unitary Shimura varieties} 
\label{sec:P.2}
\subsection{Similitude factors  of  Hermitian lattices}\label{ssec:sim}
\subsubsection{}\label{simV}
Let $\ell$ be a prime. 
 Let $(E, \bar{\cdot})$ be an \'{e}tale quadratic algebra over $F=\Q_{\ell}$ with involution as in Section ~\ref{setting}, that is,  
 $E/\Q_{\ell}$ is a quadratic field extension, or  $E \simeq 
 \Q_{\ell}\oplus \Q_{\ell}$.  
 Denote by $O_E$ the maximal order in $E$. 
Let 
 ${\bf N}=\mathbf N_{E/\Q_{\ell}}$ denote the norm map of $E/\Q_{\ell}$. 
 
Let $(V, \varphi)$  be a Hermitian space over $E$, and let  $n=\dim_{E} V$.  
We write $G=\GU(V, \varphi)$ for the \emph{unitary similitude group} associated to $(V, \varphi)$, which is a reductive group over $\Q_{\ell}$ defined by 
\begin{align}\label{def:GU}
G(R)= \{(g, c)  \in (\End_{E \otimes_{\Q_{\ell}} R}( V_R) )^{\times} \times R^{\times}  \mid 
\varphi(gx, gy)=c \cdot \varphi(x, y) \ {\text{for}} \  x, y \in V_R\}\end{align}
for any $\Q_{\ell}$-algebra $R$. Here we write  $V_R=V \otimes_{\Q_{\ell}}R$. 
The similitude character is defined by the second projection: 
\[\Sim : G \to \mathbb G_m, \ (g, c) \mapsto c.\]
Its kernel $G^1$  is the unitary group ${\rm U}(V, \varphi)$ (Section ~\ref{groups}).
Note that the similitude factor $c$    is uniquely determined by $g$. 
We write ${\rm sim}(g)\coloneqq {\rm sim}((g, c))=c$ by abuse of notation.  

%, by 
%\cite[Proposition 4.4]{Shimura0}.
Let $E = \Q_{\ell} \oplus  \Q_{\ell}$. 
Let $\{e_i\}$ be a basis of $V$ over $E$ and write $\varphi(e_i, e_j)=(a_{ij}, b_{ij}) \in E$. 
Then  $a_{ij}=b_{ji}$.  The matrix  $\Phi_1 \coloneqq (a_{ij})$ is invertible since $\varphi$ is  non-degenerate.
We have that 
\begin{equation}\label{eq:sp}
    G(\Q_{\ell}) \simeq \{(c, A, B) \in \Q_{\ell}^{\times} \times  \GL_n(\Q_{\ell})^2  \mid  B^{t} \Phi_1  A=c \cdot  \Phi_1\} 
\simeq \Q_{\ell}^{\times} \times 
 \GL_n(\Q_{\ell})
 \end{equation}
 where the second morphism is given by  $(c, A, B)\mapsto (c, A)$. 
Then the  similitude character is identified with the first  projection 
and hence it  maps $G(\Q_{\ell})$ onto $\Q_{\ell}^{\times}$. 
If $E/\Q_{\ell}$ is a quadratic field extension, 
then the similitude character maps $G(\Q_{\ell})$ onto $\bfN(E^{\times})$ or $\Q_{\ell}^{\times}$ according as $n$ is odd or even (see \eqref{eq:nu_ell}).

\subsubsection{}\label{sss:direct}
Here we assume $n=2$. We recall the  quaternion algebra associated to $(V, \varphi)$ which was 
 constructed by Shimura  \cite[Section 2]{Shimura0}.  
We fix a basis of $V$ and write $\Phi$ for its Gram matrix. We identify   $\End_{E}(V)$ with $\Mat_2(E)$. 
Let $\iota$ denote its 
 canonical involution,  given by 
$
\begin{pmatrix}
a & b 
\\
c & d 
\end{pmatrix}^{\iota} 
 =\begin{pmatrix}
d & -b
\\
-c & a
\end{pmatrix}$. 
We define a subring 
$B$ of $\End_{E}(V)$ by 
\begin{align}\label{B}
B   \coloneqq 
\{  
g \in \End_{E}(V)  \mid  
g^{\iota}
= 
\Phi^{-1} 
\cdot 
\bar{g}^t 
\cdot \Phi
\}.
\end{align}
We observe that each element $g$ of $B$ satisfies $\bar{g}^t \cdot \Phi \cdot g=\Phi \cdot g^{\iota} \cdot g=\det(g) \cdot \Phi$. 
By \cite[Propositions 2.6 and 2.8]{Shimura0}, the subring $B$ satisfies the following properties:
 \begin{itemize}
 \item 
 $B$ is a quaternion algebra over $\Q_{\ell}$; 
    \item $B^{\times}= 
   \{ g \in G(\Q_{\ell})  \mid \det(g)={\rm sim}(g)\}$ where we regard $G(\Q_{\ell})$ as a subset of $\End_{E}(V)$; 
   \item $B$ is a division algebra if and only if $(V, \varphi)$ is anisotropic.
   \end{itemize}
   \subsubsection{}
Let $\Lambda \subset V$ be a Hermitian  $O_E$-lattice, not necessarily unimodular for a moment. 
Let ${\rm K}={\rm K}_{\Lambda} \coloneqq {\rm Stab}_{G(\Q_{\ell})}\Lambda$  denote the stabilizer of $\Lambda$ in $G(\Q_{\ell})$. 
Then the similitude character induces  a homomorphism 
from ${\rm K}$ to $\Z_{\ell}^{\times}$. 
We compute its image, denoted by  $\Sim (\K)$.

 We fix a basis $\{e_i\}$ of  the lattice $\Lambda$ over $O_E$, inducing an isomorphism $\End_{O_E}(\Lambda) \simeq \Mat_n(O_E)$. 
Let $\Phi$ be the Gram matrix of $\{e_i\}$.  
Then we have an identification 
 \begin{align}\label{mat}
 {\rm K} \simeq \{ (g, c) \in \GL_{n}(O_{E}) \times \Z_{\ell}^{\times} \mid 
\bar{g}^t \cdot \Phi \cdot  g= c  \cdot  \Phi
 \}. 
\end{align} 

Let  $c \in 
 \mathbf N(O_{E}^{\times})$ and 
  let $u$ be an element  in $O_{E}^{\times}$ such that $\mathbf N(u)=c$. 
Then $(uI_n, c)\in \K$ where $I_n$ denotes the identity matrix. Hence we always have  inclusions  
\begin{equation}\label{eq:inc} 
 \mathbf N(O_{E}^{\times}) \subset {\rm sim}({\rm K}) \subset \Z_{\ell}^{\times}. 
\end{equation}
\begin{lemma}\label{multunram}
If $E/\Q_{\ell}$ is an unramified quadratic field extension or 
$E = \Q_{\ell} \oplus \Q_{\ell}$, 
then 
${\rm sim}({\rm K})=\Z_{\ell}^{\times}.$ 
\end{lemma}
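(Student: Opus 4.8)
The plan is to exhibit, in each of the two cases, enough elements of $\mathrm{K}$ with prescribed similitude factors to conclude that $\mathrm{sim}(\mathrm{K})$ exhausts $\Z_\ell^\times$. By the inclusion \eqref{eq:inc} it suffices to produce, for every unit $c\in\Z_\ell^\times$, some $(g,c)\in\mathrm{K}$ with $g\in\GL_n(O_E)$ satisfying $\bar g^t\,\Phi\,g=c\,\Phi$ for the Gram matrix $\Phi$ of a chosen $O_E$-basis of $\Lambda$.

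First I would reduce to a convenient normal form for $\Lambda$. Since $\Lambda$ is unimodular and $E/\Q_\ell$ is unramified (or split), by \cite[Section~7]{Jacobowitz} (resp.\ the obvious statement in the split case) we may choose an orthonormal basis, so that $\Phi=I_n$ and $\mathrm{K}\simeq\{(g,c)\in\GL_n(O_E)\times\Z_\ell^\times\mid \bar g^t g=c I_n\}$. Now the key observation is that for $n\geq 1$ it is enough to handle the rank-one block: given any $c\in\Z_\ell^\times$, if I can find $a\in O_E^\times$ with $\bar a a=c$, then $g=\mathrm{diag}(a,1,\dots,1)$ does not work directly (it gives $\bar g^t g=\mathrm{diag}(c,1,\dots,1)$, not $cI_n$), so instead I should look for a single $g$ scaling the whole form. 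The cleanest route: in the split case $E=\Q_\ell\oplus\Q_\ell$, via the identification \eqref{eq:sp} the similitude character is the first projection onto $\Q_\ell^\times$, and an element of $\mathrm{K}$ corresponds to $(c,A)$ with $A\in\GL_n(O_E)$ and the companion matrix $B$ also integral; taking $A=I_n$, $B=cI_n$ (integral since $c\in\Z_\ell^\times$) gives a valid element of $\mathrm{K}$ with similitude $c$, so $\mathrm{sim}(\mathrm{K})=\Z_\ell^\times$. In the unramified field case I would instead use that $\bfN:O_E^\times\to\Z_\ell^\times$ is \emph{surjective} when $E/\Q_\ell$ is unramified (the norm on units is surjective onto $O_F^\times$ for an unramified extension of local fields, since the residue-field norm is surjective and one lifts by Hensel along the $1$-units), so in fact $\bfN(O_E^\times)=\Z_\ell^\times$, and then the left inclusion in \eqref{eq:inc} already forces $\mathrm{sim}(\mathrm{K})=\Z_\ell^\times$.

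So the argument has two independent mechanisms: for an unramified quadratic field extension, surjectivity of the unit norm map makes \eqref{eq:inc} collapse; for the split algebra, the explicit description \eqref{eq:sp} of $G(\Q_\ell)$ and $\mathrm{K}$ lets one write down an element with any prescribed similitude factor. The one point requiring a little care — and the likely main obstacle — is the surjectivity of $\bfN_{E/\Q_\ell}$ on units in the unramified case: one must note that the norm induces a surjection on residue fields $\F_{\ell^2}^\times\to\F_\ell^\times$ and then promote this to $O_E^\times\to\Z_\ell^\times$ using that the norm on principal units $1+\mathfrak m_E\to 1+\ell\Z_\ell$ is surjective (again by a Hensel/successive-approximation argument, valid precisely because $E/\Q_\ell$ is unramified, including $\ell=2$ where $F=\Q_2$). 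Once that is in hand, both cases of the lemma follow immediately, and I would present the split case first since it is purely a matrix computation with no subtlety.
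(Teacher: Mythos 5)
Your proof is correct and takes essentially the same route as the paper: in the unramified case the surjectivity of the unit norm map $\bfN:O_E^{\times}\to\Z_{\ell}^{\times}$ combined with the inclusion \eqref{eq:inc} gives the result, and in the split case the explicit description as in \eqref{eq:sp}, where the similitude character is the first projection, produces an element of $\K$ with any prescribed unit similitude. (Your preliminary reduction to an orthonormal basis invokes unimodularity of $\Lambda$, which the lemma does not assume, but this is harmless since neither of your two final arguments actually uses that normalization.)
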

\begin{proof}
If $E/\Q_{\ell}$ is an unramified field extension, then 
$\Z_{\ell}^{\times} = \mathbf N(O_{E}^{\times})$. 
Hence  ${\rm sim}({\rm K})=\Z_{\ell}^{\times}$ 
by \eqref{eq:inc}. 
If $E = \Q_{\ell} \oplus  \Q_{\ell}$, then 
${\rm K}  
%\{(c,A, B) \in \Z_{\ell}^{\times} \times \GL_n(\Z_{\ell})^2 \mid B^t \cdot  \ell ^vI_n\cdot  A=c \cdot  \ell ^vI_n\}
\simeq \Z_{\ell}^{\times} \times  \GL_n(\Z_{\ell})$,  
similarly to \eqref{eq:sp}. 
The similitude character is identified with the first projection, and hence   $\Sim(\K)=\Z_{\ell}^{\times}$.  
\end{proof}
\begin{lemma}\label{mult}
Assume that $E/\Q_{\ell}$ is a  ramified quadratic field extension. 

{\rm (1)}  
If  $n$ is odd, then 
 $
{\rm sim}({\rm K})= 
 \mathbf N(O_{E}^{\times}).
$
 
 {\rm (2)} 
 If $n$ is even and 
  $\Lambda$ is unimodular,  then 
 $
 {\rm sim}({\rm K})= \Z_{\ell}^{\times}. 
$
\end{lemma}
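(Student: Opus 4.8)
The plan is to pin down $\Sim(\K)$ among the only two possibilities left open by \eqref{eq:inc}. Since $E/\Q_\ell$ is a ramified quadratic field extension, local class field theory gives $[\Q_\ell^\times : \bfN(E^\times)]=2$, and because the norm of a uniformizer of $O_E$ is a uniformizer of $\Z_\ell$, one also has $[\Z_\ell^\times : \bfN(O_E^\times)]=2$. Together with the inclusions $\bfN(O_E^\times)\subseteq \Sim(\K)\subseteq \Z_\ell^\times$ from \eqref{eq:inc}, it then suffices to decide which of $\bfN(O_E^\times)$ and $\Z_\ell^\times$ the group $\Sim(\K)$ is.

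For part (1) I would argue with determinants. Fix a basis of $\Lambda$ with Gram matrix $\Phi$ and use the description \eqref{mat}: an element of $\K$ is a pair $(g,c)$ with $g\in\GL_n(O_E)$, $c\in\Z_\ell^\times$ and $\bar g^t\Phi g=c\,\Phi$. Taking determinants and cancelling $\det\Phi\in E^\times$ gives $\bfN(\det g)=c^n$. Since $\det g\in O_E^\times$ we get $c^n\in\bfN(O_E^\times)$; as $n$ is odd and $\Z_\ell^\times/\bfN(O_E^\times)$ has order $2$, this forces $c\in\bfN(O_E^\times)$. Hence $\Sim(\K)\subseteq\bfN(O_E^\times)$, and equality holds by \eqref{eq:inc}. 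Note this step does not use unimodularity of $\Lambda$.

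For part (2) I would first reformulate membership: for $c\in\Z_\ell^\times$, one has $c\in\Sim(\K)$ if and only if there is $g\in\GL_n(O_E)$ with $\varphi(gx,gy)=c\,\varphi(x,y)$ for all $x,y\in\Lambda$, i.e.\ if and only if the Hermitian $O_E$-lattices $(\Lambda,\varphi)$ and $(\Lambda,c\varphi)$ are isomorphic. Now scaling the unimodular lattice $\Lambda$ by the unit $c$ yields again a unimodular lattice with the same norm ideal $n(\Lambda)$, and multiplies the determinant $d(\Lambda)\in O_F^\times/\bfN(O_E^\times)$ by $c^n$; since $n$ is even, $c^n=\bfN(c^{n/2})$ is a norm, so $d(\Lambda,c\varphi)=d(\Lambda)$. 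By Jacobowitz's classification of unimodular lattices over a ramified quadratic extension --- rank and determinant when $2\nmid q$ (\cite[Section~8]{Jacobowitz}), and rank, norm ideal and determinant when $2\mid q$ (\cite[Proposition~10.4]{Jacobowitz}) --- the lattices $(\Lambda,\varphi)$ and $(\Lambda,c\varphi)$ are isomorphic for every $c\in\Z_\ell^\times$. Therefore $\Sim(\K)=\Z_\ell^\times$.

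The main point to get right is the reformulation in part (2) and the verification that none of the classifying invariants changes under scaling by a unit: unimodularity and the norm ideal are immediate, and the determinant is preserved precisely because $n$ is even. A more computational alternative for (2) would be to invoke Lemma~\ref{splitting} to split off hyperbolic planes, scale each by $\begin{pmatrix}0&c\\1&0\end{pmatrix}$, and treat the bounded-rank remainder separately; but this still rests on Jacobowitz's results for the non-split remainder, so the invariant-theoretic argument above seems the most economical and handles the dyadic and non-dyadic cases of Section~\ref{lattice} uniformly.
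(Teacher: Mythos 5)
Your proposal is correct, and while part (1) coincides with the paper's argument (both rest on the identity $\bfN(\det g)=\Sim(g)^n$ for $g\in\K$; the paper phrases the conclusion via $\bfN\bigl(\det(g)\cdot\Sim(g)^{\frac{1-n}{2}}\bigr)=\Sim(g)$ rather than via the index $[\Z_\ell^\times:\bfN(O_E^\times)]=2$, but the content is the same), your part (2) takes a genuinely different route. The paper first splits $\Lambda$ into rank-two pieces via Lemma~\ref{splitting} and then exhibits explicit similitudes case by case: the matrix $\diag(c,1)$ for the hyperbolic plane $H$, explicit dyadic matrices (Table~\ref{table:4}) for the subnormal RP lattice not isomorphic to $H$, and, for the anisotropic binary lattice $(1)\oplus(-\alpha)$, an analysis of the order $O_\Lambda$ in Shimura's quaternion algebra, showing $\det(O_\Lambda^\times)=\Z_\ell^\times$ (via an unramified quadratic subfield when $\ell\neq 2$, and Table~\ref{table:2} when $\ell=2$). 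You instead reformulate $c\in\Sim(\K)$ as the isometry question $(\Lambda,\varphi)\simeq(\Lambda,c\varphi)$ and observe that unit scaling preserves unimodularity, the scale and norm ideals, and — precisely because $n$ is even, so $c^n=\bfN(c^{n/2})$ — the determinant class in $O_F^\times/\bfN(O_E^\times)$; the conclusion then follows from the completeness of these invariants for unimodular lattices over a ramified extension, which the paper itself records in Section~\ref{lattice} (rank and determinant when $2\nmid q$; rank, norm and determinant when $2\mid q$, by Jacobowitz). Your argument is shorter, avoids any case analysis, and treats the dyadic and non-dyadic cases uniformly, at the cost of leaning fully on Jacobowitz's classification; the paper's construction is more hands-on, producing explicit similitude elements and, in case (iii) with $\ell\neq2$, a structural argument independent of the binary classification. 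One cosmetic point: an isometry $(\Lambda,c\varphi)\to(\Lambda,\varphi)$ gives an element of $\K$ with similitude factor $c$ (equivalently, your direction yields $c^{-1}$), but since $c$ ranges over the group $\Z_\ell^\times$ this does not affect the conclusion.
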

 \begin{proof}
 (1)   
As in (\ref{mat}), any element $g\in {\rm K}$ satisfies that ${\bf N}(\det(g))={\rm sim}(g)^n$, and equivalently $\bfN(\det(g) \cdot {\rm sim}(g)^{\frac{1-n}{2}})={\rm sim}(g).$ 
Thus we have ${\rm sim}({\rm K}) \subset \mathbf N(O_E^{\times})$ and hence ${\rm sim}({\rm K})=\mathbf N(O_E^{\times})$ by \eqref{eq:inc}.

(2) 
Let $\Lambda$ be a unimodular  lattice  and let $\Phi$ be the Gram matrix of a basis of $\Lambda$ over $O_E$. 
We show that for any element $c\in \Z_{\ell}^{\times}$ there exists $g \in \GL_{n}(O_{E})$ such that $\bar{g}^t \cdot \Phi \cdot  g= c \cdot  \Phi$.
We may assume that $\Lambda$ is  of rank two:  
In fact, the lattice $\Lambda$ has a splitting  $\Lambda=\bigoplus_{1 \leq i\leq n/2}\Lambda_i$ for some lattices $\Lambda_i$ of rank two as in  Lemma \ref{splitting}. 
Let  $\{e_1^i, e_2^i\}$  be a basis of $\Lambda_i$ over $O_E$ with Gram matrix $\Phi_i$ for $1 \leq i \leq n/2$. 
Then $\{(e_1^i, e_2^i)_{1 \leq i \leq n/2}\}$ is a basis of   $\Lambda$ and has Gram matrix $\Phi=\diag(\Phi_1, \ldots, \Phi_{n/2})$. 
Suppose that for an element $c \in \Z_{\ell}^{\times}$  there exist matrices $g_i \in \GL_{2}(O_{E})$ such that $
\bar{g_i}^t \cdot \Phi_i \cdot  g_i=c  \cdot  \Phi_i$ for all $i$.  
 Then the matrix  $g= \diag(g_1, \ldots, g_{n/2})$ satisfies $\bar{g}^t \cdot \Phi \cdot  g= c \cdot  \Phi$,  as desired.  
By Lemma \ref{splitting}, we only need to consider  three cases: (i) The case $\Lambda=H$; (ii) The case $\ell=2$, $\Lambda$ is subnormal in RP, and $\Lambda \not\simeq H$; (iii) The case
$\Lambda=(1) \oplus (-\alpha)$ for an $\alpha \in O_F^{\times}$. 
 
(i) {\it{The case}} $\Lambda=H=\begin{pmatrix}
 & 1 
\\ 
1 & 
\end{pmatrix}$. 
In this case, 
 for any $c \in \Z_{\ell}^{\times}$, the matrix 
$g= \begin{pmatrix}
c &  
\\ 
& 1
\end{pmatrix}$ satisfies the desired property.

(ii) {\it The case $\ell=2$, $\Lambda$ is subnormal in RP,  and $\Lambda \not\simeq H$}. 
Before discussing the case $\ell=2$, we recall some facts on a ramified quadratic  field extension $E/\Q_2$. We have that  $E=\Q_2(\sqrt{\theta})$ where  $\theta=3, 7, 2,6, 10$, or $14$. 
We consider the  inclusions 
 \[1+8\Z_{2}=(\Z_2^{\times})^{2} \subset \mathbf N(O_{E}^{\times}) \subset 
  {\rm sim}({\rm K}) \subset \Z_{2}^{\times}.\]
The quotient group $
\Z_2^{\times}/(\Z_2^{\times})^{2}$ is  identified with $(\Z/8\Z)^{\times}\simeq (\Z/2\Z)^{\oplus 2}$. Further, 
the subgroup $\mathbf N(O_{E}^{\times})/(\Z_2^{\times})^{2} \subset \Z_2^{\times}/(\Z_2^{\times})^{2}$ is of order two.  In the second column in Table \ref{table:2}, we see an example of  an element $a \in O_{E}^{\times}$ such that  $\mathbf N(a)\mod 8$ generates this  subgroup.  
To prove  ${\rm sim}({\rm K)}=\Z_2^{\times}$,  
it suffices to  find an element $g \in{\rm K}$  such  that ${\rm sim}(g) \not\equiv \bfN(a) \pmod 8$. 

Now let $\Lambda$ be as above. 
By Lemma \ref{subnormal} and Example~\ref{eg:subRP}, 
 we may choose a basis of $\Lambda$ so that $\Phi=\begin{pmatrix}
2 & 1
\\
1 & 2
\end{pmatrix}$.  
 Table~\ref{table:4} shows  explicit examples of elements  $g \in \GL_2(O_E)$  such that $\bar{g}^t \cdot \Phi \cdot  g= c \cdot  \Phi$ with  $c=\Sim g  \not\equiv \bfN(a) \pmod 8$. 

\begin{table}[hbtp]
  \caption{$\ell=2$, $\Lambda$ : subnormal, $E=\Q_2(\sqrt{\theta})$ :  RP, $\Lambda\not \simeq  H$}
  \label{table:4}
  \renewcommand{\arraystretch}{1.2}
  \centering
  \begin{tabular}{|c|c|c|c|}
    \hline
  $\theta$   & $g \in \GL_2(\Z_{2})$   &  ${\rm sim}(g)$ & ${\rm sim}(g) \mod 8$  \\ 
    \hline  
     $2$ & 
     $\begin{pmatrix}
2 & 3-\sqrt{2}
 \\ 
 1+ \sqrt{2} & 
 -2+\sqrt{2}
\end{pmatrix}$ & $5$ & $5$  \\ 
 \hline 
    $10$  & $\begin{pmatrix}
3- \sqrt{10} & 
2
 \\
 -2+ \sqrt{10} & 
 1+\sqrt{10}
\end{pmatrix}$ & $-3$ & $5$ 
    \\ 
\hline
    $6$ & $\begin{pmatrix}
-1- \sqrt{6} & 
1
 \\
 -1+ \sqrt{6} & 
 -2+\sqrt{6}
\end{pmatrix}$ & $-3$ & $5$  
    \\ 
    \hline 
    $14$ & $\begin{pmatrix}
-4- \sqrt{14} & 
-1
 \\
 1+ \sqrt{14} & 
 -3+\sqrt{14}
\end{pmatrix}$ & $-1$ & $7$  \\
    \hline
  \end{tabular}
\end{table}

(iii) {\it{The case 
$\Lambda =(1) \oplus (-\alpha)$ 
for an element $\alpha \in \Z_{\ell}^{\times}$}}. 
In this case, $\Lambda$ is  normal.  
 If $\alpha \in \bfN(O_{E}^{\times})$, then $d(\Lambda)=-1$ as elements of $\Z_{\ell}^{\times}/\bfN(O_{E}^{\times})$ and hence 
 $\Lambda \simeq H$. 
 Therefore we may assume that 
 $\alpha \notin \bfN(O_{E}^{\times})$. 
Then $V$ is  anisotropic by \cite[(3.1)]{Jacobowitz}. 
Let $B$ be the quaternion division algebra over $\Q_{\ell}$ associated with the space $V=\Lambda \otimes_{\Z_{\ell}} \Q_{\ell}$, as in \eqref{B}.  
A computation shows 
\begin{align}\label{direct} 
B  & = 
 \left\lbrace  \begin{pmatrix}
a & \alpha  \bar{c}
\\
c & \bar{a} 
\end{pmatrix} \in \Mat_2(E) 
  \mid 
 a, c \in E 
\right\rbrace.
 \end{align} 
 We put $O_{\Lambda} \coloneqq 
 \{ 
g \in B \mid  g \cdot \Lambda  \subset \Lambda 
\}=B \cap \Mat_2(O_E)$.  
Then $O_{\Lambda}$ is a $\Z_{\ell}$-order of $B$.  
Further, the second property of $B$  after \eqref{B} implies that  
%\begin{align}\label{detquat}
$\det (O_{\Lambda}^{\times})\subset {\rm sim}({\rm K})( \subset \Z_{\ell}^{\times}).$ 
Hence it suffices to show that $\Z_{\ell}^{\times} \subset 
\det
(O_{\Lambda}^{\times})$. 

First assume  that $\ell \neq 2$. 
 %Let $O_B$ denote the unique maximal $\Z_{\ell}$-order of $B$. 
% Then   $O_{\Lambda} \subset O_B$ and  $\Z_{\ell}^{\times}\subset \det(O_B^{\times})$. 
% Therefore it suffices to show that $O_{\Lambda}=O_B$. 
 %Indeed, 
% Its determinant group  
%$\det(O_{B}^\times)$  contains $\Z_{\ell}^{\times}$. 
%If $O_{\Lambda}=O_B$, then the  inclusions in  (\ref{detquat}) imply ${\rm sim}({\rm K})=\Z_{\ell}^{\times}$, as desired. 
%Hence  
 Let $E \oplus E z$ be a two-dimensional 
 $E$-vector space,      equipped with a structure of a quaternion algebra over $\Q_{\ell}$  defined  by the relations 
\begin{align*} 
z^2  =\alpha, \ za=\bar{a}z, \ a  
\in E. 
\end{align*} 
Then there is an isomorphism of quaternion algebras
\[E \oplus Ez  \xrightarrow{\sim} B, \quad  
a+ bz \mapsto \begin{pmatrix}
a & \alpha  b
\\
\bar{b} & \bar{a}
\end{pmatrix}, \] 
which identifies the $\Z_{\ell}$-order $O_E \oplus O_E z$ with $O_{\Lambda}$. 
 Let $\pi$ be a  uniformizer of  $O_E$ such that $\bar{\pi}=-\pi.$ 
 Then  
 $\pi  O_{\Lambda}$ is  a two-sided ideal of $O_{\Lambda}$. 
 Further, we have an  isomorphism of $\F_{\ell}$-algebras  
\[O_{\Lambda}/\pi O_{\Lambda} \simeq  \F_{\ell}[T]/(T^2- \tilde\alpha),\]
where we write $\tilde{\alpha}=\alpha \mod \ell \in \F_{\ell}$. 
 This shows that $O_{\Lambda}/\pi O_{\Lambda}$ is a separable $\F_{\ell}$-algebra since $\ell \neq 2$. 
% Let $\rad O_{\Lambda}$ denote 
Note that $(\pi  O_{\Lambda})^{2} \subset \ell O_{\Lambda}$, and hence $\pi  O_{\Lambda}$ is contained in  the Jacobson radical of $O_{\Lambda}$, as in   \cite[Exercise 39.1]{Reiner}. 
%Furthermore, $O_{\Lambda}$ is complete in the $\pi$-adic topology. 
 Suppose   $O_{\Lambda}/\pi O_{\Lambda}
 \simeq \F_{\ell} \times \F_{\ell}$.  
 Then 
 its idempotents have lifts in $O_{\Lambda}$ by 
 \cite[Theorem 6.7]{CR}, and this 
 contradicts to the fact that $B$ is a division algebra. 
It follows that $O_{\Lambda}/\pi O_{\Lambda}$ is  a quadratic field extension of $\F_{\ell}$, generated by $z \mod \pi$.
Therefore $\Q_{\ell}(z)$ is an unramified quadratic field extension of  $\Q_{\ell}$, with the  ring of integers $\Z_{\ell}[z]$. 
Hence we have  $\Z_{\ell}^{\times} =\mathbf N_{\Q_{\ell}(z)/\Q_{\ell}}(\Z_{\ell}[z]^{\times}) \subset \det (O_{\Lambda}^{\times})$, as desired. 
% $\pi O_{\Lambda}=\rad  (O_{\Lambda})$. 
%The inclusion $O_{\Lambda} \subset O_B$ induces a homomorphism of $\F_{\ell}$-algebras 
 %\[O_{\Lambda}/\pi O_{\Lambda}  \to O_B/ \mathfrak P,\] 
%where $\mathfrak P$ denotes the maximal ideal of $O_{\Lambda}$.
%It is an isomorphism since $O_B/ \mathfrak P$ is isomorphic to the unramified quadratic extension over $\F_{\ell}$. 
%From Nakayama's lemma \cite[5.7]{CR} it follows  that  $O_B=O_{\Lambda}$, as desired. 

Next we assume that $\ell= 2$ and $\alpha \notin \mathbf N (O_{E})^{\times}$. 
By equality  \eqref{direct}, the  group $\det(O_{\Lambda}^{\times})$ consists of elements of the form $\mathbf N(a) - \alpha \mathbf N(c)$ for some $a, c \in O_E$. 
   Table \ref{table:2} shows  some examples of  $a \in O_{E}^{\times}$ and $c \in O_{E}$  such that  $\bfN(a)$ and $\mathbf N(a) - \alpha \mathbf N(c)  \mod 8$ generate the multiplicative group $(\Z/8\Z)^{\times}$. 
 The argument in case (ii) thus  implies $\det(O_{\Lambda}^{\times})=\Z_2^{\times}$.

 \begin{table}[hbtp]
  \caption{$E=\Q_2(\sqrt{\theta})$, $\Lambda=(1) \oplus (-\alpha)$, $\alpha\notin \mathbf N(O_{E}^{\times})$}
  \label{table:2}
  \renewcommand{\arraystretch}{1.2} 
  \centering
  \begin{tabular}{|c|c|c|c|c|c|c|}
    \hline
  $\theta$   & $a \in O_{E}^{\times}$ &  $\mathbf N(a) \ (8)$ &   $c \in O_{E}$  & $\bfN (c) \ (8)$ & $\alpha \  (8)$ & $\mathbf N(a)-
 \alpha \mathbf N(c) \ (8)$    
    \\ 
    \hline 
    $3$  & $1+2\sqrt{3}$ & 
    $5$ & $1+\sqrt{3}$ & $6$ & $3, 7$  &  $3$ 
    \\ 
    \hline
    $7$ & $1+2\sqrt{7}$ & 
    $5$ &  $1+\sqrt{7}$ 
    & $2$  
    & $3, 7$  &  $7$ 
    \\ 
    \hline
    $2, 10$ & $1+\sqrt{\theta}$ &  
    $7$ &   $2$ 
    & $4$  &$3, 5$  &  $3$ 
    \\  
    \hline
    $6, 14$ & $1+\sqrt{\theta}$ & $3$  & $2$ & $4$   & $5, 7$ & $7$ 
      \\
    \hline
  \end{tabular}
\end{table}
\end{proof}

\subsection{Unitary Shimura varieties}
\label{ss:Sh}
%\subsubsection{}
%We recall the definition of unitary Shimura varieties. 
\begin{defn}
A \emph{unitary PEL datum} is a $5$-tuple $\mathscr D_{\Q}=(E, \bar{ \cdot}, V, \< \, , \, \>, h_0)$ where 
\begin{itemize}
    \item 
 $E$ is an  imaginary quadratic extension of $\Q$; 
\item  $b \mapsto \bar{b}$ for $b \in E$ is the non-trivial automorphism of $E/\Q$; 
\item $V$ is an $E$-vector space of dimension $n>0$ with a  non-degenerate alternating 
  $\Q$-bilinear  form  $\< \, , \, \> :V\times V\to \Q$  such that
  \[\< bx,y\>=\< x, \bar{b} y\>\]
  for all $x,y\in V$ and $b\in E$; 
\item $h_0:\C \to \End_{E \otimes \R}(V_\R)$ is an $\R$-linear algebra
  homomorphism, % where $V_\R:=V\otimes_\Q \R$,  
  such that 
\[ \< h_0(z)x, h_0(z)y
\>=\<x,y\> \quad \text{for all\ }  z \in \C, \   x,y \in
 V_\R=V\otimes\R, \]
%for any $x, y\in V_\R=V\otimes\R$, one has 
%  $\phi(h_0(i)x, h_0(i)y)=\psi(x,y)$ and 
and that the pairing  $(x,y)\mapsto \<x,h_0(\sqrt{-1})y\>$ is symmetric and  
definite (positive or negative) on $V_\R$. 
\end{itemize}
\end{defn}
  Let ${\bf G}=\GU(V,\< \, , \, \>)$ be the $\Q$-group of $E$-linear
$\< \, , \, \>$-similitudes on $V$: For every commutative $\Q$-algebra $R$, the
group of its $R$-values is given by 
\begin{equation}
  {\bf G}(R)=\{(g, c) \in \End_E(V_R)^{\times} \times R^{\times}  \mid   \< gx,gy\>=c \cdot \< x,y\> \ {\text{for all}} \ x,y\in
  V_R \}
\end{equation}
where $V_R=V\otimes_\Q R$.

We may write $E=\Q(\sqrt{d_E})$, where $d_E$ is the discriminant of $E$. 
%Fix an embedding of $E$ into $\C$, we require that the imaginary part of $\sqrt{d_E} \in \C$ is positive.  
%fix an embedding of $E=\Q(\sqrt{d_E})$ in $\C$, where $d_E$ is the discriminant of $E$ and  the imaginary part of $\sqrt{d_E} \in \C$ is positive. 
Then there exists (see \cite[Lemma A.7]{Milne})  a unique non-degenerate Hermitian form $\varphi  : V \times V \to E$ such that 
$\< \, , \, \>$ can be written as 
\begin{equation}\label{psi}
    \< x, y\>=\Tr_{E/\Q}\Big(\big(\sqrt{d_E}\big)^{-1} \cdot  \varphi(x,  y)\Big). 
    \end{equation}
%(\cite[Lemma 4.6]{Deligne81}). 
Further, the algebraic $\Q$-group $\mathbf G$ is  isomorphic % as an algebraic $\Q$-group 
to the unitary similitude group 
$\GU(V, \varphi)$ as defined in \eqref{def:GU}. 
In particular, 
one has ${\bf G}_{\R}\simeq \GU(r,s)$ over $\R$, where $(r,s)$ is the signature of $\varphi_{\R}$. 
% and $\GU(r,s)$ is the  unitary similitude group with  signature $(r,s)$.
If we replace $\sqrt{d_E}$ by $-\sqrt{d_E}$ in \eqref{psi}, then ${\bf G}_{\R}\simeq \GU(s,r)$; however, we have $\GU(r,s)=\GU(s,r)$.
 
We define a homomorphism $h : \Res_{\C/\R}\bbG_{{\rm m}, \C} \to \bfG_{\R}$ by restricting $h_0$ to $\C^{\times}$. 
Composing $h({\C})$ with the map $\C^{\times} \to \C^{\times} \times \C^{\times}$ where $z \mapsto (z, 1)$ then gives $\mu_h : 
\C^{\times} \to \bfG(\C) \simeq \GL_{n}(\C) \times \C^{\times}$.  
Up to conjugation, 
we have $\mu_h=(\diag(z^r, 1^s),z)$ or $(\diag(z^s, 1^r),z)$. Let $\mathbf E$ be the reflex field associated with $\mathscr D_{\Q}$, that is, the field of definition of the  conjugacy class of  $\mu_h$. 
Then 
 either $\bfE=\Q$ when $r=s$ or $\bfE=E$ when $r \neq s$.

%Let $G_1:=\ker c=U_B(V,h)$ be the unitary
%group and we have an exact sequence
%\begin{equation} \label{eq:P.9}
%  \begin{CD}
%  1 @>>> G_1 @>>> G @>c>> \Gm @>>> 1.     
%  \end{CD}
%\end{equation}
Let $X$ be the ${\bf G}(\R)$-conjugacy class of $h$. %Since $G$ is not
%connected, one may ignore the question whether $(G,X)$ is a Shimura
%datum and define $\Sh_{U}(G,X)_\C$ as \eqref{P.1} 
Then $({\bf G},X)$ is  a Shimura datum\footnote{If $rs=0$, it does not satisfy the axiom  \cite[(2.1.1.3)]{Deligne}, however, we still can consider the Shimura variety associated to it using \eqref{eq:ShC}.}. 
For any open compact
subgroup ${{\rm K}}\subset  {\bf G}(\A_f)$, we write $\Sh_{{{\rm K}}}({\bf G},X)$ for the Shimura variety of level
${{\rm K}}$ associated to $({\bf G},X)$. 
Then  $\Sh_{{{\rm K}}}({\bf G},X)$  is a smooth quasi-projective variety of dimension $rs$ over $\mathbf E$. 
The set of complex points of $\Sh_{{{\rm K}}}({\bf G},X)$ is identified, 
as a complex manifold, with
\begin{equation}\label{eq:ShC}
\Sh_{\rm K}(\mathbf G, X)({\C})  = 
{\bf G}(\Q)\backslash X \times {\bf G}(\A_f)/{\rm K}.     
\end{equation}
%Clearly, result \eqref{P.2} for
%$\pi_0(\Sh_{U})$ remains true.  

%Let $\ker^1(\Q, \mathbf G)$ denote the kernel of
%the local-global map $H^1(\Q, \mathbf G)\to \prod_{p\le \infty} H^1(\Qp, \mathbf G)$. 

% for some non-negative integers $r$ and $s$. 

%Note that we have $\GU(r,s)=\GU(s,r)$. 
\begin{defn}\label{P.6}  
   An \emph{integral  unitary PEL  datum}  is a tuple 
    $\mathscr D=(E, \bar{\cdot},O_E, V,\< \, , \, \>, \Lambda, h_0)$, where 
    \begin{itemize}
    \item $(E,\bar{\cdot},V,\< \, , \, \>, h_0)$ is a unitary PEL datum;
    \item $O_E$ is the ring of integers of $E$; 
    \item $\Lambda$ is a full $O_E$-lattice in $V$ on which $\< \, , \, \>$ takes value in $\Z$. 
    \end{itemize}
\end{defn}
Equality \eqref{psi} implies the following. 
 \begin{lemma}\label{selfdual}
Let $\Lambda^{\vee, \< \, , \, \>}=\{ x \in V \mid \< x, \Lambda\> \subset \Z\}$ be the dual lattice of $\Lambda$ with respect to $\< \, , \, \>$, and let $\Lambda^{\vee, \varphi}$ be the dual lattice with respect to $\varphi$ as defined in Section ~\ref{sec:hermitian}. 
Then $\Lambda^{\vee, \< \, , \, \>}=\Lambda^{\vee, \varphi}$ as $O_E$-submodules of $V$. 
In particular, $(\Lambda, \< \, , \, \>)$ is self-dual if and only if $(\Lambda, \varphi)$ is self-dual (unimodular). 
 \end{lemma}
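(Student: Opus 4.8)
The plan is to unwind the definitions of the two dual lattices and relate them through the trace form identity \eqref{psi}. Recall that $\Lambda^{\vee, \< \, , \, \>} = \{x \in V \mid \< x, \Lambda \> \subset \Z\}$ and $\Lambda^{\vee, \varphi} = \{x \in V \mid \varphi(x, \Lambda) \subset O_E\}$. Both are $O_E$-submodules of $V$: for $\Lambda^{\vee, \varphi}$ this is immediate from $O_E$-sesquilinearity of $\varphi$, while for $\Lambda^{\vee, \< \, , \, \>}$ one uses the relation $\< bx, y\> = \< x, \bar b y \>$ together with the fact that $\Lambda$ is an $O_E$-module, so that $\< x, \Lambda \> \subset \Z$ forces $\< bx, \Lambda \> = \< x, \bar b \Lambda \> = \< x, \Lambda \> \subset \Z$ for every $b \in O_E$.

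The core of the argument is the chain of equivalences, for a fixed $x \in V$:
\begin{align*}
x \in \Lambda^{\vee, \varphi}
&\iff \varphi(x, y) \in O_E \text{ for all } y \in \Lambda \\
&\iff \Tr_{E/\Q}\!\Big(\big(\sqrt{d_E}\big)^{-1} \cdot b \cdot \varphi(x, y)\Big) \in \Z \text{ for all } y \in \Lambda,\ b \in O_E \\
&\iff \< bx, y \> \in \Z \text{ for all } y \in \Lambda,\ b \in O_E \\
&\iff x \in \Lambda^{\vee, \< \, , \, \>},
\end{align*}
where the last step again uses that $\Lambda$ (equivalently $bx$ ranging over $O_E x$) is stable under $O_E$ and the $b = 1$ case already gives one inclusion. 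The only nontrivial implication is the second "$\iff$": an element $a \in E$ lies in $O_E$ if and only if $\Tr_{E/\Q}\big((\sqrt{d_E})^{-1} b a\big) \in \Z$ for all $b \in O_E$. This is exactly the statement that $O_E$ is self-dual for the trace pairing $(a, b) \mapsto \Tr_{E/\Q}\big((\sqrt{d_E})^{-1} a b\big)$, i.e. that $\mathfrak d_{E/\Q}^{-1} = (\sqrt{d_E})^{-1} O_E$ is the inverse different; here one substitutes $a = \varphi(x,y)$ and lets $b$ run over $O_E$.

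The main obstacle — really the only point requiring care — is this self-duality of $O_E$ under the twisted trace form, namely the identity $\mathfrak{d}_{E/\Q} = (\sqrt{d_E}) O_E$ for the imaginary quadratic field $E = \Q(\sqrt{d_E})$ with $d_E$ the (fundamental) discriminant. This is a standard fact in algebraic number theory, and since $(E, \bar\cdot, V, \< \, , \, \>, h_0)$ is assumed to be a unitary PEL datum with $E = \Q(\sqrt{d_E})$ as written just before \eqref{psi}, it may be invoked directly. Once this is granted, the displayed chain of equivalences gives $\Lambda^{\vee, \< \, , \, \>} = \Lambda^{\vee, \varphi}$ as subsets of $V$, and since both are $O_E$-submodules, the equality holds as $O_E$-submodules. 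The final assertion about self-duality is then immediate: $(\Lambda, \< \, , \, \>)$ is self-dual, i.e. $\Lambda = \Lambda^{\vee, \< \, , \, \>}$, exactly when $\Lambda = \Lambda^{\vee, \varphi}$, which by the definition in Section~\ref{sec:hermitian} is precisely the condition that $(\Lambda, \varphi)$ is unimodular.
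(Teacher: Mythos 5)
Your proof is correct and follows the route the paper intends: Lemma~\ref{selfdual} is stated as an immediate consequence of \eqref{psi}, and your argument simply fills in the details, with the one substantive input being the self-duality of $O_E$ under the twisted trace pairing, i.e.\ $\mathfrak{d}_{E/\Q}=(\sqrt{d_E})\,O_E$, which you identify and use correctly.
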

 %\begin{proof}
%The assertion follows from equality  \eqref{psi} and the fact that   the inverse of the different ideal of $E/\Q$ is generated by $(\sqrt{d_E})^{-1}$. 
 %\end{proof}
 Let us fix an 
 integral unitary PEL datum $\mathscr D=(E, \bar{\cdot}, O_E, V, \< \, , \, \>, \Lambda,h_0)$. 
This gives a model  $\GU(\Lambda, \< \, , \, \>)=\GU(\Lambda, \varphi)$ of $\bfG$ over $\Z$, which is again denoted by ${\bf G}$. 
 Its group of $R$-values for a commutative ring $R$ is  given by 
 \begin{equation*}
  \mathbf G(R)=\{(g, c) \in \End_{O_E \otimes R}(\Lambda_R)^{\times} \times R^{\times}  \mid   \varphi(gx,gy)=c \cdot \varphi(x,y) \ {\text{for all}} \ x,y\in
  \Lambda_R \}
\end{equation*}
where $\Lambda_R= \Lambda \otimes R$. 
% Let $\mathscr D=(B, *, V, O_B, \Lambda)$ be an integral PEL-datum. 
% Let $\mathcal A\to \mathbf{M}_{{U}^p(N)}$ be the corresponding universal abelian schemes over $\mathbf {M}_{{U}^p(N)}$. 
%By \cite[Lemma 1.2.5.9]{Lan}, 
%there exists an $O_B \otimes \mathbb Z_p$-module $\Lambda_0$ such that $\Lambda_0 \otimes \mathbb C \simeq V_0$ as $O_B \otimes_{\mathbb Z}\mathbb C$-modules. 
\subsubsection{}\label{tori}
Let ${\bf G}^{\der}=\SU(V, \varphi)$ be the derived subgroup of $\bfG_{\Q}$. 
Then ${\bf G}^{\der}$  is a simply connected semisimple algebraic group over $\Q$. 
Kneser's theorem 
 states that  the group $H^1(\Q_{\ell}, \bfG_{\Q_{\ell}}^{\der})$ is trivial for every finite prime $\ell$ \cite[Theorem 6.4, p.~284]{PR}.  Further, the Hasse principle holds true, that is,  the map $H^1(\Q, \bfG^{\der}) \to \prod_{\ell \leq \infty}H^1(\Q_{\ell}, \bfG^{\der})$ is injective 
 \cite[Theorem 6.6, p.~286]{PR}.

Let $D$  be the quotient  torus  $D\coloneqq \bfG_{\Q}/{\bf G}^{\der}$ over $\Q$, and let $\nu : \bfG_{\Q} \to D$ be the 
natural projection.  
 Kottwitz \cite[Section ~7]{Kottwitz} 
described $D$ and $\nu$ as follows. 
  Let $T^E$ denote the Weil restriction 
$\Res_{E/\Q} 
\mathbb G_{{\rm m}, E}$, and let $T^{E,1}$ denote the kernel of the norm homomorphism  $\bfN=\bfN_{E/\Q} : T^E \to \mathbb G_{{\rm m},\Q}$.  
Then,  
 for any $\Q$-algebra $R$ we have an isomorphism 
\begin{align}
\label{def:D}
D(R) \simeq \{ (x, c) \in T^E(R)  \times R^{\times}  \mid 
\mathbf N(x)=c^n \}, 
\end{align}
under which  $\nu$ is identified with the product  $\det \times {\rm sim},  (g, c)\mapsto (\det g, c)$. 
Furthermore  we have isomorphisms 
\begin{align}\label{torus}
f : D  \xrightarrow{\sim} 
\begin{cases}
T^E 
\\ 
T^{E,1} \times \bbG_{{\rm m}, \Q}
\end{cases}
\quad 
(x, c) \mapsto 
\begin{cases}
x c^{\frac{1-n}{2}} & {\text{if $n$ is odd}}; 
\\ 
(xc^{-n/2}, c) & 
{\text{if $n$ is even}}.
 \end{cases}
\end{align}
The similitude character $\Sim: \bfG_{\Q} \to \bbG_{{\rm m}, \Q}$  is  equal to $\bfN \circ  f \circ \nu$ if $n$ is odd,  and ${\rm pr}_2 \circ f \circ \nu$ if $n$ is even. 

Let $D(\R)^0$ be the identity component of $D(\R)$. 
 From \eqref{torus} it follows  that $D(\R)^0\simeq \C^{\times}$ if $n$ is odd and $D(\R)^0 \simeq S^1 \times \R_{>0}$ if $n$ is even, where $S^1$ denotes the  unit circle. 
% Let $D(\Q)_{\infty}$ be  the intersection $$ in $D(\R)$. 
We write $E^1 \coloneqq T^{E, 1}(\Q)= \{x \in E^{\times} \mid \bfN(x)=1\}$. Then 
\begin{align}\label{D_infty}
    D(\Q)_{\infty} \coloneqq D(\Q) \cap D(\R)^0=
    \begin{cases*}
    E^{\times} & if $n$ is odd; 
    \\ 
    E^{1} \times \Q_{>0} & if $n$ is even. 
    \end{cases*}
\end{align}

For a prime $\ell$, we write  $E_{\ell}= E \otimes_{\Q} \Q_{\ell}$. 
By Kneser's theorem, we have that 
\begin{align}\label{eq:nu_ell}
    \nu(\bfG(\Q_{\ell}))=D(\Q_{\ell})=
    \begin{cases*}
    E_{\ell}^{\times} 
    \\
    E_{\ell}^{1} \times \Q_{\ell}^{\times},  
    \end{cases*}
    \quad 
    \Sim(\bfG(\Q_{\ell}))=\begin{cases*}
    \bfN(E_{\ell}^{\times}) & if $n$ is odd; 
    \\ 
    \Q_{\ell}^{\times} & if $n$ is even. 
    \end{cases*} 
\end{align}

 For any algebraic torus $T$ over $\Q$, let $T(\Z_{\ell})$ denote the unique maximal open compact subgroup of $T(\Q_{\ell})$. We write 
  %$\bfN \coloneqq \bfN_{E_{\ell}/\Q_{\ell}}$ and  
  $O^1_{E_{\ell}}  \coloneqq T^{E, 1}(\Z_{\ell})= \{x \in O_{E_{\ell}}^{\times} \mid \bfN(x)=1\}$. 
  For $\ell=2$, 
we also define a subgroup $O_{E_{2}}^0$ of $O^{\times}_{E_{2}}$ by 
$O_{E_{2}}^0  \coloneqq \{ u \bar{u}^{-1}  \mid  u \in O_{E_2}^{\times} 
\}$. Then $O_{E_2}^0 \subset O_{E_2}^1$. 
%\subset \{ u \in O_{E_{2}}^{\times}  \mid  \bfN_{E_{2}/\Q_{2}}(u)=1 \}= T^{E,1}(\Z_{2}).\]
When $E_2/\Q_2$ is ramified,  
 Hilbert's Theorem 90 implies that  $[O^1_{E_{2}}:O^0_{E_2}]=2$, as in \cite[Remark 3.4]{Kirschmer}. 

We write $\Lambda_{\ell} \coloneqq \Lambda \otimes \Z_{\ell}$.  
 By 
 Kirschmer \cite[Theorem 3.7]{Kirschmer}, the subgroup $\det ({\bf G}^1(\Z_{\ell})) \subset O^1_{E_{\ell}}$ is   described as 
\begin{align}\label{det}
\det({\bf G}^{1}(\Z_\ell))=\begin{cases}O_{E_{2}}^0  &  {\text{if}} \ \ell=2,  \  E_2/ \Q_2 \text{\ is RU}, 
 \ \Lambda_2 \simeq H^{n/2}; 
\\ 
O^1_{E_{\ell}} &   {\text{otherwise}}, 
\end{cases}
\end{align}
where $H=\begin{pmatrix}
   & 1
 \\ 
 1 & 
\end{pmatrix}$  denotes  a rank-two $O_{E_2}$-lattice. 
\begin{lemma}\label{D=nu}
Let $\ell$ be a prime. 
If $n$ is odd, or $n$ is even and $(\Lambda, \varphi)$ is unimodular, then
\[ D(\Z_{\ell}) =\nu(\bfG(\Z_{\ell})), \]
unless $n$ is even, $E_2/\Q_2$ is RU,  and  $\Lambda_2 \simeq H^{n/2}$, in which case
\[ [D(\Z_2) : \nu(\bfG(\Z_2))]=2.\] 
\end{lemma}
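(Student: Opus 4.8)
The plan is to reduce the statement, via the explicit description of $D$ and $\nu$ recorded in Section~\ref{tori}, to the index $[O^1_{E_\ell}:\det(\bfG^1(\Z_\ell))]$ already given by \eqref{det}. The first step is to pin down $\Sim(\bfG(\Z_\ell))$. Here I would note that $\bfG(\Z_\ell)$ coincides with the stabilizer ${\rm K}_{\Lambda_\ell}$ of $\Lambda_\ell$ in $\GU(V\otimes_{\Q}\Q_\ell,\varphi)(\Q_\ell)$: the inclusion ``$\subset$'' is clear, and if $(g,c)$ stabilizes $\Lambda_\ell$ then $g\in\GL_{O_{E_\ell}}(\Lambda_\ell)$, so $\det g\in O_{E_\ell}^\times$ and $c^n=\bfN(\det g)\in\Z_\ell^\times$, forcing $c\in\Z_\ell^\times$. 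Hence Lemmas~\ref{multunram} and~\ref{mult} apply, and together with the equality $\bfN(O_{E_\ell}^\times)=\Z_\ell^\times$ in the unramified and split cases they give $\Sim(\bfG(\Z_\ell))=\bfN(O_{E_\ell}^\times)$ when $n$ is odd, and $\Sim(\bfG(\Z_\ell))=\Z_\ell^\times$ when $n$ is even, the latter invoking the unimodularity of $\Lambda$ through Lemma~\ref{mult}(2).

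Next I would transport the problem along the isomorphism of $\Q$-tori $f$ of \eqref{torus}, which identifies $D(\Z_\ell)$ with $O_{E_\ell}^\times$ (if $n$ is odd) and with $O^1_{E_\ell}\times\Z_\ell^\times$ (if $n$ is even), so that it suffices to compare the subgroup $H\coloneqq f(\nu(\bfG(\Z_\ell)))$ with the full group. Since $\nu=\det\times\Sim$, for $g$ in $\bfG^1(\Z_\ell)=\{g\in\bfG(\Z_\ell):\Sim g=1\}$ one reads off $f(\nu(g))=\det g$ (resp.\ $(\det g,1)$), so $H$ meets the norm-one subgroup $O^1_{E_\ell}$ (resp.\ $O^1_{E_\ell}\times\{1\}$) exactly in $\det(\bfG^1(\Z_\ell))$. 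On the other hand $\Sim=\bfN\circ f\circ\nu$ (resp.\ $\Sim$ equals the second projection composed with $f\circ\nu$), so by the previous step the image of $H$ under $\bfN$ (resp.\ under the second projection) is all of $\bfN(O_{E_\ell}^\times)$ (resp.\ $\Z_\ell^\times$); hence $H$ together with the norm-one subgroup generates $f(D(\Z_\ell))$. A routine second-isomorphism-theorem count then gives, uniformly in the parity of $n$,
\[ [D(\Z_\ell):\nu(\bfG(\Z_\ell))]=[O^1_{E_\ell}:\det(\bfG^1(\Z_\ell))]. \]

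Finally I would read this index off from \eqref{det}: when $n$ is odd the exceptional case $\Lambda_2\simeq H^{n/2}$ cannot occur, so $\det(\bfG^1(\Z_\ell))=O^1_{E_\ell}$ and the index is $1$; when $n$ is even it is $1$ unless $\ell=2$, $E_2/\Q_2$ is RU and $\Lambda_2\simeq H^{n/2}$, in which case $\det(\bfG^1(\Z_2))=O^0_{E_2}$ and $[O^1_{E_2}:O^0_{E_2}]=2$ by Hilbert's Theorem~90 (cf.\ \cite[Remark~3.4]{Kirschmer}), which yields the asserted dichotomy.

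The only step that requires genuine care is the computation of $\Sim(\bfG(\Z_\ell))$: one must verify that $\bfG(\Z_\ell)$ is indeed the lattice stabilizer, so that Lemmas~\ref{multunram} and~\ref{mult} can be invoked, and keep track of the fact that the unimodularity hypothesis is exactly what controls the even ramified case. Once $\Sim(\bfG(\Z_\ell))$ is known, the rest of the argument is a formal manipulation with the torus isomorphism $f$ and the description of $\det(\bfG^1(\Z_\ell))$.
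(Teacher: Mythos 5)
Your proposal is correct and follows essentially the same route as the paper: the paper compares the two short exact sequences $1\to\bfG^1(\Z_\ell)\to\bfG(\Z_\ell)\to\Sim(\bfG(\Z_\ell))\to 1$ and $1\to O^1_{E_\ell}\to D(\Z_\ell)\to{\rm pr}_2(D(\Z_\ell))\to 1$, uses Lemmas~\ref{multunram} and~\ref{mult} to see that the map on similitude factors is an isomorphism, and applies the snake lemma to get $[D(\Z_\ell):\nu(\bfG(\Z_\ell))]=[O^1_{E_\ell}:\det(\bfG^1(\Z_\ell))]$, finishing with \eqref{det}. Your explicit count via the isomorphism $f$ of \eqref{torus} is just the snake-lemma step done by hand, with the same key inputs, so the two arguments coincide in substance.
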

\begin{proof}
There is a commutative diagram 
\begin{align}\label{diagram}
 \begin{CD}
  1 @>>>
  {\bf G}^{1}(\Z_{\ell})   
  @>>> 
 {\bf G}(\Z_{\ell}) 
  @>{{\rm sim}}>> 
  {\rm sim}({\bf G}(\Z_{\ell})) 
  @>>> 1
  \\ 
  @. 
   @V{\det}VV 
 @V{\nu}VV
 @VVV
 @. 
  \\ 
  1 @>>> 
  O^1_{E_{\ell}}
  @>>>
  D(\Z_{\ell}) 
  @>{\rm{pr}_2}>>
 {\rm{pr}}_2(D(\Z_{\ell})) 
  @>>> 1
  \end{CD}
\end{align}
  where the horizontal sequences are exact. 
 By the description of $D$ in \eqref{torus},  the group  
${\rm {pr}}_2(D(\Z_{\ell}))$ equals  $
\mathbf N(O_{E_{\ell}}^{\times})$ or $
{\Z}_{\ell}^{\times}$ according as $n$ is odd or even.  
 It follows from   Lemmas \ref{multunram} and \ref{mult}  that the right vertical arrow is an isomorphism. 
By the snake lemma, we have 
\begin{equation} \label{D_ell}
     [D(\Z_{\ell}) : \nu\big({\bf G}(\Z_{\ell})\big)] =
[ O^1_{E_{\ell}}: \det ({\bf G}^{1}(\Z_{\ell}))]. 
\end{equation}
This and \eqref{det} imply the assertion.
\end{proof}
\subsection{The number of connected components} 
\subsubsection{}
Let $\K$ be any open compact subgroup of $\bfG(\A_f)$. 
Let  $\Sh_{\K}({\bf G}, X)_{\C}$ be the Shimura variety of level $\K$, 
 %By \cite{deligne?}, 
and  $\pi_0(\Sh_{\K}({\bf G}, X)_{\C})$ be the set of its connected components. 
Let $X^+$ be the connected component of the Hermitian symmetric domain $X$ containing  the base point $h_0$. 
We write  $\bfG(\R)_+$ for  the stabilizer of $X^+$ in $\bfG(\R)$, and write $\bfG(\Q)_+ \coloneqq \bfG(\Q) \cap \bfG(\R)_{+}$. 
As in \cite[2.1.3]{Deligne}, we have 
\[\pi_0(\Sh_{\K}({\bf G}, X)_{\C}) \simeq \bfG(\Q)\backslash 
\bfG(\A)/ \bfG(\R)_{+} {\K} \simeq  \bfG(\Q)_+  \backslash \bfG(\A_f)/\K. \] 
Kneser's theorem and the Hasse principle for $\bfG^{\der}$ imply  that  $\nu(\bfG(\Q)_+)=D(\Q)_{\infty}$, and hence 
$\nu$ induces  a  surjective map 
\begin{equation}\label{eq:comp_doublecoset}
\nu : 
\bfG(\Q)_+  \backslash \bfG(\A_f)/\K 
\to 
D(\Q)_{\infty}    \backslash D(\A_f)/\nu(\K).    
\end{equation}

Now we assume that $rs>0$. Then the group  $\bfG^{\rm der}(\R)$ is not compact. 
The strong approximation theorem (\cite{Kneser}, \cite[Theorem 7.12, p.~427]{PR})  therefore implies that  $\bfG^{\der}(\Q)$ is dense in $\bfG^{\der}(\A_f)$. 
%and hence the set $\bfG^{\der}(\Q)  \backslash \bfG^{\der}(\A_f)/
%g^{-1}\K g \cap \bfG^{\der}(\A_f)$  is a singleton for any element $g$ of $\bfG^{\der}(\A_f)$. 
It follows that the map in  \eqref{eq:comp_doublecoset} is a bijection. 
For more details, see  \cite[Section 5]{Milne}.  

%For a prime $\ell$, 
%we write  $\Lambda_{\ell} = \Lambda \otimes {\Q_{\ell}}$. 
%For any algebraic torus $T$ over $\Q$, 

\begin{thm}\label{connected}
Assume that $rs>0$. We write    $D(\widehat{\Z})\coloneqq \prod_{\ell : {\rm prime}}D(\Z_{\ell}) \subset D(\A_f)$. 

\noindent 
{\rm (1)} 
If $n$ is odd, or $n$ is even and  $\Lambda$ is unimodular, then  
\begin{align*}
\lvert \pi_0(\Sh_{{\bf G}(\widehat{\Z})}({\bf G}, X)_{\C}) \rvert
 = 
\begin{dcases}
h(E) \ &   {\text{if $n$ is odd}}; 
\\ 
2^{2-w} \cdot h(E) \ &    {\text{if $n$ is even}}, \  \Lambda_{2} \simeq H^{n/2},  \ d_E \equiv 4 \pmod 8, \ E \neq \Q(\sqrt{-1}); 
\\ 
2^{1-w} \cdot h(E)  \ & {\text{otherwise}},
\end{dcases}
\end{align*} 
where $h(E)$ denotes the class number of the field $E$, 
$w$ denotes the number of primes which are ramified in $E$. 

\noindent 
{\rm (2)}
For an integer $N \geq 3$,  let  $\K(N)$ be the kernel of the reduction  map  ${\bfG}(\widehat{\Z}) \to {\bfG}(\widehat{\Z}/N\widehat{\Z})$. 
 Then 
\begin{align*}
\lvert \pi_0(\Sh_{{\rm K}(N)}({\bf G}, X)_{\C}) \rvert
 = 
\begin{cases*}
[D(\widehat{\Z}) : \nu(\K(N))] \cdot \lvert \mu_E \rvert^{-1} \cdot  h(E) & if $n$ is odd; 
\\
[D(\widehat{\Z}) : \nu(\K(N))] \cdot \lvert \mu_E \rvert^{-1}   \cdot 2^{1-w} \cdot h(E) & if $n$ is even,
\end{cases*}
\end{align*} 
where $\mu_E$ denotes the group of roots of unity in $E$.
\end{thm}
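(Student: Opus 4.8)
My plan is to start from the identification recorded just before the statement: since $rs>0$, strong approximation for $\bfG^{\der}$ makes the map \eqref{eq:comp_doublecoset} a bijection, so
\[
\pi_0(\Sh_{\K}(\bfG,X)_{\C})\;\cong\;D(\Q)_\infty\backslash D(\A_f)/\nu(\K),
\]
and the whole question becomes counting double cosets of the \emph{abelian} group $D(\A_f)$. I would treat the two parts uniformly, keeping $\nu(\K)\subseteq D(\widehat{\Z})$ as a variable subgroup. Because $D$ is commutative, the tautological surjection onto the maximal-level count $D(\Q)_\infty\backslash D(\A_f)/D(\widehat{\Z})$ has all fibres of the same size $[D(\widehat{\Z}):D(\widehat{\Z})\cap D(\Q)_\infty\nu(\K)]$; and since an element of $D(\Q)_\infty$ whose finite part is integral everywhere is a unit of $O_E$, one has $D(\Q)_\infty\cap D(\widehat{\Z})=\mu_E$, so this size is $[D(\widehat{\Z}):\mu_E\cdot\nu(\K)]$ and hence
\[
|\pi_0(\Sh_{\K}(\bfG,X)_{\C})|\;=\;c(n,E)\cdot[D(\widehat{\Z}):\mu_E\cdot\nu(\K)],\qquad c(n,E):=|D(\Q)_\infty\backslash D(\A_f)/D(\widehat{\Z})|.
\]

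Next I would compute $c(n,E)$ from the structure \eqref{torus} of $D$. For $n$ odd, $f$ identifies $D$ with $T^E:=\Res_{E/\Q}\Gm$; as $T^E(\R)=\C^\times$ is connected, $D(\Q)_\infty=E^\times$ by \eqref{D_infty}, and the double-coset space is literally the ideal class group of $O_E$, so $c(n,E)=h(E)$. For $n$ even, $f$ identifies $D$ with $T^{E,1}\times\Gm$; the $\Gm$-factor contributes $\Q_{>0}\backslash\A_f^\times/\widehat{\Z}^\times=\{1\}$, so $c(n,E)=|E^1\backslash T^{E,1}(\A_f)/\prod_\ell O^1_{E_\ell}|$ (using that $T^{E,1}(\R)=S^1$ is connected). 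Feeding the exact sequence $1\to\Gm\xrightarrow{\Delta}T^E\xrightarrow{x\mapsto x/\bar x}T^{E,1}\to 1$ into this, and using that for $\ell$ ramified in $E$ the maximal compact $O^1_{E_\ell}$ is generated by the image of $O_{E_\ell}^\times$ together with the image of a uniformizer of $E_\ell$, identifies this group with $\Cl(O_E)$ modulo the subgroup spanned by the classes of the ramified primes. By genus theory that subgroup is $\Cl(O_E)[2]$, of order $2^{w-1}$, so $c(n,E)=2^{1-w}h(E)$.

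For part (1), $\K=\bfG(\widehat{\Z})$, so $\nu(\K)=\prod_\ell\nu(\bfG(\Z_\ell))$. By Lemma~\ref{D=nu} this equals $D(\widehat{\Z})$ unless $n$ is even, $E_2/\Q_2$ is RU (i.e.\ $d_E\equiv 4\pmod 8$) and $\Lambda_2\simeq H^{n/2}$, in which case it has index $2$, with the discrepancy only at $\ell=2$. Outside the exceptional case $[D(\widehat{\Z}):\mu_E\nu(\K)]=1$ and $|\pi_0|=c(n,E)$, which is $h(E)$ for $n$ odd and $2^{1-w}h(E)$ for $n$ even. In the exceptional case the remaining index is $1$ or $2$, and it is $1$ exactly when some $\zeta\in\mu_E$ fails to lie in $\nu(\bfG(\Z_2))$; via $f$ and the formula \eqref{det} giving $\det(\bfG^1(\Z_2))=O^0_{E_2}$, this means $\mu_E\not\subseteq O^0_{E_2}$. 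Now $-1\in O^0_{E_2}$ always (write $-1=u/\bar u$ with $u$ a purely imaginary unit), while $i\notin O^0_{E_2}$ when $E_2=\Q_2(\sqrt{-1})$ (one has $i=\pi/\bar\pi$ for the uniformizer $\pi=1+i$, which represents the nontrivial class of $O^1_{E_2}/O^0_{E_2}$); since $2$ is inert in $\Q(\zeta_3)$, the only RU imaginary quadratic field with $\mu_E\neq\{\pm1\}$ is $\Q(\sqrt{-1})$. This produces exactly the three displayed cases.

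For part (2), $\K=\K(N)$ with $N\ge 3$. Here $\nu(\K(N))$ sits inside the kernel of $D(\widehat{\Z})\to D(\widehat{\Z}/N\widehat{\Z})$, so any root of unity in it is $\equiv 1\pmod N$ in $O_E$, forcing it to be $1$; thus $\mu_E\cap\nu(\K(N))=\{1\}$, whence $[D(\widehat{\Z}):\mu_E\nu(\K(N))]=[D(\widehat{\Z}):\nu(\K(N))]\cdot|\mu_E|^{-1}$, and the formula above yields $|\pi_0|=c(n,E)\cdot[D(\widehat{\Z}):\nu(\K(N))]\cdot|\mu_E|^{-1}$, i.e.\ part (2) (the exceptional index $2$, when present, is already absorbed into $[D(\widehat{\Z}):\nu(\K(N))]$, so no case split is needed here). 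The main obstacle is this exceptional RU case: everything else is formal double-coset bookkeeping for the abelian group $D$, but deciding whether $D(\Q)_\infty$ fills the index-$2$ gap requires locating each root of unity of $E$ inside $O^1_{E_2}/O^0_{E_2}$, which is exactly where the explicit matrices of Tables~\ref{table:4}--\ref{table:2} and Kirschmer's determinant computation \eqref{det} are used.
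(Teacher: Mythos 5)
Your argument is correct, and it follows the same skeleton as the paper's proof: reduce $\pi_0$ to the double coset space $D(\Q)_\infty\backslash D(\A_f)/\nu(\K)$, isolate the correction factor $[D(\widehat\Z):\nu(\K)]/[\mu_E:\mu_E\cap\nu(\K)]$, use Lemma~\ref{D=nu} together with \eqref{det} to settle the exceptional case $d_E\equiv 4\pmod 8$, $\Lambda_2\simeq H^{n/2}$, and use the ``roots of unity $\equiv 1\bmod N$ are trivial'' argument for part (2). Where you genuinely diverge is that you re-derive the two ingredients the paper imports by citation: the counting identity \eqref{pi_0G_1}, which the paper takes from \cite{GSY}, you obtain by elementary bookkeeping in the abelian group $D(\A_f)$ via $D(\widehat\Z)\cap D(\Q)_\infty\nu(\K)=\mu_E\cdot\nu(\K)$ (using $D(\Q)_\infty\cap D(\widehat\Z)=\mu_E$), which is a clean and correct shortcut; and the class number $h(D)$, for which the paper cites \cite{Shyr} and \cite{GSY}, you compute directly from $1\to\Gm\to T^E\to T^{E,1}\to 1$, identifying $E^1\backslash T^{E,1}(\A_f)/\widehat O_E^1$ with $\Cl(E)$ modulo the classes of the ramified primes and invoking genus theory to get $2^{1-w}h(E)$. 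That computation is right, but the two assertions carrying it ($[O^1_{E_\ell}:O^0_{E_\ell}]=2$ with the image of a uniformizer generating the quotient, and that the ramified prime classes span a subgroup of order exactly $2^{w-1}$) are doing real work and would each deserve a line of justification in a written-up version. Your verification that $i\notin O^0_{E_2}$ for $E=\Q(\sqrt{-1})$ via $i=(1+i)/\overline{(1+i)}$ and the valuation-parity argument is a nice alternative to the paper's appeal to Kirschmer's congruence description $O^0_{E_2}=\{x\in O^1_{E_2}\mid x-1\in\mathfrak D_{E_2/\Q_2}\}$. In short, the paper's route buys brevity through two citations, while yours buys a self-contained proof at the cost of carrying out the norm-one-torus and genus-theory computations explicitly.
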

The case $rs=0$ will be treated in Example \ref{rs=0}. 
\begin{proof}

%Then  the following  sequences are  exact:
%\begin{align*}
 % 1 \to {\bf G}^{\der} \to  &  {\bf G} \xrightarrow{\nu}  D \to 1, 
%\\
%  1 \to {\bf G}^{\text{1, der}} \to &  {\bf G}^1_{\Q}  \xrightarrow{\det}  D^1 \to 1.
%\end{align*}

We regard the group  $\mu_E$  as  a subgroup of 
$T^{E, 1}(\A_f)$ via  the natural embedding.   Note that 
$T^{E,1}$ can be embedded in  $D$ by \eqref{def:D}. 
Let $\mu_E \cap \nu(\K)$ be the intersection  taken in  $D(\A_f)$. 
Further, let 
$h(D)\coloneqq \lvert  D(\Q) \backslash D(\A_f)/ D(\widehat{\Z}) \rvert$ denote   the class number  of $D$.
 Then, by 
 \cite[Lemma 6.1 and Theorem 6.3]{GSY}, we have an equality 
\begin{align}\label{pi_0G_1}
\lvert  \pi_0(\Sh_{\K}({\bf G}, X)_{\C}) \rvert   = \lvert D(\Q)_{\infty}  \backslash D(\A_f)/\nu(\K) 
\rvert = \frac{\Big[D(\widehat{\Z}) : \nu(\K)\Big]}{\Big[  
\mu_E:\mu_E \cap \nu(\K)\Big]} \cdot h(D). 
\end{align}

(1)
We apply formula \eqref{pi_0G_1} to $\K=\bfG(\widehat{\Z})={\rm Stab}_{\bfG(\A_f)}{\Lambda}$. 
%under the assumption that  $\Lambda$ is unimodular if $n$ is even. 
%We first compute the index  
%  $[D(\widehat{\Z}) : \nu\big({\bf G}(\widehat{\Z})\big)]$. 
First 
 recall that  $E_2/\Q_2$ is RU if and only if $d_E \equiv 4 \pmod 8$ as in Section ~\ref{disc}. 
It follows from Lemma \ref{D=nu} that 
\begin{align}\label{eq:hat}
\begin{split} 
  & [D(\widehat{\Z}) : \nu\big({\bf G}(\widehat{\Z})\big)]  = \prod_{\ell : {\text{prime}}}
 [ D(\Z_{\ell}): \nu ({\bf G}(\Z_{\ell})) ] 
% & =   \prod_{\ell : {\text{prime}}}[O^1_{E_{\ell}} : \det ({\bf G}^1(\Z_{\ell}))] 
  =   
\begin{cases}
2 
    & \text{if $d_E \equiv 4\!\!\!\! \pmod 8$ and }   
  \Lambda_2 \simeq H^{n/2}; 
\\ 
1 &   {\text{otherwise}}.
\end{cases}
\end{split}
\end{align}

Next we compute the index  $[\mu_E: \mu_E
 \cap  \nu({\bf G}(\widehat{\Z}))]$. 
  If $d_E \not\equiv 4 \pmod 8$ or $\Lambda_2\not\simeq H^{n/2}$, then 
   formula \eqref{eq:hat} implies that $D(\widehat{\Z})=\nu(\bfG(\widehat{\Z}))$ and hence 
$\mu_E= 
  \mu_E
 \cap 
  \nu\big({\bf G}(\widehat{\Z})\big).$ 
Now we assume that $d_E \equiv 4 \pmod 8$ and $\Lambda_2 \simeq H^{n/2}$.  
Note that we have 
$\det(\bfG^1(\widehat{\Z}))=\widehat{O}^1_{E} \cap \nu(\bfG(\widehat{\Z}))$ by diagram chasing in \eqref{diagram}, and hence $\mu_E \cap \nu(\bfG(\widehat{\Z}))= \mu_E \cap \det
   (\bfG^1(\widehat{\Z}))$. 
   This and \eqref{det} imply that 
   $[ \mu_E: \mu_E \cap \nu(\bfG(\widehat{\Z}))] 
   =[\mu_E : \mu_E \cap O_{E_2}^0]$,  
   where we regard $\mu_E$ as a subgroup of  $O_{E_2}^{\times}.$ 
Recall that  $E_2=\Q_2(\sqrt{\theta})$ for a $\theta \in \Z_2^{\times}$. If we put $a={\sqrt{\theta}}$,   then $a\in O_{E_2}^{\times}$  and   $\bar{a}=-a$. 
 Hence  $-1=a \bar{a}^{-1} \in O_{E_{2}}^0$. 
If we further assume $E \neq \Q(\sqrt{-1})$, then 
$\{ \pm 1\}=\mu_E=\mu_E \cap O_{E_{2}}^0$. 
Suppose that  $E=\Q(\sqrt{-1})$. Then the elements $\pm \sqrt{-1}-1$ are uniformizers of $O_{E_2}$.  
On the other hand,  
we have $O_{E_{2}}^0 = \{ x \in O_{E_2}^1 \mid  x -1 \in \mathfrak D_{E_2/\Q_2}\}$ by  \cite[Lemma 3.5]{Kirschmer}    where $\mathfrak D_{E_2/\Q_2}$ denotes the different of $E_2/\Q_2$, and further  $\mathfrak D_{E_2/\Q_2}=(2)$ as in Section ~\ref{disc}. 
It follows that 
 $\pm \sqrt{-1} \notin O_{E_{2}}^0$ and hence  $[ \mu_E : \mu_E \cap O_{E_{2}}^0 ]=[\{ \pm 1, \pm \sqrt{-1}\} : \{\pm 1\}]= 2$.   
As a result, 
\begin{align}
\label{eq:inf}
\begin{split}
  [ \mu_E 
:
  \mu_E 
 \cap 
  \nu\big(\bfG(\widehat{\Z})\big)
  ]
= 
\begin{cases}  
2 
 &  \text{if $E = \Q(\sqrt{-1}) \ (d_E \equiv 4 \bmod 8)$ and   $\Lambda_2 \simeq H^{n/2}$;} \\  
 1 & {\text{otherwise}}.
\end{cases}
\end{split}
\end{align}

Equalities \eqref{eq:hat} and \eqref{eq:inf} imply that 
%if $n$ is even, then 
\begin{equation}\label{eq:index} \frac{\Big[D(\widehat{\Z}) : \nu(\bfG(\wh\Z))\Big]}{\Big[  
\mu_E:\mu_E \cap \nu(\bfG(\wh\Z))\Big]}=\begin{cases}
2 &  \text{if $d_E \equiv 4\!\!\! \pmod 8$, $\Lambda_2 \simeq 
H^{n/2}$ and  $E \neq \Q(\sqrt{-1})$}; \\
1 & \text{if $d_E \equiv 4\!\!\! \pmod 8$, $\Lambda_2 \simeq 
H^{n/2}$ and  $E = \Q(\sqrt{-1})$}; \\
 1 & {\text{otherwise}}. 
\end{cases}
\end{equation}
%and if $n$ is odd, this fraction is equal to $1$.

Finally, 
%Let $h(E)$ denote the class number of the field $E$,  and let  $w$ denote the number of primes which are ramified in $E$.
the formulas of Shyr  \cite[Formula (16)]{Shyr} and 
 Guo-Sheu-Yu \cite[Section ~5.1]{GHY} show  that
\begin{align}\label{class}
h(D)  =\begin{cases}h(T^E)
\\ 
h(T^{E,1})
\end{cases} \!\!\!\!\!
=\  \begin{cases}
 h(E) & {\text{if $n$ is odd}};
\\ 
 2^{1-w} \cdot h(E) & {\text{if $n$ is even}}.
\end{cases}
\end{align}

% and is ramified in RP if  $d_E \equiv 0  \pmod 8$. Otherwise $2$ is unramified in $E$. 
Equalities (\ref{pi_0G_1}),  (\ref{eq:index}), and (\ref{class}) imply  assertion (1). 

(2) We apply formula \eqref{pi_0G_1} to $\K=\K(N)$ for $N\geq 3$. 
By definition we have $\nu(\K(N)) \subset 1+N\widehat{\Z}$. 
Furthermore $\mu_E \cap (1+N\widehat{\Z})=1$ since $N\geq 3$ (cf. 
 \cite[Lemma, p.~207]{mumford:av}). 
 It follows that 
 \begin{equation}\label{eq:mu_E}
  [\mu_E: \mu_E \cap \nu(\K(N))]=\lvert \mu_E \rvert. 
 \end{equation}
 Equalities \eqref{pi_0G_1},  \eqref{eq:mu_E}, and \eqref{class} imply assertion (2).
\end{proof}

\section{The  basic locus of unitary Shimura varieties}\label{sec:bas}
%Fix an embedding of $E$ into $\C$, we require that the imaginary part of $\sqrt{d_E} \in \C$ is positive.  

\subsection{Moduli spaces with good reduction at $p$}
\subsubsection{}\label{moduli}
 We recall the moduli space of abelian varieties with good reduction at $p$ following Kottwitz \cite{Kottwitz} and Lan \cite{Lan}. 
  Let 
    $\mathscr D=(E, \bar{\cdot},O_E, V,\< \, , \, \>, \Lambda, h_0)$ be an integral  unitary  PEL  datum as in Definition \ref{P.6}.  Let $\varphi$ be the  Hermitian form on $V$ associated to the pairing $\< \, , \, \>$, and  $(r,  s)$ its signature.  
    We assume that $\Lambda$ is self-dual with respect to $\< \, , \, \>$ (or equivalently, $(\Lambda, \varphi)$ is unimodular; see Lemma \ref{selfdual}).  
  We fix a prime $p>2$ which is unramified in $E$. 
Let  $\Qbar$ be   the algebraic closure of $\Q$ in $\C$ and fix an embedding $\iota_p:\Qbar \embed \Qpbar$. Denote by  $\Sigma_{E}=\Hom(E,\C)=\Hom(E,\Qbar)$ the set of embeddings of $E$ into $\C$. 
Let $d_E$ be  the discriminant of $E/\Q$. 
We write $\Sigma_E=\{\tau,\bar \tau\}$, where we choose $\tau:E\to \C$ such that  ${\rm Im}(\tau(\sqrt{d_E}))<0$.

 Let $\bfG\coloneqq \GU(\Lambda, \< \, , \, \>)\simeq \GU(\Lambda, \varphi)$. 
 Then $\bfG(\Z_p)$ is a hyperspecial subgroup of $\bfG(\Q_p)$. 
 For an integer  $N\ge 3$ with
 $p \nmid N$, we define an open compact subgroup  $\K^p(N)$ of $\bfG(\A_f^p)$  by  
 \[{\rm K}^p(N) \coloneqq \ker \Big(\mathbf G(\widehat{\Z}^p) \to 
 \mathbf G(\widehat{\Z}^p/N\widehat{\Z}^p)\Big).\]
 Further we put ${\rm K} \coloneqq \bfG(\Z_p) \cdot {\rm K}^p(N) \subset \bfG(\A_f)$. 
 
 Let $O_{\bfE}$ be the ring of integers of the reflex field  $\bfE$ of $\mathscr D$ and set $O_{\bfE, (p)}\coloneqq O_{\bfE} \otimes \Z_{(p)}$. 
        Let  $\bfM_{{\rm K}}= \bfM_{{\rm K}}(\mathscr D)$ be the  contravariant functor from the category of locally Noetherian schemes 
 over $O_{\bfE, (p)}$ to the category of sets which takes a connected scheme $S$ over  $O_{\bfE, (p)}$ to the set of  isomorphism classes of tuples $(A,\iota,\lambda,\bar \eta)$ where 
\begin{itemize}
\item $A$ is an abelian scheme  over $S$; 
 \item $\iota$ is a homomorphism $O_E\to \End(A)$ which 
 satisfies the Kottwitz determinant condition of signature $(r,s)$, that is, we have an equality of polynomials
 \begin{equation}\label{eq:det}
\det(T- \iota(b); \Lie(A))=(T-\tau(b))^r(T- \bar{\tau}(b))^s \in \calO_S[T] 
 \end{equation} 
for all $b \in
O_E$, where $\{\tau,\bar \tau\}$ are embeddings of $E$ into $\C$; 
 \item 
  $\lambda : (A, \iota)  \xrightarrow{\sim} (A^t, \iota^t)$ is an $O_E$-linear \emph{principal} polarization,  that is,   $({A}^t, \iota^t)$ is the  dual abelian scheme of $A$ with a  homomorphism ${\iota}^t :  O_E \to \End {A^t}$ given by $b\mapsto (\iota(\bar{b}))^t$, and $\lambda : A \xrightarrow{\sim} A^t$ is a principal polarization  preserving $\iota$ and $\iota^t$;  
 \item 
 $\bar{\eta}$ is a $\pi_1(S,\bar s)$-invariant ${{\rm K}}^p(N)$-orbit of $O_E \otimes \wh \Z^{p}$-linear isomorphisms $\eta: \Lambda\otimes \wh \Z^p\isoto \wh T^p(A_{\bar s})$  which preserve the pairings 
 \begin{align*}
 \< \, , \, \>:   \Lambda\otimes \wh \Z^p \times \Lambda\otimes \wh \Z^p &\to \wh \Z^p, 
 \\
\lambda:  \wh T^p(A_{\bar{s}}) \times \wh T^p(A_{\bar{s}}) & \to \wh \Z^p(1)
   \end{align*}
    up to a scalar in 
   $(\wh \Z^p)^{\times}$. Here, $\widehat{T}^p(A_{\bar s})$ is  the prime-to-$p$ Tate module and 
  $\bar{s}$ is a geometric point of $S$. 
   \end{itemize}
   Two tuples $(A, \iota, \lambda, \bar \eta)$  and $(A',\iota',\lambda',\bar \eta')$ are said to be isomorphic if there exists an $O_E$-linear isomorphism 
  of  abelian schemes 
  $f : A\xrightarrow{\sim} A'$  such that  $\lambda=f^{t} \circ \lambda' \circ f$  and  $\bar{\eta}'=\overline{f \circ \eta}$.  
  See \cite[1.4.1]{Lan} for more details. 
   \begin{thm}[{\cite{Kottwitz},  \cite[Ch.2]{Lan}}]
The contravariant functor $\mathbf M_{{\rm K}}$ is represented by a smooth  quasi-projective scheme (denoted again by) $\mathbf M_{{\rm K}}$ over $O_{\bfE, (p)}$.
\end{thm}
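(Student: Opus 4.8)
The plan is to realize $\bfM_{\rm K}$ as a relatively representable subfunctor of a quasi-projective scheme coming from Mumford's moduli of polarized abelian schemes, and then to establish smoothness over $O_{\bfE,(p)}$ by crystalline deformation theory. First I would use that $N\geq 3$ and $p\nmid N$ to rigidify the problem: an $O_E$-linear automorphism of $(A,\iota,\lambda)$ respecting $\bar\eta$ is trivial, so $\bfM_{\rm K}$ has no nontrivial automorphisms and it suffices to exhibit a scheme. Forgetting $\iota$, a point of $\bfM_{\rm K}(S)$ is an abelian scheme of relative dimension $n$ with a principal polarization and a prime-to-$p$ level-$N$ structure; this is Mumford's moduli problem, represented over $\Z_{(p)}$ by a quasi-projective scheme $\mathcal A$ carrying a relatively ample line bundle. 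I would then cut out $\bfM_{\rm K}$ inside $\mathcal A\otimes O_{\bfE,(p)}$ by the remaining PEL data, each imposed as a relatively representable condition: the ring map $\iota\colon O_E\to\End(A)$ is relatively representable because $\End(A)$ is unramified over the base; the relation $\iota^t(b)=\iota(\bar b)^t$ (compatibility of $\iota$ with $\lambda$) and the Kottwitz determinant condition \eqref{eq:det} — the vanishing of finitely many coefficients of $\det(T-\iota(b);\Lie A)$ — are closed conditions; and the requirement that $\bar\eta$ be $O_E\otimes\wh{\Z}^p$-linear and respect the two pairings up to a $(\wh{\Z}^p)^\times$-scalar cuts out a locally closed (a union of ${\rm K}^p(N)$-orbits) subfunctor. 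Intersecting these loci presents $\bfM_{\rm K}$ as a scheme, of finite type over $O_{\bfE,(p)}$ and quasi-projective via the ample bundle pulled back from $\mathcal A$.

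It remains to show $\bfM_{\rm K}$ is formally smooth over $O_{\bfE,(p)}$; together with finite presentation this gives smoothness. Given a square-zero thickening $S_0\hookrightarrow S$ of $O_{\bfE,(p)}$-schemes and $(A_0,\iota_0,\lambda_0,\bar\eta_0)\in\bfM_{\rm K}(S_0)$, I would lift it as follows: the prime-to-$p$ datum $\bar\eta$ extends uniquely, and by Grothendieck--Messing / Serre--Tate theory lifting $(A_0,\iota_0,\lambda_0)$ is the same as lifting the Hodge filtration $\Fil^1\subset\mathbb D(A_0)$ inside the Grothendieck--Messing crystal to an $O_E\otimes\calO_S$-stable, $\lambda_0$-isotropic direct summand realizing the signature in \eqref{eq:det}. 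Because $p>2$ is unramified in $E$, the ring $O_E\otimes\Zp$ is a maximal order, étale over $\Zp$; since $\Lambda$ is self-dual at $p$, the crystal is a free $O_E\otimes\calO_S$-module carrying a perfect hermitian (equivalently alternating) pairing, and the space of admissible filtrations is governed by the Rapoport--Zink local model, which in this hyperspecial situation is the Grassmannian $\mathrm{Gr}(r,n)$ — a lift in one $\tau$-component determining the other by isotropy — smooth of dimension $rs$. Hence a lift exists locally on $S$ and the set of lifts is a torsor under a locally free sheaf, giving formal smoothness; the relative dimension is $rs$.

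The genuinely nontrivial point is the smoothness at $p$: everything hinges on the good-reduction hypotheses — $p>2$ unramified in $E$ and $\Lambda_p$ self-dual — which make $O_E\otimes\Zp$ a maximal étale order and the local model a smooth Grassmannian, so that the Grothendieck--Messing deformation problem is unobstructed and automatically compatible with the determinant condition \eqref{eq:det}. By contrast, representability and quasi-projectivity are formal once one invokes Mumford's theorem and checks that each PEL constraint is relatively representable — the only mildly delicate inputs being the rigidity afforded by the level-$N$ structure and the verification that the determinant condition is closed (indeed, once $\Lie A$ is decomposed according to the two embeddings of $E$ — possible since $p$ is unramified in $E$ — open and closed).
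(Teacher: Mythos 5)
The paper does not prove this theorem itself; it cites Kottwitz and Lan (Ch.~1--2), whose argument is exactly the one you sketch: cut $\bfM_{\rm K}$ out of Mumford's Siegel moduli space by relatively representable PEL conditions, use $N\geq 3$ for rigidity and quasi-projectivity, and get smoothness over $O_{\bfE,(p)}$ from Grothendieck--Messing/Serre--Tate together with the smoothness of the hyperspecial local model (a Grassmannian of rank-$rs$), which is where the hypotheses $p>2$ unramified and $\Lambda_p$ self-dual enter. So your proposal is correct and takes essentially the same route as the cited proof, modulo routine care that the forgetful map to the Siegel moduli is quasi-affine (so ampleness pulls back) and that the eigenspace decomposition of $\Lie(A)$ is only available étale-locally (which suffices).
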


\begin{remark}\label{rem:MK}
(1) In \cite{Kottwitz} Kottwitz defined a moduli problem using prime-to-$p$ isogeny classes of abelian schemes with a $\Z_{(p)}^{\times}$-polarization. 
By \cite[Proposition  1.4.3.4]{Lan},
this moduli problem is isomorphic to $\bfM_{\K}$ defined above, 
under the assumption that 
$\Lambda$ is self-dual.

(2) By \cite[Section  8]{Kottwitz} the complex algebraic variety $\bfM_{\rm K}\otimes \C $ is a finite disjoint union of complex Shimura varieties indexed by the finite set $\ker^1(\Q,{\bf G}) := \ker\Big( H^1(\Q, {\bf G})\to \prod_{\ell \leq \infty} H^1(\Q_{\ell},{\bf G})\Big) $. 
By \cite[Lemma 8.8]{Yu:arithmetic}, the group ${\bf G}=\GU(V,\< \, , \, \>)$ (for an  imaginary quadratic field $E$) satisfies the Hasse principle and hence that  $\bfM_{\rm K} \otimes \C \simeq \Sh_{{\rm K}}({\bf G}, X)_{\C}$. The number of connected components of $\bfM_{\rm K}\otimes \C $ then is computed by Theorem~\ref{connected} (2).
\end{remark}

 % We fix an embedding $\Qbar \hookrightarrow \Qpbar$. 
 Let $\mathfrak{p}$  be a prime ideal of $O_{\bfE}$ lying on $p$ that corresponds to the embedding $\iota_p:\Qbar\embed \Qpbar$. 
 Let $O_{\bfE, \mathfrak{p}}$ be the localization    of $O_{\bfE}$ at $\mathfrak{p}$,  and 
let $k$ be an  algebraic closure  of the residue field of  $O_{\bfE, \mathfrak{p}}$. 
  Let $\mathcal M_{{\rm K}}\coloneqq \mathbf M_{\rm K} \otimes_{O_{\bfE, \mathfrak{p}}} k$ denote the base change of $\mathbf M_{{\rm K}}$ to  $k$.

 \subsection{Newton and Ekedahl-Oort strata}\label{stratification} 
 \subsubsection{}
 Let $\F$ be a perfect field of characteristic $p$. 
Let $W(\F)$ be the ring of Witt-vectors over $\F$ with the Frobenius morphism  $\sigma : W(\F) \to W(\F)$. 
  Let $W(\F)[{\sf F, V}]$ be the  quotient ring   of the associative free $W(\F)$-algebra generated by the indeterminates ${\sf F}$ and ${\sf V}$ with respect to the relations 
 \begin{align}\label{FV}
 {\sf {FV}=VF}=p, \quad {\sf F}a=a^{\sigma}{\sf F}, \quad {\sf V}a^{\sigma}=a{\sf V} \quad {\text{for all}} \quad  a \in W(\F).
 \end{align}

 A \emph{Dieudonn\'e module} $M$ over $W(\F)$ is a left $W(\F)[{\sfF,\sfV}]$-module which is
 finitely generated as a $W(\F)$-module. 
 A \emph{polarization} on $M$ is an alternating form 
 $\<\,,\,\> :  M \times  M \to W(\F)$
  satisfying  %$\sf F$ and $\sf V$ satisfy 
  \begin{equation}\label{adj}
   \<{\sf F}x,y\>=\<x, {\sf V}y\>^{\sigma} \quad \text{for all} \quad x, y \in M. 
  \end{equation}
 A polarization is called a \emph{principal polarization} 
 if it is a perfect pairing. 
 By \dieu theory, there is an equivalence of categories between the category of $p$-divisible groups over $\F$ and the category of \dieu modules that are free  $W(\F)$-modules. 

Let $\mathscr D=(E, \bar{\cdot},O_E, V,\< \, , \, \>, \Lambda, h_0)$ and  $p>2$ be as in Section ~\ref{moduli}. 
We set $E_p \coloneqq E \otimes_{\Q} \Q_p$ and $O_{E_p} \coloneqq O_E\otimes \Z_{p}$. 
A $p$-\emph{divisible group with $\mathscr D$-structure} over $k$  is a triple $(X,  \iota, \lambda)$ consisting of a $p$-divisible group $X$ over $k$ of height $2n$, an action $\iota: O_{E_p} \to \End(X)$, and an $O_{E_p}$-linear isomorphism  $\lambda : X \to X^{t}$ %up to $\Z_p^{\times}$-equivalence 
such that $\lambda^{t}=-\lambda$. 
Furthermore, the induced  $O_{E_p}$-action on $\Lie(X)$ is assumed to satisfy the determinant condition governed by $\mathscr D$. 

For a $p$-divisible group with $\mathscr D$-structure $(X,  \iota, \lambda)$, 
let $M$ be the covariant \dieu module of the $p$-divisible group $X$. 
 Then $\iota$ induces  an action of  
 $O_{E_p}$ on $M$  and $\lambda$ induces a 
    principal polarization $\<\,,\,\> : M \times  M \to W(k)$, 
    satisfying that 
   \begin{itemize}
   \item[$\bullet$] the action of $O_{E_p}$  commutes with the operators $\sf F$ and $\sf V$, and 
   \item[$\bullet$]
    $\<ax, y\>=\<x, a^*y\>$ for all $x, y \in M, a\in O_{E_p}$. 
   \end{itemize}
   \subsubsection{}\label{div}
   In the rest of this section, we write $G\coloneqq \bfG_{\Z_p}$. 
   This is a connected reductive group scheme over $\Z_p$, and  its generic fiber $G_{\Q_p}$ is unramified.  
   Let $L$ denote the fraction field of $W(k)$. 
   Let $C(G)$ denote the set of $G(W(k))$-$\sigma$-conjugacy classes   $\llbracket b \rrbracket \coloneqq \{ g^{-1}b \sigma (g) \mid g \in G(W(k))\}$ of elements $b \in G(L)$. 
   By \cite[Theorem 3.16]{RZ}, 
there exists an isomorphism $M \xrightarrow{\sim} \Lambda \otimes W(k)$ preserving $O_{E_p}$-actions and the  pairings $\< \, , \,\>$, under which $\sfF$ corresponds to $b( {\rm id} \otimes \sigma)$ for some $b \in {G}(L)$. 
The class   $\llbracket b \rrbracket \in C(G)$ does not depend on the choice of a fixed  isomorphism, and we therefore obtain an injective map 
\begin{equation}\label{map}
\left\{ 
\begin{array}{c}
\text{isomorphism classes of $p$-divisible groups} 
\\
\text{with $\mathscr D$-structure over $k$} 
\end{array}
\right\} 
\hookrightarrow 
C(G).
\end{equation}
Let $B(G)$ be the set of $G(L)$-$\sigma$-conjugacy classes 
$[b]  \coloneqq  \{g^{-1} 
b \sigma(g) \mid g \in G(L)\}$ of elements  $b \in G(L)$. 
The above map with the natural projection $C(G) \to B(G)$ then induces an injective map 
\begin{equation}\label{isog}
\left\{ 
\begin{array}{c}
\text{isogeny  classes of $p$-divisible groups} 
\\
\text{with $\mathscr D$-structure over $k$} 
\end{array}
\right\} 
\hookrightarrow 
B(G). 
\end{equation}

We  recall a   description   of $B(G)$.   
 We fix a maximal torus $T$ of $G$ and a Borel subgroup $B$ containing $T$ both defined over $\Qp$. 
 Let $(X^*(T), \Psi, X_*(T), \Psi^{\vee}, \Delta)$ be the corresponding based root datum. 
 Let $W \coloneqq N_{G}(T)/T$ be the associated Weyl group.
%Let $(X^*(T), \Phi, X_*(T), \Phi^{\vee})$   be the corresponding root datum. 
%Let
%$X_*(T)^{\sigma}$ and $X_$
%Let $X_*(T)^{+}$ be the set of dominant  coweights determined by $B$. 
By work of 
Kottwitz \cite{Kottwitz85}, a  class $[b] \in B(G)$ is uniquely determined by two invariants: the Kottwitz point 
$\kappa_G(b) \in \pi({G})_{\sigma}=\pi_1(G)/(1-\sigma)(\pi_1(G))$  and  the Newton point $\nu_G(b) \in (X_*(T)_{\Q}^{+})^{\sigma}$. 
Here $\pi_1(G)$ denotes the Borovoi's fundamental group, and $X_*(T)_{\Q}^{+}$ denotes the set of dominant elements in $X_*(T)_{\Q}\coloneqq X_*(T) \otimes \Q$. 
For $\lambda, \lambda' \in X_*(T)_{\Q}$, 
we write $\lambda \leq \lambda'$ if the difference $\lambda'-\lambda$ is 
a non-negative rational linear combination of  positive coroots. 
The set $B(G)$ naturally forms a partially ordered set  with $[b] \leq [b']$ if $\kappa_G(b) = \kappa_G(b')$ and $\nu_G(b)\leq \nu_G(b')$. 
We call a class $[b] \in B(G)$ \emph{basic} if $\nu_G(b) \in X_*(Z_G)_{\Q}$, where  $Z_G\subset T$ is the center of $G$. 
From \cite[Section ~5]{Kottwitz85} it follows that 
 $[b] \in B(G)$ is basic if and only if it is minimal  with
respect to the above partial order. 

We now describe the image of the map   \eqref{isog}.
Let  $[\mu]$ be the conjugacy class of  the cocharacter $\mu_h$  defined by the PEL  datum $\mathscr D$ as in  Section  \ref{ss:Sh}. 
Under the fixed embedding  $\iota_p : \ol{\Q} \hookrightarrow \ol{\Q}_p$,  we can  regard  $[\mu]$ as a $W$-orbit in $X_*(T)$. 
The dominant  representative of $[\mu]$ in $X_*(T)$ is denoted by $\mu$. We set 
$\mu^{\diamond} \coloneqq  \frac{1}{m}  \sum_{i=0}^{m-1} \sigma^i(\mu)  \in X_*(T)_{\Q}$ where $m$ is the degree of a splitting field of $G$  over $\Q_p$. Further  
$\mu^{\natural}$ 
denotes the image of $\mu$ under the projection 
$X_*(T) \to  \pi_1(G)_{\sigma}$. 
As in \cite[Section  6]{Kottwitz97} and  \cite[Section  4]{Rapoport}, 
%for any element $\mu \in X_*(T)_{\sigma}^{+}$ 
we define a set $B(G, \mu)$  by 
\begin{equation}
    \label{bgmu}
 B(G, \mu)=\{ [b] \in B(G) \mid  \nu_G(b) \leq \mu^{\diamond},  \kappa_G(b)=\mu^{\natural}\}. 
\end{equation}
Then by  \cite{RR, KR, Lucarelli, Gashi}, 
 the  image of the map  \eqref{isog} is equal to the subset 
  $B(G, \mu)$. 
    There is a unique basic class in $B(G, \mu)$ by \cite[Section ~5]{Kottwitz85}. 
  %defined in \eqref{bgmu}, where $\mu$ is the dominant minuscule cocharacter deduced from $\mathscr{D}$. 
 % For more details see \cite[Section ~8.2]{VW}. 

\subsubsection{}\label{b}
 Let $(A, \iota,  {\lambda}, \bar{\eta})$  be a $k$-point of $\mathcal M_{\rm K}$. 
 Then  ${\iota}$  and $\lambda$ induce a $\mathscr D$-structure on 
 the $p$-divisible group $A[p^{\infty}]$ of $A$, and  
 we can attach to it a class $\llbracket b \rrbracket \in C(G)$. We thus obtain a map 
 \[ \calM_{\K}(k) \to C(G).\] 
 Each fiber of this map is called a \emph{central leaf} on $\M_{\K}$.

 Further, there exists a map
 \[{\rm Nt} : \mathcal M_{\rm K} \to  B(G, \mu)\]
 such that it takes  a $k$-point $(A, \iota,  {\lambda}, \bar{\eta})$ to the class $[b]$ attached to  $(A[p^{\infty}],  \iota, \lambda)$. 
  Each fiber 
    is a locally closed subset of $\mathcal M_{\rm K}$ \cite{RR}.  
  %, and satisfies that  the Zariski closure of $\mathcal N^b$ in $\mathcal M_{\rm K}$ is the union of $\calN^{b'}$ for all $b'\leq b$.
   By definition, the \emph{Newton stratum  attached to} $[b] \in B(G, \mu)$ is 
    the fiber 
  ${\rm Nt}^{-1}([b])$ endowed with the 
reduced scheme structure. 
 It is non-empty for every $[b] \in B(G, \mu)$ \cite[Theorem 1.6 (1)]{VW}. 
 The Newton stratum  attached to the unique basic class  $[b] \in B(G, \mu)$  is  called the \emph{basic locus} and denoted by $\mathcal M_{\rm K}^{\rm bas}$. 
 This is closed  in  $\M_{\K}$. 

%Next we recall the definition of the   Ekedahl-Oort stratification on $\M_{\K}$.
 Let $S \subset W$ denote the set of simple reflections  corresponding to $B$. 
 To the cocharacter $\mu$ associated to  $\mathscr D$, one can attach a  non-empty subset $J \subset S$ \cite[A.5]{VW}. Let $W_J$ denote the subgroup of $W$ generated by $J$, and let 
    ${}^J W$ denote the set of  representatives of minimal length of $W_J\backslash W$: 
\[ {}^J W \coloneqq \{ w \in W \mid \ell(sw) > \ell(w)  \ \text{for all}  \ s \in J\}.\] 
 For a $k$-point $(A, \iota,  {\lambda}, \bar{\eta})$ of $\M_{\K}$, let $(A, \iota, \lambda)[p]$ be the $p$-torsion $A[p]$ of $A$ endowed with the $O_E$-action induced by $\iota$ and the isomorphism $A[p] \xrightarrow{\sim} A[p]^t$ induced by $\lambda$. 
 By 
 \cite[Theorem 6.7]{Moonen}, 
 we can associate to $(A, \iota, \lambda)[p]$ an element $\zeta(x) \in {}^J W$, and 
 the element $\zeta(x)$ determines the isomorphism class of $(A, \iota, \lambda)[p]$. 
  The \emph{Ekedahl-Oort (EO) stratum attached to}
   $w \in {}^J W$ is the  locally closed  reduced subscheme  $\mathcal M_{\rm K}^{w}$ of $\M_{\K}$ whose $k$-points consist of $x\in \M_{\K}(k)$ with  $\zeta(x)=w$. 
   The EO stratum is non-empty for all $w \in {}^J W$ \cite[Theorem 1.2 (1)]{VW}. 
 %  We remark that 
% the set ${}^J W$ is in bijection to 
%the set of isomorphism classes of $\mathscr D$-zips over $k$ \cite[(5.3)]{VW}.
 %The $k$-vector space $M_k=M \otimes_{\breve{\Z}_p}k$ satisfies is equipped  with a symplectic
%form $\<\,,\,\>$ and an action of $O_{E_p}/(p)$ satisfying $\< bm, n\>= \< m, \bar{b}n\>$ for all $b \in  O_E/(p), m, n \in \mathbb D(\mathcal A_x[p])$. 
 % endowed with ${\rm Frob}_k$-linear and ${\rm Frob}_k^{-1}$-linear maps ${\sf F}, {\sf V} : M_k \to M_k$ respectively, such that $\ker({\sf F})=\im({\sf V})$ and $\ker({\sf V})=\im({\sf F})$. 
% Let $C=\ker({\sf F})$, $D=\ker({\sf V})$,  and let $\varphi_0 : (M_k/C)^{(p)} \xrightarrow{\sim} D$ (resp.~$\varphi_1  : C^{(p)} \xrightarrow{\sim} M_k/D)$) be the isomorphism induced by ${\sf F}$ (resp.~${\sf V}^{-1}$). 
 %Then the tuple $(M_k, C, D, \varphi_0, \varphi_1)$ is a $\mathscr D$-zip in the sense in \cite[Definition 2.1]{VW}. 
% Then for any $x \in M_{\rm K}^w(k)$ one has  (\cite[Example 2.2]{VW}) 
%  \[ \mathcal M_{\rm K}^w(k) = \{y \in \mathcal M_{\rm K}(k) \mid \D(\mathcal A_y[p]) \simeq \D(\mathcal A_x[p])\}.\]
 The EO stratum $\mathcal M_{\rm K}^e$ attached to the identity element $e \in {}^J W$ is the unique  EO stratum  of dimension zero. 
 
 \subsubsection{}
Let $(X, \iota, \lambda)$ be a $p$-divisible group with $\mathscr D$-structure over $k$  and let $b \in G(L)$ such that the  isomorphism class of $(X, \iota, \lambda)$ 
corresponds to  $\llbracket b \rrbracket \in C(G)$ via \eqref{map}. 
Further we put $H \coloneqq \ker( G(W(k)) \to G(k))$. 
By \cite[Remark 8.1]{VW}, the following conditions are equivalent.
\begin{itemize}
    \item For all $b' \in H  b H$ there exists an element $g \in G(W(k))$ such that $gb'\sigma(g)=b$.
    \item Any $p$-divisible group with $\mathscr D$-structure $(X', \iota', \lambda)$ with $(X', \iota', \lambda')[p] \simeq (X, \iota, \lambda)[p]$ is isomorphic to 
 $(X, \iota, \lambda)$.
\end{itemize}
An element $b \in G(L)$ and also $(X, \iota, \lambda)$    are  called \emph{minimal}  if they satisfy these equivalent conditions. 
 \begin{prop}[{\cite[Proposition 9.17]{VW}}]\label{CL} 
 {\rm (1)} 
 For each $k$-point $(A, \iota,  {\lambda}, \bar{\eta})$ on the $0$-dimensional EO stratum $\M_{\K}^e$, the 
associated $p$-divisible group with $\mathscr D$-structure $(A[p^{\infty}],  \iota,  \lambda)$  is minimal. 
 Furthermore the set $\mathcal M_{\rm K}^e(k)$ is a central leaf. 
 %there is an equality of subsets of $\mathcal M_{\rm K}(k)$: 

 {\rm (2)} 
 There is an inclusion $\mathcal M_{\rm K}^e \subset \mathcal M_{\rm K}^{\rm bas}$. 
 \end{prop}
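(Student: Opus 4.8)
This is \cite[Proposition 9.17]{VW}; I would prove it by making the Dieudonn\'e module completely explicit and then invoking the rigidity of Oort's minimal $p$-divisible groups. Fix a $k$-point $x$ of $\mathcal M_{\rm K}^e$, so $\zeta(x)=e\in{}^JW$. By Moonen's dictionary \cite[Theorem 6.7]{Moonen} this pins down the isomorphism class of the $BT_1$ with $\mathscr D$-structure $(A,\iota,\lambda)[p]$ as the one of type $e$, i.e.\ the ``most special'' one. Passing through the covariant Dieudonn\'e functor, one writes the module $M$ together with its $O_{E_p}$-action, its principal polarization $\<\,,\,\>$, and the operators $\sfF,\sfV$; after decomposing $O_{E_p}=O_E\otimes\Z_p$ into its local factors (one field if $p$ is inert in $E$, two copies of $\Z_p$ if $p$ splits) and splitting $M$ into the corresponding isotypic pieces, the Kottwitz determinant condition of signature $(r,s)$ forces $M$ to be an orthogonal (or hyperbolic) sum of standard rank-one and rank-two $W(k)[\sfF,\sfV]$-modules on which $\sfF,\sfV$ act by explicit powers of $p$ composed with the Frobenius -- this is precisely the module written down in Section~\ref{sec:bas} and used in Proposition~\ref{J_Minert}. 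Each such summand is one of Oort's minimal $p$-divisible groups decorated with an $O_{E_p}$-action, so $M$ is a direct sum of minimal blocks; the rigidity theorem for minimal $p$-divisible groups (equivalently, the first of the equivalent conditions of \cite[Remark 8.1]{VW} holds for the attached $b$) then shows that $(A[p^\infty],\iota,\lambda)$ is minimal, which is the first assertion of (1).

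\textbf{Central leaf in (1).} Given minimality this becomes formal. Let $\llbracket b\rrbracket\in C(G)$ be the class attached via \eqref{map} to one point $x_0$ of $\mathcal M_{\rm K}^e(k)$, and let $C$ be the central leaf over $\llbracket b\rrbracket$, i.e.\ the corresponding fiber of $\mathcal M_{\rm K}(k)\to C(G)$. If $y\in\mathcal M_{\rm K}^e(k)$, then $\zeta(y)=e=\zeta(x_0)$, so $(A_y,\iota,\lambda)[p]\simeq(A_{x_0},\iota,\lambda)[p]$, and minimality of $(A_{x_0}[p^\infty],\iota,\lambda)$ yields $(A_y[p^\infty],\iota,\lambda)\simeq(A_{x_0}[p^\infty],\iota,\lambda)$; hence $y\in C$. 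Conversely, if $y\in C$, then the two $p$-divisible groups with $\mathscr D$-structure are isomorphic because \eqref{map} is injective, hence so are their $p$-torsions, hence $\zeta(y)=e$ and $y\in\mathcal M_{\rm K}^e(k)$. Thus $\mathcal M_{\rm K}^e(k)=C$ is a single central leaf.

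\textbf{Part (2).} There are two routes; I would take the first. Since each summand of $M$ is one of Oort's minimal building blocks carrying an $O_{E_p}$-action, one reads the Newton point $\nu_G(b)$ off the slopes of these blocks and checks directly that $\nu_G(b)\in X_*(Z_G)_\Q$; thus $[b]$ is the unique basic class in $B(G,\mu)$ and $\mathcal M_{\rm K}^e\subset\mathrm{Nt}^{-1}([b])=\mathcal M_{\rm K}^{\rm bas}$. Alternatively, one can argue formally from (1): by the dimension formula for central leaves (Oort, and Hamacher in the PEL setting), a central leaf inside $\mathrm{Nt}^{-1}([b])$ has dimension $\langle 2\rho,\nu_G(b)\rangle$, where $2\rho$ is the sum of the positive roots; by (1) the central leaf $\mathcal M_{\rm K}^e(k)$ is $0$-dimensional, so $\langle 2\rho,\nu_G(b)\rangle=0$, and since $\nu_G(b)$ is dominant every summand $\langle\alpha,\nu_G(b)\rangle$ with $\alpha>0$ vanishes, forcing $\nu_G(b)$ to be $W$-invariant, i.e.\ central; again $[b]$ is basic and $\mathcal M_{\rm K}^e\subset\mathcal M_{\rm K}^{\rm bas}$.

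\textbf{Expected main obstacle.} The real work is the explicit description of $M$: one must organise the $O_{E_p}$-isotypic decomposition, the principal polarization, and the determinant condition of signature $(r,s)$ correctly across the regimes ($p$ inert; $p$ split with $r=s$; $p$ split with $r\neq s$) and for both parities of $n$, and match each isotypic block with one of Oort's minimal modules so that the criterion of \cite[Remark 8.1]{VW} applies. Once that bookkeeping is done, the minimality in (1), the central-leaf identity, and the slope computation behind (2) all follow with little extra effort.
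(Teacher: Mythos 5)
This proposition is not proved in the paper at all: it is quoted verbatim from \cite[Proposition 9.17]{VW}, whose proof is group-theoretic (via minimal/fundamental elements of $G(L)$ and their relation to the EO invariant), so your attempt should be judged as a reconstruction. As written it has a genuine gap, and within this paper it is also circular. The crucial step is your claim that, once $\zeta(x)=e$ fixes the isomorphism class of $(A,\iota,\lambda)[p]$, the determinant condition ``forces'' the Dieudonn\'e module $M$ itself to be the explicit orthogonal sum of standard blocks. Knowing the EO type only determines $M/pM$ with its induced structure; the passage from the $p$-kernel to the full module $M$ is precisely the content of the minimality you are trying to prove, so it cannot be assumed. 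Pointing to the modules of Propositions~\ref{Minert} and \ref{Msplit} makes this circular in the present paper: both of those propositions are proved \emph{using} Proposition~\ref{CL} (basicness of $[b]$ in \ref{Minert}/\ref{Msplit} via part (2), and Oort-minimality of $b_1$ in \ref{Msplit} via part (1)). Moreover, even granting the block decomposition of the underlying $p$-divisible group, Oort's rigidity theorem \cite{Oort} concerns $p$-divisible groups \emph{without} polarization and $O_{E_p}$-action, whereas minimality in the sense of \cite[Remark 8.1]{VW} requires the isomorphism $(X',\iota',\lambda')\simeq(X,\iota,\lambda)$ to respect the full $\mathscr D$-structure; verifying the first condition of that remark for the attached $b$ is the hard part and is not ``equivalent'' to Oort's statement. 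This is exactly why Viehmann--Wedhorn argue with minimal elements in the loop group rather than with Oort's theorem alone.

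The parts of your argument that come after minimality are fine: given (1), your two-sided argument that $\M_{\K}^e(k)$ equals the central leaf through one of its points (using injectivity of \eqref{map} and Moonen's classification \cite{Moonen}) is correct, and your second route to (2) — zero-dimensionality of the leaf plus the central-leaf dimension formula $\langle 2\rho,\nu_G(b)\rangle$ forces $\langle\alpha,\nu_G(b)\rangle=0$ for every positive root $\alpha$, hence $\nu_G(b)\in X_*(Z_G)_\Q$ — is a legitimate alternative to reading off slopes, provided you import that dimension formula (Hamacher's PEL version) as an external input. To repair part (1) you would either have to reproduce the group-theoretic argument of \cite{VW}, or supply the missing bridge from the $p$-kernel to $M$ independently (e.g.\ in the inert case via the $a$-number criterion for superspecialness and a classification of principally polarized $O_{E_p}$-structures on $\D^{\oplus n}$), which is substantial extra work that your sketch does not contain.
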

 
\subsubsection{}
Let $\Sigma_{O_{E_p}}=\Hom_{\Zp}(O_{E_p},W(k))$ be the set of embeddings of $O_{E_p}=O_E \otimes {\Z_p}$ into $W(k)$. We write $\Sigma_{O_{E_p}}=\{\sigma_1,\sigma_2\}$ in the way that they correspond to $\tau$ and $\bar \tau$, respectively, under the identification 
\[ \Sigma_E=\Hom(E,\Qbar)=\Hom_{\Qp}(E_p, \Qpbar)=\Hom_{\Zp}(O_{E_p},W(k))= \Sigma_{O_{E_p}}\] 
via the embedding $\iota_p$. It is convenient to write $\sigma_i$ with $i\in\Z/2\Z$, then one has $\sigma\circ \sigma_i=\sigma_{i+1}$ for $i\in \Z/2\Z$.

Let  $(A, \iota,  {\lambda}, \bar{\eta})$ be a $k$-point of $\M_{\K}$, and  $(A[p^{\infty}], \iota, \lambda)$  be the associated  $p$-divisible group with $\mathscr D$-structure. 
Let $M$ be the covariant  \dieu module of $A[p^{\infty}]$. The induced  $O_{E_p}$-action on $M$ gives rise to a decomposition 
$M=M_1 \oplus M_2=\oplus_{i\in \Z/2\Z} M_i$, where $M_i$ is the $\sigma_i$-component of $M$. 
%As in \cite[Proposition 2.2.4]{Wooding}, 
For each $i\in \Z/2\Z$, we have that $\<M_i,M_i\>=0$, and 
\begin{align}
\begin{aligned}\label{eq:i}
     \sfF(M_i) &   \subset 
     M_{i+1}, & 
  \sfV(M_{i}) & \subset 
  M_{i+1}, & 
  \dim_{k}(M_1/\sfV(M_{2}))& =r &  & \text{if $p$ is inert};  
    \\ 
    \sfF(M_i)  & \subset 
     M_i, & 
  \sfV (M_{i}) & 
  \subset M_i,  
  &  
  \dim_{k}(M_1/\sfV(M_{1})) & =r & 
    &\text{if $p$ is split.}
\end{aligned}
\end{align}
 Here, the right equalities follows from  the determinant condition  \eqref{eq:det}. 
\subsubsection{}\label{model}
Now we fix a $k$-point  $(A, \iota,  {\lambda}, \bar{\eta})$ of  
the $0$-dimensional EO stratum $\M_{\K}^e$. 
%We describe the \dieu module $M$ of the $p$-divisible group $(\calA_x[p^{\infty}], \iota, \lambda)$. 
We assume first  that $p$ is inert in $E$. 
In this case, the abelian variety  $A$ over $k$ is  superspecial  \cite[Section~3.5.1]{Wooding}. 
As in \cite[Proposition 6]{Ghitza}, $A$   has a canonical model $\tilde{A}$ over  $\F_{p^2}$, in which the geometric Frobenius is $[-p]$.  
 Moreover, the covariant \dieu module $\tilde{M}$ of its $p$-divisible group $\tilde{A}[p^{\infty}]$  is isomorphic as  a \dieu module over  $W(\F_{p^2})$ to the direct sum of  $n$-copies of the \dieu module $\D$ which is defined by 
 \begin{equation*}
 (\D, \sfF, \sfV)=
  \left( W(\F_{p^2})^{\oplus 2}, \quad \begin{pmatrix}
    & 1 
    \\ 
    -p & 
  \end{pmatrix}\sigma', \quad 
  \begin{pmatrix}
    & -1 
    \\ 
    p & 
  \end{pmatrix}\sigma'^{-1}
  \right). 
 \end{equation*} 
 Here $\sigma'$ is the Frobenius morphism of $W(\F_{p^2})$. 
In particular one has $\sfF=-\sfV$ on $\tilde{M}$. 

 Since the  correspondence $A \mapsto \tilde{A}$ is functorial, $\tilde{A}$ is equipped with additional structure corresponding to $\iota$ and $\lambda$. 
 Thus $\tilde{M}$ has  a principal polarization 
 $\<\,,\,\> :  \tilde{M} \times  \tilde{M} \to W(\F_{p^2})$ and an $O_{E_p}$-action, inducing 
 a decomposition  $\tilde{M}=\tilde{M}_1 \oplus \tilde{M}_2$. 
\begin{prop}\label{Minert}
 Assume that $p$ is inert in $E$. 
Then there is a basis $e_1, \ldots, e_n, f_1, \ldots, f_n$ of $\tilde{M}$ over $W(\F_{p^2})$ such that 
\begin{itemize}
\item[(i)] it is a standard symplectic basis with respect to $\< \, , \, \>$, that is,  it satisfies that 
\[\left\langle e_{i_1}, e_{i_2} \right\rangle=0=\left\langle f_{j_1}, f_{j_2} \right\rangle, \  \left\langle e_i, f_j \right\rangle=\delta_{ij}=-\left\langle f_j, e_i \right\rangle;\]
\item[(ii)] the elements 
$e_1, \ldots, e_n$ span $\tilde{M}_1$ and $f_1 \ldots, f_n$ span $\tilde{M}_2$; 
\item[(iii)]
 the $\sigma'$-linear (resp.~$\sigma'^{-1}$-linear)  operator $\sfF$ (resp.~$\sfV$) is given by 
\begin{equation}\label{eq:Finert}
 \mathsf{F}=\begin{pmatrix}
 &  &  -pI_r  & 
 \\   &   &   & I_s
\\ 
 I_r &   &  & 
\\
 & -pI_s &   & 
\end{pmatrix} \sigma', \quad 
\mathsf{V}=\begin{pmatrix}
 &  &  pI_r  & 
 \\   &   &   & -I_s
\\ 
 -I_r &   &  & 
\\
 & pI_s &   & 
\end{pmatrix} \sigma'^{-1}
.
\end{equation}
\end{itemize}
\end{prop}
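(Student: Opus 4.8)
The plan is to repackage the Frobenius together with the polarization on $\tilde M$ as a single Hermitian form on the $\sigma_1$-component $\tilde M_1$, to determine its isometry class by combining the determinant condition with the relation $\sfF=-\sfV$, and then to read off the asserted basis from the classification of Hermitian lattices over the unramified extension $W(\F_{p^2})/\Z_p$. I use the facts recorded just above: $\tilde M\simeq\D^{\oplus n}$ is free of rank $2n$ over $W(\F_{p^2})$, the $\sigma'$-semilinear operators satisfy $\sfF=-\sfV$ and hence $\sfF^2=-\sfF\sfV=-p$ by \eqref{FV}; since $p$ is inert, $O_{E_p}=W(\F_{p^2})$, so the induced action gives a decomposition $\tilde M=\tilde M_1\oplus\tilde M_2$ with each $\tilde M_i$ free of rank $n$, totally isotropic for the perfect $W(\F_{p^2})$-bilinear pairing $\<\,,\,\>$ (as in \eqref{eq:i}), and $\sfF(\tilde M_i)\subset\tilde M_{i+1}$. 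In particular $\<\,,\,\>$ identifies $\tilde M_2$ with the $W(\F_{p^2})$-dual of $\tilde M_1$.

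First I would pin down the elementary divisors of $\sfF\colon\tilde M_1\to\tilde M_2$. From $\sfF^2=-p$ one gets $p\tilde M_2\subset\sfF\tilde M_1\subset\tilde M_2$, and symmetrically for $\tilde M_1$, so the relevant cokernels are annihilated by $p$. By \eqref{eq:i} (and $\sfV=-\sfF$) the cokernel of $\sfF\colon\tilde M_2\to\tilde M_1$ has $W(\F_{p^2})$-length $r$ — this is the only point at which the determinant condition is used, and it descends to $\tilde M$ because length is preserved under the flat base change to $W(k)$ — while the total cokernel $\tilde M/\sfF\tilde M$ has length $\dim\Lie(\tilde A)=n$, so the cokernel of $\sfF\colon\tilde M_1\to\tilde M_2$ has length $s$. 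Thus in suitable $W(\F_{p^2})$-bases the map $\sfF\colon\tilde M_1\to\tilde M_2$ is $\diag(I_r,pI_s)$ up to the $\sigma'$-twist.

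Next I would set $h(x,y):=\<x,\sfF(y)\>$ for $x,y\in\tilde M_1$. Using the adjunction relation \eqref{adj} and $\sfV=-\sfF$ one checks that $h(y,x)=h(x,y)^{\sigma'}$, so $h$ is a non-degenerate $\sigma'$-Hermitian form on $\tilde M_1$, taking values in $W(\F_{p^2})$ because $\<\,,\,\>$ does. For a $W(\F_{p^2})$-basis $e_1,\dots,e_n$ of $\tilde M_1$ putting $\sfF$ in the elementary-divisor form above, the Gram matrix of $h$ is the product of an invertible matrix with $\diag(I_r,pI_s)$, hence has elementary divisors $(1^r,p^s)$. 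Since $p>2$ and $W(\F_{p^2})/\Z_p$ is unramified, Jacobowitz's classification \cite[Section~7]{Jacobowitz} yields an orthogonal Jordan splitting of $(\tilde M_1,h)$ into a unimodular sublattice of rank $r$ and $p$ times a unimodular sublattice of rank $s$; as unimodular Hermitian lattices over $W(\F_{p^2})$ are orthogonal sums of copies of $\<1\>$ (and $\<1\>^{\oplus s}\simeq\<-1\>^{\oplus s}$), this gives $(\tilde M_1,h)\simeq\<1\>^{\oplus r}\perp\<-p\>^{\oplus s}$. Fix a basis $e_1,\dots,e_n$ of $\tilde M_1$ realizing this isometry, so that $h(e_i,e_j)=\delta_{ij}$ for $i,j\le r$, $h(e_{r+i},e_{r+j})=-p\,\delta_{ij}$ for $i,j\le s$, and $h(e_i,e_{r+j})=0$.

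Finally, I would put $f_i:=\sfF(e_i)$ for $i\le r$ and $f_{r+j}:=-p^{-1}\sfF(e_{r+j})$ for $j\le s$. Expressing $\<e_a,f_b\>$ through the $h$-values of this basis gives $\<e_a,f_b\>=\delta_{ab}$ for all $a,b$; since the $e_a$ form a basis of $\tilde M_1$ and $\<\,,\,\>$ identifies $\tilde M_2$ with the dual of $\tilde M_1$, this forces each $f_b$ into $\tilde M_2$ and makes $\{f_a\}$ the $W(\F_{p^2})$-basis of $\tilde M_2$ dual to $\{e_a\}$. Together with $\<e_a,e_b\>=\<f_a,f_b\>=0$ from isotropy, this shows $e_1,\dots,e_n,f_1,\dots,f_n$ is a standard symplectic basis with the $e_i$ spanning $\tilde M_1$ and the $f_i$ spanning $\tilde M_2$, which is (i) and (ii). By construction $\sfF(e_i)=f_i$ and $\sfF(e_{r+j})=-pf_{r+j}$, and applying $\sfF$ once more, using $\sfF^2=-p$ and the $\sigma'$-semilinearity $\sfF(-p^{-1}x)=-p^{-1}\sfF(x)$, gives $\sfF(f_i)=-pe_i$ and $\sfF(f_{r+j})=e_{r+j}$; these are precisely the entries of the matrix for $\sfF$ in \eqref{eq:Finert}, and the matrix for $\sfV$ follows from $\sfV=-\sfF$. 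The main obstacle is the third step — checking that $h$ is an integral Hermitian form and that $\sfF^2=-p$ together with \eqref{eq:i} force its Jordan type to be exactly $\<1\>^{\oplus r}\perp\<-p\>^{\oplus s}$; granting that, the classification over the unramified base and the bookkeeping of $\sfF$-images are routine.
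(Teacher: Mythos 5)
Your proof is correct, but it follows a genuinely different route from the paper's. The paper proves Proposition \ref{Minert} by an inductive splitting argument: using \cite[Corollary 9]{Ghitza} it finds $x\in\tilde M_1\cup\tilde M_2$ with $\<x,\sfF(x)\>$ a unit, normalizes this value to $\pm1$ via surjectivity of the norm $W(\F_{p^2})^{\times}\to W(\F_p)^{\times}$, splits off the rank-two sublattice $\tilde N=W(\F_{p^2})x\oplus W(\F_{p^2})\sfF(x)$ orthogonally, observes that $\sfF=-\sfV$ persists on $\tilde N^{\perp}\simeq\D^{\oplus(n-1)}$ while keeping track of the signature through $\dim_k\big((\tilde N\cap\tilde M_{i+1})/\sfV(\tilde N\cap\tilde M_i)\big)$, and concludes by induction. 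You instead encode the pair $(\<\,,\,\>,\sfF)$ as the $\sigma'$-Hermitian form $h(x,y)=\<x,\sfF(y)\>$ on $\tilde M_1$ (Hermitian symmetry follows from \eqref{adj} and $\sfV=-\sfF$, integrality from $\sfF\tilde M_1\subset\tilde M_2$), pin down its invariant factors $(1^r,p^s)$ from \eqref{eq:i}, $\sfF^2=-p$ (via \eqref{FV}) and the perfect pairing $\tilde M_1\times\tilde M_2\to W(\F_{p^2})$, and then invoke the classification of Hermitian lattices over the unramified quadratic extension \cite[Section 7]{Jacobowitz} (which the paper already uses elsewhere) to get $(\tilde M_1,h)\simeq\<1\>^{\oplus r}\perp\<-p\>^{\oplus s}$; the dual-basis bookkeeping with $f_i=\sfF(e_i)$, $f_{r+j}=-p^{-1}\sfF(e_{r+j})$ then yields (i)--(iii), and I checked the sign conventions do reproduce \eqref{eq:Finert}. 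The one step you should spell out a little more is the passage from the Smith normal form of the Gram matrix to the Jordan type: congruent Gram matrices are in particular equivalent, so the invariant factors are isometry invariants, and a Jordan splitting with a unimodular block of rank $r$ and a $p$-modular block of rank $s$ (and no higher-scale blocks) is the only type compatible with invariant factors $(1^r,p^s)$; together with $\<p\>\simeq\<-p\>$ (as $-1\in\bfN(O_{E_p}^{\times})$) this closes the argument. Your approach trades the paper's reliance on Ghitza's Corollary 9 and the induction for the local Hermitian-lattice classification, and it has the merit of making the role of the signature $(r,s)$ transparent through the elementary divisors of $\sfF$; the paper's argument, by contrast, stays entirely within symplectic Dieudonn\'e-module manipulations and is shorter once Ghitza's result is granted.
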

\begin{proof}
By \cite[Corollary 9]{Ghitza}, for any principal polarization $\< \, , \, \>$ on the \dieu module $\tilde{M} \simeq \D^{\oplus n}$, there is an element $x \in M$ such that $\<x, \sfF(x)\>$ is a unit of $W(\F_{p^2})$. 
We may assume that $x$ belongs to $\tilde{M}_1$ or $\tilde{M}_2$ since $\<\tilde{M}_i, \sfF (\tilde{M}_{i+1})\>=0$. 
We have that 
$\<x, \sfF(x)\>
=\<x, -\sfV(x)\>
=-\<\sfF(x), x\>^{\sigma'}=\<x, \sfF(x)\>^{\sigma'}$ and hence $\<x, \sfF(x)\>$ belongs to $W(\F_p)^{\times}$. 
Multiplying $x$ by an element of  $W(\F_{p^2})^{\times}$, we may further assume that $\<x, \sfF(x)\>=1$ or $-1$ according as  $x$ belongs to  $\tilde{M}_1$  or $\tilde{M}_2$, 
since the norm map $W(\F_{p^2})^{\times} \to W(\F_p)^{\times}, a \mapsto a a^{\sigma'}$ is surjective. 
Let $\tilde{N} \coloneqq W(\F_{p^2})x \oplus W(\F_{p^2})\sfF(x)$. 
Then $\tilde{M}=\tilde{N} \oplus \tilde{N}^{\perp}$ 
where 
$\tilde{N}^{\perp}$ denotes the dual of $\tilde{N}$ with respect to $\< \, , \, \>$. 
Furthermore one has  $\sfF=-\sfV$ on  $\tilde{N}^{\perp}$, and it follows that $\tilde{N}^{\perp} \simeq \D^{\oplus n-1}$. 
Moreover we have 
$\dim_k\big((\tilde{N} \cap \tilde{M}_{i+1})/\sfV(\tilde{N} \cap \tilde{M}_{i})\big)=1$ 
if $x \in \tilde{M}_i$. 

 By induction, 
 there exist elements $e_1, \ldots, e_r$ of  $\tilde{M}_1$ and  $f_{n-r}, \ldots, f_{n}$ of $\tilde{M}_2$ such that 
 $\tilde{M}_1$ is spanned by $e_1, \ldots, e_r, \sfF(f_{n-r}), \ldots, \sfF(f_n)$, $\tilde{M}_2$ is spanned by $\sfF(e_1), \ldots, \sfF(e_r),  f_{n-r}, \ldots, f_n$, and further  
 $\<e_i, \sfF(e_i)\>=\<\sfF(f_j), f_j\>=1$. 
\end{proof}
\subsubsection{}
Next we consider the case  $p$ is split in $E$. 
We start by recalling the definition of minimal $p$-divisible groups (without additional structure) 
introduced by Oort   \cite{Oort}.  
Let $a, b$ be  coprime non-negative integers. 
Let  $\mathbb M$ be the \dieu module with basis $e_1, e_2, \ldots, e_{a+b}$ over $W(k)$ on which 
the actions of $\sfF$ and $\sfV$ are  given by $\sfF(e_i) = e_{i+b}$ and $\sfV(e_i)=e_{i+a}$. 
Here we use the notation $e_{i+a+b}=pe_{i}$. 
We write $H_{a,b}$ for the 
the $p$-divisible group over $k$  corresponding to $\mathbb M$. 
Then $H_{a,b}$ has height $a+b$ and is isoclinic of slope ${b}/(a+b)$. 
We call a  $p$-divisible group $X$ over $k$  \emph{Oort-minimal} if $X$ is isomorphic to a product  $\prod_j H_{a_j, b_j}$ for some $a_j, b_j$. 
By \cite{Oort, Oort2}, $X$ is Oort-minimal if and only if it satisfies the property  that  
any $p$-divisible group $X'$ with $X'[p] \simeq X[p]$ is isomorphic to $X$ . 

Let $(A, \iota, \lambda, \bar{\eta}) \in \M_{\K}^e(k)$ and let 
$M$ be the \dieu module associated with   $(A[p^{\infty}], \iota, \lambda)$.  We set $m =\gcd(r,s)$, $r'=r/m$, $s'=s/m$, and $n'=n/m$. 

\begin{prop}\label{Msplit}
Assume that $p$ is split in $E$. 
Then there is a decomposition $M=(M')^{\oplus m}$ of $M$ into mutually orthogonal  copies of a  \dieu module $M'$ over $W(k)$  with  an $O_{E_p}$-action,  which has a  basis $e_1, \ldots, e_{n'}, f_1, \ldots, f_{n'}$ such that  
\begin{itemize}
\item[(i)]  it is a standard symplectic basis with respect to $\< \, , \, \>$, that is, 
\[\left\langle e_{i_1}, e_{i_2} \right\rangle=0=\left\langle f_{j_1}, f_{j_2} \right\rangle, \  \left\langle e_i, f_j \right\rangle=\delta_{ij}=-\left\langle f_j, e_i \right\rangle;\]
\item[(ii)] the elements 
$e_1, \ldots, e_{n'}$ span $M_1'$ and $f_1 \ldots, f_{n'}$ span $M_2'$ where $M'=M'_1 \oplus M'_2$ is  the decomposition induced by the action of $O_{E_p}$; 
\item[(iii)]
 the $\sigma$-linear (resp.~$\sigma^{-1}$-linear)  operator $\sfF$ (resp.~$\sfV$) is given by  
\begin{equation}\label{eq:Fsplit}
  \mathsf{F}=\left(\begin{matrix}
 & pI_{s'} &    & 
 \\  I_{r'} &   &   & 
\\ 
 &   &  & I_{s'}
\\
 &  &  pI_{r'} & 
\end{matrix}\right) \sigma, 
\quad 
\mathsf{V}=\left(\begin{matrix}
 & pI_{r'} &    & 
 \\  I_{s'} &   &   & 
\\ 
 &   &  & I_{r'}
\\
 &  &  pI_{s'} & 
\end{matrix}\right) \sigma^{-1}.  
\end{equation}
\end{itemize}
\end{prop}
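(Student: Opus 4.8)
The plan is to reduce everything to the Dieudonné module of a single $p$-divisible group and then invoke minimality to identify that group explicitly. Since $p$ is split in $E$, one has $O_{E_p}=O_E\otimes\Z_p\simeq\Z_p\times\Z_p$, and the two idempotents produce the decomposition $M=M_1\oplus M_2$ of \eqref{eq:i}, in which $\sfF$ and $\sfV$ preserve each $M_i$ while $\langle M_i,M_i\rangle=0$. Thus each $M_i$ is a Lagrangian Dieudonné submodule, and the perfect polarization restricts to a perfect $W(k)$-pairing $M_1\times M_2\to W(k)$ identifying $M_2$ with the dual Dieudonné module $M_1^{\vee}$; so the whole datum is determined by $M_1$, which is free of rank $n$ over $W(k)$ and, by \eqref{eq:i}, satisfies $\dim_k(M_1/\sfV M_1)=r$. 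Writing $X_1$ for the $p$-divisible group with covariant Dieudonné module $M_1$, of height $n$ and dimension $r$, the task becomes to pin down $X_1$.

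I expect the main obstacle to be showing $X_1\simeq H_{r',s'}^{\oplus m}$. On one side, $(A,\iota,\lambda,\bar\eta)$ lies on $\M_{\K}^e\subset\M_{\K}^{\mathrm{bas}}$ by Proposition~\ref{CL}(2), so the class of $(A[p^\infty],\iota,\lambda)$ in $B(G,\mu)$ is basic; since $p$ splits, its Newton point is central, which forces $X_1$ to be isoclinic, necessarily of slope $s/n$ because $\dim X_1=r$ and $X_1$ has height $n$. On the other side, by Proposition~\ref{CL}(1) the $p$-divisible group with $\mathscr D$-structure $(A[p^\infty],\iota,\lambda)$ is minimal in the sense recalled above (cf.~\cite[Remark~8.1]{VW}); I would unwind this through the product decomposition $A[p^\infty]\simeq X_1\times X_2$ with $X_2\simeq X_1^{t}$ and $O_{E_p}$ acting through the two idempotents, to conclude that any $p$-divisible group $X_1'$ with $X_1'[p]\simeq X_1[p]$ is isomorphic to $X_1$, i.e.~$X_1$ is Oort-minimal in the sense of \cite{Oort,Oort2}. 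An Oort-minimal $p$-divisible group is a product of the $H_{a_j,b_j}$, and the only one that is moreover isoclinic of slope $s/n$ (so $b_j/(a_j+b_j)=s'/n'$ with $\gcd(r',s')=1$) and of height $n$ is $H_{r',s'}^{\oplus m}$. I would also note that this identification is parallel to the superspecial statement invoked in the inert case and can presumably be extracted from \cite[Section~3.5]{Wooding}.

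Granting $M_1\simeq\mathbb M^{\oplus m}$, where $\mathbb M$ is the standard Dieudonné module of $H_{r',s'}$ with basis $g_1,\dots,g_{n'}$ and $\sfF g_i=g_{i+s'}$, $\sfV g_i=g_{i+r'}$ (indices mod $n'$, $g_{i+n'}=pg_i$), I would take for the $l$-th summand $e_1,\dots,e_{n'}$ equal to this basis and define $f_1,\dots,f_{n'}\in M_2$ by $\langle e_i,f_j\rangle=\delta_{ij}$ together with orthogonality to the $e_i$ of the other summands — possible and unique since $M_1\times M_2$ is perfectly paired. Because $O_{E_p}$ acts on $M_1$ and $M_2$ through scalars, every summand is $O_{E_p}$-stable, and (ii) together with the symplectic relations (i) hold by construction. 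Then, using the adjunction \eqref{adj} in the forms $\langle x,\sfF y\rangle=\langle\sfV x,y\rangle^{\sigma}$ and $\langle x,\sfV y\rangle=\langle\sfF x,y\rangle^{\sigma^{-1}}$, a direct computation with $\sfF g_i$ and $\sfV g_i$ shows that $\sfF f_j$ and $\sfV f_j$ again lie in the span of the $f_i$ of the same summand — so the summands $M^{(l)}:=\langle e_i\rangle\oplus\langle f_i\rangle$ are mutually orthogonal and stable under $\sfF$, $\sfV$ — and that within a summand $\sfF f_j=f_{j-r'}$ for $j>r'$ while $\sfF f_j=pf_{j+s'}$ for $j\le r'$, with the symmetric formulas for $\sfV$; matching these against the action on $g_1,\dots,g_{n'}$ reproduces precisely the block matrices \eqref{eq:Fsplit}. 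In short, after the nontrivial input $X_1\simeq H_{r',s'}^{\oplus m}$ the argument is a mechanical variant of the proof of Proposition~\ref{Minert}, with $H_{r',s'}^{\oplus m}$ replacing the superspecial module $\D^{\oplus n}$.
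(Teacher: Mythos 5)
Your proposal is correct and follows essentially the same route as the paper's proof: split $M=M_1\oplus M_2$ by the idempotents of $O_{E_p}$, use Proposition~\ref{CL} to see that $M_1$ is the Dieudonn\'e module of a basic (hence isoclinic) and minimal $p$-divisible group of height $n$ and dimension $r$, identify it with that of $H_{r',s'}^{\oplus m}$ via Oort's minimality theory, and then take the dual basis in $M_2$ and read off \eqref{eq:Fsplit} from the adjunction \eqref{adj}. The only minor divergence is how minimality is transferred to $M_1$: the paper projects the element $b$ to $\GL_n(L)$ and invokes \cite[Corollary 9.13]{VW}, whereas you double $X_1'$ to $X_1'\times (X_1')^t$ and apply the moduli-theoretic definition directly; both arguments are equivalent and equally valid.
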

\begin{proof}
Let $\Lambda_{W(k)}=\Lambda_1 \oplus  \Lambda_2$ be the decomposition induced by an action of $O_E \otimes W(k)=W(k) \oplus W(k)$.  Then  we have 
$G(L) \simeq \Gm(L) \times \GL(\Lambda_{1}[\frac{1}{p}])$. 
Furthermore there exists an isomorphism $M \simeq \Lambda_{W(k)}$ identifying  $M_1$ with $\Lambda_1$, under which $\sfF$ corresponds to an element $b$ of     $\Gm(L) \times \GL(\Lambda_{1}[\frac{1}{p}])$. 
Note that its  $G(L)$-$\sigma$-conjugacy  class $[b] \in B(G, \mu)$ is basic by  Proposition~\ref{CL}~(2).
On the other hand, one has  $\sfF(M_1), \sfV(M_1) \subset M_1$ by \eqref{eq:i},  and 
hence $M_1$ can be regarded as a \dieu module   without additional structure. 
We write $b_1 \in   \GL(\Lambda_{1}[\frac{1}{p}])$ for the element corresponding to the operator $\sfF\vert_{M_1}$ under the identification $M_1 \simeq \Lambda_2$. 
Then $b_1={\rm pr}_2(b)$ by the construction. 

Let $X$ be the $p$-divisible group  over $k$ corresponding  to the \dieu
 module $M_1$. 
By Proposition \ref{CL} (1), the element $b \in G(L)$ is minimal, and 
therefore $b_1$ is minimal as an element of $\GL(\Lambda_{1}[\frac{1}{p}])\simeq \GL_n(L)$. 
It follows from  \cite{Oort} (see also  \cite[Corollary 9.13]{VW}) that 
 $X$ is Oort-minimal.   
 Further, the above argument shows that   the   $\GL_n(L)$-$\sigma$-conjugacy class of $b_1$ is basic, and hence  $X$ is isoclinic. 
 In addition we have that  $\dim_{k}M_1/\sfV(M_1)=r$ and $\dim_k M_1/\sfF(M_1)=s$ 
 by \eqref{eq:i}. 
 These conditions imply that $X \simeq H^m_{r', s'}$, i.e., 
there is a decomposition 
 $M_1=(M'_1)^{\oplus m}$ of $M_1$ into a \dieu module $M'_1$ with a basis $e_i, \ldots, e_{n'}$ such that  $\sfF(e_i)=e_{i+s'}$  and $\sfV(e_i)=pe_{i+r'}$. 
% As a result $X$ has slope $s'/n'=s/n$ with multiplicity $n$. 
 
There exist  elements $f_1, \ldots, f_{n'}$ of $M_2$  satisfying condition (i) since $\< \, , \, \>$ is a  perfect pairing and $M_i^{\perp}=M_i$. 
 Let $M_2'$ be the submodule of $M_2$ spanned by $f_1, \ldots, f_{n'}$ and let  $M'=M_1' \oplus M_2'$, so that condition (ii) is  satisfied.  
% The operator $\sfV=p\sfF^{-1}$ on $M_1'$ is determined. 
% A simple computation 
 Using relation \eqref{adj} we determine the actions of $\sfF$ and $\sfV$ on $M_2'$ and obtain \eqref{eq:Fsplit}. 
%A $W(k)$-module equipped with 
%the above structure defines a  \dieu module  with $\mathscr D$-structure, denoted by $M'$. 
%By Wooding  \cite[Propositions 3.5.4 and 3.5.8]{Wooding}, 
%the covariant \dieu 
%module of the $p$-torsion group  corresponding to the stratum $\mathcal M_{\rm K}^{e}(k)$ is isomorphic to 
% $M /pM$ with additional structure.
%Therefore, 
%Proposition \ref{CL} (1) implies that $\mathbb D(\mathcal A_x) \simeq M$ as \dieu modules with   $\mathscr D$-structures. 
\end{proof}
By  \eqref{eq:Fsplit},  the abelian  variety $A$ over $k$ is superspecial (i.e.,  $\sfF M= \sfV M$) if and only if $r=s$. See also  \cite[Section~3.5.1]{Wooding}. 
\subsection{Maximal parahoric subgroups of  unitary groups}\label{mprh} 
\subsubsection{}
We briefly recall some facts about  maximal parahoric subgroups of unitary (similitude) groups   and of  inner forms of $\GL_n$. 
Our reference is Tits \cite{Tits}. 

First let $E_p/\Q_p$ be the unramified quadratic field extension. 
Let $\bfN=\bfN_{E_p/\Q_p}$ denote the norm map of $E_p/\Q_p$. The quotient group $\Q_p^{\times}/\mathbf N E_p^{\times}$ is of order two,  having $\{1, p\}$ as a system of representatives.  
The  determinant $d(\calV)$ of a  Hermitian  space $(\calV, \phi)$ takes value   in  $\Q_p^{\times}/\mathbf N E_p^{\times}$,  as  in Section ~\ref{sec:hermitian}. 

Let $H$ be the  Hermitian lattice  of rank two over $O_{E_p}$ having a basis $\{e_1, e_2\}$ with Gram matrix $\begin{pmatrix}
 & 
 1 
 \\ 
 1 &
\end{pmatrix}$, 
and 
let $\bbH \coloneqq H \otimes_{\Z_p} \Q_p$. 
Then
$d(\bbH)=-1= 1 \in \Q_p^{\times}/\mathbf N E_p^{\times}$. 
Let $(1)$ and $(p)$  denote  the lattices of rank one  with Gram matrix $(1)$ and $(p)$, respectively. Further, let $\bbV_1\coloneqq (1) \otimes _{\Z_p}\Q_p$ and $\bbV_p \coloneqq (p) \otimes _{\Z_p}\Q_p$. 

Let $(\calV, \phi)$ be a  non-degenerate Hermitian  space of dimension $n$ over $E_p$. 
Then there is an isomorphism  
\begin{align}\label{phi}
\calV \simeq 
\left\lbrace
\begin{array}{lll}
  \mathbb H^{(n-1)/2}
\oplus 
\bbV_{1}
&  
\text{if $n$ is odd,} &   d(\calV) =  1; 
\\ 
  \mathbb H^{(n-1)/2}
\oplus 
 \bbV_{p}
&   \text{if $n$ is odd,} &  d(\calV) =  p;
\\ 
  \mathbb H^{n/2} 
  &  \text{if $n$ is even,}  &  d(\calV) =  1; 
 \\ 
  \mathbb H^{ (n/2)-1}\oplus  
\bbV_{1} \oplus \bbV_{p}
&  \text{if $n$ is even,} &   d(\calV) = p.
\end{array}
\right.
\end{align}
Let $\calG=\GU(\calV, \phi)$ (resp.~$\calG^1={\rm U}(\calV, \phi)$)   denote the unitary similitude group (resp.~the unitary group) of $(\calV, \phi)$ over $\Q_p$. 
When $n$ is odd,  
the groups  $\calG$ of the above two 
spaces with $d(\calV)=1$ and  $p$ are isomorphic and unramified. 
When $n$ is even, the group $\calG$  is unramified  or not according as  $d(\calV) = 1$ or $p$. 
The same statements hold true for the groups $\calG^1$. 

Let $H(1)$ be the $O_{E_p}$-lattice in $\H$ spanned by the basis $\{e_1, pe_2\}$, whose   Gram matrix  is 
  $\begin{pmatrix}
 & p
 \\ 
 p  & 
\end{pmatrix}$.  

 Let $t$ be an integer such that $0 \leq t \leq  n$ and   $d(\calV)=p^t \in \Q_p^{\times}/\bfN(E_p^{\times})$. 
 Under the identification in \eqref{phi}, 
we define an $O_{E_p}$-lattice $\calL_t$ in $\calV$ by 
\begin{align}\label{L_t}
\calL_t \coloneqq  
\left\lbrace
 \begin{array}{llll}
 H^{ (n-t-1)/{2}}
 \oplus 
H(1)
 ^{t/2} 
 \oplus 
 (1)   
   &  {\text{if $n$ is odd}}, &  d(\calV)= 1 &  ({\text{$t$ is even}}); 
  \\ 
  H^{ (n-t)/{2}} \oplus 
H(1)^{(t-1)/{2}} 
\oplus 
(p) 
 &  {\text{if $n$ is odd}}, & d(\calV)= p & ({\text{$t$ is odd}}); 
\\
H^{ (n-t)/{2}} \oplus 
 H(1)^{t/{2}} 
 &  {\text{if $n$ is even}}, & d(\calV)= 1 &  ({\text{$t$ is even}});
 \\ 
 H^{ (n-t-1)/{2}}
 \oplus 
 H(1)^{(t-1)/{2}}
 \oplus 
 (1) 
  \oplus 
(p)  &  {\text{if $n$ is even}}, & d(\calV) = p &  ({\text{$t$ is odd}}).
\end{array} 
\right.
\end{align}
Note that $\calL_t$ is unimodular if and only if $d(\calV)=1$ and $t=0$. 
Conversely, any unimodular lattice $\Lambda$ of rank $n$ over $O_{E_p}$ satisfies $d(\Lambda \otimes_{\Z_p}\Q_p)=1$ and  $\Lambda \simeq \calL_0$. 

We define subgroups $\calP_t$ of $\calG(\Q_p)$ and $\calP_t^1$ of $\calG^1(\Q_p)$ by 
\[ \calP_t \coloneqq {\rm Stab}_{\calG(\Q_p)}\calL_t, \quad 
\calP_t^1\coloneqq \{g \in \calP_t \mid {\rm sim}(g)=1\}.\]
Then $\calP_t$ (resp.~$\calP_t^1$)  is a maximal  parahoric subgroup of $\calG(\Q_p)$ (resp.~$\calG^1(\Q_p)$). 
Further, any maximal parahoric subgroup of $\calG(\Q_p)$ (resp.~$\calG^1(\Q_p)$) is conjugate to a subgroup $\calP_t$ (resp.~$\calP_t^1$) for some $t$. 
 In Table \ref{table:special}, we give 
special and hyperspecial parahoric subgroups among $(\calP_t)_t$. 
\begin{table}[hbtp]
  \caption{Special and hyperspecial parahoric subgroups among  $(\calP_t)_t$}
  \label{table:special}
  \renewcommand{\arraystretch}{1.2}
  \centering
  \begin{tabular}{|c|c|c|c|}
    \hline
  $n$   & $d(\calV)$  &  special & hyperspecial  \\ 
    \hline 
     {\text{odd}} & 
     $1$ & $\calP_0$, $\calP_{n-1}$ & $\calP_0$  \\ 
     \hline
    \text odd & 
    $p$  
 & 
 $\calP_1$, $\calP_n$ & 
 $\calP_n$ 
    \\  
    \hline
    \text even  
    & $1$ 
& $\calP_0$, $\calP_n$ 
& $\calP_0$, $\calP_n$ 
 \\
 \hline
 \text even & 
 $p$ 
  & $\calP_1$, $\calP_{n-1}$ & \text none 
   \\
    \hline
  \end{tabular}
\end{table}

Let $\underline{\calP}_t$ (resp.~$\underline{\calP}^1_t$) denote the smooth model  of $\calP_t$ (resp.~$\calP_t^1$) over $\Z_p$,   and let $\overline{\calP}_t$ (resp.~$\overline{\calP}_t^1$) denote the maximal reductive quotient of the special fiber $\underline{\calP}_t\otimes \F_p$ (resp.~$\underline{\calP}^1_t \otimes \F_p$). 
  Then there are isomorphisms  of algebraic groups over $\F_p$
  \begin{align*}
  \overline{\calP}_t \simeq  {\rm G}({\rm U}_{n-t} \times {\rm U}_{t}), \quad \overline{\calP}_t^1 \simeq  {\rm U}_{n-t}\times {\rm U}_{t}.
  \end{align*}
  Here, by ${\rm G}({\rm U}_{n-t} \times {\rm U}_{t})$ we mean the group of pairs of matrices  $(g_1,  g_2) \in  \GU_{n-t} \times \GU_{t}$ 
having the same similitude factor.
\subsubsection{}\label{ss:B}  
Next let $m$ be a divisor of $n$ and  $n'\coloneqq n/m$. 
Let $B$ be a division algebra over $\Q_p$ of degree $n'^2$  and  
let  $O_B$ be the  unique maximal order of $B$.   
Then the algebraic  group $\calG^1\coloneqq \Res_{B/\Q_p} \GL_{m, B}$ is an inner form of the unramified group   $\GL_{n, \Q_p}$.  
The group $\calG^1(\Q_p)=\GL_m(B)$ 
contains a unique conjugacy class of maximal parahoric subgroup,  represented by   $\GL_m(O_B)$. 
In particular, any maximal parahoric subgroup of $\calG^1(\Q_p)$  is special. 
The maximal reductive quotient of the special fiber over $\F_p$ of its smooth model is isomorphic to  $\Res_{\F_q/\F_p}\GL_{m, \F_q}$, where $\F_q$ denotes the finite field of order  $q \coloneqq p^{n'}$.
\begin{comment}
\begin{proof}
Let $P \subset \GL_m(B)$ be a maximal parahoric  subgroup.  
Let $L$ be the submodule of $B^m$ generated by the lattices $g \cdot O_B^{m}$ for all $g \in P$. 
Then $L$ is an $O_B$-lattice in $B^m$ having an action of $P$. 
In fact, 
 the intersection $P \cap \GL_m(O_B)$ is of finite index in $P$ and hence the module $L$ is generated by the finite number of lattices $g \cdot O_B^m$. 
There is an element $h \in \GL_m(B)$ such that $h \cdot O_B^m=L$,  and we have 
 an 
 inclusion $P \subset \GL_{O_B}(L)=h \GL_m(O_B)h^{-1}$. 
Since  $P$ is maximal, this inclusion is an equality. 
\end{proof}
\end{comment}
\subsection{The group of automorphisms of a basic isocrystal}
%Basic Frobenius-twisted centralizers}
\subsubsection{}
For each $b \in G(L)$ we define 
 an algebraic group 
$J_b$ over $\Q_p$ with functor of points
\begin{equation}\label{def:J} 
J_b(R) 
= \{ g \in   G(L \otimes _{\Q_p}R ) \mid  g^{-1} b \sigma(g)=b\}
\end{equation} 
for any $\Q_p$-algebra $R$. 
Up to isomorphism, $J_b$ depends only on the $G(L)$-$\sigma$-conjugacy class $[b]$.
By \cite{Kottwitz85}, the class  $[b]$ is basic if and only if $J_b$ is an inner form of $G_{\Q_p}$. 

Let $(A, \iota,  {\lambda}, \bar{\eta})$ be a $k$-point of  $\M_{\K}$, and let  $(A[p^{\infty}], \iota,  {\lambda})$ be the associated $p$-divisible group  with $\mathscr D$-structure. 
Let  $(M, \sfF, \sfV)$ be the \dieu module of $(A[p^{\infty}], \iota,  {\lambda})$,  and  
let $M[1/p]=M\otimes_{W(k)} L$ be the associated isocrystal. 
Let $b$ be the element of  $G(L)$ corresponding to the operator $\sfF$ via an identification   $M\simeq \Lambda_{W(k)}$ as in Section ~\ref{div}. 
Then there is an isomorphism 
\begin{align*}
{J}_b(R) \simeq \{ (g, c)  \in \GL_{
(E \otimes_{\Q}L)
\otimes_{\Q_p}{R}}   (M[1/p] _R) \times R^{\times}  \mid  
(\sfF \otimes {\rm id}_{R}) g= g ({\sfF \otimes {\rm id}_R}), \  
\<gx, gy\>=c \<x,y\> \} 
\end{align*} 
for any   $\Q_p$-algebra $R$, where  $M[1/p] _R \coloneqq M[1/p] \otimes_{\Q_p} R$. 

%Then  $J$ is isomorphic to the  $\sigma$-centralizer $J_b$ of $b$ which will be   defined in  \eqref{def:J}. 

Assume now that $(A, \iota,  {\lambda}, \bar{\eta})$ is lying on the $0$-dimensional EO stratum $\M_{\K}^e$. 
Then the class $[b]$ is the unique basic element of  $B(G, \mu)$ by Proposition \ref{CL} (2). 
Let  %${\rm I}_p^e$ denote the  stabilizer of  $M$ in $J_b(\Q_p)$:
\begin{equation}\label{I_p^e}
    {\rm I}_p^e \coloneqq {\rm Stab}_{J_b(\Q_p)} M.
    \end{equation}

%Each element $g \in I(\Z_p)$ acts on the skeleton $M^*$. 

\begin{prop}\label{J_Minert}
{\rm (1)}
Assume that $p$ is inert in $E$. 
Let $(\calV, \phi)$ be  a Hermitian space over $E_p$ with $d(\calV) = p^s$. 
Then 
\[J_b \simeq \GU(\calV, \phi).\]
In particular,  $J_b$ is unramified if and only if  $rs$ is even. 

  The subgroup ${\rm I}_p^e$ of $J_b(\Q_p)$ is a maximal parahoric subgroup, which is   conjugate to the subgroup  $\calP_s$  defined in Section ~\ref{mprh}. 

{\rm (2)}
 Assume that $p$ is split in $E$. 
Let $m=\gcd(r,s)$ and $n'=n/m$.  
%Let $q=p^{u}$, and let $\Q_{q}$ be the unramified field extension of $\Q_p$ of degree $u$ with  ring of integers $\Z_{q}$.  
Let $B$  be a division algebra over $\Q_p$ of degree $n'^2$  with $\inv(B)={r}/{n}$. 
Then  
\begin{align*}
J_b \simeq \Res_{B/\Q_p}\GL_{m, B} \times \mathbb G_{{\rm m}, \Q_p}. 
\end{align*}

Let $O_B$ be the unique maximal order of $B$. 
Then the subgroup ${\rm I}_p^e$  is a maximal parahoric subgroup of $J_b(\Q_p)$  
conjugate to  $\GL_m(O_B) \times \Z_p^{\times}$,  under the identification $J_b(\Q_p) \simeq \GL_m(B) \times \Q_p^{\times}$.  
\end{prop}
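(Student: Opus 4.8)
\textbf{Proof proposal for Proposition~\ref{J_Minert}.}

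The plan is to compute $J_b$ directly from the explicit description of the Dieudonn\'e module $M$ obtained in Propositions~\ref{Minert} and~\ref{Msplit}, using the functor-of-points description of $J_b$ recalled above. In the inert case I would start from the standard symplectic basis $e_1,\dots,e_n,f_1,\dots,f_n$ of $\tilde M$ with $\sfF,\sfV$ given by \eqref{eq:Finert}. An element of $J_b(\Q_p)$ is a pair $(g,c)$ with $g$ an $(E\otimes_\Q L)$-linear automorphism of $M[1/p]$ commuting with $\sfF$ and satisfying $\<gx,gy\>=c\<x,y\>$. Since $\sfF=-\sfV$ and $\sfF^2=-p\,\sigma'^2$ acts as a scalar on the isocrystal, the $\sfF$-fixed condition amounts to descending $g$ to a $\Q_p$-structure: the $\Q_p$-vector space $\calV\coloneqq (M[1/p])^{\sfF^2}$ (more precisely, the slope-$1/2$ isocrystal structure) carries an $E_p$-action and the form $\<\ ,\ \>$ together with $\sfF$ induces a nondegenerate Hermitian form $\phi$ on $\calV$ over $E_p$, and $J_b=\GU(\calV,\phi)$. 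The only genuine content is identifying the invariant $d(\calV)\in\Q_p^\times/\bfN E_p^\times$: I would read it off from the Gram matrix of an $E_p$-basis of $\calV$ built from the $e_i$'s and $\sfF(f_j)$'s, and track the signs $-p$ versus $I_s$ appearing in \eqref{eq:Finert}; the upshot is that the anisotropic kernel is governed by the parity of $s$, so $d(\calV)=p^s$ and $J_b$ is unramified iff $s$ (equivalently $rs$) is even. The identification of $\rm I_p^e={\rm Stab}_{J_b(\Q_p)}M$ with the conjugate of $\calP_s$ then follows because $M$ is a self-dual-up-to-the-form $O_{E_p}$-lattice in $M[1/p]$, so it gives a vertex of the building; maximality of the parahoric is \cite{HZZ} or can be seen directly, and the precise index $t=s$ in the notation of \eqref{L_t} is forced by $d(\calV)=p^s$ together with the computation of the Jordan splitting of $M$ from Proposition~\ref{Minert}, via the classification of Section~\ref{mprh}.

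In the split case I would use the decomposition $E_p\otimes W(k)=W(k)\oplus W(k)$, which splits $M=M_1\oplus M_2$ with $\sfF,\sfV$ preserving each $M_i$ and $M_2=M_1^\perp$; thus the polarization identifies $M_2$ with the dual of $M_1$, and $\<\ ,\ \>$ contributes only the similitude factor. Hence $J_b(R)=\{(g,c)\}$ where $g$ is determined by its action $g_1$ on $M_1[1/p]$ commuting with $\sfF|_{M_1}$, and $c\in R^\times$ is free: this gives $J_b\simeq J_{b_1}\times\Gm$ where $J_{b_1}$ is the automorphism group of the isocrystal $(M_1[1/p],\sfF|_{M_1})$. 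By Proposition~\ref{Msplit}, $M_1\simeq\mathbb M^{\oplus m}$ with $\mathbb M$ the simple isocrystal of slope $s'/n'=s/n$, so its endomorphism algebra is $\Mat_m(B)$ with $B$ the central division algebra over $\Q_p$ of invariant $\inv(B)=s/n$; after possibly replacing $s/n$ by $r/n$ (depending on the covariant-vs-contravariant normalization, which I would pin down so that $\inv(B)=r/n$ as stated) this gives $J_{b_1}\simeq\Res_{B/\Q_p}\GL_{m,B}$, whence the claimed formula. The stabilizer of $M$ corresponds to the stabilizer of the $O_B$-lattice $M_1$, which is $\GL_m(O_B)$ up to conjugacy (by the standard fact, recalled in Section~\ref{ss:B}, that $\GL_m(B)$ has a single conjugacy class of maximal parahoric, represented by $\GL_m(O_B)$), and the $\Gm$-factor contributes $\Z_p^\times={\rm Stab}$ trivially.

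The main obstacle I anticipate is bookkeeping rather than conceptual: getting the Hermitian invariant $d(\calV)=p^s$ exactly right in the inert case, including the sign conventions in \eqref{eq:Finert} and the effect of the Frobenius twist when passing from $M[1/p]$ to its $\Q_p$-descent, and correspondingly fixing the normalization so that $\inv(B)=r/n$ (not $s/n$) in the split case. A secondary point requiring care is the claim that $\rm I_p^e$ is exactly a \emph{maximal} parahoric and matches $\calP_s$ (resp.\ $\GL_m(O_B)\times\Z_p^\times$) on the nose: for this I would invoke that $M$ is $O_{E_p}$-stable and self-dual for $\<\ ,\ \>$ hence defines a hyperspecial-or-special vertex, combine with the dimension/Jordan-type data from Propositions~\ref{Minert} and~\ref{Msplit}, and cite \cite{HZZ} for maximality of the volume; the remaining identification of the index $t$ is then a direct comparison with the list \eqref{L_t} and Table~\ref{table:special}.
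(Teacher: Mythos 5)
Your plan is essentially the paper's own proof: in the inert case the paper likewise works with the explicit basis of Proposition~\ref{Minert}, writes $g=\diag(g_1,g_2)$, and converts the commutation with $\sfF$ plus the symplectic similitude condition into the Hermitian similitude condition $(g_1^{\sigma'})^t\Psi g_1=c\,\Psi$ with $\Psi=\diag(1^r,(-p)^s)$, so that $d(\calV)=p^s$ and ${\rm I}_p^e\simeq \GU(\calL_s,\phi)(\Z_p)$ is conjugate to $\calP_s$ (maximality then comes from Section~\ref{mprh}, not from \cite{HZZ}); in the split case it likewise reduces to automorphisms of the minimal isocrystal $M_1'$ and quotes \cite[Lemmas 3.1 and 3.4]{Yu:end} for ${\rm I}_p^{e,1}\simeq O_B^\times$ and $\inv(B)=r/n$, then takes $m$-fold sums. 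Only cosmetic caveats: the literal fixed space $(M[1/p])^{\sfF^2}$ is zero (on $\tilde M[1/p]$ one has $\sfF^2=-p$), so phrase the descent via the commutant of $\sfF$ as you in fact compute it, and ``unramified iff $s$ even'' should read ``iff $rs$ even'', since for $n$ odd the group is unramified regardless of $d(\calV)$.
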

\begin{proof}
(1) 
As in Section \ref{model}, $M$ has a model $\tilde{M}$ over $W(\F_{p^2})$. 
By functoriality, we have 
\[ J_b(\Q_p)\simeq \{ (g, c)  \in \GL_{E  \otimes_{\Q}(W(\F_{p^2})[1/p])}  \tilde{M}[1/p]  \times \Q_p^{\times}  \mid  \sfF g= g {\sfF}, \  
\<gx, gy\>=c \<x,y\>
\}.\] 
 Let $(g, c)$ be an element of ${\rm I}_p^e \subset J_b(\Q_p)$. 
 Then $c \in \Z_p^{\times}$. 
 As in Proposition~\ref{Minert} we choose bases $e_1, \ldots, e_n $ and $f_1, \ldots, f_n$  of $\tilde{M}_1$ and $\tilde{M}_2$, respectively. 
 Since $g$ preserves  $\tilde{M}_1$ and $\tilde{M}_2$,  one can write $g=\diag(g_1, g_2)$  
where we regard $g_1, g_2$ as elements of   $\Mat_n(W(\F_{p^2}))$ via the fixed bases. 
Then we have 
\[\begin{pmatrix}
g_1^t & 
\\ 
& g_2^t
\end{pmatrix}
\begin{pmatrix}
 & I_n
 \\
 -I_n & 
\end{pmatrix}
\begin{pmatrix}
g_1 & 
\\
 & g_2
\end{pmatrix}
= c \cdot 
\begin{pmatrix}
 & I_n 
 \\ 
 -I_n & 
\end{pmatrix},\]
 %for an element $c \in \Z_p^{\times}$, 
 and hence 
$g_2^t \cdot g_1=c \cdot I_n$. 
Further, 
the relation $\sfF g=g \sf F$ and equality 
%gives 
%\begin{align*}
%\diag(g_1, g_2) \cdot {\sf F}  ={\sf F} \cdot  \diag(g_1^{\sigma}, g_2^{\sigma}). 
%\end{align*}
\eqref{eq:Finert} show 
 that 
\begin{align} 
\Psi \cdot  g_1^{\sigma'}=g_2 \cdot  \Psi, \quad \Psi \coloneqq \diag(1^r, (-p)^s), 
%g_2 \cdot \diag(
%1^r, p^s)  =\diag(1^r, p^s) 
%\cdot g_1^{\sigma}.
\label{eq1}
\end{align} 
where $\sigma'$ denotes the Frobenius morphism of $W(\F_{p^2})$.  
%This equality, together with the one obtained from it by taking $\sigma$ and the transpose, implies that  $g_1^{\sigma^2}=1$ and $g_1 \in \Mat_n(O_{E_p})$.

Now we identify $O_{E_p}$ with $W(\F_{p^2})$. 
%We note that for any integer $i$ there is an isomorphism 
% $\begin{pmatrix}
%p^i & 0
%\\ 
%0 & p^i
%\end{pmatrix} \simeq \begin{pmatrix}
%0 & p^i
%\\ 
%p^i & 0
%\end{pmatrix}$ 
% of rank-two Hermitian lattices over $O_{E_p}$, by \cite[Section ~7]{Jacobowitz}. 
Then  $\det(\Psi)=p^s$ as elements of  $\Q_p^{\times}/\bfN_{E_p/\Q_p}(E_p^{\times})$. 
Further, 
the Hermitian lattice of rank $n$ over $O_{E_p}$ defined by $\Psi$ is isomorphic to the lattice $(\calL_s, \phi)$ as  in \eqref{L_t}.  It follows that  \[ {\rm I}_p^e \simeq  \{(g_1, c)  \in \GL_n(O_{E_p}) \times \Z_p^{\times} \mid (g_1^ {\sigma'})^t \cdot \Psi  \cdot 
g_1 = c\cdot \Psi\} \simeq \GU(\calL_s, \phi)(\Z_p).\]

A similar argument can be applied to the module  $\tilde{M}[1/p]  \otimes_{\Q_p} R$ for any  $\Q_p$-algebra $R$, and hence 
\[J_b \simeq  \GU(\calL_s\otimes_{\Z_p}\Q_p, \phi)=\GU(\calV, \phi).\]

 (2) 
 Assume first that $r$ and $s$ are relatively prime.  We choose a basis of $M=M'=M'_1 \oplus M'_2$ over $W(k)$ as in Proposition \ref{Msplit}. 
Then for each element $(g, c)$ of ${\rm I}_p^e$ we can write $g=\diag(g_1, g_2)$ with $g_1, g_2 \in \Mat_n(W(k))$. 
Similarly to the previous case, 
we have 
  %\begin{align}\label{eq5}
  $g_2^t \cdot g_1 =c \cdot I_n$.
Further, equality \eqref{eq:Fsplit} shows  that 
\begin{align}\label{eq4}
g_i \left(\begin{matrix}
  & pI_s
 \\ I_r  &  
\end{matrix}\right)=
\left(\begin{matrix}
  & pI_s
 \\ I_r  &  
\end{matrix}\right)g_i^{\sigma},  \quad i=1,2.
\end{align}
  It follows that ${\rm I}_p^e \simeq {\rm I}_p^{e,1} \times \Z_p^{\times}$ where 
  \[ 
  {\rm I}_p^{e,1} = 
  \left\lbrace g_1 \in \GL_n(W(k))  \mid  g_1 \left(\begin{matrix}
  & pI_s
 \\ I_r &  
\end{matrix}\right)
=
\left(\begin{matrix}
  & pI_s
 \\ I_r &  
\end{matrix}\right)
g_1^{\sigma}\right\rbrace.\]
%Let $M'$ 
%$\bfM_{(r,s)}$ denote the  $\Z_p$-module generated by elements
%$e_i$  for $i \in \Z_{\geq 0}$  with relation $e_{i+n} = pe_i$, and 
%%defined by ${\sf F}e_i = e_{i+r}$. 
%We extend  the operator ${\sf F}$ to 
%$\bfM_{(r, s)} \otimes W$ 
 % by $\sigma$-linearly so that $\bfM_{(r, s)} \otimes W$ is a \dieu 
% module. 
The group ${\rm I}_p^{e,1}$ is therefore  isomorphic to the automorphism  group of the  \dieu module $M'_1$ without additional structure given in Proposition \ref{Msplit}. 
Recall that $M'_1$  corresponds to an Oort-minimal $p$-divisible group over $k$.  
Let $B$ be a division algebra of degree $n^2$ with $\inv B=\frac{r}{n}.$ 
From \cite[Lemmas 3.1 and 3.4]{Yu:end} it follows that $
{\rm I}_p^{e,1} \simeq  
  O_B^{\times}$ as groups and 
$J  \simeq \Res_{B/\Q_p}\mathbb G_{{\rm m}, B} \times \mathbb G_{{\rm m}, \Q_p}$ as algebraic groups over $\Q_p$.

For general $r$ and $s$, there is a decomposition $M=(M')^{\oplus m}$ and thus we see the assertion. 
\end{proof}
Let $[b] \in B(G, \mu)$ be as above and 
let $J_b^{\der}$ be the derived subgroup of $J_b$. 
The following lemma will be used to describe the  connected components of $\M^{\bas}_{\K}$ in Section \ref{sec:inner}. 
\begin{lemma}\label{cpt}
Assume $r, s > 0$. 
Then  $J_b^{\der}({\Q_p})$ is compact if and only if $p$ is inert in $E$ and $(r, s)=(1, 1)$, or $p$ is inert in $E$ and $\gcd(r, s)=1$. 
\end{lemma}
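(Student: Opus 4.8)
The plan is to read $J_b^{\der}$ off from the explicit description of $J_b$ in Proposition~\ref{J_Minert} and then apply the standard fact (see e.g.\ \cite{PR}) that for a connected semisimple group $H$ over $\Q_p$ the locally compact group $H(\Q_p)$ is compact if and only if $H$ is anisotropic over $\Q_p$, i.e.\ has $\Q_p$-rank $0$. Thus it suffices to decide in each of the two cases of Proposition~\ref{J_Minert} whether $J_b^{\der}$ is $\Q_p$-anisotropic; throughout I use the hypothesis $r,s>0$, which forces $n=r+s\ge 2$.

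\emph{The inert case.} By Proposition~\ref{J_Minert}(1) we have $J_b\simeq \GU(\calV,\phi)$ for a Hermitian space $(\calV,\phi)$ of dimension $n$ over the unramified quadratic extension $E_p/\Q_p$ with $d(\calV)=p^{s}\in\Q_p^{\times}/\bfN(E_p^{\times})$, hence $J_b^{\der}\simeq\SU(\calV,\phi)$, whose $\Q_p$-rank equals the Witt index of $(\calV,\phi)$. So $J_b^{\der}$ is anisotropic if and only if $(\calV,\phi)$ is anisotropic. If $n\ge 3$ then $\calV$ is isotropic by \cite[(3.1)]{Jacobowitz}, so $J_b^{\der}(\Q_p)$ is not compact. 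If $n=2$ then $(r,s)=(1,1)$, so $s$ is odd and $d(\calV)=p\ne 1$; since a rank-two Hermitian space over the unramified $E_p/\Q_p$ is isotropic exactly when its determinant is trivial (it is then the hyperbolic plane $\bbH$, for which $d(\bbH)=-1=1$), we get $\calV\simeq\bbV_1\oplus\bbV_p$, which is anisotropic by \cite[(3.1)]{Jacobowitz}, so $J_b^{\der}(\Q_p)$ is compact. Hence for $p$ inert, compactness holds precisely when $(r,s)=(1,1)$.

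\emph{The split case.} By Proposition~\ref{J_Minert}(2), $J_b\simeq(\Res_{B/\Q_p}\GL_{m,B})\times\Gm$ with $m=\gcd(r,s)$, $n'=n/m$, and $B$ a central division $\Q_p$-algebra of degree $n'^2$. Since $r,s>0$ one has $m\le\min(r,s)<n$, so $n'\ge 2$ and $B\ne\Q_p$. Then $J_b^{\der}\simeq(\Res_{B/\Q_p}\GL_{m,B})^{\der}$ is the reduced-norm-one subgroup of the unit group of the central simple $\Q_p$-algebra $M_m(B)$ (note $\deg M_m(B)=(mn')^2=n^2$); it is an inner form of $\SL_{n,\Q_p}$ of $\Q_p$-rank $m-1$. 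Equivalently, it is anisotropic if and only if $M_m(B)$ is a division algebra, which holds precisely when $m=1$ (for $m\ge 2$, $M_m(B)\supset M_m(\Q_p)$ contains nonzero nilpotents, hence $M_m(B)$ is not division). Hence for $p$ split, $J_b^{\der}(\Q_p)$ is compact precisely when $\gcd(r,s)=1$.

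Assembling the two cases gives the assertion: $J_b^{\der}(\Q_p)$ is compact if and only if $p$ is inert in $E$ and $(r,s)=(1,1)$, or $p$ is split in $E$ and $\gcd(r,s)=1$. I do not expect a real obstacle here, since the argument is bookkeeping on top of Proposition~\ref{J_Minert}; the only point requiring mild care is the rank-two inert case, where $r,s>0$ pins down $(r,s)=(1,1)$ and $d(\calV)=p$, and where one must invoke the classification and anisotropy criteria for Hermitian spaces over a $p$-adic field together with the standard description of the $\Q_p$-rank of an inner form of $\SL_n$.
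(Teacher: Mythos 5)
Your proof is correct and follows essentially the same route as the paper's: in the inert case you reduce to anisotropy of $(\calV,\phi)$ (isotropic for $n\ge 3$, anisotropic for $n=2$ since $d(\calV)=p$), and in the split case to compactness of the reduced-norm-one subgroup of $\GL_m(B)$, i.e.\ $m=\gcd(r,s)=1$; your added details on $\Q_p$-rank and on $M_m(B)$ not being division for $m\ge 2$ are just a more explicit form of the same argument. Note that the lemma as printed contains a typo (the second alternative should read ``$p$ is split in $E$''), and your proof, like the paper's, establishes the intended corrected statement.
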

\begin{proof}
 Assume first that $p$ is inert in $E$. 
 Then we have that $J_b^{\der} \simeq  \SU(\calV, \phi)$ for a Hermitian space $(\calV, \phi)$ of dimension $n\geq 2$. 
 When $n \geq 3$, 
the space $\calV$ is always isotropic and  $J_b^{\der}(\Q_p)$ is not compact. 
When $n=2$, namely $(r, s)=(1, 1)$, we have 
$d(\calV)=p$. 
Hence $\calV$ is anisotropic and 
 $J_b^{\der}(\Q_p)$ is compact. 
Next assume that $p$ is split in $E$. 
Then $J_b^{\der}({\Q_p})$ is isomorphic to  the group 
$\{ g \in  \GL_m(B) \mid {\rm nrd}(g)=1\}$, where ${\rm nrd}$ denotes the reduced norm on $\GL_m({B})$. This group   is compact if and only if $m =1$. 
\end{proof}
 \section{Irreducible and connected  components of  affine Deligne-Lusztig varieties}\label{sec:ADLV}
\subsection{Irreducible  components of ADLVs   for unramified  groups}
\subsubsection{}
%Let $\mathscr D$,   $p$  be as in Section ~\ref{moduli}.
Let $k$ be an algebraic closure of $\F_p$. 
Let $W(k)$ be the ring of Witt vectors over $k$ with Frobenius automorphism $\sigma$,  and let $L=W(k)[1/p]$ be  the fraction field of $W(k)$.  
Let $G$ be a connected reductive group scheme over $\Z_p$. Then $G_{\Q_p}$ is automatically unramified, and $G(\Z_p)$ is a hyperspecial subgroup of $G(\Q_p)$. 
%$S$ is a maximal split torus, 
%
We fix a maximal torus $T$ of $G$ and a Borel subgroup $B$ containing $T$. 
We use the same notations as those in Section \ref{div}. 
For a dominant coweight $\mu \in X_*(T)^{+}$ and an element $b \in G(L)$, we set  
\[X_{\mu}(b)(k)=\{ g \in G(L) /G(W(k)) \mid  g^{-1}b \sigma(g)\in G(W(k)) \mu(p)  G(W(k))\}. 
\]
We can identify 
 $X_{\mu}(b)(k)$ with  the set of $k$-points of a locally closed  subscheme $X_{\mu}(b)$ of the Witt vector partial  affine flag variety ${\rm Gr}_G$  constructed by Zhu \cite{Zhu} and Bhatt-Scholze \cite{BS}.  The scheme $X_{\mu}(b)$ is locally of perfectly finite type over $k$  \cite[Lemma 1.1]{HV}. 
We call 
$X_{\mu}(b)$ the \emph{affine Deligne-Lusztig variety}  associated to $(G, \mu, b)$. 
Note that affine Deligne-Lusztig varieties  $X_{\mu}(b)$  depends only on the $\sigma$-conjugacy class $[b] \in B(G)$ up to isomorphism. 
Further $X_{\mu}(b)$ 
 is non-empty if and only if $[b] \in B(G, \mu)$~\cite{Gashi}. 
 
 We write ${\rm Irr}(X_{\mu}(b))$ for the set of irreducible components of the affine Deligne-Lusztig variety $X_{\mu}(b)$, and ${\rm Irr}^{{\rm top}}(X_{\mu}(b))$ for the  set of those components which are top-dimensional. 
 It is conjectured that $X_{\mu}(b)$ is equi-dimensional (see  \cite[Theorem 3.4]{HV}). 
 However, it always  holds true  if  $\mu$ is minuscule. 
% The set $B(G, \mu)$ has a unique minimal element with respect to the natural partial order on $B(G, \mu)$, which we call the basic element in $B(G, \mu)$.  

%If $G$ is classical,  $\mu$ is minuscule, and either $p\neq 2$ or all simple factors of $G^{{\rm ad}}$ be of type $A$ or $C$, then $X_{\mu}(b)$ is equidimensional and hence ${\rm Irr}^{{\rm top}}(X_{\mu}(b))={\rm Irr}(X_{\mu}(b))$ by \cite[Theorem 3.4 (2)]{HV}.  

For each  $b \in G(L)$, let $J_b$ be an algebraic group  over $\Q_p$  as defined in \eqref{def:J}. 
%\begin{equation}\label{def:J}
%J_b(R)=\{ g\in G(L \otimes_{\Q_p} R) \mid
%g^{-1}b \sigma(g)=b\}
%\end{equation}
%for any $\Q_p$-algebra $R$. 
Then $J_b(\Q_p)$  acts on the set $X_{\mu}(b)$ by left multiplication. 
This action induces an action of $J_b(\Q_p) $ on ${\rm Irr}^{\rm top}(X_{\mu}(b))$. 

\subsubsection{} 
Let $X_*(T)^{\sigma}$ and  $X_*(T)_{\sigma}$ denote the $\sigma$-invariants and coinvariants of $X_*(T)$, respectively.  
%Then 
%the map $X_*(T) \to X_*(T),  \lambda \mapsto \sum_{i=0}^m \sigma^i(\lambda)$ factors through 
% a map $X_*(T)_{\sigma}  \to X_*(T)^{\sigma}$. 
For each $\lambda \in X_*(T)$, let $\lambda^{\natural}$ denote its image in $\pi_1(G)_{\sigma}$,  $\underline{\lambda}$ denote its image in $X_*(T)_{\sigma}$, and  $\lambda^{\diamond}$ denote its $\sigma$-average in $X_*(T)_{\Q}$. 
Then we have a canonical isomorphism $X_*(T)_{\sigma,\Q} \xrightarrow{\sim} X_*(T)^{\sigma}_{\Q}$ where  $\underline{\lambda}\mapsto \lambda^{\diamond}$. 
%Let $\lambda, \lambda' \in X_*(T)_{\sigma, \Q}$. 

Now we fix  $\mu \in X_*(T)^{+}$ and  $[b] \in B(G, \mu)$.
Let $\kappa_G(b) \in \pi_1(G)_{\sigma}$ denote the Kottwitz point and $\nu_G(b) \in X_*(T)_{\Q}^{+, \sigma}$  denote  the Newton point. 
Hamacher and Viehmann \cite[Lemma/Definition 2.1]{HV} proved that 
%and reformulated as in 
%proved 
%that 
the set 
\[ \{ 
\underline{\lambda} \in X_{*}(T)_{\sigma} \mid  \underline{\lambda}^{\natural}=\kappa_{G}(b), \  \underline{\lambda}^{\diamond} \leq \nu_{G}(b)\} 
\]
has a unique maximum  element $\underline{\lambda}_G(b)$ characterized by the property that  $\underline{\lambda}_G(b)^{\natural}=\kappa_G(b)$ and that $\nu_G(b)-\underline{\lambda}_G(b)^{\diamond}$ is equal to a linear combination of simple coroots with coefficients in $[0, 1) \cap \Q$. 
This element can be regarded as ``the best integral approximation" of the newton point $\nu_G(b)$. 

 Let $\widehat{G}$ be the Langlands dual of $G$ defined over $\overline{\Q}_{\ell}$ with $\ell \neq p$. 
 Let $\wh{B}$ be a Borel subgroup of $\wh{G}$ with maximal torus $\wh{T}$ such that there exists a bijection    $X_*(T)^+ \simeq X^*(\wh{T})^+$. 
%For any $\mu \in X_*(T)^{+}$, 
We write $V_{\mu}$ for the irreducible $\widehat{G}$-module of highest weight $\mu$. 
Let $V_{\mu}(\underline{\lambda}_G(b))$ be the sum of $\lambda$-weight spaces $V_{\mu}(\lambda)$ for $\lambda \in X_{*}(T)=X^*(\wh{T})$ satisfying  $\underline{\lambda}  = \underline{\lambda}_G(b)$. 

The following  theorem was  conjectured by Chen and X.~Zhu, and proved by  
Zhou-Y.~Zhu  and Nie. 
\begin{thm}[{\cite[Theorem A]{ZZ}, \cite[Theorem 4.10]{Nie}}]\label{MVcycle}
There exists a canonical bijection  between the set $J_b(\Q_p)\backslash {\rm Irr}^{{\rm top}}(X_{\mu}(b))$ and the  
Mirkovi\'{c}-Vilonen basis of $V_{\mu}(\underline{\lambda}_G(b))$ constructed in \cite{MV}. 
In particular, 
\[\lvert   J_b (\Q_p)\backslash {\rm Irr}^{{\rm top}} (X_{\mu}(b)) \rvert  = \dim V_{\mu}(\underline{\lambda}_G(b)).\]
\end{thm}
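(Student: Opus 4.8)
The statement is quoted as a theorem of Zhou--Y.~Zhu and Nie (building on the Chen--Zhu conjecture), so the ``proof'' I would present is really a summary of the architecture of their argument, assembled from the ingredients already in the excerpt. The plan is to reduce the count $\lvert J_b(\Q_p)\backslash \Irr^{\mathrm{top}}(X_\mu(b))\rvert$ to a statement about a weight space of $V_\mu$, via the geometry of the Witt vector affine Grassmannian $\mathrm{Gr}_G$ and geometric Satake. First I would recall the product formula of Hamacher--Viehmann and Zhou--Y.~Zhu that identifies the top-dimensional irreducible components of $X_\mu(b)$ inside those of the affine Deligne--Lusztig variety for a \emph{basic} (in fact central) cocharacter twist; concretely, one uses the semi-module / Ekedahl--Oort-type stratification to cut $X_\mu(b)$ into pieces indexed by $X_*(T)_\sigma$, and the element $\underline\lambda_G(b)$ of \cite[Lemma/Definition 2.1]{HV} picks out exactly the stratum carrying the top-dimensional components. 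The key point here is the ``best integral approximation'' characterization: $\underline\lambda_G(b)^\natural=\kappa_G(b)$ and $\nu_G(b)-\underline\lambda_G(b)^\diamond$ is a combination of simple coroots with coefficients in $[0,1)\cap\Q$, which forces the relevant Iwahori-double-coset count to localize on that single coinvariant class.

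Next I would invoke the compatibility of $X_\mu(b)$ with the Mirkovi\'c--Vilonen cycles under geometric Satake. Via the Witt-vector version of the geometric Satake equivalence (Zhu \cite{Zhu}), the intersection cohomology sheaf on the Schubert variety $\overline{\mathrm{Gr}_G^{\mu}}$ corresponds to $V_\mu$, and the MV cycles — the closures of the irreducible components of the intersections of $\mathrm{Gr}_G^\mu$ with semi-infinite orbits — furnish a canonical basis of $V_\mu$ compatible with the weight decomposition $V_\mu=\bigoplus_\lambda V_\mu(\lambda)$. The point of the theorem is that restricting to the affine Deligne--Lusztig locus and taking $J_b(\Q_p)$-orbits matches up precisely with restricting the MV basis to the weight spaces $V_\mu(\lambda)$ with $\underline\lambda=\underline\lambda_G(b)$; this is where the work of Nie \cite[Theorem 4.10]{Nie} and Zhou--Y.~Zhu \cite[Theorem A]{ZZ} enters, combining the $J_b(\Q_p)$-equivariant structure on $\Irr^{\mathrm{top}}(X_\mu(b))$ with a dimension count (the ADLV analogue of the dimension formula of Görtz--He--Nie / Hamacher) to see that the orbits are in bijection with exactly those MV basis vectors. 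The cardinality statement $\lvert J_b(\Q_p)\backslash\Irr^{\mathrm{top}}(X_\mu(b))\rvert=\dim V_\mu(\underline\lambda_G(b))$ then follows by summing the MV basis over the fibre of the map $\lambda\mapsto\underline\lambda$ hitting $\underline\lambda_G(b)$.

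The main obstacle — the part I would not attempt to reprove and would simply cite — is establishing that the $J_b(\Q_p)$-action on $\Irr^{\mathrm{top}}(X_\mu(b))$ has the ``right'' orbit structure, i.e. that each $J_b(\Q_p)$-orbit contributes exactly one MV basis element and no collapsing or splitting occurs. This requires the full strength of the Chen--Zhu-type comparison: a careful analysis of the Beauville--Laszlo / loop-group uniformization of $X_\mu(b)$, the identification of the transitive pieces of the $J_b(\Q_p)$-action with affine Deligne--Lusztig varieties for Levi subgroups (an induction on the semisimple rank via Hodge--Newton-type reductions), and the matching of the resulting combinatorics with the crystal/MV-cycle combinatorics of $V_\mu$. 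In this paper, however, $\mu$ is minuscule, so $V_\mu$ is a \emph{minuscule} representation: every weight of $V_\mu$ is Weyl-conjugate to $\mu$, each weight space is one-dimensional, and $X_\mu(b)$ is automatically equidimensional (so $\Irr^{\mathrm{top}}=\Irr$). Thus for the applications here the formula simplifies to $\lvert J_b(\Q_p)\backslash\Irr(X_\mu(b))\rvert=\#\{\lambda\in W\mu : \underline\lambda=\underline\lambda_G(b)\}$, and in the subsequent sections one only needs to make this cardinality explicit for the unitary groups $G$ at hand — which is a concrete combinatorial computation with the (twisted) Weyl orbit of $\mu$ rather than any further representation-theoretic input.
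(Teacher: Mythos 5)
Your proposal is correct and matches the paper's treatment: the paper does not reprove this result but cites it directly to Zhou--Zhu and Nie (noting the earlier cases of Xiao--Zhu and Hamacher--Viehmann), exactly as you propose to do, and the only part actually used later is the minuscule simplification $\dim V_\mu(\underline{\lambda}_G(b))=\#\{\lambda\in W\cdot\mu : \underline{\lambda}=\underline{\lambda}_G(b)\}$, which is the paper's equation \eqref{V_mu} and which you state correctly.
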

Note that this theorem  has first been shown by 
Xiao-X.~Zhu \cite{XZ}  for $[b]$ such that the  $\Q_p$-ranks of $J_b$ and $G$ coincide, and by  Hamacher-Viehmann \cite{HV} if  $\mu$ minuscule and either $G$ is split or $b$ is superbasic.

%An element $\lambda \in X_*(T)=X^*(\widehat{T})$ is a weight of the space $V_{\mu}$ if and
 % only if $\lambda\le \mu$ and $\lambda$ is contained in the convex
%  hull with vertices $W\cdot \mu$ in the affine subspace
 % $\mu+Q^\vee$.  

Let $W$ denote the absolute Weyl group of $G$. 
If $\mu$ is  minuscule, 
then  the dimension of  $V_{\mu}(\lambda)$ for a  $\lambda \in X_*(T)$  equals $1$  or $0$ according as $\lambda$ belongs to the $W$-orbit of $\mu$ or not. 
 Hence we have 
\begin{align}\label{V_mu} \dim V_{\mu}(\underline{\lambda}_G(b))
=\# ( W\cdot \mu \cap  \{\lambda \in X_{*}(T)  \mid \underline{\lambda} = \underline{\lambda}_G(b) \} 
). \end{align} 
\subsection{Irreducible components of ADLVs  for unitary   similitude groups}\label{ss:ur}
\subsubsection{}
Let $\mathscr D$, $p$, $\bfG$ be as in Section \ref{moduli}. We write $E_p =E\otimes _{\Q}\Q_p$, $V_p=V \otimes_{\Q}\Q_{p}$, and $\Lambda_p=\Lambda \otimes \Z_p$. 
Then $V_p$ is of rank $n$ over $E_p$ and equipped with a Hermitian form $\varphi$. 
Further, the $O_{E_p}$-lattice $\Lambda_p$ is unimodular.
In the rest of this section, let $G\coloneqq \bfG_{\Q_p} \simeq  \GU(V_p, \varphi)$. 
%Now let $(E, \bar{ \cdot})$ be an \'{e}tale quadratic algebra over $F=\Q_p$ with involution as in Section ~\ref{setting}. 
%We consider the following two cases:
%\begin{itemize}
 %   \item 
% $E/\Q_p$ is an \emph{unramified} quadratic field extension; \item 
 % $E=\Q_p \oplus \Q_p$. 
%  \end{itemize}
%Let $(V, \varphi)$ be a Hermitian space over $E$. 
%In the first case, we assume that $(V, \varphi)$ admits a \emph{unimodular} $O_E$-lattice. 
%Let $G=\GU(V, \varphi)$ be the  unitary similitude group  of $(V, \varphi)$, which is a reductive group over $\Q_p$. 

Assume first that $p$ is inert in $E$. 
%that is, 
% $E_p/\Q_p$ is the unramified quadratic field extension. 
For an $i\in I=\{1,\dots, n\}$,
write $i^\vee=n-i+1$. 
Let $\Phi$ denote the  anti-diagonal matrix whose $(i, i^{\vee})$-entry is $1$. 
By the classification of Hermitian spaces and lattices in \eqref{phi} and  \eqref{L_t},  the space $V_p$ has   a  basis $\{e_i,\dots, e_n\}$ over the field $E_p$  with Gram matrix $\Phi$. 
In other words, the Hermitian form $\varphi$ on $V_p$ is defined by  
$\varphi(e_{i}, e_{j^{\vee}})=\delta_{ij}$ for $i,j\in I$.
%Then $(V, \varphi)$ has a unimodular $O_E$-lattice $\Lambda= \bigoplus_{i=0}^n O_E e_i$. 
%Conversely, if a Hermitian space $(V, \varphi)$ over an unramified extension $E/\Q_p$ admits a unimodular $O_E$-lattice, then $V$ has a basis over $E$ with Gram matrix $\Phi$. 
%The group  is unramified. 
%See also Section ~\ref{mprh}. 

%Let  $\sigma_p  \in \Gal(E/\Qp)$ denote the  nontrivial involution $a\mapsto \bar a$. 
Fix an isomorphism $\End (V_p \otimes_{\Q_p}  \Qpbar)=\End(V_p)\otimes_{\Q_p} \Qpbar
%\Qpbar
\xrightarrow{\sim}  \Mat_n(\Qpbar)^2$ by $A\otimes \alpha\mapsto (A\alpha,
\bar A \alpha)$.  The matrix $\Phi\in \End(V_p)$ corresponds to $(\Phi,\Phi)$  and
we have 
\begin{align*}
G(\Qpbar)& 
=\left 
\{ 
(t_0, A,B) 
\in  \mathbb G_{{\rm {m}}}(\Qpbar) \times \GL_n(\Qpbar)^2 \mid (B^t,
A^t)(\Phi,\Phi)(A,B)
=(t_0\Phi,t_0\Phi) \right \} 
\\ 
  &
  =\left \{(t_0, A, B) \in  \mathbb G_{{\rm{m}}}(\Qpbar)
  \times \GL_n(\Qpbar)^2
   \mid  B=t_0 \Phi^{-1} (A^t)^{-1}
  \Phi\, \right \} 
  \\ 
  &
  \simeq  \mathbb G_{{\rm{m}}}(\Qpbar)  \times \GL_n(\Qpbar) 
  \quad 
  \text{(via the first and second  projections)}. 
 \end{align*}
 Let $\GmQpbar^n \subset \GL_n$ be  the diagonal maximal torus,  and 
 let  $T_{\Qpbar}=\GmQpbar \times
   \GmQpbar^n$ be a maximal torus of  $G _{\Qpbar} \simeq  \GmQpbar \times \GL_{n, \Qpbar}$.  
   Further, let 
  $B$ be the Borel subgroup of $G_{\Qpbar}$ given by the product of $\GmQpbar$ and the group of  upper triangular matrices in $\GL_{n, \Qpbar}$. 
 %\[\sigma (1, t_1,\ldots, t_n)=(1, t_n^{-1}, \ldots,
 % t_1^{-1}), \quad \sigma (t_0, 1, \ldots, 1)=(t_0 , t_0, \ldots t_0).\]
  Let $\varepsilon_0, \varepsilon_1, \dots, \varepsilon_n$ be the $\Z$-basis of $X^*(T)$ defined by $\varepsilon_i(t)=t_i$ for any $t=(t_0, t_1, \ldots, t_n) 
 \in  T_{\Qpbar}$. 
  Let $\varepsilon_i^{*}\in X_*(T)$ denote   the dual of $\varepsilon_i$. 
Then the  Frobenius $\sigma$  acts on $X_*(T)$ by 
\begin{equation}\label{eq:sigma}
    \sigma(\varepsilon_0^*)= \varepsilon_{0}^*+ \cdots + \varepsilon_n^*, \quad  \sigma(\varepsilon_i^*)=-\varepsilon^*_{i^\vee} \   \text{for}  \ 1 \leq i \leq n. 
    \end{equation}
The $\sigma$-average of a cocharacter $\lambda=\sum_{i=0}^n \lambda_i \varepsilon_i^* \in X_*(T)$ is 
\[ \lambda^{\diamond}=\frac{1}{2}(\lambda+\sigma \lambda)=\lambda_0\varepsilon_0^*+ \frac{1}{2}\{ (\lambda_0+ \lambda_1-\lambda_n)\varepsilon_1^*+\dots +
(\lambda_0+\lambda_n-\lambda_1)\varepsilon_n^*\}.  \]
  
The map  $X_*(T) \to \Z^2$ where   $\sum_{i=0}^n a_i \varepsilon_i^* \mapsto (a_0, \sum_{i=1}^n a_i)$  induces an identification 
\begin{equation}\label{eq:pi_1}
  \pi_{1}(G) = X_*(T)/\sum_{\alpha \in \Phi^+} \Z \alpha^* \xrightarrow{\sim}\Z^2. 
\end{equation}
Here, $\Phi^+$ denotes  the set of positive roots  corresponding to $B$,  and $\alpha^*$ denotes the coroot corresponding to $\alpha$. 
Under this identification,  we have that $\sigma(1, 0)=(1, n)$ and $\sigma(0,1)=(0,-1)$.   
Hence $(0, n)$ and $(0, 2)$ generate the submodule $(1-\sigma)\pi_1(G)$. Further,  $\pi_1(G)^{\sigma}$ 
is generated by either $(2, n)$ when $n$ is odd, or $(1, n/2)$ when $n$ is even. 
%Hence the set of $\sigma$-coinvariants $\pi_1(G)_{\sigma}$ is described as 
It follows that  
\begin{equation}\label{eq:pi1s}
 \pi_{1}(G)_{\sigma}
 %=\pi_1(G)/(1-\sigma)(\pi_1(G))
 \simeq \begin{cases}
\Z & {\text{if $n$ is odd}};
\\
\Z \oplus \Z/2\Z &  
{\text{if $n$ is even}}, 
\end{cases} 
\quad \text{and} \quad 
\pi_1(G)^{\sigma}\simeq  \Z.
\end{equation}

  The simple roots are   
  $\Delta=\{\varepsilon_1-\varepsilon_2,
  \dots,\varepsilon_{n-1}-\varepsilon_{n}\}$ and the  simple coroots are  
  $\Delta^{\vee}=\{\varepsilon_1^*-\varepsilon_2^*,
  \dots,\varepsilon_{n-1}^*-\varepsilon_{n}^*\}$.  
 Hence,  a cocharacter $\lambda=\sum_{i=0}^n \lambda_i \varepsilon_i^* \in X_*(T)$ is dominant if and
  only if $\lambda_i\ge \lambda_{i+1}$ for all $i \geq 1$.

Let $[\mu]$ be the conjugacy class of  the cocharacter $\mu_h$ defined by $\mathscr D$, regarded as a $W$-orbit in $X_*(T)$ as in  Section~\ref{div}. 
Let $\mu$ be the dominant  representative of $[\mu]$. 
Then $\mu=\sum_{i=0}^r  \varepsilon_i^*$ or $\sum_{i=0}^s  \varepsilon_i^*$.  
Now we assume that $\mu=\sum_{i=0}^r  \varepsilon_i^*$. 
This assumption will be justified by the fact that replacing $r$ with $s=n-r$ does not change the final result, see Proposition \ref{Orbits}.   
Let $[b]$ be the  basic $\sigma$-conjugacy class in  $B(G, \mu)$. 
The 
Kottwitz point $\kappa_G(b)$ is equal to  the image of $\mu$ in $\pi_1(G)_{\sigma}$, and hence 
\begin{align}\label{kgb}
\kappa_G(b)=
\left\{
\begin{aligned} 
& 1   &  & \in \Z &  & {\text{if $n$ is odd}}; 
\\ 
& (1, r \bmod 2)    & &  \in \Z\oplus \Z/2\Z & & {\text{if $n$ is even}}. 
\end{aligned}
\right.
\end{align}

The Newton point $\nu_G(b) \in X_{*}(T)_{\Q}$ satisfies that  
\begin{equation*}\label{average} 
\nu_G(b)\leq 
\mu^{\diamond} 
=\varepsilon_0^{*}
+\varepsilon_{1}^{*}
+\cdots + \varepsilon_{\min(r,n-r)}^*+
\frac{1}{2} \Big(\varepsilon_{\min(r,n-r)+1}^*+\cdots+ \varepsilon_{\max(r,n-r)} \Big). 
\end{equation*} 
Further,  $\nu_G(b)$ factors through the $\sigma$-invariant part of the center $Z_G$ of $G$ since $[b]$ is basic. 
The subgroup 
  $X_*(Z_G)$ of $X_*(T)$ 
  is generated by $\varepsilon_0^*$ and   $\varepsilon_1^*+\dots+\varepsilon_n^*$. 
  It follows that 
 \begin{align*}
\nu_{G}(b) & = \varepsilon_0^*+\frac{1}{2}(\varepsilon_1^*+ \cdots \varepsilon_n^*) = (\varepsilon_0^*)^{\diamond}  \in X_*(T)_{\Q}^{\sigma}. 
\end{align*}

\begin{lemma}\label{apx:1}
We define an element $\tilde{\lambda}$ of $X_*(T)$ by   
\begin{align*}
\tilde{\lambda}=
\begin{cases}
 \varepsilon_0^*+ \varepsilon_{n/2+1}^* & {\text{if $n$ is even and $r$ is odd}}; 
 \\
 \varepsilon_0^* & {\text{otherwise}}. 
\end{cases}
\end{align*}
Then we have an equality as elements of $X_*(T)_{\sigma}$:  
\[\underline{\lambda}_G(b) =\ul{\tilde{\lambda}}.\]
\end{lemma}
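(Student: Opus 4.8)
The plan is to verify directly that the image $\underline{\tilde\lambda}\in X_*(T)_\sigma$ of the cocharacter $\tilde\lambda$ satisfies the two conditions that characterise $\underline{\lambda}_G(b)$ in the Hamacher--Viehmann description recalled above, namely $\underline{\tilde\lambda}^\natural=\kappa_G(b)$ in $\pi_1(G)_\sigma$ and $\nu_G(b)-\tilde\lambda^\diamond$ being a linear combination of simple coroots with all coefficients in $[0,1)\cap\Q$ (here $\tilde\lambda^\diamond$ denotes the $\sigma$-average, which represents $\underline{\tilde\lambda}$ under $X_*(T)_{\sigma,\Q}\xrightarrow{\sim}X_*(T)^\sigma_\Q$). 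Once both are checked, the uniqueness clause of that description forces $\underline{\lambda}_G(b)=\underline{\tilde\lambda}$, which is the assertion.

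For the coroot condition I will compute $\tilde\lambda^\diamond=\tfrac12(\tilde\lambda+\sigma\tilde\lambda)$ from \eqref{eq:sigma}. In the case $\tilde\lambda=\varepsilon_0^*$ one gets $\tilde\lambda^\diamond=\varepsilon_0^*+\tfrac12(\varepsilon_1^*+\dots+\varepsilon_n^*)$, which is exactly the Newton point $\nu_G(b)$ computed just before the lemma; hence $\nu_G(b)-\tilde\lambda^\diamond=0$ and the condition holds trivially. In the remaining case, $n$ even and $r$ odd with $\tilde\lambda=\varepsilon_0^*+\varepsilon_{n/2+1}^*$, the relation $\sigma(\varepsilon_{n/2+1}^*)=-\varepsilon_{n/2}^*$ from \eqref{eq:sigma} gives $(\varepsilon_{n/2+1}^*)^\diamond=\tfrac12(\varepsilon_{n/2+1}^*-\varepsilon_{n/2}^*)$, so $\nu_G(b)-\tilde\lambda^\diamond=\tfrac12(\varepsilon_{n/2}^*-\varepsilon_{n/2+1}^*)$; since $\varepsilon_{n/2}^*-\varepsilon_{n/2+1}^*$ is one of the simple coroots in $\Delta^\vee$ and the coefficient $\tfrac12$ lies in $[0,1)$, the condition again holds.

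For the Kottwitz-point condition I will use the identification $\pi_1(G)\simeq\Z^2$ of \eqref{eq:pi_1} together with the computation of $(1-\sigma)\pi_1(G)$ and $\pi_1(G)_\sigma$ in \eqref{eq:pi1s}. The image of $\tilde\lambda$ in $\Z^2$ is $(1,0)$ in the case $\tilde\lambda=\varepsilon_0^*$ and $(1,1)$ when $n$ is even and $r$ is odd, while $\kappa_G(b)$ is the class of $(1,r)$ by \eqref{kgb}. When $n$ is odd, $(1-\sigma)\pi_1(G)$ contains $(0,1)$, so the classes of $(1,0)$ and $(1,r)$ coincide in $\pi_1(G)_\sigma\simeq\Z$. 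When $n$ is even, $(1-\sigma)\pi_1(G)=\langle(0,2)\rangle$, and in $\pi_1(G)_\sigma\simeq\Z\oplus\Z/2\Z$ the class of $(1,0)$ (resp. of $(1,1)$) equals that of $(1,r)$ precisely when $r$ is even (resp. odd), which matches exactly the case distinction in the statement. Thus $\underline{\tilde\lambda}^\natural=\kappa_G(b)$ in every case.

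Combining the two verifications and invoking uniqueness yields $\underline{\lambda}_G(b)=\underline{\tilde\lambda}$. The argument is essentially bookkeeping; the only place requiring a bit of care is getting $\sigma(\varepsilon_{n/2+1}^*)=-\varepsilon_{n/2}^*$ right and noticing that $\varepsilon_{n/2}^*-\varepsilon_{n/2+1}^*$ is genuinely a simple coroot (valid since $1\le n/2\le n-1$ whenever $n$ is even), rather than merely a positive coroot, so that the coefficient stays in the half-open interval $[0,1)$.
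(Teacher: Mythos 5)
Your proof is correct and follows essentially the same route as the paper: you verify the two properties characterizing $\underline{\lambda}_G(b)$ — that the image of $\tilde{\lambda}$ in $\pi_1(G)_{\sigma}$ equals $\kappa_G(b)$, and that $\nu_G(b)-\tilde{\lambda}^{\diamond}$ is a combination of simple coroots with coefficients in $[0,1)\cap\Q$ — and conclude by the uniqueness clause of the Hamacher--Viehmann characterization, exactly as the paper does. You additionally spell out the $\pi_1(G)_{\sigma}$ bookkeeping that the paper merely asserts via \eqref{kgb}, and your expression $\tfrac12(\varepsilon_{n/2}^*-\varepsilon_{n/2+1}^*)$ is the correct one (the paper's displayed $\varepsilon_{n/2}^*-\varepsilon_{n/2-1}^*$ is a typo, as it is not a nonnegative combination of simple coroots).
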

\begin{proof}
%So $\lambda\in \Lambda^{{\rm Tate}_p}$ if and only if
%$\lambda_i=\lambda_{i^\vee}$ for all $i$ and  
%${\rm rk} \Lambda^{{\rm Tate}_p}=[(n+1)/2]$.
%   by $\epsilon_i-\epsilon_{i^\vee}$ for $i=1,\dots, r=[n/2].$
The image of $\tilde{\lambda}$ in  $\pi_1(G)_{\sigma}$ is equal to the Kottwitz point $\kappa_G(b)$ by \eqref{kgb}. 
Further,  
\begin{align*}
\nu_G(b)- \tilde{\lambda}^{\diamond}= \begin{cases} -
(\varepsilon_{n/2+1}^*)^{\diamond} 
\\ 
0
\end{cases} \! \! \! \! \!
= \ \begin{cases}
\frac{1}{2}(
\varepsilon_{n/2}^*-\varepsilon_{n/2-1}^*) & \text{if $n$ is even and  $r$ is  odd}; 
\\
0 & \text{otherwise}, 
\end{cases}
\end{align*}
and  $\varepsilon_{n/2}^*-\varepsilon_{n/2-1}^*$ is a simple coroot. 
 The characterization of   $\ul{\lambda}_G(b)$ implies the assertion. 
\end{proof}
 \begin{prop}\label{Orbits}
We have that 
\begin{align*}
\lvert  J_b(\Q_p)\backslash {\rm Irr}(X_{\mu}(b)) \rvert =
\begin{cases}
\begin{pmatrix}
  n/2 -1 
 \\ 
 (r-1)/2 
\end{pmatrix} &  \text{if  $n$ is even and $r$ is odd};
\\
\begin{pmatrix}
\lfloor n/2 \rfloor 
\\ 
\lfloor r/2 \rfloor 
\end{pmatrix} & \text{if  $n$ is odd or $r$ is even}.
\end{cases}
\end{align*}
\end{prop}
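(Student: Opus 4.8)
The plan is to translate the cardinality on the left into a count of elements of the single Weyl orbit $W\cdot\mu$ lying in a prescribed coset of $X_*(T)_\sigma$, and then to perform that count explicitly. Since $\mu$ is minuscule, $X_\mu(b)$ is equidimensional, so ${\rm Irr}(X_\mu(b))={\rm Irr}^{\rm top}(X_\mu(b))$; hence Theorem~\ref{MVcycle} together with \eqref{V_mu} gives
\[
\lvert J_b(\Q_p)\backslash {\rm Irr}(X_\mu(b))\rvert
=\dim V_\mu(\underline{\lambda}_G(b))
=\#\bigl(W\cdot\mu\cap\{\lambda\in X_*(T)\mid \underline{\lambda}=\underline{\lambda}_G(b)\}\bigr),
\]
and by Lemma~\ref{apx:1} one may replace $\underline{\lambda}_G(b)$ by $\underline{\tilde\lambda}$. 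The absolute Weyl group $W$ is the symmetric group permuting $\varepsilon_1^*,\dots,\varepsilon_n^*$ and fixing $\varepsilon_0^*$, so $W\cdot\mu=\{\mu_S:=\varepsilon_0^*+\sum_{i\in S}\varepsilon_i^*\mid S\subseteq\{1,\dots,n\},\ \lvert S\rvert=r\}$, and the task reduces to counting the $r$-subsets $S$ with $\underline{\mu_S}=\underline{\tilde\lambda}$ in $X_*(T)_\sigma$.

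Next I would make $X_*(T)_\sigma$ explicit from \eqref{eq:sigma}. The submodule $(1-\sigma)X_*(T)$ is generated by $(1-\sigma)\varepsilon_0^*=-(\varepsilon_1^*+\dots+\varepsilon_n^*)$ and $(1-\sigma)\varepsilon_i^*=\varepsilon_i^*+\varepsilon_{i^\vee}^*$ for $1\le i\le n$. A short computation shows that, modulo the pairwise relations $\underline{\varepsilon_{i^\vee}^*}=-\underline{\varepsilon_i^*}$, the relation coming from $\varepsilon_0^*$ is automatic when $n$ is even, while for $n$ odd it is equivalent to the extra relation $\underline{\varepsilon_{(n+1)/2}^*}=0$. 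In either case $X_*(T)_\sigma$ is free on the images of $\varepsilon_0^*,\varepsilon_1^*,\dots,\varepsilon_{\lfloor n/2\rfloor}^*$, with $\underline{\varepsilon_j^*}=-\underline{\varepsilon_{j^\vee}^*}$ for $j>\lfloor n/2\rfloor$ (and $\underline{\varepsilon_{(n+1)/2}^*}=0$ when $n$ is odd). Writing $S=S_-\sqcup S_+$ with $S_-=S\cap\{1,\dots,\lfloor n/2\rfloor\}$, isolating whether the middle index $(n+1)/2$ belongs to $S$ when $n$ is odd, and putting $S_+'=\{i^\vee:i\in S_+\}\subseteq\{1,\dots,\lfloor n/2\rfloor\}$, one finds that $\underline{\mu_S}$ has $\underline{\varepsilon_0^*}$-coordinate $1$ and $\underline{\varepsilon_j^*}$-coordinate $\mathbf 1_{S_-}(j)-\mathbf 1_{S_+'}(j)$ for $1\le j\le\lfloor n/2\rfloor$.

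Finally I would impose $\underline{\mu_S}=\underline{\tilde\lambda}$ and count. If $n$ is odd or $r$ is even, then $\tilde\lambda=\varepsilon_0^*$, so one needs $S_-=S_+'$; writing $t=\lvert S_-\rvert$ gives $\lvert S\rvert=2t$ when $n$ is even (forcing $t=r/2$), and $\lvert S\rvert=2t$ or $2t+1$ when $n$ is odd according to the parity of $r$, and in all cases the admissible $S$ are counted by $\binom{\lfloor n/2\rfloor}{\lfloor r/2\rfloor}$. If $n$ is even and $r$ is odd, then $\tilde\lambda=\varepsilon_0^*+\varepsilon_{n/2+1}^*$, so $\underline{\tilde\lambda}=\underline{\varepsilon_0^*}-\underline{\varepsilon_{n/2}^*}$; this forces $n/2+1\in S$, $n/2\notin S$, and $S_-\cap\{1,\dots,n/2-1\}=S_+'\cap\{1,\dots,n/2-1\}$, so with $t$ the cardinality of this common set one gets $\lvert S\rvert=2t+1$ and the count is $\binom{n/2-1}{(r-1)/2}$. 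This gives the asserted formula, and the identity $\binom{m}{j}=\binom{m}{m-j}$ makes transparent the symmetry under $r\leftrightarrow n-r$ invoked when the dominant representative $\mu$ was chosen. The one genuinely delicate point is the explicit description of $X_*(T)_\sigma$ in the second paragraph — in particular noticing that the $\varepsilon_0^*$-relation is redundant for $n$ even but forces the vanishing of the central coordinate for $n$ odd, which is exactly what splits the answer into the two different binomial coefficients; the rest is bookkeeping.
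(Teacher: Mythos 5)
Your proof is correct and follows essentially the same route as the paper: both reduce, via Theorem~\ref{MVcycle}, equality \eqref{V_mu} and Lemma~\ref{apx:1}, to counting the elements of $W\cdot\mu$ whose image in $X_*(T)_\sigma$ equals $\underline{\tilde\lambda}$, and then carry out that count combinatorially. The paper does the count by exhibiting a bijection with subsets $\calI\subset\{1,\dots,\lfloor n/2\rfloor\}$ (resp.\ $\{1,\dots,n/2-1\}$) of size $\lfloor r/2\rfloor$, which is the same bookkeeping you perform via your explicit basis of the coinvariants $X_*(T)_\sigma$.
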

\begin{proof}
We consider the following  collections of indices:
\begin{align}\label{ind}
 \calI \subset \begin{cases} 
 \{1, \ldots, n/2  -1 \}   &  {\text{if $n$ is even and $r$ is odd}};
 \\ 
 \{ 
 1, \ldots, \lfloor  n/2 \rfloor \}   &  \text{otherwise},
 \end{cases}
 \quad \text{and} \quad 
 \lvert \calI \rvert =\left\lfloor {r}/{2} \right\rfloor.
 \end{align}
  The number of such collections $\calI$ is equal to
  $\begin{pmatrix}
  n/2  -1 
 \\ 
 (r-1)/2 
\end{pmatrix}$ if $n$ is even and $r$ is 
odd, and to 
$
\begin{pmatrix}
\lfloor n/2 \rfloor 
\\ 
\lfloor r/2 \rfloor 
\end{pmatrix}$ otherwise. 

The $W$-orbits of $\mu$ are 
$\{ \varepsilon_0+\sum_{i \in \calJ}
 \varepsilon_{i}^* \in X_*(T)  \mid \calJ \subset \{1, \ldots, n\}, \lvert \calJ \rvert=r\}$. 
It follows from Lemma~\ref{apx:1}  that 
the assignment $\calI \mapsto \tilde{\lambda}+ 
\sum_{i \in \calI}
(\varepsilon_{i}^*+ \varepsilon_{i^{\vee}}^*) \in X_*(T)$  induces  a  bijection from the set of collections $\calI$  satisfying (\ref{ind}) to the intersection   $ 
 W\cdot \mu \cap  \{\lambda \in X_{*}(T)  \mid \ul{\lambda}  = \underline{\lambda}_G(b) \}$. 
The assertion thus  follows from  Theorem \ref{MVcycle} and equality \eqref{V_mu}.
\end{proof}
%\subsection{Unitary similitude groups for  $E=\Q_p \oplus \Q_p$} 
\subsubsection{} 
Next we assume that $p$ is split in $E$. Then $E_p=\Q_p \oplus \Q_p$. 
There is  an isomorphism $G\simeq 
\bbG_{{\rm m},\Q_p} \times \GL_{n, \Q_p}$ over   $\Q_p$, and in particular $G$ is split.  
%Its dual group $\widehat{G}$ is $\GL(n)$. 
We fix a maximal torus  $T$ and a Borel subgroup $B$ of $G$  as in the previous case.  Let $\mu \in X_*(T)^{+}$ be the coweight defined by $\mathscr D$, 
and 
let $[b]$ be the basic $\sigma$-conjugacy class in $B(G, \mu)$. 
%be 
%
 %The  set of simple coroots is  $\Delta^{\vee}=\{\varepsilon^*_1-\varepsilon_2^*, \ldots, \varepsilon_{n-1}^*-\varepsilon_{n}^*\}$. 
 \begin{prop}\label{Orbits:2}
% Let $r\geq 0$ be an integer and 
%let  $\mu=\sum_{i=0}^r  \varepsilon_i^* \in X_*(T)$ be a dominant minuscule coweight. 
 %Let  $\mu \in X_*(T)$ be a dominant minuscule coweight and 
 %Let $[b] \in B(G, \mu)$ be the  basic $\sigma$-conjugacy class. 
We have that 
\begin{align*}
\lvert  J_b(\Q_p)\backslash {\rm Irr}(X_{\mu}(b)) \rvert =
1.
\end{align*}
\end{prop}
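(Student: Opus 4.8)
The plan is to reduce the split case to the general theory of Section~\ref{ss:ur} (which gives $\lvert J_b(\Q_p)\backslash \Irr(X_\mu(b))\rvert = \dim V_\mu(\underline{\lambda}_G(b))$ via Theorem~\ref{MVcycle}) and then compute this dimension directly. Since $p$ is split, $G \simeq \Gm \times \GL_n$ over $\Q_p$ and in particular $G$ is split, so $\sigma$ acts trivially on $X_*(T)$; hence $X_*(T)_\sigma = X_*(T)$, the averaging operation $\lambda \mapsto \lambda^\diamond$ is the identity, and $\underline{\lambda}_G(b)$ is simply the Newton point $\nu_G(b)$ itself (the ``best integral approximation'' is exact because $\nu_G(b)$ is already integral in the $\GL_n$-factor as we will see). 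First I would write down $\mu$ explicitly: under the identification $G_{\Qpbar} \simeq \GmQpbar \times \GL_{n,\Qpbar}$ the coweight $\mu_h$ is (up to conjugacy) $\mu = \varepsilon_0^* + \varepsilon_1^* + \cdots + \varepsilon_r^*$, i.e. the $\GL_n$-part is the minuscule coweight $(1^r, 0^s)$ of $\GL_n$.

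Next I would identify the basic element $b$ and its Newton point. In the split case, as in the proof of Proposition~\ref{Msplit}, the operator $\sfF$ corresponds under $M_1 \simeq \Lambda_1$ to an element $b_1 \in \GL_n(L)$ whose $\sigma$-conjugacy class is basic in $B(\GL_n, (1^r,0^s))$; its Newton point is the constant slope $r/n$ in every coordinate. Combined with the central $\Gm$-factor, $\nu_G(b)$ is the central cocharacter $\varepsilon_0^* + \tfrac{r}{n}(\varepsilon_1^* + \cdots + \varepsilon_n^*)$. Then $\dim V_\mu(\underline{\lambda}_G(b)) = \#\big(W\cdot\mu \cap \{\lambda \in X_*(T) \mid \underline{\lambda} = \underline{\lambda}_G(b)\}\big)$ by \eqref{V_mu}; but since $\sigma$ acts trivially, $\underline{\lambda} = \lambda$, so this set is $W\cdot\mu \cap \{\nu_G(b)\}$. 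Because $\nu_G(b)$ is a single point of $X_*(T)_\Q$ and $\mu$ is minuscule, this intersection has at most one element, and it is nonempty precisely when $\nu_G(b) \in X_*(T)$ lies in $W\cdot\mu$ — which fails in general. So I would instead argue more carefully: the correct statement of Theorem~\ref{MVcycle} uses $V_\mu(\underline{\lambda}_G(b))$ as a weight space in the coinvariants $X_*(T)_\sigma$, and one checks that for $G$ split the relevant object is the single highest-weight-orbit MV-cycle, giving dimension $1$.

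The cleanest route avoids re-deriving the MV-basis count: I would invoke Proposition~\ref{J_Minert}(2), which already identifies $J_b \simeq \Res_{B/\Q_p}\GL_{m,B} \times \Gm$ with $B$ a division algebra of invariant $r/n$ and $m = \gcd(r,s)$, and in this situation $b$ (hence $b_1$) is \emph{superbasic} in each $\GL_{n'}$-block ($n' = n/m$) because $\gcd(r',s') = 1$. For superbasic $b$ with $\mu$ minuscule, the theorem of Hamacher–Viehmann \cite{HV} (noted right after Theorem~\ref{MVcycle}) applies, and the affine Deligne–Lusztig variety $X_\mu(b)$ for $\GL_{n'}$ with superbasic $b$ is known to be a single $J_b(\Q_p)$-orbit of points (it is $0$-dimensional, consisting of the lattices in one $J_b$-homothety class); the product over the $m$ blocks and the central $\Gm$ contribute nothing new, so $J_b(\Q_p)\backslash\Irr(X_\mu(b))$ is a single orbit. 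Hence the cardinality is $1$.

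The main obstacle will be pinning down the precise normalization: one must be careful that the coweight $\mu$ from the Shimura datum, after the identification $G_{\Qpbar}\simeq \GmQpbar\times\GL_{n,\Qpbar}$, really is minuscule in the $\GL_n$-factor and that the associated basic $b$ decomposes as an $m$-fold product of superbasic elements of $\GL_{n'}$ — this is exactly the content reused from the proof of Proposition~\ref{Msplit} and Proposition~\ref{J_Minert}(2), so the argument is short once those are in hand. A secondary point to verify is that $\Irr = \Irr^{\mathrm{top}}$ here, which holds because $\mu$ is minuscule so $X_\mu(b)$ is equidimensional (in fact $0$-dimensional in each superbasic block), and that taking the product over blocks and the extra split torus does not create additional $J_b(\Q_p)$-orbits of components — both are routine. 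I would therefore present the proof as: reduce to $\GL_{n'}$ with superbasic $b$ via Propositions~\ref{Msplit} and~\ref{J_Minert}(2), cite the Hamacher–Viehmann/Xiao–Zhu result for superbasic minuscule data, and conclude the orbit set is a singleton.
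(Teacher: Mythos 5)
Your proposal has genuine gaps on both of the routes you sketch. On the first route, the claim that $\underline{\lambda}_G(b)$ ``is simply the Newton point $\nu_G(b)$'' and that $\nu_G(b)$ is integral is false: the basic Newton point in the $\GL_n$-factor has all slopes equal to $r/n$, which is not integral unless $rs=0$, and $\underline{\lambda}_G(b)$ is by definition the \emph{best integral approximation}, which differs from $\nu_G(b)$ whenever $r/n\notin\Z$. You notice that your intersection $W\cdot\mu\cap\{\nu_G(b)\}$ would then be empty, but the repair you offer (``one checks that \dots the relevant object is the single highest-weight-orbit MV-cycle'') is exactly the point that needs an argument and is left unproved. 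The actual proof in the paper is the correct version of this route and needs none of your identification: since $p$ is split, $G$ is split and $\sigma$ acts trivially on $X_*(T)$, so the fibre of $X_*(T)\to X_*(T)_{\sigma}$ over $\underline{\lambda}_G(b)$ is the singleton $\{\underline{\lambda}_G(b)\}$; then \eqref{V_mu} gives $\dim V_{\mu}(\underline{\lambda}_G(b))=\#\bigl(W\cdot\mu\cap\{\underline{\lambda}_G(b)\}\bigr)\le 1$ because $\mu$ is minuscule, and equality holds since $X_{\mu}(b)\neq\emptyset$, so Theorem~\ref{MVcycle} yields the count $1$. No computation of $\underline{\lambda}_G(b)$ is required.

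Your ``cleanest route'' is also not sound as stated. First, for superbasic $b$ in $\GL_{n'}$ with minuscule $\mu'=(1^{r'},0^{s'})$ the variety $X_{\mu'}(b)$ is \emph{not} zero-dimensional in general: its dimension is $\tfrac{r's'}{2}-\tfrac{n'-1}{2}$, which is positive as soon as $\min(r',s')\ge 2$ (e.g.\ $(r',s')=(2,3)$), so the picture of ``lattices in one homothety class'' fails, and with it your justification of a single orbit of components. Second, when $m=\gcd(r,s)>1$ the basic element of $\GL_n$ is basic but \emph{not} superbasic, and $X_{\mu}(b)$ for $\GL_n$ does not decompose as a product of the block ADLVs (the lattices involved need not split along the blocks, and $J_b(\Q_p)\simeq\GL_m(B)$ is strictly larger than the product of the block groups), so ``the product over the $m$ blocks \dots contributes nothing new'' is unjustified. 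What saves you is only that $G$ is split, so the Hamacher--Viehmann/Zhou--Zhu/Nie theorem applies directly to $G$ itself, which is precisely the paper's argument; the detour through superbasic blocks and Propositions~\ref{Msplit} and~\ref{J_Minert}(2) neither applies as stated nor is needed.
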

\begin{proof}
Since the Frobenius $\sigma$ acts on $X_*(T)$ trivially, the set   
  $\{\lambda \in X_{*}(T)  \mid \ul{\lambda} =  \underline{\lambda}_G(b) \}$ is a singleton $\{\underline{\lambda}_G(b)\}$.  
  Hence the assertion follows from Theorem \ref{MVcycle} and equality \eqref{V_mu}.
\end{proof}
For a description of  $\underline{\lambda}_G(b)$ in this case,  see  {\cite[Example 2.3]{HV}}. 
%for completeness. 
 
% The map $X_*(T) \to \Z^2, \lambda=\sum_{i=0}^n {\lambda_i} \varepsilon_i^* \mapsto (\lambda_0, \sum_{i=1}^{n} \lambda_i)$ induces an identification $\pi_1(G) \simeq \Z^2$. 
%  The Kottwitz map $\kappa_{G} : G(L)\to \pi_1(G) \simeq \Z \times \Z$ is given by $(g, c) \mapsto ({\rm{val}}_p(\det g), {\rm{val}}_p(c))$, which is determined by the Newton point $\nu_G(b)$. 
 %The  Kottwitz point $\kappa_G(b)$ of the basic element $b$ equals $(1, s) \in \pi_1(G)$.  
% The Newton point is 
 % \begin{equation*} 
  %\nu_G(b)=\varepsilon_0^*+ \frac{s}{n} \sum_{i=1}^{n}\varepsilon^*_{i} \in X_{*}(T)_{\Q}.
 % \end{equation*}
% As in {\cite[Example 2.3]{HV}}, one has 
 % \[ 
 % \underline{\lambda}_G(b)=\varepsilon_0^*+ \sum_{i=2}^{n} \left( 
%  \left\lfloor \frac{si}{n} \right\rfloor - \left\lfloor \frac{s(i-1)}{n} \right\rfloor \right)  \varepsilon^*_i \in X_*(T).\]

 \subsection{Connected components of ADLVs for unitary similitude groups}
 %where $c_{\mu,b}$ is an element in $\pi_1(G)$ defined in \cite[]{}.
 \subsubsection{}
 Let $w : X_*(T) \to \pi_1(G)$ denote the canonical projection. 
 As in \cite{Kottwitz85}, there exists a map   $w_G : G(L) \to 
 \pi_1(G)$ sending an element $b \in G(\Z_p) \lambda(p) G(\Z_p) \subset G(L)$ for $\lambda \in X_*(T)$ to $w(\lambda)$. 
 For each $b \in G(L)$, the projection of  $w_G(b)$ to $\pi_1(G)_{\sigma}$ coincides with  $\kappa_G(b)$. 
  
 Let $\mu \in X_*(T)^+$ be the coweight  defined by $\mathscr D$,  and $[b]  \in B(G, \mu)$ be the basic $\sigma$-conjugacy class, as in Section \ref{ss:ur}. 
Then $\kappa_G(b)=\mu^{\natural}=w(\mu) \bmod (1-\sigma) \in \pi_1(G)_{\sigma}$. 
Hence 
 $w_G(b)-w(\mu)=(1- \sigma)(c_{b, \mu})$ 
 for an element $c_{b, \mu} \in \pi_1(G)$, whose $\pi_1(G)^{\sigma}$-coset is  uniquely determined by this relation. 
 
 We note that the adjoint group of $G$ is simple. 
 Further $\mu$ is minuscule and 
 the pair $(\mu, b)$ is Hodge-Newton indecomposable in the sense of \cite[Section~2.2.5]{CKV}. 
 By the work of Viehmann \cite{Viehmann} and Chen-Kisin-Viehmann  \cite[Theorem 1.1]{CKV},  the set $\pi_0(X_{\mu}(b))$  of connected components of $X_{\mu}(b)$ is   described as follows. 
 If  $rs >0$, then  $w_G$ induces  a bijection 
 \begin{equation}\label{eq:pi_0}
     \pi_0(X_{\mu}(b))  \simeq  c_{b, \mu} \pi_1(G)^{\sigma}. 
     \end{equation}
     If $rs=0$, then $X_{\mu}(b) \simeq G(\Q_p)/G(\Z_p)$ is discrete. 
  %   This was first proved by Viehmann \cite{Viehmann} for split groups, and then by  Chen-Kisin-Viehmann \cite{CKV} for unramified groups. 
  
Moreover, the group $J_b(\Q_p)$ acts transitively on $\pi_0(X_{\mu}(b))$ by   \cite[Theorem 1.2]{CKV}. 
 \begin{prop}\label{stab}
 Assume that $rs>0$. 
 
 {\rm (1)}
 There is a bijection
 \begin{align*}
     \pi_0(X_{\mu}(b)) \simeq 
     \begin{cases*}
     \Z & if $p$ is inert in $E$; 
     \\
     \Z \oplus \Z 
     & if $p$ is split in $E$. 
     \end{cases*}
 \end{align*}
 
 {\rm (2)} 
  As in Proposition  \ref{J_Minert}, we fix an   identification  \begin{align*}
   J_b(\Q_p)\simeq \begin{cases*}
   \GU(\calV, \phi)(\Q_p) & if   $p$ is inert in $E$;
   \\ 
     \GL_m(B) \times \Q_p^{\times} 
     & if $p$ is split in $E$. 
     \end{cases*}
     \end{align*}
     Let $J_b^0$ denote the stabilizer in $J_b(\Q_p)$ of a fixed connected component of $X_{\mu}(b)$. 
    Then 
 \[ J_b^0  \simeq  
 \begin{cases*}
 \{ g \in \GU(\calV, \phi)(\Q_p) \mid 
  v_p(\Sim g)=0\} & if $p$ is inert in $E$; 
  \\ 
  \{(g,c) \in \GL_m({B}) \times \Q_p^{\times}  \mid v_p({\rm nrd}(g))=v_p(c)=0\}
  & if $p$ is split in $E$,  
  \end{cases*}\]
  where $v_p$ denotes the $p$-adic valuation on $\Q_p$, and ${\rm nrd}$ denotes the reduced norm on $\GL_m({B})$. 
 \end{prop}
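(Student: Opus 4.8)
The plan is to prove both parts essentially simultaneously, since part (1) is the computation of $\pi_0(X_\mu(b))$ as an abelian group via the Viehmann--Chen-Kisin-Viehmann description, and part (2) is the identification of the stabilizer of one component under the transitive $J_b(\Q_p)$-action. For part (1), I would start from the bijection $\pi_0(X_\mu(b))\simeq c_{b,\mu}\,\pi_1(G)^\sigma$ of \eqref{eq:pi_0}, which applies because $rs>0$, $\mu$ is minuscule, the adjoint group of $G$ is simple, and $(\mu,b)$ is Hodge--Newton indecomposable (these hypotheses were checked in the text preceding the proposition). Then I would simply plug in the computation of $\pi_1(G)^\sigma$. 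In the inert case this is \eqref{eq:pi1s}: $\pi_1(G)^\sigma\simeq\Z$, so $\pi_0(X_\mu(b))$ is a torsor under $\Z$, hence in bijection with $\Z$. In the split case $G\simeq\Gm\times\GL_n$ is split, so $\sigma$ acts trivially on $\pi_1(G)=\Z^2$ and $\pi_1(G)^\sigma=\Z\oplus\Z$, giving the second line. (The coset $c_{b,\mu}\pi_1(G)^\sigma$ is a full $\pi_1(G)^\sigma$-torsor, so noncanonically it is just $\pi_1(G)^\sigma$ itself; I would be slightly careful to phrase the bijection as one of sets, not of groups.)

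For part (2), the key tool is the refinement that $w_G$ (equivalently $\kappa_{J_b}$, via the natural map $J_b(\Q_p)\to\pi_1(G)^\sigma$ used in \cite{CKV}) is equivariant for the $J_b(\Q_p)$-action on $\pi_0(X_\mu(b))$. Since the action is transitive, the stabilizer $J_b^0$ of a fixed component is exactly the kernel of the map $J_b(\Q_p)\to\pi_1(G)^\sigma\simeq\Z^{(2)}$ induced by $w_G$ (up to translating so that the chosen component lies over $0$). So the real content is to identify this homomorphism explicitly under the identification of $J_b$ furnished by Proposition~\ref{J_Minert}. In the inert case, $J_b\simeq\GU(\calV,\phi)$, and the map $J_b(\Q_p)\to\pi_1(G)^\sigma\simeq\Z$ should be identified with $g\mapsto v_p(\Sim g)$ — here I would use that $\pi_1(\GU)=\Z^2$ with $\sigma$ acting as in \eqref{eq:sigma}, so that $\pi_1(G)^\sigma$ is generated by $(2,n)$ (n odd) or $(1,n/2)$ (n even), and under either generator the composite of $w_G$ with $\kappa_G$ records the valuation of the similitude factor (the $\GL_n$-determinant part is killed modulo $(1-\sigma)$ precisely because $\det(g)\cdot\Sim(g)^{(1-n)/2}$ lands in norms, cf.\ the argument in Lemma~\ref{mult}). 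Hence $J_b^0=\{g:v_p(\Sim g)=0\}$. In the split case, $J_b\simeq\GL_m(B)\times\Gm$ with $\pi_1(G)^\sigma=\Z\oplus\Z$; the first $\Z$ is hit by the valuation of the reduced norm $\mathrm{nrd}$ on $\GL_m(B)$ (using $v_p(\mathrm{nrd})$ induces the Kottwitz map for $\Res_{B/\Q_p}\GL_{m,B}$, an inner form of $\GL_n$), and the second $\Z$ by $v_p$ of the $\Gm$-factor, giving $J_b^0=\{(g,c):v_p(\mathrm{nrd}(g))=v_p(c)=0\}$.

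The main obstacle I anticipate is the bookkeeping in the inert case: making precise the claim that the Kottwitz homomorphism $w_G:G(L)\to\pi_1(G)$, post-composed with projection to $\pi_1(G)^\sigma$ and then transported to $J_b$ via the inner twist, equals $g\mapsto v_p(\Sim g)$ — i.e.\ that the determinant coordinate genuinely drops out and the surviving coordinate is exactly the similitude valuation with the right normalization. This requires either (i) carefully tracking the explicit generators of $\pi_1(G)^\sigma$ from \eqref{eq:pi1s} against the valuations of $\det$ and $\Sim$, using the relation $\bfN(\det g)=\Sim(g)^n$ on $G$, or (ii) citing the general functoriality of Kottwitz maps for inner forms (e.g.\ \cite{Kottwitz85}, and as used in \cite{CKV}) to reduce to the abelianization $G^{\mathrm{ab}}=D$, where by \eqref{torus} the relevant character is $\Sim$ (n odd) or $\mathrm{pr}_2\circ f\circ\nu$ (n even) — both of which are visibly the similitude character after the identifications already set up in Section~\ref{tori}. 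I would take route (ii), as it keeps the proof short and leverages \eqref{torus} and \eqref{eq:nu_ell} directly; the split case is then an entirely parallel, and easier, application since $G$ is already split and $D\simeq\Gm\times\Gm$ with the two coordinates being $\det$ and $\Sim$.
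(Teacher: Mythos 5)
Your proposal is correct, and its skeleton matches the paper's: part (1) is exactly the paper's argument (the bijection \eqref{eq:pi_0} from \cite{CKV} combined with \eqref{eq:pi1s} in the inert case, and triviality of the $\sigma$-action on $\pi_1(G)\simeq\Z^2$ in the split case), and for part (2) both you and the paper identify $J_b^0$ with the kernel of $w_G$ restricted to $J_b(\Q_p)$, using that the parametrization of $\pi_0(X_\mu(b))$ is induced by $w_G$ and left multiplication by $j$ shifts it by $w_G(j)$. The only divergence is how that kernel is computed: the paper takes what you call route (i) — it writes $G(L)\simeq \GL_L(V_1)\times L^{\times}$, observes that $J_b(\Q_p)$ embeds as $g\mapsto (g,\Sim(g))$, so that $w_G(g)=(v_p(\det g),\,v_p(\Sim g))$, and then discards the determinant condition via $\bfN_{E_p/\Q_p}(\det g)=\Sim(g)^n$; the split case is handled in parallel. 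Your preferred route (ii), via the abelianization $D$, is also viable, but be aware of two points it quietly uses: first, functoriality only gives $\nu_*(w_G(j))=w_D(\nu(j))$, so to conclude $w_G(j)=0$ from the $D$-side you need $\nu_*:\pi_1(G)\to X_*(D)$ to be injective, which holds here precisely because $\bfG^{\der}=\SU(V,\varphi)$ is simply connected; second, for $n$ odd the character $f\circ\nu$ of \eqref{torus} is $E_p^{\times}$-valued rather than $\Sim$ itself, so the claim that it is \emph{visibly the similitude character} still requires composing with the norm and using that $E_p/\Q_p$ is unramified (units are detected by the valuation of the norm) — that is, the same bookkeeping as route (i), just transported to the torus. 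So route (ii) buys some conceptual packaging but no real savings; your route (i) is essentially the paper's proof verbatim.
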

 \begin{proof}
 (1) 
 %Let $D=D_{\Q_p}$ be the quotient torus $G/G^{\der}$. 
%The natural projection $\nu : G \to D$ induces an isomorphism 
% $\pi_1(G) \simeq \pi_1(D)=X_{*}(D)$ since  $G^{\der}=\SU(V, \varphi)$ is  simply-connected. 
% and  $D_L \simeq \Gm_{,L}  \times \Gm_{,L}$, such that 
%Assume that  $n$ is odd. 
%Then $D \simeq T^{E}$. 
%the projection $\nu : G_L \to D_L$ is  given by 
%$\nu(c, g) = \bigg( 
 %   \frac{\det(g_1)}{\Sim(g_1)^{\frac{n-1}{2}}}, 
 %     \frac{\det(g_2)}{\Sim(g_2)^{\frac{n-1}{2}}} 
  %  \bigg)$. 
 % $\nu(h, c)=(\det(h), c)$. 
  %  The relation  $\bfN_{E/\Q_p}(\det(g_i))
  %  =\Sim(g_i)^{n}$ implies that 
  %   $v_p \bigg( 
   % \frac{\det(g_i)}{\Sim(g_i)^{\frac{n-1}{2}}} \bigg)=\frac{1}{2} v_p(\Sim(g_i)) \in \Z$ 
       When $p$ is inert in $E$, the assertion follows from  \eqref{eq:pi_0} and  \eqref{eq:pi1s}.    %$\sigma$ acts on $\pi_1(G)\simeq \Z \oplus \Z$ via $\sigma(1, 0)=(-1, 0)$ and $\sigma(0, 1)=(n,1)$, and hence we have 
       %$\pi_1(G)^{\sigma}\simeq \Z$. 
       A similar geometric  argument works for the case 
 $p$ is split, and hence we have  $\pi_1(G)^{\sigma}=\pi_1(G)\simeq \Z^2$ as in \eqref{eq:pi_1}. 
 %the same as that i \eqref{eq:pi1} shows $\pi_1(G)$  then the geometric  $\pi_1(G)^
  %  {\sigma}=\pi_1(G)$. 
    
    (2) 
     Assume that $p$ is inert in $E$. We write $V_p=V \otimes_{\Q} \Q_p$. We fix an isomorphism $E_p \otimes_{\Q_p}L \simeq L \oplus L$, which induces a decomposition $V_p\otimes_{\Q_p} L \simeq V_1 \oplus V_2$. 
Then  $G(L)=\GU(V_p, \varphi)(L) \simeq   \GL_L(V_1) \times 
L^{\times}$. 
Under this identification, the map $w_G$ is 
    given by 
\begin{align*}
    w_{G} : G(L)  & \to \pi_1(G)    \simeq \Z \oplus \Z, 
    \\ 
    (h, c) &  \mapsto 
    \big(v_p(\det h), v_p(c) \big)
    \end{align*}
    where $v_p$ denotes the $p$-adic valuation on $L$. 
    % Now we  fix an inclusion $E_p \hookrightarrow L$ and an identification $\calV \otimes_{E_p}L \simeq V_1$. 
    
   On the other hand, as in the proof of Proposition  \ref{J_Minert} (1), we have an inclusion  $\End_{E_p}\calV  \hookrightarrow \End_L V_1$, 
  under which  the inclusion $J_b(\Q_p) \hookrightarrow G(L) \simeq \GL_L(V_1) \times L^{\times}$  can be regarded as the one sending $g  \in J_b(\Q_p)=\GU(\calV, \phi)(\Q_p) \subset \GL_{E_p}(\calV)$ to $(g,  \Sim(g))$. 
     It follows that,  
     for each element of $c_{b, \mu}\pi_1(G)^{\sigma} \subset \pi_1(G)$, 
     its  stabilizer in $J_b(\Q_p)$  consists of elements $g$  with  $v_p(\det(g))=0$ and $v_p(\Sim(g))=0$. 
 However, the latter condition is sufficient   since  $\bfN_{E_p/\Q_p}(\det(g))=\Sim(g)^n$. 
 
 The case  $p$ is split in $E$ can be proved similarly. 
%If $n$ is even, then $D$ is 
%isomorphic to 
%$T^{E, 1} \times \mathbb G_{{\rm m}, \Q_p}$. 
\end{proof}
\begin{comment}
\begin{proof}
  We write  $\underline{\lambda}_G(b)=
  \sum_{i=0}^n \lambda_i \varepsilon_i^*$.  
  Let $P$ be the  polygon over $[0, n]$ with starting point $(0,0)$ and slope $\lambda_i$ over $(i-1, i)$. 
  
 the polygon $P$ 
 is the largest polygon  below the line segment connecting $(0, 0)$ with $(n, s)$ such that its slopes and break points are integral. 
  This implies that $\lambda_i=\left\lfloor \frac{si}{n} \right\rfloor - \left\lfloor \frac{s(i-1)}{n} \right\rfloor $ for  $i\leq 2$. 
 Further one has
   $(\lambda_0, s)=\kappa_G(b)=(1, s) \in  \pi_1(G)$  and hence $\lambda_0=1$.   
 \end{proof}
\end{comment}
 %\begin{remark} 
% Let $w_0$ denote the longest element in $W$. 
% Then each pair $\lambda, \lambda' \in X_*(T)$  satisfies $\lambda \leq \lambda'$ if and only if $-w_0(\lambda) \leq -w_0(\lambda')$.  
 %Hence  Propositions  \ref{Orbits} and \ref{Orbits:2} hold  true if we replace $\mu$ by   the cocharacter  $-w_0(\mu)$, which is dominant.  
% \end{remark}
\section{Mass formula for the inner form associated to the  basic locus}\label{sec:inner}
  \subsection{The group of self quasi-isogenies of an abelian variety}
  \subsubsection{}
 Let $\mathscr D$ and   $p$ be as in Section ~\ref{moduli},  and $\bfG=\GU(\Lambda, \< \, , \, \>)$. 
 We assume that $\Lambda$ is self-dual with respect to  $\< \, , \, \>$. 
 We write  $\mu$ for the cocharacter defined by $\mathscr D$. 
 Let $\K\coloneqq \bfG(\Z_p)\K^p(N)$ with $N \geq 3$ and $p \nmid N$.

Let $(A, \iota,  {\lambda}, \bar{\eta})$ be a $k$-point of  $\mathcal M_{\rm K}$.    
Let $\End^0(A)$ denote the $\Q$-algebra of  self  quasi-isogenies of the abelian variety $A$. 
 It admits an injective homomorphism $\iota : E \hookrightarrow\End^0(A)$. 
We regard $E$ as a subalgebra of  $\End^0(A)$ via $\iota$, and write $\End_E^0(A)$ for its centralizer. 
\begin{lemma}\label{simple}
If $(A, \iota, \lambda, \bar{\eta})$ is lying on the basic locus $\mathcal M^{\rm bas}_{\rm K}$, then 
 $\End_E^0(A)$ is a central simple algebra over $E$. 
\end{lemma}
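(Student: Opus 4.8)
The plan is to use the fact that $A$ is isogenous to a power of a supersingular (or, more precisely, basic isogeny type) abelian variety and to analyze $\End^0_E(A)$ via the $p$-divisible group with $\mathscr D$-structure attached to $(A,\iota,\lambda)$. First I would recall that for a point on $\mathcal M^{\bas}_{\rm K}$ the isogeny class of $(A[p^\infty],\iota,\lambda)$ corresponds to the unique basic class $[b]\in B(G,\mu)$, and that outside $p$ the prime-to-$p$ Tate module is the trivial $\bfG$-structure, so $\End^0_E(A)\otimes_{\Q}\Q_\ell\simeq \End_{E_\ell}(V_\ell)$ for $\ell\neq p$, which is a central simple algebra over $E_\ell$. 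At $p$, $\End^0_E(A)\otimes_{\Q}\Q_p$ is the algebra of $E_p$-linear endomorphisms of the isocrystal $M[1/p]$ commuting with $\sfF$; since $[b]$ is basic, this is the group of units of an $E_p$-algebra $C_p$ which, by the standard description of the endomorphism algebra of an isoclinic (basic) $p$-divisible group with $O_{E_p}$-action — as used in Proposition~\ref{J_Minert} — is again a central simple algebra over $E_p$ (a matrix algebra over a division algebra when $p$ is split, and the endomorphism algebra $\GU(\mathcal V,\phi)$-type algebra when $p$ is inert).

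Then I would assemble these local statements into a global one. The key point is that $\End^0_E(A)$ is a finite-dimensional $E$-algebra which is (a) semisimple — because $A$ is isogenous to a power of a simple abelian variety with the relevant CM/quaternionic endomorphism structure, so $\End^0(A)$ is semisimple, and taking the centralizer of the semisimple subalgebra $E$ preserves semisimplicity; and (b) has all its completions $\End^0_E(A)\otimes_E E_v$ central simple over $E_v$ for every place $v$ of $E$ (the archimedean places are handled because $A$ is isogenous to a product of isogenous abelian varieties, forcing $\End^0_E(A)\otimes\R$ to be a matrix algebra over $\R$, $\C$, or $\mathbb H$, hence central simple over the relevant completion of $E$; more simply, one uses that $\dim_E\End^0_E(A)$ equals $n^2$ by a Tate-module dimension count). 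A finite-dimensional semisimple $E$-algebra whose center has trivial completion — i.e. whose center $Z$ satisfies $Z\otimes_E E_v = E_v$ for all $v$, equivalently $Z=E$ by weak approximation / the fact that a number field with a trivial completion everywhere is trivial — must itself be central over $E$, and a semisimple algebra that is central and whose localizations are all simple is a central simple algebra. This is the route I would take: prove centrality of $Z(\End^0_E(A))$ over $E$ place-by-place, then invoke semisimplicity.

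An alternative, and perhaps cleaner, route is to quote Rapoport–Zink style theory directly: by the $p$-adic uniformization (Theorem of Rapoport–Zink, cited later in the paper as \cite{RZ}) the basic locus is uniformized by a quaternionic-type Shimura set attached to an inner form $I$ of $\bfG$, and $\End^0_E(A)$ is precisely (the underlying algebra of) the group $I$ evaluated appropriately — more precisely $I$ is the unitary similitude group of a Hermitian space over the central simple $E$-algebra $\End^0_E(A)$, and the construction of $I$ as an inner form of $\bfG$ in \cite{Kottwitz} forces $\End^0_E(A)$ to be central simple over $E$ of the correct dimension. Since the paper will in any case define this inner form $I$ in the next subsection, I would structure the proof to lean on the endomorphism-algebra description of $I_{\Q_p}\simeq J_b$ from Proposition~\ref{J_Minert} and on the triviality of the prime-to-$p$ structure, concluding centrality and simplicity.

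The main obstacle will be the archimedean (and, implicitly, the "global semisimplicity plus correct dimension") bookkeeping: showing $\dim_E\End^0_E(A)=n^2$ cleanly, and ruling out the possibility that $\End^0_E(A)$ is a product of several central simple algebras over $E$ rather than a single one — this is exactly where one needs that the basic abelian variety is \emph{isotypic} (isogenous to a power of a single simple abelian variety with its $E$-action), which follows from the fact that a basic isocrystal with $\mathscr D$-structure is, up to the $O_{E_p}$-decomposition, isoclinic, and the Tate-module comparison at $\ell\neq p$. So the crux is to establish isotypicity of $A$ at a basic point, after which the central simplicity is a formal consequence of the local computations together with the Hasse principle / weak approximation for the center.
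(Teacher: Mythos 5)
There is a genuine gap, and it sits exactly where you flag the ``crux'': isotypicity and the prime-to-$p$ local computation are not established by what you propose. Your claim that $\End_E^0(A)\otimes_{\Q}\Q_\ell\simeq \End_{E_\ell}(V_\ell)$ for $\ell\neq p$ does not follow from the level structure: the Tate module only gives an injection, and by Tate's theorem (after descending to a finite field) the image is the commutant of Frobenius; upgrading this to the full matrix algebra is equivalent to knowing that a power of Frobenius is central, i.e.\ it is essentially the statement being proved, so the local-global route as written is circular. Likewise your dimension count $\dim_E\End_E^0(A)=n^2$ presupposes that the algebra is a form of $\End_E(V)$. For isotypicity, ``basic $\Rightarrow$ isoclinic pieces, plus Tate-module comparison'' is not a proof: over $\overline{\F}_p$ there exist non-isogenous simple abelian varieties with the same slopes, so the Newton polygon alone cannot force a single isotypic factor; what is needed is an input tying the Frobenius to $E$. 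The paper supplies exactly this: choose a model $(A',\iota')$ over a finite field with $\End^0(A')\simeq\End^0(A)$, write $A'\sim\prod_i (A_i')^{l_i}$, so $Z(\End^0(A'))=\prod_i\Q(\pi_i)$, and invoke the characterization of basic points (\cite[Proposition 4.2]{Yu:basic}) to conclude $Z(\End^0(A'))\subset E$; since a product of fields inside the field $E$ has one factor, $A'$ is isotypic, $\End^0(A')$ is central simple over $\Q(\pi_1)$, and the double centralizer theorem gives that $\End_E^0(A)$ is central simple over $E$. Nothing in your sketch replaces the role of that citation.

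Your alternative route --- quoting Rapoport--Zink/Kottwitz that $I$ is an inner form of $\bfG$, hence $\End_E^0(A)$ is central simple --- is also problematic inside this paper's logical order: Lemma~\ref{simple} is used to define the reduced norm and the group $I$ before Theorem~\ref{inner} is stated, and the uniformization results you would cite are themselves built on an analysis of the endomorphism algebra of basic points of the same kind as the paper's argument. If you want a self-contained proof, the clean path is the paper's: descent to a finite field, Honda--Tate/Weil-number description of the center, the basic-ness criterion forcing the center into $E$, and the double centralizer theorem.
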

\begin{proof}
There exist a finite field $\F_q\subset k$,   and a model $(A', \iota')$ over $\F_q$ of $(A,  \iota)$   such that 
  $\End^0(A') \simeq  \End^0 (A)$.  
Let $A' \sim 
\prod_{i=i}^t (A'_i)^{l_i}$ be the decomposition into components 
up to isogeny, where each abelian variety $A'_i$ is simple and $A'_i \not\sim A'_j$ for any $i\neq j$. 
Then we have an isomorphism 
 $\End^0(A') \simeq \prod_{i=1}^t \Mat_{l_i}(D_i)$, where $D_i \coloneqq \End^0(A'_i)$ is a division algebra over $\Q$. 
 Let $\pi_i$ be the Frobenius
endomorphism of $A'_i$. 
Then the center $Z(\End^0 (A'))$ of $\End^0 (A')$ is equal to  $\prod_{i=1}^t \Q(\pi_i)$. 
Moreover, the assumption  and 
\cite[Proposition 4.2]{Yu:basic} imply that $Z(\End^0 (A'))$ is contained in $E$. 
Hence the abelian variety $A'$ has only one component, say $(A'_1)^{l_1}$,  and the algebra $\End^0 (A')=\Mat_{l_1}(D_1)$ is a central simple algebra over $Z(\End^0 (A'))=\Q(\pi_1)$. 
It follows from the double centralizer theorem  that  the ring $\End_E^0(A')\simeq \End_E^0(A)$ is a central simple algebra over $E$.  
\end{proof}
By Lemma \ref{simple}, there exist   a finite extension $E'/E$ and an isomorphism  $f : \End^0_E(A) \otimes_E E' \xrightarrow{\sim}  \Mat_{n}(E')$.   
The reduced norm of an element  $g \in \End^0_E(A)$ is defined by ${\rm nrd}(g)\coloneqq \det(f(g \otimes 1))$, which  
 takes value in $E$.  
  \subsubsection{}\label{I}
%  For a  $k$-point $(A, \iota,  {\lambda}, \bar{\eta})$ in $\mathcal M_{\rm K}$, 
For a $k$-point $(A, \iota,  {\lambda}, \bar{\eta})$ of $\M_{\K}$,  we define an algebraic  group   $I$  over $\Q$ by 
 \begin{align*}
 I(R)
 \coloneqq \{(g,c) \in (\End_{E}^0(A) \otimes R)^{\times} \times R^{\times} \mid g^*\cdot g=c\cdot \id \}, 
 \end{align*} 
 where $R$ is a  $\Q$-algebra, and  $g \mapsto g^*$ is the Rosati involution of $\End_{E}^0(A)$ induced by the principal polarization $\lambda$. 
The similitude character ${\rm sim} :I \to {\bbG}_{{\rm m}, \Q}$ is defined by $(g, c) \mapsto c$. 
\begin{thm}\label{inner}
Assume that $(A, \iota,  {\lambda}, \bar{\eta})$ is lying on the basic locus $\M_{\K}^{\bas}.$

{\rm (1)} 
The group $I$ is an inner form of ${\mathbf  G}_{\Q}$, and is such that $I(\R)$ is compact modulo center. 
Further,  there are isomorphisms
\begin{align*}
I_{\Q_{\ell}} 
\simeq 
\begin{cases} 
\mathbf  G_{\Q_{\ell}}  & \  {\rm if} \ \ell \neq p; 
\\ 
 J_b & \ {\rm if} \ \ell=p,  
 \end{cases}
 \end{align*}
 where  $b \in \bfG(L)$ is a representative of  the unique basic class $[b] \in B(\bfG_{\Q_p}, \mu)$. 
 
 {\rm (2)}
 For any point $(A', \iota',  {\lambda'}, \bar{\eta}')  \in \mathcal M^{\rm bas}_{\rm K}(k)$, 
 the associated group $I'$ over $\Q$ is  isomorphic to $I$ as inner forms of ${\bf G}_{\Q}$. 
 
 {\rm (3)}
There is an isomorphism of perfect  schemes 
\[ \Theta : I(\Q) \backslash 
X_{\mu}(b) \times \mathbf  G(\A_f^{p})/{\rm K}^p(N)
\xrightarrow{\sim}
 \mathcal M_{{\rm K}}^{\bas, \rm pfn},  
\]
where $\mathcal M_{{\rm K}}^{\rm{bas}, {\rm pfn}}$ denotes the perfection of $\mathcal M_{{\rm K}}^{{\rm bas}}$. 

 {\rm (4)} 
 Asssume that $(A, \iota,  {\lambda}, \bar{\eta}) \in \mathcal M_{{\rm K}}^e(k)$. 
 Let $I_p^e$ be the stabilizer in $J_b(\Q_p)$ of the associated \dieu module, as in  \eqref{I_p^e}. 
Then 
  $\Theta$ induces a  bijection  
\begin{align*}
 I(\Q) \backslash 
I(\A_f) /{\rm I}_p^e\cdot{{\rm K}^p}(N) 
 \xrightarrow{\sim} \mathcal M_{\rm K}^e(k). 
\end{align*}
\end{thm}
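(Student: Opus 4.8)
The plan is to establish parts (1)--(3) first and then deduce (4) as a consequence of the uniformization isomorphism $\Theta$ together with the characterization of the $0$-dimensional EO stratum via minimality.

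First I would prove (1). Since $(A,\iota,\lambda,\bar\eta)$ lies on $\M_{\K}^{\bas}$, Lemma~\ref{simple} gives that $\End_E^0(A)$ is a central simple $E$-algebra; combining the Rosati involution with the $E$-action, the group $I$ is an inner form of $\bfG_{\Q}$ by the standard argument (the involution is of the right type because $\lambda$ is a principal polarization compatible with $\iota$). The compactness of $I(\R)$ modulo center follows from the positivity of the Rosati involution. The local isomorphisms come from comparing $\End_E^0(A)$ with the relevant endomorphism algebras: away from $p$, the prime-to-$p$ Tate module together with the level structure $\bar\eta$ identifies $I_{\A_f^p}$ with $\bfG_{\A_f^p}$; at $p$, passing to the isocrystal $M[1/p]$ of $A[p^\infty]$ and using that $\sfF$ corresponds to a basic $b\in\bfG(L)$ (which holds because the stratum is basic), one gets $I_{\Q_p}\simeq J_b$ directly from the definition \eqref{def:J} of $J_b$. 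For (2), since there is a unique basic class $[b]\in B(\bfG_{\Q_p},\mu)$ and the local forms at all places agree, the groups $I'$ attached to different basic points are inner forms with the same local invariants; the Hasse principle for adjoint (or the relevant simply connected) groups forces them to be isomorphic as inner forms of $\bfG_{\Q}$.

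For (3) I would invoke the $p$-adic uniformization theorem of Rapoport--Zink \cite{RZ}: the Rapoport--Zink space attached to $(\bfG_{\Q_p},b,\mu)$ has underlying reduced scheme (after perfection) identified with $X_{\mu}(b)$, and the uniformization theorem then produces the isomorphism $\Theta$ of perfect schemes between $I(\Q)\backslash X_{\mu}(b)\times\bfG(\A_f^p)/\K^p(N)$ and $\M_{\K}^{\bas,\mathrm{pfn}}$. Here one must check that the group appearing in the Rapoport--Zink uniformization is exactly our $I$ — this is where parts (1) and (2) are used, since the uniformizing group is by construction the automorphism group of the fixed basic abelian variety with its PEL structure, which is $I$.

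Finally, for (4), I would restrict $\Theta$ to the closed subset $\M_{\K}^e$. By Proposition~\ref{CL}(1) a point lies on $\M_{\K}^e$ exactly when its associated $p$-divisible group with $\mathscr D$-structure is minimal, and $\M_{\K}^e(k)$ is a single central leaf. Under $\Theta$, the central leaf through the base point corresponds to the $J_b(\Q_p)$-orbit of the base point in $X_{\mu}(b)$, whose points are parametrized by $J_b(\Q_p)/\mathrm{Stab}_{J_b(\Q_p)}(M)=J_b(\Q_p)/\mathrm{I}_p^e$ (the stabilizer of the fixed Dieudonné module being $\mathrm{I}_p^e$ by \eqref{I_p^e}). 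Substituting this single orbit into the double-coset description of (3), the factor $X_{\mu}(b)$ is replaced by $J_b(\Q_p)/\mathrm{I}_p^e$, and since $J_b(\Q_p)\simeq I(\Q_p)$ the double coset becomes
\[
I(\Q)\backslash I(\A_f)/\mathrm{I}_p^e\cdot\K^p(N)\xrightarrow{\sim}\M_{\K}^e(k),
\]
as claimed. The main obstacle I anticipate is the bookkeeping in matching the group $I$ (defined via self-quasi-isogenies, part (1)) with the uniformizing group in \cite{RZ} and then correctly translating the central-leaf/minimality statement of Proposition~\ref{CL} into the orbit statement $J_b(\Q_p)\cdot(\text{base point})$ inside $X_{\mu}(b)$; everything else is a formal manipulation of double cosets.
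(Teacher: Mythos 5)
Your proposal is correct and follows essentially the same route as the paper: parts (1)--(3) are exactly what the paper delegates to the Rapoport--Zink uniformization \cite{RZ} together with its ADLV reformulation (\cite{XZ}, \cite{HZZ}), and part (4) is deduced, as in the paper, by combining $\Theta$ with Proposition~\ref{CL}~(1), which identifies $\mathcal M_{\rm K}^e(k)$ with the central leaf through the base point. The only difference is that where the paper cites \cite[Corollary 3.4]{TY} for the bijection between the double coset space and the central leaf, you unpack that citation directly via the (correct) observation that the preimage of the leaf in $X_{\mu}(b)$ is the $J_b(\Q_p)$-orbit of the base point, whose stabilizer is ${\rm I}_p^e={\rm Stab}_{J_b(\Q_p)}M$.
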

\begin{proof}
Assertions (1), (2), and (3) are proved by   Rapoport and Zink in   \cite{RZ}. 
Note that they  constructed an isomorphism from 
a quotient of what is now called a Rapoport-Zink formal scheme to the completion of the integral model along
the basic locus (see also Remark~\ref{rem:MK} (2)). 
%for Shimura varieties of PEL type. 
%See \cite{HP} and \cite{XZ} for Hodge type. 
%See also \cite{HZZ}. 
An isomorphism  using an affine  Deligne-Lusztig variety was proved in \cite[Corollary 7.2.16]{XZ} and \cite[Proposition 5.2.2]{HZZ} for a  Hodge-type Shimura variety. 

By \cite[Corollary 3.4]{TY}, the   morphism $\Theta$  induces a bijection from the double coset space in (4) 
to the central leaf  passing through $(A, \iota,  {\lambda}, \bar{\eta})$ in $\M_{\K}(k)$. 
This leaf coincides with  $\M_{\K}^e(k)$ by Proposition \ref{CL} (1), and hence assertion (4) follows.    
\end{proof}
\subsubsection{}
Assume that $(A, \iota, \lambda, \bar{\eta}) \in \M_{\K}^{\bas}(k)$.  
Since $I$ is an inner form of $\bfG_{\Q}$, its derived subgroup $I^{\der}$ is again simply connected, and 
 the quotient torus  $I/I^{\rm der}$ is  isomorphic to  $D =  \bfG_{\Q}/\bfG_{\Q}^{\rm der}$ (Section ~\ref{tori}).  
Let $\nu : I 
\to 
%I/I^{\rm der} 
%\simeq
D$ denote the projection. 
The reduced norms of elements of  $\End^0_E(\mathcal A_x)$  induce a homomorphism ${\rm nrd} : I \to T^{E}$ of algebraic groups over $\Q$. 
We write ${\bf N}=\mathbf N_{E/\Q}$ for the norm map of $E/\Q$.  
Then each element $(g, c) \in I$ satisfies that ${\bf N}({\rm nrd}(g))=c^n$. 
The projection  $\nu$ is thus equal to the product  ${\rm nrd} \times {\rm sim}$   under the  identification in \eqref{def:D}. 

%induces a surjective homomorphism  $\nu : I \to D$. 

Let $\Q_{>0}$ (resp.~$\R_{>0}$) denote the group of positive rational (resp.~real) numbers. 
The norm group $\bfN(E^{\times})$ is contained in $\Q_{>0}$ since $E$ is an imaginary quadratic field. 
We write $E^1 \coloneqq \{ x \in E \mid \bfN(x)=1\}$. 
\begin{lemma}\label{sim:I}
We have that 
\begin{align*}
    \nu(I(\Q))=
    \begin{cases*}
    E^{\times} 
    \\
    E^{1} \times \Q_{>0},  
    \end{cases*}
    \quad 
    \Sim(I(\Q))=\begin{cases*}
    \bfN(E^{\times}) & if $n$ is odd; 
    \\ 
    \Q_{>0} & if $n$ is even. 
    \end{cases*} 
\end{align*}
%The similitude character induces a surjective homomorphism from  $I(\Q)$ to $\bfN(E^{\times})$ if $n$ is odd, and to $\Q_{>0}$ if $n$ is even. 
\end{lemma}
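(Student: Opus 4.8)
The plan is to prove Lemma~\ref{sim:I} by reducing it to the corresponding statement for $\bfG_\Q$ itself, which is encapsulated in Kottwitz's description of $D$ and the exact sequence $1\to I^{\der}\to I\xrightarrow{\nu}D\to 1$. Since $I$ is an inner form of $\bfG_\Q$ (Theorem~\ref{inner}(1)), its derived group $I^{\der}$ is simply connected and semisimple, and the quotient torus $I/I^{\der}$ is canonically isomorphic to $D=\bfG_\Q/\bfG_\Q^{\der}$; moreover $\nu$ is identified with $\mathrm{nrd}\times\mathrm{sim}$ under \eqref{def:D}. So the first step is to compute $\nu(I(\Q))$. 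The issue is that $\nu$ on $\Q$-points need not be surjective onto $D(\Q)$; rather, the cokernel of $I(\Q)\to D(\Q)$ injects into $H^1(\Q,I^{\der})$. Here I would invoke that $I^{\der}$ is simply connected, so by the Hasse principle (which holds for simply connected groups over number fields, cf.\ \cite[Theorem 6.6]{PR}) together with Kneser's theorem giving vanishing of $H^1(\Q_\ell,I^{\der})$ for all finite $\ell$, the obstruction lives in $H^1(\R,I^{\der})$. But $I(\R)$ is compact modulo center (Theorem~\ref{inner}(1)), so $I^{\der}_\R$ is an anisotropic (compact) simply connected real group; for such groups $H^1(\R,I^{\der})$ can be nonzero, so I cannot conclude surjectivity onto all of $D(\Q)$ this way.

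Instead the cleaner route is to identify $\nu(I(\Q))$ directly with $D(\Q)\cap D(\R)^0$, exactly as was done for $\bfG$ in Section~\ref{tori}, \eqref{D_infty}. The point is that the image $\nu(I(\Q))$ must land in $\nu(I(\R))$, and since $I(\R)$ is compact modulo center, $I(\R)/I^{\der}(\R)$ maps onto the image of the compact-mod-center group in $D(\R)$, which is precisely the identity component $D(\R)^0$ (a compact real torus times $\R_{>0}$ from the similitude factor when $n$ is even, or $\C^\times$ when $n$ is odd — see the discussion after \eqref{torus}). Thus $\nu(I(\Q))\subseteq D(\Q)\cap D(\R)^0=D(\Q)_\infty$, and \eqref{D_infty} gives $D(\Q)_\infty=E^\times$ if $n$ is odd and $E^1\times\Q_{>0}$ if $n$ is even. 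For the reverse inclusion I would argue that $\nu$ is surjective onto $D(\Q)_\infty$: the same strong-approximation / Hasse-principle argument that proves $\nu(\bfG(\Q)_+)=D(\Q)_\infty$ (used just before \eqref{eq:comp_doublecoset}) applies verbatim to $I$, because $I^{\der}$ is simply connected, satisfies the Hasse principle and Kneser vanishing at all finite places, and — crucially — the obstruction at $\infty$ is killed since $D(\Q)_\infty$ is by definition contained in $\nu(I(\R))$ and for elements of $D(\R)^0$ one can lift locally at $\R$ within the connected (compact mod center) group $I(\R)$. This yields $\nu(I(\Q))=D(\Q)_\infty$ as claimed.

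For the similitude statement, once $\nu(I(\Q))$ is pinned down, $\Sim(I(\Q))$ is obtained by composing with the second projection $\mathrm{pr}_2$ in \eqref{def:D}, i.e.\ $(x,c)\mapsto c$. When $n$ is even, $D(\Q)_\infty=E^1\times\Q_{>0}$ maps onto $\Q_{>0}$ under $\mathrm{pr}_2$, giving $\Sim(I(\Q))=\Q_{>0}$. When $n$ is odd, using the isomorphism $f:D\xrightarrow{\sim}T^E$, $(x,c)\mapsto xc^{(1-n)/2}$, together with $\Sim=\bfN\circ f\circ\nu$ (as recorded in Section~\ref{tori}), we get $\Sim(I(\Q))=\bfN(E^\times)$; one checks $\bfN(E^\times)\subseteq\Q_{>0}$ since $E$ is imaginary quadratic, consistent with $I(\R)$ being compact mod center. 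I would also double-check that the constraint $\bfN(\mathrm{nrd}(g))=\mathrm{sim}(g)^n$ satisfied by elements of $I$ is exactly the defining relation for $D$ in \eqref{def:D}, which the paragraph preceding the lemma already notes.

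The main obstacle is the surjectivity half: making rigorous that $\nu:I(\Q)\to D(\Q)_\infty$ hits everything despite the potentially nontrivial $H^1(\R,I^{\der})$. The resolution is that one only needs to lift elements of $D(\Q)$ that already lie in $D(\R)^0=\nu(I(\R))^0$; for those, the local obstruction at $\R$ is automatically trivial because the relevant connecting map $D(\R)^0\to H^1(\R,I^{\der})$ is zero (a connected real torus in the image is covered by the connected real group $I(\R)$), and the obstruction at finite places vanishes by Kneser. A Hasse-principle argument for $I^{\der}$ — valid since it is simply connected — then glues these to a global lift, exactly mirroring the treatment of $\bfG$ in Section~\ref{tori}. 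I expect the write-up of this step to be short but to require care in citing the right form of the Hasse principle and in observing that $D(\Q)_\infty$ is the precise image; everything else is a transcription of the $\bfG$-case computation.
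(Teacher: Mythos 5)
Your proposal is correct and follows essentially the same route as the paper: the paper's proof is exactly the diagram chase you describe, using the exact sequences $I\to D\to H^1(\cdot,I^{\der})$ over $\Q$ and $\R$, Kneser's vanishing at finite places plus the Hasse principle for the simply connected $I^{\der}$ to see the obstruction is detected at $\R$ alone, and connectedness of $I(\R)$ to get $\nu(I(\R))=D(\R)^0$, whence $\nu(I(\Q))=D(\Q)\cap D(\R)^0=D(\Q)_\infty$ and the similitude factors follow from \eqref{D_infty}. The strong-approximation remark in your second paragraph is unnecessary (and not used in the paper here); your final paragraph already supplies the correct mechanism.
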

\begin{proof}
The projection $\nu : I \to D$ induces a commutative diagram 
\[
 \begin{CD}
  I(\Q)   
  @>{\nu}>> 
 D(\Q) 
  @>>> 
  H^1(\Q, I^{\rm der})  
  \\ 
  @VVV 
   @VVV 
 @VVV
  \\ 
 I(\R)  
  @>{\nu}>>
  D(\R) 
  @>>>
 H^1(\R, I^{\rm der}), 
  \end{CD}
  \]
where the horizontal sequences  are exact, and the left and middle vertical  arrows are  injective. 
Furthermore,   Kneser's theorem and  Hasse principle imply  that the right vertical arrow   is also injective. 
A diagram chasing therefore shows $\nu(I(\Q))=D(\Q) \cap \nu(I(\R))$. 

Since $I(\R)$ is connected, its image $\nu(I(\R))$ is the  identity component $D(\R)^0$ of $D(\R)$. 
As in Section ~\ref{tori}, 
%From the description of $D$ in  \eqref{torus} it follows that  $\nu(I(\R)) \simeq \C^{\times}$ if $n$ is odd, and  $\nu(I(\R)) \simeq S^1 \times \R_{>0}$ if $n$ is even. 
it follows that the group  
$\nu(I(\Q))=D(\Q)_{\infty}$ is isomorphic to  $E^{\times}$ if $n$ is odd and to 
  $E^1 \times \Q_{>0}$ if $n$ is even.  
Their similitude factors are $\bfN(E^{\times})$ and $\Q_{>0}$, respectively. 
\end{proof}
For each prime $\ell$ we write  $E_{\ell}= E \otimes _{\Q}\Q_{\ell}$.    
Similarly to the case of $\bfG_{\Q}$ (see \eqref{eq:nu_ell}), we have  \begin{align}\label{eq:nuI}
    \nu(I(\Q_{\ell}))=D(\Q_{\ell})=
    \begin{cases*}
    E_{\ell}^{\times} 
    \\
    E_{\ell}^{1} \times \Q_{\ell}^{\times},  
    \end{cases*}
    \quad 
    \Sim(I(\Q_{\ell}))=\begin{cases*}
    \bfN(E_{\ell}^{\times}) & if $n$ is odd; 
    \\ 
    \Q_{\ell}^{\times} & if $n$ is even. 
    \end{cases*} 
\end{align}
\begin{lemma}\label{I:dense}
The group $I(\Q)$ is dense in $I(\Qp)$.
\end{lemma}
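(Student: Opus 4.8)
The statement to prove is Lemma \ref{I:dense}: that $I(\Q)$ is dense in $I(\Q_p)$. The plan is to reduce this to the strong approximation theorem for the simply connected group $I^{\der}$ together with control of the abelianization torus $D$. First I would recall from Theorem \ref{inner}(1) that $I$ is an inner form of $\bfG_\Q$, that $I(\R)$ is compact modulo center, and that $I^{\der}$ is simply connected (being an inner form of $\bfG_\Q^{\der}=\SU(V,\varphi)$). In particular $I^{\der}_\R$ is an inner form of $\SU(r,s)$ modulo center considerations; since $I(\R)$ is compact modulo center, $I^{\der}(\R)$ is compact. This means strong approximation in the naive form ``$I^{\der}(\Q)$ dense in $I^{\der}(\A_f)$'' is \emph{not} available globally — but we only need density in the single local factor $I^{\der}(\Q_p)$, and for that strong approximation away from the archimedean place is not the right tool. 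Instead I would use strong approximation with respect to the \emph{finite} set $S=\{p\}$: since $I^{\der}$ is simply connected and $I^{\der}(\Q_p)$ is non-compact (this is exactly Lemma \ref{cpt} together with the hypothesis $r,s>0$ — wait, one must be careful in the exceptional cases of Lemma \ref{cpt}).

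Let me restructure. The cleaner approach: work with the torus quotient. We have the exact sequence $1\to I^{\der}\to I\xrightarrow{\nu} D\to 1$, and by Kneser's theorem and the Hasse principle (quoted in Section \ref{tori}) the connecting maps in Galois cohomology behave well, so that $\nu(I(\Q))=D(\Q)_\infty = \nu(I(\R))\cap D(\Q)$ as established in Lemma \ref{sim:I}, while $\nu(I(\Q_p))=D(\Q_p)$ by Lemma \ref{sim:I}/\eqref{eq:nuI}. The first step is to show that the closure of $\nu(I(\Q))$ in $D(\Q_p)$ is all of $D(\Q_p)$: this is the statement that $D(\Q)_\infty$ is dense in $D(\Q_p)$, which is a statement purely about the torus $D\cong T^E$ (resp.\ $T^{E,1}\times\bbG_m$). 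For $T^E=\Res_{E/\Q}\bbG_m$ this says $E^\times$ (or rather the subgroup $D(\Q)_\infty$) is dense in $E_p^\times$; for $\bbG_m$ it says $\Q_{>0}$ is dense in $\Q_p^\times$; and for $T^{E,1}$ it says $E^1$ is dense in $E^1_p$. Each of these is standard: $\Q_{>0}$ is dense in $\Q_p^\times$ by weak approximation / the fact that $\Q_{>0}\to \Q_p^\times$ has dense image (adjust any sign issue using that we are in $\Q_{>0}$, not $\Q^\times$, and $-1\in\R_{>0}$... no, $-1\notin\R_{>0}$, but $\Q_{>0}$ already surjects onto $\Q_p^\times/(1+p^n\Z_p)$ for all $n$), and density of $E^\times$ in $E_p^\times$ is weak approximation for the multiplicative group of the number field $E$ at the places above $p$; density of $E^1$ in $E^1_p$ follows since $T^{E,1}$ is a norm-one torus and one can use Hilbert 90 / weak approximation for $\Res_{E/\Q}\bbG_m$ and divide, or quote the class-number computation \eqref{class} combined with the fact that we are looking at a single place.

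The second step handles the kernel. Let $g\in I(\Q_p)$ be given; we want $\gamma\in I(\Q)$ close to it. Using step one, choose $\gamma_0\in I(\Q)$ with $\nu(\gamma_0)$ close to $\nu(g)$ in $D(\Q_p)$; replacing $g$ by $\gamma_0^{-1}g$ we may assume $\nu(g)$ lies in an arbitrarily small neighborhood of the identity in $D(\Q_p)$, hence (since $\nu(I(\Q_p))=D(\Q_p)$ and $I(\Q_p)\to D(\Q_p)$ is open) we may further assume — after another adjustment by an element of $I(\Q_p)^{\der}\cdot(\text{small})$ — that it suffices to approximate elements of $I^{\der}(\Q_p)$ by elements of $I^{\der}(\Q)$. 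So the crux becomes: \emph{$I^{\der}(\Q)$ is dense in $I^{\der}(\Q_p)$}. Now $I^{\der}$ is semisimple simply connected and $\Q$-almost-simple (its adjoint group is simple, as noted before Proposition \ref{stab}), so the Kneser–Platonov strong approximation theorem applies with respect to $S=\{p\}$ provided $I^{\der}$ is isotropic over $\Q_p$, equivalently provided $I^{\der}(\Q_p)$ is non-compact. This is governed by Lemma \ref{cpt}: $J_b^{\der}(\Q_p)=I^{\der}(\Q_p)$ is compact precisely when $p$ is inert and $(r,s)=(1,1)$, or $p$ is split and $\gcd(r,s)=1$.

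\textbf{Expected main obstacle.} The genuine difficulty is exactly the exceptional cases of Lemma \ref{cpt}, where $I^{\der}(\Q_p)$ is compact and strong approximation at $S=\{p\}$ fails outright; there density of $I^{\der}(\Q)$ in $I^{\der}(\Q_p)$ is simply false. In those cases the lemma as stated ($I(\Q)$ dense in $I(\Q_p)$) cannot be proved by this route — and indeed I would expect the authors either (a) to be implicitly assuming one is outside those exceptional cases, or (b) to really only need a weaker statement, e.g.\ that $\nu(I(\Q))$ is dense in $\nu(I(\Q_p))$, or that $I(\Q)\cdot(\text{something})$ is dense, which is what actually gets used in the subsequent mass computation (Theorem \ref{Mass_inner}). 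So the plan is: prove the torus-level density (step one) unconditionally, invoke strong approximation for $I^{\der}$ in the non-exceptional cases (step two), and for the exceptional cases either argue that the needed application only requires the torus-level statement, or note that $I^{\der}(\Q_p)$ compact forces $I^{\der}(\Q)$ to be a lattice whose closure is open-and-closed, and reconcile this with how the lemma is used downstream. I would flag the precise hypotheses needed and check against the statement of Theorem \ref{Mass_inner} before committing to the final wording.
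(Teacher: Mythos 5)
Your overall architecture---treat the torus quotient $D$ via $\nu$ and the kernel $I^{\der}$ separately, then glue---is the same as the paper's, and your torus step (density of $D(\Q)_\infty$ in $D(\Q_p)$, via rationality/weak approximation for $T^{E,1}$ together with Lemma \ref{sim:I} and \eqref{eq:nuI}) is exactly what the paper does. The gap is in the kernel step. You invoke strong approximation with respect to $S=\{p\}$, which is the wrong tool on two counts: first, strong approximation relative to $S=\{p\}$ gives density of $I^{\der}(\Q)$ in $I^{\der}(\A_f^{p})$, i.e.\ \emph{away} from $p$, and says nothing about density in the single factor $I^{\der}(\Q_p)$; second, it leads you to treat the cases of Lemma \ref{cpt} where $I^{\der}(\Q_p)$ is compact as genuine exceptions, and even to assert that density of $I^{\der}(\Q)$ in $I^{\der}(\Q_p)$ is ``simply false'' there and that the lemma may need weakening. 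That assertion is incorrect: compactness of the local group is no obstruction to density at a single place (for instance, the norm-one elements of a definite quaternion algebra over $\Q$ are dense in the compact local norm-one group at a ramified prime).

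What is actually needed is weak approximation at the single place $p$, and this is what the paper uses: $I^{\der}$ is simply connected and $\Q$-simple, and such groups satisfy weak approximation unconditionally \cite[Proposition 7.11, p.~422]{PR}; hence $I^{\der}(\Q)$ is dense in $I^{\der}(\Q_p)$ in \emph{all} cases, including the superbasic ones of Lemma \ref{cpt}. (If you insist on deriving it from strong approximation, remove instead a place $\ell\neq p$ at which $I^{\der}_{\Q_\ell}\simeq \bfG^{\der}_{\Q_\ell}$ is isotropic---such $\ell$ exist---to get density in $I^{\der}(\A_f^{\ell})$, and then project to the factor at $p$.) With that input the conclusion goes through along your gluing lines, as in the paper: $\ol{I(\Q)}$ is a closed subgroup of $I(\Q_p)$ containing $I^{\der}(\Q_p)=\ker\big(\nu\vert_{I(\Q_p)}\big)$ (by Kneser's theorem), and its image under $\nu$ is all of $\nu(I(\Q_p))$, so $\ol{I(\Q)}=I(\Q_p)$. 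As written, your proposal leaves the lemma unproved precisely in the compact (superbasic) cases, which are needed downstream (Lemma \ref{trans} and Theorem \ref{thm:con_comp}); no weakening is required, only the correct approximation theorem.
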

\begin{proof}
% In the proof of Lemma~\ref{sim:I}, we show that $\nu(I(\Q))$ is $E^1\times \Q_{>0}$ if $n$ is even and is $E^\times$ is $n$ is odd, respectively, and $\nu(I(\Qp))$ is $E^1_p\times \Qp^\times$ if $n$ is even and is $E_p^\times$ is $n$ is odd, respectively.
Since the norm one subgroup $T^{E, 1}$ of $T^E=\Res_{E/\Q} \bbG_{{\rm m},E}$ is a unitary group, it is a $\Q$-rational algebraic variety and hence satisfies the weak approximation property \cite[Propositions 7.3 and 7.4, p.~402-403]{PR}. Thus, $E^1$ is dense in $E_p^1$. 
From  Lemma \ref{sim:I} and equality \eqref{eq:nuI} it follows that 
%the closure $\ol{\nu(I(\Q))}$ of $\nu(I(\Q))$ is equal to $\nu(I(\Qp))$. 
$\nu(I(\Q))$ is dense in $\nu(I(\Qp))$. 
On the other hand, since $I^{\rm der}$ is a $\Q$-simple group, it has weak approximation \cite[Proposition 7.11, p.~422]{PR}. Therefore, ${I^{\rm der}(\Q)}$ is dense in  $I^{\rm der}(\Qp)$. Since $\nu(\ol{I(\Q)})=\nu(I(\Qp))$ and $\ol{I(\Q)}\supset I^{\rm der}(\Qp)$, we have $\ol{I(\Q)}=I(\Qp)$. This proves the lemma.  
\end{proof}

\subsection{Mass formula for maximal parahoric subgroups at $p$}
\subsubsection{}\label{mass:setting}
Let $(A, \iota, \lambda, \bar{\eta}) \in \mathcal M^{\rm bas}_{\rm K}(k)$. 
We fix  identifications  \begin{align*}
I(\Q_p)=J_b(\Q_p)=\begin{cases}\GU(\calV, \phi)(\Q_p) & {\text{if $p$ is inert in $E$}}; 
 \\ 
 \GL_m(B) \times \Q_p^{\times}  & {\text{if $p$ is split in $E$}}. 
 \end{cases}
 \end{align*}
 Here $(\calV, \phi)$  is a Hermitian space over $E_p$
with $d(\calV)=p^s$, and $B$ is a division algebra
over $\Q_p$ of degree $(n/m)^2$  with $\inv(B) = r/n$, see  Proposition \ref{J_Minert}.

Let ${\rm I}_p$ be a maximal parahoric subgroup  of $I(\Q_p)$.  As in Section ~\ref{mprh}, we have that 
 \begin{align}\label{eq:K_p}
 {\rm I}_p \sim_{\rm conj}
 \begin{cases}
 \calP_t  \quad {\text{for some $0 \leq t\leq n$ with $t \equiv s \pmod 2$}} & {\text{if $p$ is inert in $E$}}; 
 \\ 
 \GL_m(O_B) 
 \times \Z_p^{\times} & {\text{if $p$ is split in $E$}}. 
 \end{cases}
 \end{align}
 Similarly,
 we fix an identification $I(\A_f^p)=\mathbf G(\A_f^p)$ and regard $\mathbf G(\widehat{\Z}^p)$ as a subgroup of $I(\A_f^p)$. 
\begin{lemma}\label{sim}
The similitude character induces a surjective map 
 \begin{align*}
{\rm sim}  :  I(\Q) \backslash I(\A_f) / 
{\rm I}_p{\bf G}(\widehat{\Z}^p) 
\to 
 \begin{dcases}
  \mathbf N(\A_{E, f}^{\times}) /
 {\bf N}(E^{\times}) \cdot 
 \mathbf N(\widehat{O}_E^{\times}) 
 & {\text{if $n$ is odd}}, 
 \\ 
 \A_f^{\times}/\Q_{>0} \cdot  \widehat{\Z}^{\times} &
 {\text{if $n$ is even}}. 
 \end{dcases}
 \end{align*}
 Moreover, the cardinality $\tau$ of the quotient group in the RHS  is given by 
 \begin{align*}
\tau= 
\begin{cases} 
 2^{w-1} \quad \text{where} \quad w \coloneqq \#  \{  \ell : {\text{prime}}, \ell \mid d_E\} & {\text{if $n$ is odd}};  
 \\
 1  & {\text{if  $n$ is even}}.  
 \end{cases} 
 \end{align*}
 %are equal to $2^{w-1}$  and $1$ respectively,  where $w$ denotes the number of primes ramified in $E$.
\end{lemma}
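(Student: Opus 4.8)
The plan is to compute the target via the exact sequence relating $I$ to its quotient torus $D$ and the abelianized class set, then reduce everything to the arithmetic of the CM field $E$. First I would invoke Lemma~\ref{sim:I} and the description \eqref{eq:nuI} of the local similitude factors: the similitude character $\Sim$ factors through $\nu : I \to D$ followed by the appropriate projection from $D$ ($\bfN \circ f$ when $n$ is odd, $\mathrm{pr}_2 \circ f$ when $n$ is even), exactly as for $\bfG_\Q$ in Section~\ref{tori}. Since $\Sim(I(\Q)) = \bfN(E^\times)$ (resp.\ $\Q_{>0}$) for $n$ odd (resp.\ even) by Lemma~\ref{sim:I}, and $\Sim(I(\A_f)) = \bfN(\A_{E,f}^\times)$ (resp.\ $\A_f^\times$), the only remaining ingredient is to identify $\Sim({\rm I}_p \bfG(\widehat{\Z}^p))$. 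Away from $p$ this is $\Sim(\bfG(\widehat{\Z}^p)) = \prod_{\ell \neq p} \Sim(\bfG(\Z_\ell))$, which by Lemmas~\ref{multunram} and~\ref{mult} equals $\bfN(\widehat{O}_E^{p,\times})$ when $n$ is odd and $(\widehat{\Z}^p)^\times$ when $n$ is even. At $p$, since $p$ is unramified in $E$, the maximal parahoric ${\rm I}_p$ still surjects onto $\Z_p^\times = \bfN(O_{E_p}^\times)$ onto its image (this is where I would use that ${\rm I}_p$ contains a hyperspecial-type piece, or argue directly that $\mathrm{sim}(\calP_t) = \Z_p^\times$ for $p$ inert since $O_{E_p}^\times \to \Z_p^\times$ is surjective on norms in the unramified case, and $\mathrm{sim}(\GL_m(O_B) \times \Z_p^\times) = \Z_p^\times$ in the split case). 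Hence $\Sim({\rm I}_p \bfG(\widehat{\Z}^p)) = \bfN(\widehat{O}_E^\times)$ (resp.\ $\widehat{\Z}^\times$).

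Next I would assemble these: the map $\mathrm{sim}$ on the double coset space is well-defined and surjective onto $\Sim(I(\A_f))/\Sim(I(\Q)) \cdot \Sim({\rm I}_p\bfG(\widehat{\Z}^p))$, which is precisely $\bfN(\A_{E,f}^\times)/\bfN(E^\times)\cdot\bfN(\widehat{O}_E^\times)$ for $n$ odd and $\A_f^\times/\Q_{>0}\cdot\widehat{\Z}^\times$ for $n$ even. Surjectivity is immediate because $\mathrm{sim}$ is already surjective at the level of $I(\A_f)$ by \eqref{eq:nuI} combined with $\Sim(I(\A_f)) = \prod_\ell' \Sim(I(\Q_\ell))$. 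For the cardinality when $n$ is even, $\A_f^\times/\Q_{>0}\cdot\widehat{\Z}^\times$ is trivial by the elementary fact that every fractional ideal class of $\Z$ is principal — equivalently $\A_f^\times = \Q_{>0}\cdot\widehat{\Z}^\times$ — so $\tau = 1$. When $n$ is odd, $\bfN(\A_{E,f}^\times)/\bfN(E^\times)\cdot\bfN(\widehat{O}_E^\times)$ is a quotient of $\A_{E,f}^\times/E^\times\widehat{O}_E^\times = \mathrm{Cl}(E)$; more precisely I would identify it with the quotient of $\A_f^\times/\Q_{>0}\widehat{\Z}^\times = 1$ that parametrizes norms, but this is too coarse, so instead I would use the norm-index computation: $[\bfN(\A_{E,f}^\times) : \bfN(E^\times)\bfN(\widehat{O}_E^\times)]$ equals the index of the subgroup of the idele class group of $\Q$ that is a local norm everywhere, which by class field theory and the genus theory of the quadratic extension $E/\Q$ equals $2^{w-1}$ where $w$ is the number of ramified primes.

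The cleanest route, and the one I would actually write, avoids re-deriving genus theory: since $n$ is odd the norm map $\bfN : T^E \to \bbG_{\mathrm m,\Q}$ is surjective on $\Q$-points modulo $\bfN(E^\times)$ exactly up to the obstruction measured by $H^1(\Q, T^{E,1})$, and the relevant quotient is the cokernel of $\bfN : \mathrm{Cl}(E) \to \mathrm{Cl}(\Q) = 1$ twisted appropriately — this gives $\ker(\A_f^\times/\Q_{>0}\widehat{\Z}^\times \to \ldots)$, which again forces a direct count. I would therefore cite the standard identity $[\Q_v^\times : \bfN_{E_v/\Q_v}(E_v^\times)] = 2$ for $v$ ramified or inert and $=1$ for $v$ split, combined with the product formula for the quadratic Hilbert symbol / the exact sequence $1 \to \Q^\times/\bfN(E^\times) \to \bigoplus_v \Q_v^\times/\bfN(E_v^\times) \to \Z/2 \to 0$ of global class field theory: intersecting with the constraint that we mod out by $\bfN(\widehat{O}_E^\times)$ (which kills the inert and split local factors, leaving only the $w$ ramified places, each contributing $\Z/2$) and by $\bfN(E^\times)\cap\Q_{>0}$ at the real place (which contributes nothing since $E$ is imaginary so $\bfN(E_\infty^\times) = \R_{>0}$), we get $(\Z/2)^w$ modulo the single global relation, hence $\tau = 2^{w-1}$. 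I expect the main obstacle to be bookkeeping the real place correctly — because $E/\Q$ is imaginary the archimedean norm index is $2$, so one must check that the passage from $I(\A_f)$ to $I(\A)$ and back (implicit in writing $\Q_{>0}$ rather than $\Q^\times$) does not introduce an extra factor of $2$; this is exactly where Lemma~\ref{sim:I}'s statement $\Sim(I(\Q)) = \bfN(E^\times)$ rather than $\Q^\times$ is used, and tracking it carefully is the delicate point.
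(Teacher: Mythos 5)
Your proposal is correct in substance, and the first assertion is handled exactly as in the paper: you identify $\Sim(I(\Q))$ and $\Sim(I(\A_f))$ from Lemma~\ref{sim:I} and \eqref{eq:nuI}, and $\Sim({\rm I}_p\bfG(\widehat{\Z}^p))$ from Lemmas~\ref{multunram} and \ref{mult} (noting $\Sim(\calP_t)=\Z_p^{\times}$ in the inert case and $\Sim(\GL_m(O_B)\times\Z_p^{\times})=\Z_p^{\times}$ in the split case), so that the double coset space surjects onto $\Sim(I(\A_f))/\Sim(I(\Q))\cdot\bfN(\widehat O_E^{\times})$ (resp.\ $\widehat\Z^{\times}$); this is the paper's argument verbatim, as is the trivial $n$ even case. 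Where you diverge is the count for $n$ odd. The paper multiplies everything by $\Q_{>0}$, invokes the isomorphism $\bfN(\A_{E,f}^{\times})/\bfN(E^{\times})\bfN(\widehat O_E^{\times})\simeq \bfN(\A_{E,f}^{\times})\Q_{>0}/\bfN(\widehat O_E^{\times})\Q_{>0}$ from \cite[(4.7)]{GSY} (whose content is the Hasse norm theorem, using $\bfN(\C^{\times})=\R_{>0}$), and then computes two indices: $[\A_f^{\times}:\bfN(\widehat O_E^{\times})\Q_{>0}]=2^{w}$ from the local unit-norm indices at ramified primes, and $[\A_f^{\times}:\bfN(\A_{E,f}^{\times})\Q_{>0}]=2$ from the norm index theorem, giving $2^{w-1}$ as the index-$2$ subgroup of $(\Z/2)^{w}$. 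Your route through the exact sequence $1\to\Q^{\times}/\bfN(E^{\times})\to\bigoplus_v\Q_v^{\times}/\bfN(E_v^{\times})\to\Z/2\to0$ uses the same two class-field-theoretic inputs (injectivity is the Hasse norm theorem you need to pass from the double coset to the norm-group statement; the $\Z/2$ cokernel is the norm index theorem/product formula), so it is a legitimate repackaging rather than a new idea. Two points to tighten when writing it up: the target group is an index-$2$ \emph{subgroup} of $\widehat\Z^{\times}/\bfN(\widehat O_E^{\times})\simeq(\Z/2)^{w}$ (the kernel of the product-of-Hilbert-symbols map), not literally ``$(\Z/2)^{w}$ modulo one relation''--the cardinality $2^{w-1}$ is of course the same--and the phrase ``modding out by $\bfN(\widehat O_E^{\times})$ kills the inert and split local factors'' conflates the groups $\Q_v^{\times}/\bfN(E_v^{\times})$ with the unit quotients $\Z_v^{\times}/\bfN(O_{E_v}^{\times})$; it is the latter that vanish away from the ramified primes. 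You correctly identified the real place (via $\bfN(\C^{\times})=\R_{>0}$ and $\Sim(I(\Q))=\bfN(E^{\times})\subset\Q_{>0}$) as the delicate point.
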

\begin{proof}
By \eqref{eq:nuI}, 
%As in Section ~\ref{simV}, the image of the group 
the group  $\Sim(I(\A_f))$ equals to either  $\bfN(\A^{\times}_{E, f})$ when $n$ is odd,   or  $\A_f^{\times}$ when   $n$ is  even. 
     For $\ell \neq p$, the subgroup   $\bfG(\Z_{\ell})$ is the stabilizer of the unimodular lattice $\Lambda_{\ell}$ in $\GU(V_{\ell}, \varphi)$, and therefore Lemma \ref{mult} shows that $\Sim(\bfG(\Z_{\ell}))$ equals either  $\bfN(O_{E_{\ell}}^{\times})$ when $n$ is odd, or  $\Z_{\ell}^{\times}$ when  $n$ is even. 
     Further, when $p$ is inert in $E$, then  $\calP_t$ is by definition the stabilizer of an $O_{E_p}$-lattice $\calL_t$, see \eqref{L_t}.  
     Hence  Lemma 
     \ref{multunram}  implies that  $\Sim({\rm I}_p)=\Sim(\calP_t)=\bfN(O_{E_p}^{\times})=\Z_p^{\times}$. 
     When $p$ is split in $E$, then $\Sim({\rm I}_p)=\Sim(\GL_m(O_B) \times \Z_p^{\times})=\Z_p^{\times}$. 
     These and 
Lemma \ref{sim:I} imply the first assertion. 
    
   If $n$ is even, the quotient group is trivial. 
Suppose that $n$ is odd. 
We consider the inclusions 
\[
 \mathbf N(\widehat{O}_E^{\times}) \cdot \Q_{>0} \subset 
 \mathbf N(\A_{E, f}^{\times}) \cdot \Q_{>0} 
\subset 
\A_{f}^{\times}.
\]
As in  \cite[Formula (4.7)]{GSY}, we have 
\begin{equation}\label{quot}
\mathbf N(\A_{E, f}^{\times}) /
 {\bf N}(E^{\times}) \cdot 
 \mathbf N(\widehat{O}_E^{\times}) \simeq 
\mathbf N(\A_{E, f}^{\times}) \cdot \Q_{>0} /
 \mathbf N(\widehat{O}_E^{\times})  \cdot \Q_{>0}. 
\end{equation}
Further, a direct computation shows that 
\begin{align}\label{eq:norm}
[\A_f^{\times} : \mathbf N(\widehat{O}_E^{\times}) \cdot \Q_{>0}]=
[ \widehat{\Z}^{\times} : 
\mathbf N(\widehat{O}_E^{\times})]= \prod_{\ell}[\Z_{\ell}^{\times} : \mathbf N(\widehat{O}_{E_\ell}^{\times})]
=2^w. 
\end{align}
Moreover, an equality ${\bf N}(\C^{\times})=\R_{>0}$ and the norm index theorem imply that 
\begin{align}\label{nit}
[ 
\A_f^{\times} : \mathbf  N(\A_{E, f}^{\times}) \cdot \Q_{>0}]
 =[\A^{\times} : 
\bfN(\A_{E}^{\times}) \cdot \Q^{\times}]=2. 
\end{align}
Equalities  \eqref{quot},  (\ref{eq:norm}),  and  (\ref{nit}) show that 
\[ [\mathbf N(\A_{E, f}^{\times}) : 
 {\bf N}(E^{\times}) \cdot 
 \mathbf N(\widehat{O}_E^{\times})]= 
[ \mathbf N(\A_{E, f}^{\times}) \cdot \Q_{>0} : 
 \mathbf N(\widehat{O}_E^{\times})  \cdot \Q_{>0} 
 ]=
 2^{w-1}.
 \]
\end{proof}
We write  $I^{1}$ for the kernel of the similitude character of  $I$. 
Let $\Gamma$ be an  arithmetic subgroup of $I(\Q)$. Then  $\Sim(\Gamma) \subset \Z^{\times} \cap \Q_{>0}=\{1\}$ by Lemma \ref{sim:I}, and hence   $\Gamma  \subset I^1(\Q)$.
 Since $I^1(\R)$ is compact, $\Gamma$ is finite.  

  Let $U$ be an open compact subgroup of $I(\A_f)$. 
  Let $[g]$ be a double coset in  $I(\Q) \backslash I(\A_f) / U$,   represented by $g\in I(\A_f)$. 
  Then   $\Gamma_{g}\coloneqq I(\Q) \cap g  U g^{-1}$ is finite. 
  A similar statement holds true also for an open compact subgroup $U^1$ of $I^1(\A_f)$ and a coset $[g] \in I^1(\Q) \backslash I^1(\A_f) / U^1$. 
 \begin{defn}\label{mass:I}
 The \emph{mass of $I$ with respect to $U$} is defined by 
\begin{align}\label{eq:mass}
\Mass(I, U)   \coloneqq  \sum_{[g] \in I(\Q) \backslash I(\A_f) / U}  \frac{1}{ \lvert \Gamma_{g} \rvert} .
\end{align}
The \emph{mass of $I^1$ with respect to $U^1$} is defined similarly  and denoted by $\Mass(I^1, U^1)$. 
\end{defn}
%We will give an explicit formula for  $\Mass(I, {\rm I}_p{\bf G}(\widehat{\Z}^p))$. 
%  The second equality follows from the definition. 
We write ${\rm I}_p^1 \coloneqq {\rm I}_p \cap I^1(\Q_p)$. 
\begin{prop}\label{mass:sim} 
Let $\tau$ be as in Lemma \ref{sim}. 
 Then  
\begin{align*}
\Mass \big(I, {\rm I}_p {\bf G}(\widehat{\Z}^p)\big)= \tau  \cdot 
\Mass \big(I^1, {\rm I}_p^1 {\bf G}^1(\widehat{\Z}^p)\big).
\end{align*}
%the superscript $^1$ denotes the kernel of the similitude character, and 
%\begin{align*}
%\tau= 
%\begin{cases} 
% 2^{w-1} & {\text{if $n$ is odd}}, \quad w \coloneqq \#  \{  \ell : {\text{prime}}, \ell \mid d_E\}; 
% \\
% 1  & {\text{if  $n$ is even}}.  
 %\end{cases}
% \end{align*}
\end{prop}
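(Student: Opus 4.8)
The plan is to compare the two double coset spaces through the similitude character, using Lemma~\ref{sim}. Write $U \coloneqq {\rm I}_p\bfG(\wh\Z^p)$ and $U^1 \coloneqq {\rm I}_p^1\bfG^1(\wh\Z^p)$, so that $U^1 = U\cap I^1(\A_f)$. By Lemma~\ref{sim}, $\Sim$ induces a surjection $I(\Q)\backslash I(\A_f)/U \twoheadrightarrow Q$ onto a finite group $Q$ of order $\tau$. First I would fix, for each class $\bar c\in Q$, a representative $c$ and an element $g_{\bar c}\in I(\A_f)$ with $\Sim(g_{\bar c})=c$, taking $g_{\bar c}=1$ when $\bar c$ is trivial. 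From Lemma~\ref{sim:I} and its proof one has $\Sim(I(\Q))\subseteq\Q_{>0}$ and $\Sim(U)\subseteq\wh\Z^{\times}$, hence $\Sim(I(\Q))\cap\Sim(U)=\{1\}$ inside $\A_f^{\times}$. A direct computation with the exact sequence $1\to I^1(\A_f)\to I(\A_f)\xrightarrow{\Sim}\Sim(I(\A_f))$ then shows that every double coset lying over $\bar c$ has a representative of the form $g_{\bar c}h$ with $h\in I^1(\A_f)$, and that $g_{\bar c}h$ and $g_{\bar c}h'$ represent the same $I(\Q)$-double coset if and only if $h'\in (g_{\bar c}^{-1}I^1(\Q)g_{\bar c})\,h\,U^1$.

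Next, for $g=g_{\bar c}h$ the group $\Gamma_g=I(\Q)\cap gUg^{-1}$ is arithmetic, hence finite and contained in $I^1(\Q)$ by the discussion preceding Definition~\ref{mass:I}; since $I^1\lhd I$ one obtains $\Gamma_g=I^1(\Q)\cap gU^1g^{-1}$. Consequently the assignment $h\mapsto g_{\bar c}hg_{\bar c}^{-1}$ descends to a bijection from the fibre of $\Sim$ over $\bar c$ onto $I^1(\Q)\backslash I^1(\A_f)/(g_{\bar c}U^1g_{\bar c}^{-1})$ which preserves weights: the stabilizer of $g_{\bar c}h$, namely $I^1(\Q)\cap g_{\bar c}hU^1h^{-1}g_{\bar c}^{-1}$, is exactly the stabilizer of $g_{\bar c}hg_{\bar c}^{-1}$ in the right-hand space. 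Summing over $Q$ gives
$$\Mass(I,U)=\sum_{\bar c\in Q}\Mass\bigl(I^1,\,g_{\bar c}U^1g_{\bar c}^{-1}\bigr).$$

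It remains to prove that each term equals $\Mass(I^1,U^1)$, and I expect this to be the only real obstacle, since conjugating the level subgroup by an element of $I(\A_f)$ does not a priori preserve the mass. When $n$ is even we have $\tau=1$ and $g_{\bar c}=1$, so nothing is needed. When $n$ is odd, I would use the center $Z(I)$ of $I$, which is isomorphic to $\Res_{E/\Q}\bbG_{{\rm m},E}$ with $\Sim|_{Z(I)}=\bfN_{E/\Q}$. By \eqref{eq:nuI}, $\Sim(I(\A_f))=\bfN(\A_{E,f}^{\times})$ for $n$ odd, so $c=\Sim(g_{\bar c})=\bfN(z)$ for some $z\in\A_{E,f}^{\times}=Z(I)(\A_f)$; then $g_{\bar c}z^{-1}\in I^1(\A_f)$, and since $z$ is central, $g_{\bar c}U^1g_{\bar c}^{-1}=(g_{\bar c}z^{-1})\,U^1\,(g_{\bar c}z^{-1})^{-1}$. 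Thus $g_{\bar c}U^1g_{\bar c}^{-1}$ is conjugate to $U^1$ by an element of $I^1(\A_f)$, and the invariance of the mass under such conjugation (a change of variables in the defining sum, as the stabilizers match on both sides) yields $\Mass(I^1,g_{\bar c}U^1g_{\bar c}^{-1})=\Mass(I^1,U^1)$. Combining, $\Mass(I,U)=\tau\cdot\Mass(I^1,U^1)$, as asserted. The only delicate inputs beyond Lemmas~\ref{sim:I} and~\ref{sim} are the elementary fact $\Sim(I(\Q))\cap\Sim(U)=\{1\}$, which keeps the representative and stabilizer bookkeeping clean, and the surjectivity of $\bfN\colon Z(I)(\A_f)\to\Sim(I(\A_f))$ for $n$ odd, which is immediate from \eqref{eq:nuI}.
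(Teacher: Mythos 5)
Your proof is correct and follows essentially the same route as the paper's: decompose $I(\Q)\backslash I(\A_f)/{\rm I}_p\bfG(\widehat{\Z}^p)$ by similitude class using Lemma~\ref{sim}, and use $\Sim(I(\Q))\cap\Sim(U)=\{1\}$ (positivity at infinity versus integrality at the finite places) to identify each fibre with an $I^1$-double coset space with matching stabilizer groups. The only point where you go beyond the paper is the final identity $\Mass\big(I^1, g_{\bar c}U^1g_{\bar c}^{-1}\big)=\Mass\big(I^1,U^1\big)$, which the paper's proof leaves implicit in its last equality; your justification via the central torus (for $n$ odd, writing $\Sim(g_{\bar c})$ as the norm of an adelic central element, so that $g_{\bar c}U^1g_{\bar c}^{-1}$ is $I^1(\A_f)$-conjugate to $U^1$) is valid and cleanly closes that step.
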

\begin{proof}
 We put $U \coloneqq {\rm I}_p {\bf G}(\widehat{\Z}^p)$ and $U^1 \coloneqq {\rm I}_p^1 {\bf G}^1(\widehat{\Z}^p)$. 
The map in Lemma \ref{sim}  induces a decomposition 
\[I(\Q) \backslash I(\A_f)/U= \bigcup_{i=1}^{\tau} I(\Q) \backslash I(\Q)   I^1(\A_f) f_i U /U, \]
where $f_1, \ldots, f_{\tau}$ are some elements of $I(\A_f)$. 
%where we assume $g \in I^1(\A_f)$.
The right multiplication by  $f_i^{-1}$ induces a bijection 
\begin{align}\label{eq:f_i}
I(\Q) \backslash I(\Q)  I^1(\A_f)  f_i U / U  \xrightarrow{\sim} 
I(\Q) \backslash I(\Q) I^1(\A_f) f_i U f_i^{-1}/f_i U f_i^{-1}.\end{align}
We put $U_i^1 \coloneqq f_iUf_i^{-1} \cap I^1(\A_f)$. 
 Then a computation of similitude factors shows $U^1_i=f_iU^{1}f_i^{-1}$. 
Furthermore, there is a bijection 
 \begin{align}\label{eq:iden}
 I^1(\Q) \backslash I^1(\A_f) /U_i^{1} \xrightarrow{\sim}
 I(\Q) \backslash I(\Q) I^1(\A_f)  f_i U f_i^{-1}/ f_iU f_i^{-1}.\end{align}
 In fact, the natural projection  induces a well-defined  map. 
We show this map  is injective.  Suppose that two elements $x_1, x_2\in I^1(\A_f)$ satisfy $\gamma x_1 u=x_2$ for some $\gamma \in I(\Q)$ and $u\in f_iUf_i^{-1}$.  Then 
${\rm sim}(\gamma)\cdot {\rm sim}(u)=1$. 
 We have  ${\rm sim}(\gamma) \in \Q_{>0}$ as in  Lemma \ref{sim:I}, and $\Sim(u) \in \widehat{\Z}^{\times}$. 
  It follows that ${\rm sim}(\gamma)={\rm sim}(u)=1$, and  hence $\gamma \in I^1(\Q)$ and $u \in U_i^1$, as desired. 

For each $1 \leq i \leq \tau$, let $g_{i1}, \ldots, g_{im_i}$ be elements of   $I^1(\A_f)$ such that $g_{i1}f_i, \ldots, g_{im_i}f_i$ are representatives of double cosets in $I(\Q) \backslash I(\Q)   I^1(\A_f) f_i U /U$. 
Since $\Gamma_{g_{ij} f_i}  \subset I^1(\Q)$, we have 
\begin{align}\label{eq:Gamma}
\Gamma_{g_{ij} f_i}
%(=  I(\Q) \cap g_{ij}f_i U(g_{ij}f_i)^{-1})
=I^1(\Q) \cap g_{ij}U_i^1 g_{ij}^{-1}.\end{align}
% In fact, the subgroup  $\Sim(\Gamma_{g_{ij} f_i}) \subset \Q_{>0}$ is trivial since $\Gamma_{g_{ij} f_i}$ is finite. 
%   In particular, each element  $g_{ij}f_iu(g_{ij} f_i)^{-1} 
 %  \in \Gamma_{g_{ij} f_i}$ 
  % with $u \in U$  satisfies that $1={\rm sim}
  % (g_{ij}f_iu(g_{ij} f_i)^{-1})=
 %  \Sim(u)$, as desired. 
 It follows that 
   \begin{align*} 
    \Mass(I,U) 
    &=\sum_{i=1}^{\tau} \sum_{j=1}^{m_i} 
\frac{1}{\lvert \Gamma _{g_{ij}f_i}\rvert} &  &  [\eqref{eq:mass}]
\\ 
& = 
\sum_{i=1}^{\tau} 
\sum_{j=1}^{m_i}  
\frac{1}{\lvert
I^1(\Q) \cap 
g_{ij} U^1_i g_{ij}^{-1}
\rvert} &  & [\eqref{eq:f_i}, \eqref{eq:iden}, \eqref{eq:Gamma}]
\\
& = 
\sum_{i=1}^{\tau}
\Mass (I^1, U_i^1)
= \tau
\cdot 
\Mass (I^1, U^{1}). & & 
\end{align*}
\end{proof}
\subsubsection{}
% $\Mass (\Lambda)=\Mass \big({\mathbf G}^1_{\Q}, \mathbf G^1(\widehat{\Z})
%\big)$ 
We write  $\bfG^1={\rm U}(\Lambda, \varphi)$ for the kernel of the similitude character of $\bfG$. 
Recall that $I_{\R}^1$  is a  compact inner form of $\mathbf G_{\R}^1$. 
 Let $\mu_{I^1(\R)}$ be the Haar measure on $I^1(\R)$ which gives this group volume one. 
   %Its pull-back   via an inner twist gives a Haar measure on ${\bf G}^1(\R)$, denoted by  $\mu_{{\bf G}^1(\R)}$. 
    The pull-back of  $\mu_{I^1(\R)}$  via an inner twist gives a Haar measure on ${\bf G}^1(\R)$,  denoted by  $\mu_{{\bf G}^1(\R)}$. 
    Further,  for each prime $\ell$, let $\mu_{{\bf G}^1(\Q_{\ell})}$ be the Haar measure on ${\bf G}^1(\Q_{\ell})$ which gives ${\bf G}^1(\Z_{\ell})$ volume one. 
 Finally we put $\mu_{{\bf G}^1(\A)}
   \coloneqq \mu_{{\bf G}^1(\R)} \times \prod_{\ell} \mu_{{\bf G}^1(\Q_{\ell})}$. 
 % The subgroup ${\bf G}^1(\Z_{\ell})$ is the stabilizer of the lattice  $\Lambda_{\ell}$ in the unitary group  $\bfG^1(\Q_{\ell})={\rm U}(V, \varphi)(\Q_{\ell})$. 
 By Definition \ref{def:mass} we have that
   \begin{align}
   \label{eq:G}
 \Mass (\Lambda) = \int_{{\bf G}^1(\Q) \backslash {\bf G}^1(\A)} \mu_{{\bf G}^1(\A)}. 
 \end{align}
 
 % For  a maximal parahoric subgroup ${\rm I}_p$ of $I(\Q_p)$, 
   Let $\mu_{I^1(\Q_{p})}$ be the Haar measure on $I^1(\Q_p)$ which gives ${\rm I}_p^1$ volume one. 
   For $\ell\neq p$, we put  $\mu_{I^1(\Q_{\ell})}=\mu_{{\bf G}^1(\Q_{\ell})}$ under the fixed  identification $I^1(\Q_{\ell})\simeq {\bf G}^1(\Q_{\ell})$. 
  Further   
  we put  $\mu_{I^1(\A)}\coloneqq  \mu_{I^1(\R)} \times \prod_{\ell} \mu_{I^1(\Q_{\ell})}$. 
 Then  
   \begin{align} \label{eq:omega}
   \Mass(I^1,  {\rm I}_p^1{\bf G}^1(\widehat{\Z}^p))  =  \int_{I^1(\Q) \backslash I^1(\A)} \mu_{I^1(\A)}. 
 % \\ \label{eq:I^1}
  % =  & 
 % \sum_{[g] \in I^1(\Q) \backslash I^1(\A_f) / {\rm I}_p^1{\bf G}^1(\widehat{\Z}^p)} \frac{1}{ \lvert  I^1(\Q) \cap g {\rm I}_p^1 {\bf G}^1(\widehat{\Z}^p) g^{-1} \rvert}.
 \end{align}
  % Recall that,   for each prime $\ell$,  $\mu_{{\bf G}^1(\Q_{\ell})}$ denotes  the Haar measure on ${\bf G}^1(\Q_{\ell})$ which gives ${\bf G}^1(\Z_{\ell})$
   %={\rm U}(\Lambda, \varphi)(\Z_{\ell})$ 
  % volume one. 

% Let $\mu_{{\bf G}^1(\Q_{p})}$ be the Haar measure on ${\bf G}^1(\Q_{p})$ which gives ${\bf G}^1(\Z_{p})$ volume one. 
Fix an inner twisting  
$f : I^1_{\Q_p}\to {\bfG}_{\Q_p}^1$  over an extension of $\Q_p$,  and let $\mu^*_{I^1(\Q_p)}$ be the Haar measure on $I^1(\Q_p)$ defined by the pull-back $\mu^*_{I^1(\Q_p)} \coloneqq f^*(\mu_{{\bf G}^1(\Q_p)})$. 
Let
\begin{equation}\label{lpKp}
  \lambda_p({\rm I}_p) \coloneqq  \Bigg( 
  \int_{{\rm I}_p^1}
 \mu^*_{I^1(\Q_p)}\Bigg) ^{-1}.  
\end{equation}
It follows from \eqref{eq:G} and  \eqref{eq:omega} that  
\begin{align}\label{eq:GHY}
\frac{ \Mass \big(I^1,  {\rm I}_p^1  \mathbf G^1(\widehat{\Z}^p) \big) }{
\Mass \big(\Lambda
%{\mathbf G}^1_{ \Q}, \mathbf G^1(\widehat{\Z})
\big)}
%\frac{\int_{{\bf G}^1(\Z_p)}
 %\mu_{{\bf G}^1(\Q_p)}}
% \Bigg \int_{{\rm I}_p^1}
% \mu^*_{I^1(\Q_p)} \Bigg)^{-1}
 %\frac{1}{ \int_{{\rm I}_p^1}
 %\mu^*_{I^1(\Q_p)}}
 =\lambda_p({\rm I}_p). 
\end{align}

\begin{prop}\label{compare} 
We have that
\[\lambda_p({\rm I}_p)=
\begin{dcases} 
\left(
\prod_{i=1}^{n}(p^i-(-1)^i)
\right) 
\cdot 
\left( 
  \prod_{j=1}^{n-t} (p^j-(-1)^j) \cdot 
  \prod_{k=1}^{t}
  (p^k-(-1)^k) 
  \right)^{-1} & {\text{if $p$ is inert and ${\rm I}_p\sim_{\rm conj}\calP_t$}}; 
  \\ 
\left(
 \prod_{i=1}^n (p^i-1) 
 \right) 
 \cdot  
 \left( \prod_{j=1}^{m}(p^{\frac{n}{m} \cdot j}-1) \right) ^{-1},   \quad   m \coloneqq \gcd(r,s) & {\text{if $p$ is split}}.
 \end{dcases}\]
\end{prop}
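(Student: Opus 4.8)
The plan is to compute $\lambda_p({\rm I}_p)$ by comparing volumes of two parahoric subgroups of the \emph{same} $\Q_p$-group $\bfG^1_{\Q_p}$, but measured with respect to a measure that is transported from the compact inner form $I^1_{\Q_p}$. The key observation is that the quantity $\lambda_p({\rm I}_p)^{-1}=\int_{{\rm I}_p^1}\mu^*_{I^1(\Q_p)}$ is insensitive to the choice of inner twisting $f$, because any two such differ by an inner automorphism which preserves Haar measure; hence we may compute it via the Bruhat--Tits/Gross formalism for volumes of parahorics. Recall that for a reductive group $H$ over $\Q_p$ with parahoric $P$ whose Bruhat--Tits model $\underline P$ has special fibre with maximal reductive quotient $\overline P$, one has a canonical relation
\begin{equation*}
  \vol(P) = \frac{|\overline P(\F_p)|}{p^{\dim \overline P}} \cdot (\text{a factor depending only on } H),
\end{equation*}
where the volume is taken with respect to a fixed invariant form on $H$ (equivalently, on its inner forms). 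First I would fix the measure so that $\vol({\bfG^1(\Z_p)})^{-1}$ equals the local density $\beta_{\Lambda_p}$ as computed in Lemma~\ref{LD} — this is the natural normalization coming from \eqref{eq:G} — and then express $\lambda_p({\rm I}_p)$ as the ratio of $\beta_{\Lambda_p}$ to the analogous ``$\calP_t$-density'' $\int_{{\rm I}_p^1}\mu^*_{I^1(\Q_p)}$, the latter being $|\overline{\calP}^1_t(\F_p)|/p^{\dim\overline{\calP}^1_t}$ times the same global factor.

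Concretely, in the inert case I would use $\overline{\calP}^1_t\simeq {\rm U}_{n-t}\times {\rm U}_t$ from Section~\ref{mprh}, together with the order formula $|{\rm U}_m(\F_q)| = q^{m(m-1)/2}\prod_{i=1}^m(q^i-(-1)^i)$ and $\dim {\rm U}_m=m^2$ from Table~\ref{table:finite}. The unimodular case corresponds to $t=0$, where $\overline{\calP}^1_0\simeq {\rm U}_n$ and the density recovers $\beta_{\Lambda_p}=\prod_{i=1}^n(1-(-1)^iq^{-i})$ of Lemma~\ref{LD}. Taking the ratio, the powers of $p$ from the $q^{m(m-1)/2}$ prefactors and from $p^{-\dim}$ must be checked to cancel — this is the bookkeeping step: $\tfrac12 n(n-1) = \tfrac12(n-t)(n-t-1)+\tfrac12 t(t-1) + t(n-t)$ and $n^2=(n-t)^2+t^2+2t(n-t)$, so indeed the prefactor exponents agree after accounting for $\dim\overline{\calP}^1_0-\dim\overline{\calP}^1_t = 2t(n-t) = $ the discrepancy in the $q^{m(m-1)/2}$ terms, leaving precisely
\begin{equation*}
  \lambda_p({\rm I}_p) = \frac{\prod_{i=1}^n(p^i-(-1)^i)}{\prod_{j=1}^{n-t}(p^j-(-1)^j)\cdot\prod_{k=1}^t(p^k-(-1)^k)}.
\end{equation*}
In the split case, $\bfG^1_{\Q_p}\simeq \GL_{n,\Q_p}$ and ${\rm I}_p^1$ corresponds to $\GL_m(O_B)$ with $B$ of degree $(n/m)^2$ and invariant $r/n$; here $\overline{\calP}^1\simeq \Res_{\F_q/\F_p}\GL_{m,\F_q}$ with $q=p^{n/m}$, while the hyperspecial subgroup $\bfG^1(\Z_p)=\GL_n(\Z_p)$ has $\overline{\bfG^1}=\GL_{n,\F_p}$. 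Using $|\GL_n(\F_p)|=p^{n(n-1)/2}\prod_{i=1}^n(p^i-1)$ and $|\GL_m(\F_q)|=q^{m(m-1)/2}\prod_{j=1}^m(q^j-1)$ with $q=p^{n/m}$, and checking $\dim\GL_n - \dim_{\F_p}\Res_{\F_q/\F_p}\GL_m = n^2 - m^2(n/m) = n^2-mn$ against the exponent discrepancy, I would obtain $\lambda_p({\rm I}_p)=\prod_{i=1}^n(p^i-1)\big/\prod_{j=1}^m(p^{(n/m)j}-1)$.

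The main obstacle is making the normalization argument rigorous: one must verify that the ``global factor'' in the volume formula (the product of the motivic/Artin $L$-values times discriminant powers, in the Gross or Prasad normalization) is genuinely identical for the hyperspecial model of $\bfG^1_{\Q_p}$ and for the parahoric model $\underline{\calP}^1_t$ — this rests on the fact that $I^1_{\Q_p}$ and $\bfG^1_{\Q_p}$ are inner forms and hence share the same $\Q_p$-quasi-split inner form, the same complex/$\ell$-adic dual group, and the same Langlands--Gross invariant form, so that only the special-fibre reductive quotients differ. The cleanest route is to invoke the comparison already implicit in Gan--Yu \cite{GY} and in the Bruhat--Tits theory as packaged by Gross, namely that $\int_{P}\omega = |\overline P(\F_p)| p^{-\dim\overline P}\cdot \mathcal{N}$ with $\mathcal N$ depending only on the inner-form class; dividing the $t=0$ instance by the general-$t$ instance cancels $\mathcal N$ and $\beta_{\Lambda_p}$, reducing everything to the explicit finite-group order computations above together with the elementary exponent identities for the powers of $p$. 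I expect the remainder to be a routine, if slightly tedious, manipulation of the product formulas in Table~\ref{table:finite}.
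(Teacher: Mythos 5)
Your overall strategy is the paper's: transport the canonical measure from the hyperspecial model of $\bfG^1_{\Q_p}$ to $I^1_{\Q_p}$, express the volume of each parahoric through the order of the maximal reductive quotient of the special fibre of its smooth model, and finish with the order formulas of Table~\ref{table:finite}. However, your key normalization lemma is stated incorrectly, and this is a genuine gap. The formula $\int_P\omega=|\overline P(\F_p)|\,p^{-\dim\overline P}\cdot\mathcal N$ with $\mathcal N$ depending only on the inner-form class of the group is valid only when $\omega$ is the form adapted to the model of $P$ itself; once you fix $\omega$ to be the pullback of the form adapted to the hyperspecial model (which is what $\mu^*_{I^1(\Q_p)}$ is), the constant acquires an extra power of $p$ depending on the \emph{type} of the parahoric. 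The correct statement, which is exactly what Gan--Hanke--Yu's Iwahori-volume computation (their (2.6), (2.11), (2.12)) supplies and what the paper uses, is $\int_P|\omega^*|=|\overline P(\F_p)|\,p^{-(\dim\bfG^1+\dim\overline P)/2}$, equivalently the ratio $\lambda_p({\rm I}_p)=p^{-N(\bfG^1_{\F_p})}|\bfG^1(\F_p)|\big/\bigl(p^{-N(\overline{{\rm I}_p^1})}|\overline{{\rm I}_p^1}(\F_p)|\bigr)$ with $N(\cdot)$ the number of positive roots, \emph{not} the dimension. A quick sanity check exposes the problem: for $\SL_2(\Q_p)$ with the hyperspecial-adapted measure, the Iwahori has volume $\vol(\SL_2(\Z_p))/(p+1)=(p-1)/p^2$, whereas your formula with $\overline P=\Gm$ would give $(p-1)/p$, off by $p^{-1}=p^{-N(\SL_2)+N(\Gm)}$.

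Because of this, your ``bookkeeping'' step is also wrong as written: with your normalization the $p$-power prefactors do \emph{not} cancel. In the inert case the discrepancy in the $q^{m(m-1)/2}$-prefactors of the group orders is $\tfrac12 n(n-1)-\tfrac12(n-t)(n-t-1)-\tfrac12 t(t-1)=t(n-t)$, while the dimension discrepancy you invoke is $2t(n-t)$; these do not match, and your ratio would come out as $p^{-t(n-t)}$ times the asserted answer (similarly $p^{-n(n-m)/2}$ off in the split case, where the order-prefactor gap is $n(n-m)/2$ but $\dim\GL_n-\dim_{\F_p}\Res_{\F_q/\F_p}\GL_m=n(n-m)$). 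With the correct exponents $N(\bfG^1_{\F_p})=\tfrac12 n(n-1)$ and $N(\overline{{\rm I}_p^1})=\tfrac12(n-t)(n-t-1)+\tfrac12 t(t-1)$ (resp.\ $\tfrac{n}{m}\cdot\tfrac{m(m-1)}{2}$ in the split case), the cancellation is exact and the stated formulas drop out, which is precisely the paper's computation \eqref{vol}--\eqref{eq:n}. So your final formulas are right, but only because two errors compensate; to make the argument sound you must replace your volume formula by the GHY/Prasad comparison (or equivalently compare each parahoric's adapted form with the pulled-back hyperspecial form, which introduces the factor $p^{N(\overline P)-N(\bfG^1_{\F_p})}$).
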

\begin{proof}
We compute $\lambda_p({\rm I}_p)$ using Gan-Hanke-Yu's argument \cite{GHY}. 
 Note that  Prasad  \cite[Prop. 2.3]{Prasad} gave  a similar formula for  a semi-simple and simply connected algebraic group.
 
We recall the canonical Haar measures on ${\bf G}^1(\Q_p)$ and $I^1(\Q_p)$ constructed in \cite[Section  4]{Gross2}. 
 Let $\omega_{\bfG^1_{ \Q_p}}$ be an invariant differential of top degree on ${\bf G}^1_{\Q_p}$ with nonzero reduction on ${\bf G}^1_{\F_p}$ and let $\lvert \omega_{\bfG^1_{ \Q_p}} \rvert$ be the associated Haar measure on ${\bf G}^1(\Q_p)$. 
 Let $\lvert \omega^*_{I^1_{\Q_p}} \rvert$ on $I^1_{\Q_p}$ be the Haar measure associated with the pull-back 
   $f^*(\omega)$. 
  Since ${\bf G}^1_{\Q_p}$ is unramified with reductive model  ${\bfG}^1_{\Z_p}$,   
  we have 
 $\int_{\bfG^1(\Z_p)} \lvert \omega_{\bfG^1_{\Q_p}}\rvert= p^{-\dim \bfG^1_{\F_p}} \cdot  \lvert \bfG^1(\F_p)\rvert$ as in  \cite[p.~294]{Gross2}. 
It follows that  $
  {\mu}^*_{I^1(\Q_p)}=p^{\dim \bfG^1_{\F_p}} \cdot  \lvert \bfG^1(\F_p) \rvert^{-1}  \cdot \lvert \omega^*_{I^1_{\Q_p}} \rvert$.  
 
Now  
let  $\underline{{\rm I}}^1_{p}$  be the smooth  model over $\Z_p$ of ${\rm I}_p^1$, and let 
$\overline{{\rm I}_p^1}$ 
 be the maximal reductive quotient of  the special  fiber 
$\underline{{\rm I}}^1_{p} \otimes_{\Z_p}\F_p$. 
Further let
$N(\mathbf {G}^1_{ \F_p})$ (resp.~$N(\overline{{\rm I}_p^1})$) 
denote  the number of positive roots of $\mathbf {G}^1_{\F_p}$ (resp.~$\overline{{\rm I}_p^1}$). 
By a computation of the volume of an Iwahori subgroup of $I^1(\Q_p)$ \cite[(2.6), (2.11) and (2.12)]{GHY}, we have that 
  \begin{align}
  \begin{split}\label{vol}
 \lambda_p({\rm I}_p)
 =\Bigg( p^{\dim \bfG^1_{\F_p}} \cdot  \lvert \bfG^1(\F_p) \rvert^{-1} \cdot  \int_{{{\rm I}_p^1}} 
 \lvert
  \omega^*_{I^1_{\Q_p}}
 \rvert \Bigg)^{-1}= 
\frac{{p}^{-N(\mathbf {G}^1_{\F_p})}\cdot \lvert  \mathbf {G}^1(\F_{p}) \rvert}
 {{p}^{-N(\overline{{\rm I}_p^1})} \cdot \lvert  \overline{{\rm I}_p^1}(\F_{p}) \rvert}. 
\end{split}
\end{align}

Let ${\rm U}_t$ denote  the unitary group in $n$ variables over $\F_p$, and  $\F_q$ denote the field of order $q \coloneqq p^{n/m}$.  Then we have isomorphisms of  algebraic groups over $\F_p$ (Section ~\ref{mprh}): 
\begin{align*}
\mathbf {G}^1_{\F_p}  \simeq 
\begin{cases}
{\rm U}_n 
\\  
\GL_n, 
\end{cases} 
\ 
\overline{{\rm I}_p^1}
\simeq  
\begin{cases} 
{\rm U}_{n-t} \times {\rm U}_{t} 
 &  {\text{if $p$ is inert in $E$ and ${\rm I}_p^1 \sim_{\rm cong}\calP_t^1$}}; 
\\   \Res_{\F_q/\F_p}\GL_{m, \F_q}   & {\text{if $p$ is split in $E$}}.
\end{cases}
\end{align*}
Moreover, we have that 
\begin{align}\label{eq:n}
\begin{split}
N({\rm U}_n) =N(\GL_n)  = \frac{n(n-1)}{2}, 
\quad 
  N(\Res_{\F_{q}/\F_p}\GL_{m, \F_q})  =\frac{n}{m}\cdot \frac{m(m-1)}{2}.
 \end{split}
  \end{align}
  Equalities \eqref{vol} and  \eqref{eq:n}, and Table \ref{table:finite} imply the assertion.
\end{proof}
 \begin{remark}
 The rational function $\lambda_p({\rm I}_p)$ of $p$ is in fact a polynomial with integer coefficients. 
 Indeed, if $p$ is split in $E$, we can write 
 $
 \lambda_p({\rm I}_p)= \prod_{1 \leq i \leq n, m \nmid i}(p^i-1).
$

 Assume that $p$ is inert in $E$. 
  Formula \eqref{vol} shows that  $\lambda_p({\rm I}_p)$ equals the prime-to-$p$ factor of the fraction  $\lvert {\rm U}_n(\F_p) \rvert /\lvert {\rm U}_{n-t}(\F_p) \times {\rm U}_t(\F_p)\rvert$. 
Since ${\rm U}_{n-t} \times {\rm U}_t$ can be  embedded into ${\rm U}_n$, the fraction is an integer for any prime $p>2$. 
  From Gauss' lemma \cite[Ch.9, Ex.2]{AM} it follows that the  coefficients of the polynomial $\lambda_p({\rm I}_p)$  are integers, as desired.   
  We can also show this fact  by induction,  using the following relation: 
  If we write $\lambda_p(n,t)$ for the expression of $\lambda_p({\rm I}_p)$ (with $p$ inert) in Proposition~\ref{compare}, then   \[\lambda_p(n,t)= p^t \cdot \lambda_p(n-1, t)+ (-1)^{n-t} \cdot \lambda_p(n-1, t-1).\]
\end{remark}

\begin{thm}\label{Mass_inner}   
Let $\chi$ be the Dirichlet  character of  $E/\Q$. 
We use the convention that $\chi^j=1$ or $\chi$ according as $j$ is even or odd. 
 Let   $L(-, \chi^j)$ be the Dirichlet $L$-function associated to $\chi^j$. Let ${\rm I}_p$ be a maximal parahoric subgroup of $I(\Qp)$ and let $\lambda_p({\rm I}_p)$ be the associated number defined in \eqref{lpKp} whose formula is given in Proposition~\ref{compare}.
 
 {\rm (1)}  
The mass of $I$ with respect to  ${\rm I}_p {\bf G}(\widehat{\Z}^p)$   is 
\[ \Mass\left(I, {\rm I}_p {\bf G}(\widehat{\Z}^p)\right)= \varepsilon \cdot  \prod_{j=1}^n L(1-j, \chi^j)  \cdot \prod_{\ell \mid d_E} \kappa_{\ell}  \cdot \lambda_p({\rm I}_p)
\] 
where $d_E$ denotes the discriminant of $E$, and quantities $\varepsilon$ and  $\kappa_{\ell}$ for primes $\ell \mid d_E$  are given by 
\begin{align}\label{epsilon}
\varepsilon   
&
=
\begin{dcases}
\frac{1}{2^{n}} 
 &  {\text{if $n$ is  odd}}; 
\\
\frac{(-1)^{n/2}}{2^{n+w-1}}  &  {\text{if $n$ is  even, where}} \  w =
\# \{ \ell :  {\text{prime}},  \ \ell \mid d_{E}\}, 
\end{dcases}
\\ 
\label{kappa_ell}
\kappa_{\ell} 
& 
=
\begin{cases}
1 &
 {\text {if $n$ is odd}};   
\\
{\ell}^{n/2} +1 
 &  {\text {if $n$ is  even}},
  \ \ell  \neq 2, \ 
  d(\Lambda_{\ell})=(-1)^{n/2}; 
  %(-1)^{n/2}\cdot d( \Lambda_{{\ell}})  \mod  {{\ell}} 
   % \in 
 % \F_{\ell}^{\times 2}; 
\\
\ell^{n/2} -1  
 & {\text {if $n$ is  even}},  \ 
  \ell \neq 2, \
   d(\Lambda_{\ell})\neq (-1)^{n/2}; 
   %(-1)^{n/2}\cdot  d( \Lambda_{{\ell}})  \mod  {{\ell}} 
% \notin
%  \F_{\ell}^{\times 2}; 
 \\ 
2^n-1 &  {\text {if $n$ is  even}}, \  \ell =2, \   d_E \equiv 4 \pod 8, \ \Lambda_{2} \ {\text {normal}}; 
\\
2  &  {\text {if $n$ is even}}, \  \ell =2, \ d_E \equiv 4 \pod 8,  \ \Lambda_{2} \  {\text  {subnormal}}; 
\\
2^{n/2} \cdot (2^n -1) 
 &  {\text {if $n$ is even}}, \  \ell =2, \ d_E \equiv 0 \pod 8, \ \Lambda_{2} \  {\text {normal}}; 
\\ 
2^{n/2} +  1 
& 
 {\text {if $n$ is even}},  \  \ell =2, \  d_E \equiv 0 \pod 8, \ \Lambda_{2} 
\ {\text  {subnormal}}, \ d(\Lambda_2) =(-1)^{n/2}; 
\\ 
2^{n/2} -  1 
& 
 {\text {if $n$ is even}},  \  \ell =2, \  d_E \equiv 0 \pod 8, \ \Lambda_{2} 
\ {\text  {subnormal}}, \ d(\Lambda_2) \neq (-1)^{n/2}. 
\end{cases}
\end{align}
Here we write $\Lambda_{\ell}=\Lambda \otimes \Q_{\ell}$, and its determinant $d(\Lambda_{\ell})$ takes value in  $\Z_{\ell}^{\times}/\bfN_{E_{\ell}/\Q_{\ell}}(O_{E_{\ell}}^{\times})$. 
  
{\rm (2)} 
Let $\varepsilon$, $\kappa_{\ell}$, and $\lambda_p({\rm I}_p)$ be as above. 
For an integer $N\geq 3$ with $p \nmid N$, we have that 
\[ \lvert I(\Q) \backslash I(\A_f)/{\rm I}_p{\rm K}^p(N) \rvert = 
[\mathbf G(\widehat{\Z}^p) : {\rm K}^p(N)] \cdot \varepsilon \cdot   \prod_{j=1}^n L(1-j, \chi^j)  \cdot \prod_{\ell \mid d_E} \kappa_{\ell}  \cdot \lambda_p({\rm I}_p).\]
\end{thm}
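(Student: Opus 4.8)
The plan is to assemble Theorem~\ref{Mass_inner} from the pieces already in place: the comparison formula \eqref{eq:GHY}, the explicit exact mass formula of Theorem~\ref{SMS}, the similitude-decomposition Proposition~\ref{mass:sim}, and the local-volume computation Proposition~\ref{compare}. For part~(1), I would start from Proposition~\ref{mass:sim}, which gives
\[
\Mass\bigl(I, {\rm I}_p\bfG(\wh\Z^p)\bigr)=\tau\cdot\Mass\bigl(I^1, {\rm I}_p^1\bfG^1(\wh\Z^p)\bigr),
\]
with $\tau=2^{w-1}$ when $n$ is odd and $\tau=1$ when $n$ is even. Then \eqref{eq:GHY} rewrites the right-hand side as $\tau\cdot\lambda_p({\rm I}_p)\cdot\Mass(\Lambda)$, where $\Lambda$ is the unimodular Hermitian lattice over $\Q$ (i.e. over the totally real field $F=\Q$) underlying $\mathscr D$, and $\lambda_p({\rm I}_p)$ is the polynomial in $p$ made explicit in Proposition~\ref{compare}. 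So the whole theorem reduces to plugging the $F=\Q$ case of Theorem~\ref{SMS} into $\tau\cdot\lambda_p({\rm I}_p)\cdot\Mass(\Lambda)$ and checking the bookkeeping of constants.

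Next I would specialize Theorem~\ref{SMS} to $F=\Q$, so $d=1$, $L_F(1-j,\chi^j)=L(1-j,\chi^j)$ the Dirichlet $L$-value, and $d_{E/\Q}=|d_E|$; the places $v\mid d_{E/\Q}$ are the rational primes $\ell\mid d_E$, of which there are $w$. Theorem~\ref{SMS} then reads
\[
\Mass(\Lambda)=\frac{(-1)^s\cdot 2}{2^{n+w}}\cdot\prod_{j=1}^nL(1-j,\chi^j)\cdot\prod_{\ell\mid d_E}\kappa_\ell^{\mathrm{SMS}},
\]
with $s=0$ if $n$ is odd and $s=n/2$ if $n$ is even, and $\kappa_\ell^{\mathrm{SMS}}$ as tabulated there (for $\ell=2$ the dichotomy RU/RP corresponds to $d_E\equiv4$ resp.\ $d_E\equiv0\pmod 8$, by the discussion in Section~\ref{disc}; I should record that translation explicitly). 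Multiplying by $\tau$ and collecting: when $n$ is odd, $(-1)^s\cdot2\cdot2^{-(n+w)}\cdot2^{w-1}=2^{-n}=\varepsilon$; when $n$ is even, $(-1)^{n/2}\cdot2\cdot2^{-(n+w)}\cdot1=(-1)^{n/2}\cdot2^{-(n+w-1)}=\varepsilon$. This matches \eqref{epsilon}. The factors $\kappa_\ell$ in \eqref{kappa_ell} are precisely the $\kappa_\ell^{\mathrm{SMS}}$ after the RU/RP$\leftrightarrow d_E\bmod 8$ substitution and after noting that for $n$ odd all of them are $1$; so $\prod_{\ell\mid d_E}\kappa_\ell$ is carried over verbatim. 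Finally $\lambda_p({\rm I}_p)$ is appended from Proposition~\ref{compare}, giving the displayed formula for $\Mass(I,{\rm I}_p\bfG(\wh\Z^p))$.

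For part~(2), I would use Theorem~\ref{inner}~(4) together with the level structure: the group ${\rm K}^p(N)$ is normal of finite index in $\bfG(\wh\Z^p)$ with $\bfG(\wh\Z^p)/{\rm K}^p(N)\hookrightarrow \bfG(\wh\Z^p/N)$, and since $N\ge 3$ any arithmetic subgroup $\Gamma_g=I(\Q)\cap g({\rm I}_p{\rm K}^p(N))g^{-1}$ is trivial — indeed $\Gamma_g\subset I^1(\Q)$ acts faithfully on the $N$-torsion of the abelian variety, so it embeds in $\bfG(\wh\Z^p/N)$ and, being finite and torsion-free by the classical rigidity argument (cf.\ \cite[Lemma, p.~207]{mumford:av}), is $\{1\}$. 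Hence the weighted count $\Mass(I,{\rm I}_p{\rm K}^p(N))$ is just the cardinality $\lvert I(\Q)\backslash I(\A_f)/{\rm I}_p{\rm K}^p(N)\rvert$, and the standard index relation $\Mass(I,{\rm I}_p{\rm K}^p(N))=[\bfG(\wh\Z^p):{\rm K}^p(N)]\cdot\Mass(I,{\rm I}_p\bfG(\wh\Z^p))$ (obtained by fibering the double-coset space over $I(\Q)\backslash I(\A_f)/{\rm I}_p\bfG(\wh\Z^p)$ and using triviality of the $\Gamma_g$ again to see each fiber has size $[\bfG(\wh\Z^p):{\rm K}^p(N)]$) yields part~(2) from part~(1).

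The main obstacle I anticipate is not conceptual but the sign and power-of-$2$ bookkeeping: keeping straight the factor $2$ from $\tau(G^1)=2$, the factor $\tfrac12$ absorbed into the definition of $\lambda_v$ in \eqref{nu}, the extra $2^{w-1}$ from $\tau$, the $(-1)^s$, and — in the even case — the interplay between $\lvert O_F/d_{E/F}\rvert^{n/2}$ appearing in $\kappa_v$ versus the $q_v^{n/2}$-type factors in \eqref{kappa_ell}, all of which must combine to exactly $\varepsilon$ and $\kappa_\ell$ as stated. A secondary point requiring care is the translation between the local invariants "RU/RP, normal/subnormal, $d(\Lambda_v)=\pm(-1)^{n/2}$" used in Theorem~\ref{SMS} and the prime-$2$ congruence conditions $d_E\bmod 8$ used in the statement of Theorem~\ref{Mass_inner}; I would handle this by citing the explicit description in Section~\ref{disc} and Lemma~\ref{subnormal}.
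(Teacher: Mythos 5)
Your proposal is correct and follows essentially the same route as the paper: part (1) is exactly the combination of Proposition~\ref{mass:sim}, equality \eqref{eq:GHY}, and Theorem~\ref{SMS} specialized to $F=\Q$ (with the RU/RP $\leftrightarrow$ $d_E \bmod 8$ dictionary from Section~\ref{disc}), and part (2) is the triviality of the stabilizers at level ${\rm I}_p{\rm K}^p(N)$ by Serre's lemma, just as in the paper. One minor imprecision: in your parenthetical justification of $\Mass(I,{\rm I}_p{\rm K}^p(N))=[\bfG(\wh\Z^p):{\rm K}^p(N)]\cdot\Mass(I,{\rm I}_p\bfG(\wh\Z^p))$, the fibers of the map of double-coset spaces have cardinality $[\bfG(\wh\Z^p):{\rm K}^p(N)]/\lvert\Gamma_g\rvert$ where $\Gamma_g=I(\Q)\cap g\,{\rm I}_p\bfG(\wh\Z^p)g^{-1}$ is generally nontrivial, so they need not all have size $[\bfG(\wh\Z^p):{\rm K}^p(N)]$; the index relation nevertheless holds because the coarser mass is weighted by $1/\lvert\Gamma_g\rvert$, and your conclusion is unaffected.
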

\begin{proof}
  Theorem~\ref{SMS},  
  Proposition~\ref{mass:sim},  and equality \eqref{eq:GHY}  imply  assertion (1). 
  For any representative $g$ of a double coset in $I(\Q) \backslash I(\A_f)/{\rm I}_p{\rm K}^p(N)$, 
      the intersection  
  $\Gamma_g=I(\Q) \cap g {\rm I}_p {\rm K}^p(N) g^{-1}$ is trivial by Serre's lemma.  
  Hence  assertion (2) follows from    assertion (1). 
\end{proof}

 \subsection{Main theorems and examples}
For a scheme $S$ over $k$, we write $\Irr(S)$ for the set of irreducible components of $S$. 
 Recall that  $\M_{\K}^{\bas}$ and $\M_{\K}^e$ denote the basic locus and the $0$-dimensional EO stratum of $\M_{\K}$,  respectively, where $\K=\bfG(\Z_p)\K^p(N)$ with $N \geq 3$ and    $p\nmid N$. 
 \begin{thm}
   \label{intro} 
   We have that 
   \begin{align} \label{intro:bas}
   \lvert  {\rm Irr}(\mathcal M^{\rm{bas}}_{\rm K})  \rvert & = [\mathbf G(\widehat{\Z}^p) : {\rm K}^p(N)]   \cdot \varepsilon \cdot \prod_{j=1}^n L(1-j, \chi^j)  \cdot \prod_{\ell \mid d_{E}} \kappa_{\ell}  \cdot  \lambda^{\rm bas}_p \cdot \rho^{\rm bas}, 
    \\ 
    \label{intro:e}
    \lvert \mathcal M^{e}_{\rm K}(k) \rvert &= 
    [\mathbf G(\widehat{\Z}^p) : {\rm K}^p(N)]   \cdot \varepsilon \cdot \prod_{j=1}^n L(1-j, \chi^j)  \cdot \prod_{\ell \mid d_{E}} \kappa_{\ell}  \cdot  \lambda^{e}_p,
   \end{align} 
   where $d_E$ denotes the discriminant of $E/\Q$, $\chi$ denotes the Dirichlet character associated to $E/\Q$, 
    the  quantities $\varepsilon$ and $\kappa_{\ell}$ for primes $\ell \mid d_{E}$ are as defined in \eqref{epsilon} and \eqref{kappa_ell}, and 
 the quantities $\lambda_p^{\rm bas}$, $\lambda_p^e$, and $\rho^{\rm bas}$ are given by 
\begin{align}
\lambda_p^{\rm bas}&= 
\begin{dcases}
1    & \text{if $p$ is inert,  $rs$ is even}; 
\\ 
\frac{p^n-1}{p+1}   &  \text{if $p$ is  inert,  $rs$ is odd}; 
\\
\left(\prod_{h=1}^{n}(p^h-1)\right)
 \cdot  \left( \prod_{i=1}^{m}(p^{\frac{n}{m} \cdot i}-1) \right) ^{-1},  \quad   m\coloneqq \gcd(r, s) 
 & \text{if $p$ is  split,}
 \end{dcases}
 \\
   \label{lambda^e}
 \lambda^e_p & =
\begin{dcases}
 \left(\prod_{h=1}^{n}(p^h-(-1)^h) \right) \cdot \left(\prod_{i=1}^{r}(p^i-(-1)^i)\cdot \prod_{j=1}^{s}(p^j-(-1)^j)\right)^{-1} &  \text{if $p$ is inert}; 
 \\ 
 \lambda_p^{\rm bas} 
  & \text{if $p$ is split}, 
  \end{dcases}
 \\ 
 \rho^{\rm bas} &  = 
\begin{dcases}
\begin{pmatrix}
\lfloor n/2 \rfloor 
\\ 
\lfloor r/2 \rfloor 
\end{pmatrix}    & \text{if $p$ is  inert,  $rs$ is even}; 
\\ 
\begin{pmatrix}
  n/2 -1 
 \\ 
 (r-1)/2 
\end{pmatrix}   &  \text{if $p$  is inert, $rs$ is odd}; 
\\
1
 & \text{if $p$ is split}. 
 \end{dcases}
\end{align} 
In particular, if {\rm (i)} $p$ is split in $E$, {\rm (ii)} $rs=0$, or {\rm (iii)}   $n$ is even and $(r,s)=(1,n-1)$ or  $(n-1, 1)$, then   
\[\lvert {\rm Irr}(\calM_{\rm K}^{\rm bas})\rvert= \lvert \calM_{\rm K}^e(k) \rvert.\]
\end{thm}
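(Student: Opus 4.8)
The plan is to derive \eqref{intro:bas} and \eqref{intro:e} from the Rapoport--Zink $p$-adic uniformization (Theorem~\ref{inner}), the explicit mass formula (Theorem~\ref{Mass_inner}), the affine Deligne--Lusztig orbit counts of Propositions~\ref{Orbits} and~\ref{Orbits:2}, and the maximality of the component stabilizers from \cite{HZZ}; the concluding equality $\lvert\Irr(\calM_{\rm K}^{\rm bas})\rvert=\lvert\calM_{\rm K}^e(k)\rvert$ will then drop out of comparing the two closed formulas.

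For \eqref{intro:bas}: since perfection induces a homeomorphism on underlying topological spaces, $\calM_{\rm K}^{\rm bas}$ and $\calM_{\rm K}^{\mathrm{bas},\mathrm{pfn}}$ have the same irreducible components, so Theorem~\ref{inner}~(3) yields a bijection $\Irr(\calM_{\rm K}^{\rm bas})\cong I(\Q)\backslash\bigl(\Irr(X_\mu(b))\times\bfG(\A_f^p)/\K^p(N)\bigr)$, using that $\bfG(\A_f^p)/\K^p(N)$ is discrete and that irreducible components of a quotient are computed by $I(\Q)$-orbits on the irreducible components of the source. As $\mu$ is minuscule, $X_\mu(b)$ is equidimensional, so $\Irr(X_\mu(b))=\Irr^{\mathrm{top}}(X_\mu(b))$; writing $I(\A_f)=J_b(\Q_p)\times\bfG(\A_f^p)$ (Theorem~\ref{inner}~(1)) and decomposing $\Irr(X_\mu(b))$ into $J_b(\Q_p)$-orbits, an orbit with representative $Z$ and stabilizer $\mathrm{I}_p^Z$ contributes the double coset set $I(\Q)\backslash I(\A_f)/\mathrm{I}_p^Z\K^p(N)$, whence
\[
\lvert\Irr(\calM_{\rm K}^{\rm bas})\rvert=\sum_{Z\in J_b(\Q_p)\backslash\Irr(X_\mu(b))}\bigl\lvert I(\Q)\backslash I(\A_f)/\mathrm{I}_p^Z\K^p(N)\bigr\rvert.
\]
By \cite{HZZ} every $\mathrm{I}_p^Z$ is a maximal parahoric subgroup of $J_b(\Q_p)$ of maximum volume; by \eqref{lpKp} the invariant $\lambda_p(\mathrm{I}_p^Z)$ is the reciprocal of that volume, hence independent of $Z$, so by Theorem~\ref{Mass_inner}~(2) each summand equals $[\bfG(\widehat\Z^p):\K^p(N)]\,\varepsilon\prod_{j=1}^nL(1-j,\chi^j)\prod_{\ell\mid d_E}\kappa_\ell\,\lambda_p(\mathrm{I}_p)$ for one fixed maximum-volume maximal parahoric $\mathrm{I}_p$. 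Since there are $\rho^{\rm bas}$ such orbits by Propositions~\ref{Orbits} and~\ref{Orbits:2}, \eqref{intro:bas} follows once $\lambda_p(\mathrm{I}_p)=\lambda_p^{\rm bas}$, which is a short case check against Table~\ref{table:special} and Proposition~\ref{compare}: for $p$ split, $\mathrm{I}_p\sim_{\rm conj}\GL_m(O_B)\times\Z_p^\times$ gives the split value of $\lambda_p^{\rm bas}$; for $p$ inert one has $J_b\simeq\GU(\calV,\phi)$ with $d(\calV)=p^s$, and the maximum-volume maximal parahoric is hyperspecial when $rs$ is even (so $\lambda_p(\mathrm{I}_p)=1$) and is $\calP_1$ when $rs$ is odd — in which case $n$ is even and $\lambda_p(\calP_1)=(p^n-(-1)^n)/(p+1)=(p^n-1)/(p+1)$.

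For \eqref{intro:e} I would instead invoke Theorem~\ref{inner}~(4) directly: it gives $\calM_{\rm K}^e(k)\cong I(\Q)\backslash I(\A_f)/\mathrm{I}_p^e\K^p(N)$, so Theorem~\ref{Mass_inner}~(2) applied with $\mathrm{I}_p=\mathrm{I}_p^e$ yields \eqref{intro:e}, provided $\lambda_p(\mathrm{I}_p^e)=\lambda_p^e$. By Proposition~\ref{J_Minert}, $\mathrm{I}_p^e\sim_{\rm conj}\calP_s$ when $p$ is inert and $\mathrm{I}_p^e\sim_{\rm conj}\GL_m(O_B)\times\Z_p^\times$ when $p$ is split; substituting $t=s$ (so $n-t=r$) into Proposition~\ref{compare} reproduces the inert-case value $\prod_{h=1}^n(p^h-(-1)^h)\bigl(\prod_{j=1}^r(p^j-(-1)^j)\prod_{k=1}^s(p^k-(-1)^k)\bigr)^{-1}=\lambda_p^e$, and in the split case $\lambda_p(\mathrm{I}_p^e)=\lambda_p^{\rm bas}=\lambda_p^e$ by definition. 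Comparing \eqref{intro:bas} with \eqref{intro:e}, it remains to verify $\lambda_p^{\rm bas}\rho^{\rm bas}=\lambda_p^e$ in the three listed cases. If $p$ is split this is clear since $\rho^{\rm bas}=1$ and $\lambda_p^{\rm bas}=\lambda_p^e$, which also covers (ii) and (iii) when $p$ splits. If $rs=0$ with $p$ inert, say $s=0$, then $r=n$ gives $\lambda_p^e=\prod_{j=r+1}^n(p^j-(-1)^j)=1=\lambda_p^{\rm bas}$ and $\rho^{\rm bas}=\binom{\lfloor n/2\rfloor}{\lfloor n/2\rfloor}=1$, the case $r=0$ being symmetric. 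If $p$ is inert with $n$ even and $(r,s)=(1,n-1)$ or $(n-1,1)$, then $rs$ is odd, $\rho^{\rm bas}=\binom{n/2-1}{0}=1$, $\lambda_p^{\rm bas}=(p^n-1)/(p+1)$, and $\lambda_p^e=\dfrac{\prod_{h=1}^n(p^h-(-1)^h)}{(p+1)\prod_{k=1}^{n-1}(p^k-(-1)^k)}=\dfrac{p^n-(-1)^n}{p+1}=\dfrac{p^n-1}{p+1}$, so $\lambda_p^{\rm bas}\rho^{\rm bas}=\lambda_p^e$, completing the argument.

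The main obstacle is the bookkeeping concentrated in the second paragraph: passing from the uniformization isomorphism of perfect schemes to an honest identity of numbers of irreducible components, and verifying both that the $J_b(\Q_p)$-stabilizers of the top-dimensional components of $X_\mu(b)$ all share the maximal volume — so that the common value can be factored out of the sum over orbits — and that this common volume is that of a maximal parahoric whose $\lambda_p$-invariant equals $\lambda_p^{\rm bas}$. The remaining inputs (the Rapoport--Zink uniformization, the \cite{HZZ} maximality statement, Theorem~\ref{Mass_inner}, and the ADLV orbit counts) are quoted directly, and the final ``in particular'' is the elementary identity verified above.
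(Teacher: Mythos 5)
Your proposal is correct and follows essentially the same route as the paper: Rapoport--Zink uniformization to identify $\Irr(\M_{\K}^{\bas})$ with $\coprod_{[Z]} I(\Q)\backslash I(\A_f)/{\rm I}_p^Z\K^p(N)$, the He--Zhou--Zhu/Nie maximal-volume property of the stabilizers ${\rm I}_p^Z$ to factor out a common $\lambda_p$, Propositions~\ref{Orbits} and~\ref{Orbits:2} for the orbit count $\rho^{\bas}$, Theorem~\ref{Mass_inner}~(2) for each double coset count, and Theorem~\ref{inner}~(4) with ${\rm I}_p^e\sim_{\rm conj}\calP_s$ for the $0$-dimensional stratum. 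Your explicit case checks identifying $\lambda_p({\rm I}_p^Z)$ with $\lambda_p^{\bas}$ and verifying $\lambda_p^{\bas}\rho^{\bas}=\lambda_p^e$ in the three listed cases are slightly more detailed than the paper's, but the argument is the same.
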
 
\begin{proof}
For each $Z \in {\rm Irr}(X_{\mu}(b))$,  let ${\rm I}_p^Z$ denote the stabilizer of $Z$ in $J_b(\Q_p)$. 
   Then we have    a bijection 
\[  
\coprod_{ [Z] \in  J_b(\Q_p)\backslash {\rm Irr} X_{\mu}(b)}  J_b(\Q_p)/{\rm I}_p^{Z} \xrightarrow{\sim} {\rm Irr}(X_{\mu}(b)).\]
We fix identifications ${I}(\Q_p)={J_b} (\Q_p)$ and $I(\A_f^p)= {\bf G}(\A_f^p)$. 
By Theorem \ref{inner} (2),  the Rapoport-Zink uniformization map induces  a bijection  (see \cite[Theorem A]{HZZ}) 
 \begin{align}\label{eq:irr}
 \coprod_{[{Z}] \in J_b(\Q_p)\backslash {\rm Irr}(X_{\mu}(b))}
 I(\Q) \backslash {I}(\A_f)/  {\rm I}_p^{{Z}}{\rm K}^p(N) \xrightarrow{\sim} \Irr(\mathcal M_{{\rm K}}^{\rm bas}).
 \end{align}
 By He-Zhou-Zhu  \cite[Theorem 4.1.2 and Proposition 2.2.5]{HZZ} and Nie \cite{Nie},   the stabilizer   ${\rm I}_p^{Z}$ is a parahoric subgroup  which has  the maximal volume among all the parahoric subgroups of $J_b(\Q_p)$. 
 In particular, the cardinality  $\lvert I(\Q) \backslash I(\A_f)/ {\rm I}_p^{{Z}}{\rm K}^p(N) \rvert$ does not depend on ${Z} \in {\rm Irr}X_{\mu}(b)$.
 If $p$ is split in $E$, then  ${\rm I}_p^Z$ is conjugate to  $\GL_m(O_B) \times \Z_p^{\times}$, as in \eqref{eq:K_p}. 
 Suppose that $p$ is inert in $E$. 
 By \eqref{eq:K_p} and \eqref{lpKp}, the maximal parahoric subgroup ${\rm I}_p^Z$ is conjugate to some $\calP_t$  such that  $\lambda_p(\calP_t)$ is minimal among  $(\calP_t)_{t}$ with  $t \equiv s \pmod 2$.   It follows from 
 Proposition \ref{compare} that
    ${\rm I}_p^Z$ is conjugate to  
     $\calP_0$ or $\calP_n$ if $rs$ is even, and to 
        $\calP_1$ or $\calP_{n-1}$  if $rs$ is odd. 
% if $p$ is inert in $E$ the subgroup  ${\rm I}_p^{{Z}}$ is conjugate to  
 %  $\calP_0$ with  $\lambda_p(\calP_0)=1$ when $s$ is even, to $\calP_n$ with $\lambda(\calP_n)=1$ when  $r$ is even, or to  
  %  $\calP_1$ or  $\calP_{n-1}$ with $\lambda_p(\calP_1)=\lambda_p(\calP_{n-1})=(p^n-1)/(p+1)$ when $rs$ is odd. 
Hence Proposition \ref{Orbits} and Theorem~\ref{Mass_inner} (2) imply formula \eqref{intro:bas}. 

   Theorem \ref{inner} (3) and  Theorem~\ref{Mass_inner} (2) imply formula \eqref{intro:e}. 
\end{proof}
\begin{cor}
If $rs=0$,  or  $p$ is inert in $E$ and $rs$ is even,  then the number $|\Irr(\M_{\K}^{\bas} )|$ of irreducible components of the basic locus is independent of $p$ and is a constant number depending only on the input PEL datum $\mathscr D$ and $N$. 
For other cases, the number $|\Irr(\M_{\K}^{\bas} )|$ grows to infinity with $p$. 
\end{cor}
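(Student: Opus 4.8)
The plan is to read off the claimed corollary directly from the explicit formula \eqref{intro:bas} for $|\Irr(\M_{\K}^{\bas})|$ in Theorem~\ref{intro}, examining how each factor depends on $p$. The key observation is that the formula factors as a product of a ``global'' part independent of $p$, namely
\[
C(\mathscr D, N) \coloneqq [\mathbf G(\widehat{\Z}^p):\K^p(N)]\cdot \varepsilon \cdot \prod_{j=1}^n L(1-j,\chi^j)\cdot \prod_{\ell\mid d_E}\kappa_\ell,
\]
together with the two $p$-dependent factors $\lambda_p^{\bas}$ and $\rho^{\bas}$. Here one must first note that since $p$ is unramified in $E$ and $p>2$, the primes $\ell\mid d_E$ over which the product of the $\kappa_\ell$ is taken are all distinct from $p$, so $C(\mathscr D, N)$ genuinely does not involve $p$; and $[\mathbf G(\widehat{\Z}^p):\K^p(N)]$ is by definition a prime-to-$p$ index, hence also independent of $p$. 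So the whole $p$-dependence of $|\Irr(\M_{\K}^{\bas})|$ sits in the product $\lambda_p^{\bas}\cdot\rho^{\bas}$.

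Next I would go through the three cases in the definitions of $\lambda_p^{\bas}$ and $\rho^{\bas}$. If $rs=0$, then $X_\mu(b)$ is discrete and the relevant formula degenerates: in this case $p$ is automatically split or the basic locus reduces to $\M_{\K}^e$, and one checks from the statement (or directly from \eqref{intro:e} together with the final ``in particular'' clause of Theorem~\ref{intro}) that $|\Irr(\M_{\K}^{\bas})| = |\M_{\K}^e(k)|$ equals $C(\mathscr D,N)$ times a factor that is $1$; more carefully, when $rs=0$ the coweight $\mu$ is central, $\lambda_p^{\bas}$ and $\rho^{\bas}$ both collapse, and $|\Irr(\M_{\K}^{\bas})|$ is the number of points, which by Theorem~\ref{connected}(2) and Remark~\ref{rem:MK} is the number of connected components of $\bfM_{\K}\otimes\C$, a quantity manifestly independent of $p$. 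If $p$ is inert in $E$ and $rs$ is even, then $\lambda_p^{\bas}=1$ and $\rho^{\bas}=\binom{\lfloor n/2\rfloor}{\lfloor r/2\rfloor}$, which is a binomial coefficient depending only on $(r,s)$; hence $|\Irr(\M_{\K}^{\bas})| = C(\mathscr D,N)\cdot\binom{\lfloor n/2\rfloor}{\lfloor r/2\rfloor}$ is constant in $p$.

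For the remaining cases I would show the number grows to infinity. If $p$ is inert and $rs$ is odd, then $\lambda_p^{\bas} = (p^n-1)/(p+1)$, which is a polynomial in $p$ of degree $n-1\ge 1$ (note $rs$ odd forces $r,s\ge 1$, so $n\ge 2$), hence tends to $\infty$ with $p$; multiplying by the positive constants $C(\mathscr D,N)$ and $\rho^{\bas}=\binom{n/2-1}{(r-1)/2}\ge 1$ preserves this. If $p$ is split in $E$, then $\rho^{\bas}=1$ and $\lambda_p^{\bas} = \prod_{1\le h\le n,\ m\nmid h}(p^h-1)$ (using the polynomial identity recorded in the Remark after Proposition~\ref{compare}), where $m=\gcd(r,s)$; since $rs>0$ we have $m<n$, so the index set $\{1\le h\le n: m\nmid h\}$ is nonempty, and $\lambda_p^{\bas}$ is a nonconstant polynomial in $p$ with positive leading coefficient, hence $\to\infty$. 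In every case the constant $C(\mathscr D,N)$ is strictly positive (it is a product of an Euler factor index, a nonzero rational $\varepsilon$, special $L$-values known to be nonzero and in fact the relevant product equals $2^{\pm}\Mass(\Lambda)/(\text{positive factors})$ up to sign by Theorem~\ref{SMS}, and the positive $\kappa_\ell$), so it does not interfere with the growth or with the constancy. The one point requiring a little care is to justify that $C(\mathscr D,N)\ne 0$; this follows because $|\Irr(\M_{\K}^{\bas})|$ is a nonnegative integer which is positive (the basic locus is nonempty), so the product of all the stated factors is positive, and since $\lambda_p^{\bas}$ and $\rho^{\bas}$ are positive, $C(\mathscr D,N)>0$ as well. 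I do not anticipate a genuine obstacle here — the corollary is essentially a qualitative reading of the closed formula — but the mildly fiddly part is organizing the case division cleanly and confirming that in the ``$rs=0$'' case the formula \eqref{intro:bas} is interpreted through the identification with $|\M_{\K}^e(k)|$ and Theorem~\ref{connected}, since $X_\mu(b)$ being discrete means the factors $\lambda_p^{\bas},\rho^{\bas}$ are not literally given by the middle or bottom lines of their defining case lists.
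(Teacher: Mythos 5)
Your route is the same as the paper's: the corollary carries no separate proof there and is meant to be read off from the closed formula \eqref{intro:bas} of Theorem~\ref{intro}, exactly as you do --- isolate the $p$-independent factor (legitimate because $p\nmid N$ and, $p$ being unramified in $E$, $p\nmid d_E$), note it is positive since the basic locus is nonempty, and track the $p$-dependence through $\lambda_p^{\bas}\cdot\rho^{\bas}$.

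One step, as written, would fail. In the split case you quote the identity $\lambda_p^{\bas}=\prod_{1\le h\le n,\ m\nmid h}(p^h-1)$ from the remark after Proposition~\ref{compare} and argue the index set is nonempty because $m<n$. That identity (which indeed appears with this typo in the paper) is wrong as stated: the denominator $\prod_{i=1}^m\bigl(p^{\frac{n}{m}i}-1\bigr)$ removes exactly the factors whose exponents are multiples of $n/m$, so the correct condition is $(n/m)\nmid h$, not $m\nmid h$. With your version, in the frequent case $\gcd(r,s)=1$ (e.g.\ signature $(1,n-1)$) the index set is empty and you would conclude $\lambda_p^{\bas}=1$, i.e.\ no growth, contradicting the corollary. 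The repair is immediate: either use the corrected condition (the set $\{h\le n:\ (n/m)\nmid h\}$ is nonempty precisely when $n/m\ge 2$, i.e.\ when $rs>0$), or avoid the identity altogether and compare degrees in the defining ratio, $\deg \lambda_p^{\bas}=\tfrac{n(n+1)}{2}-\tfrac{n(m+1)}{2}=\tfrac{n(n-m)}{2}>0$ since $m=\gcd(r,s)<n$ whenever $rs>0$. Two smaller points: for $rs=0$ the case lists of Theorem~\ref{intro} apply literally, giving $\lambda_p^{\bas}\cdot\rho^{\bas}=1$ in both the inert case ($\rho^{\bas}=\binom{\lfloor n/2\rfloor}{\lfloor r/2\rfloor}=1$ when $r\in\{0,n\}$) and the split case ($m=n$), so no detour through $\M_{\K}^e$ is needed; and Theorem~\ref{connected} assumes $rs>0$, so the correct reference for the $rs=0$ identification with the complex Shimura set is Example~\ref{rs=0}, not Theorem~\ref{connected}~(2).
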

\begin{remark}\label{rem:un_basic}
The former  condition  is equivalent to that the basic element   $[b]\in B(G, \mu)$ 
is unramified in the sense of Xiao-Zhu \cite[Section 4.2]{XZ}, and is further equivalent to that 
$\dim_k \M_{\K}=2 \cdot \dim_k \M_{\K}^{\bas}.$
%(2) By Theorem~\ref{intro},  then $\lvert  {\rm Irr}(\mathcal M^{\rm{bas}}_{\rm K})  \rvert=\lvert  \M_{\K}^e(k) \rvert$. 
\end{remark}

Now we assume that $(r,s)=(1, n-1)$. 
The case where $p$ is split in $E$ has been studied by Harris and Taylor \cite{HT} for their proof of the local Langlands conjecture for $\GL_n$. 
In this case the EO and Newton stratifications coincide, and in particular $\M_{\K}^{\bas}=\M_{\K}^e$. 
If $p$ is inert in $E$, Vollaard and Wedhorn proved that 
   for each odd integer $1 \leq t \leq n$  
 there exists a unique EO stratum  of dimension 
 $\frac{1}{2}(t-1)$ in $\calM_{\rm K}^{\rm bas}$,   denoted by  $\mathcal M^{(t)}_{{{\rm K}}}$ \cite[Section ~6.3]{Vollaard}. 
 Let $\overline{\M}_{\K}^{(t)}$ be the Zariski closure of $\M_{\K}^{(t)}$ in $\M_{\K}$. 
 Note that $\M_{\K}^{\bas}=\overline{\M}_{\K}^{(n)}$ or $\ol{\M}_{\K}^{(n-1)}$ according as $n$ is odd or even.
 \begin{thm}\label{intro:t}
Assume that $(r,s)=(1, n-1)$  and $p$ is inert in $E$. 
Let $t$ be an odd integer such that $1 \leq t \leq n$. 
Then  
\[ \lvert {\rm Irr}(\overline{\M}_{\K}^{(t)}) \rvert=  [\mathbf G(\widehat{\Z}^p) : {\rm K}^p(N)]   \cdot \varepsilon \cdot \prod_{j=1}^n L(1-j, \chi^j)  \cdot \prod_{\ell \mid d_E} \kappa_{\ell}  \cdot \lambda^{(t)}_p\]
where $\varepsilon$ and  $\kappa_{\ell}$ are given as in \eqref{epsilon} and \eqref{kappa_ell}, and $\lambda_p^{(t)}$ is given by
\begin{align*}
\lambda^{(t)}_p  =\left( \prod_{h=1}^n (p^h-(-1)^h) \right) \cdot  
  \left(\prod_{i=1}^t (p^i-(-1)^i) \cdot 
  \prod_{j=1}^{n-t}(p^j-(-1)^j) \right)^{-1}. 
  \end{align*}
\end{thm}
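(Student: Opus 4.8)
The plan is to follow the same strategy used in the proof of Theorem~\ref{intro}, but replacing the basic locus $\M_{\K}^{\bas}$ with the smaller closed stratum $\overline{\M}_{\K}^{(t)}$ and tracking which parahoric subgroup of $J_b(\Q_p)$ arises as a stabilizer. First I would recall that, since $(r,s)=(1,n-1)$ and $p$ is inert, Vollaard--Wedhorn \cite[Section~6.3]{Vollaard} identify $\overline{\M}_{\K}^{(t)}$ with a union of closed subvarieties of the Rapoport--Zink space, each associated to a vertex lattice of ``type'' $t$; via the $p$-adic uniformization of Theorem~\ref{inner} (parts (2) and (3)) together with \cite[Theorem~A]{HZZ}, the set $\Irr(\overline{\M}_{\K}^{(t)})$ is in bijection with
\[
 \coprod_{[Z] \in J_b(\Q_p)\backslash \Irr^{(t)}} I(\Q)\backslash I(\A_f)/{\rm I}_p^{Z}\K^p(N),
\]
where $\Irr^{(t)}$ denotes the set of irreducible components of the relevant closed ADLV attached to type-$t$ lattices. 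The key geometric input, already in \cite{Vollaard}, is that $J_b(\Q_p)$ acts transitively on the set of such irreducible components (equivalently, on vertex lattices of type $t$), so the disjoint union over $[Z]$ collapses to a single term, and the stabilizer ${\rm I}_p^{Z}$ is the parahoric subgroup $\calP_t$ fixing a type-$t$ vertex lattice $\calL_t$ as in \eqref{L_t} and Proposition~\ref{J_Minert}(1).

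Next I would compute the cardinality $\lvert I(\Q)\backslash I(\A_f)/\calP_t\K^p(N)\rvert$ by applying Theorem~\ref{Mass_inner}(2) with the maximal parahoric subgroup ${\rm I}_p=\calP_t$: this gives
\[
 [\bfG(\wh\Z^p):\K^p(N)]\cdot \varepsilon \cdot \prod_{j=1}^n L(1-j,\chi^j)\cdot \prod_{\ell\mid d_E}\kappa_\ell \cdot \lambda_p(\calP_t),
\]
and by Proposition~\ref{compare} (the inert case) one has
\[
 \lambda_p(\calP_t)=\Big(\prod_{i=1}^n(p^i-(-1)^i)\Big)\cdot\Big(\prod_{j=1}^{n-t}(p^j-(-1)^j)\cdot\prod_{k=1}^{t}(p^k-(-1)^k)\Big)^{-1}=\lambda_p^{(t)},
\]
which is exactly the claimed formula. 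Note the consistency checks: for $t=n$ (when $n$ is odd) or $t=n-1$ (when $n$ is even) one recovers $\lambda_p^{(n)}=\lambda_p^e$ or $\lambda_p^{(n-1)}=\lambda_p^{\bas}\cdot\rho^{\bas}$-type expressions of Theorem~\ref{intro}, since $\overline{\M}_{\K}^{(n)}=\M_{\K}^{\bas}$ (resp.\ $\overline{\M}_{\K}^{(n-1)}=\M_{\K}^{\bas}$); and for $t=1$ one gets $\lambda_p^{(1)}=\lambda_p^e$, consistent with $\overline{\M}_{\K}^{(1)}=\M_{\K}^e$.

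The step I expect to be the main obstacle is verifying that $J_b(\Q_p)$ acts \emph{transitively} on $\Irr(\overline{\M}_{\K}^{(t)})$, equivalently on the set of type-$t$ vertex lattices in $\calV$, and that each such component contributes a stabilizer conjugate to $\calP_t$ rather than some larger parahoric; this requires carefully invoking the structure theory of the Bruhat--Tits building for $\GU(\calV,\phi)$ with $\calV$ of the specific Hermitian type $d(\calV)=p^s$ (here $s=n-1$, so $d(\calV)=p^{n-1}$), and matching the Vollaard--Wedhorn stratification of $\M_{\K}^{(t)}$ with the orbit structure. Once transitivity is in hand, the remaining work is the purely group-theoretic mass/volume bookkeeping, which is already packaged in Theorem~\ref{Mass_inner} and Proposition~\ref{compare}, so no further hard analysis is needed. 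I would also remark briefly that, unlike the basic-locus case, there is no ``$\rho^{\bas}$'' combinatorial factor here because the ADLV components of a fixed vertex-lattice type form a single $J_b(\Q_p)$-orbit, whereas $\Irr(X_\mu(b))$ involves several orbits indexed by the Mirković--Vilonen basis of Proposition~\ref{Orbits}.
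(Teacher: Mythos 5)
Your proposal is correct and follows essentially the same route as the paper: the paper also reduces to a class-number count via the uniformization together with Vollaard--Wedhorn's result (their Proposition 6.3 is cited precisely to supply the transitivity of $J_b(\Q_p)$ and the identification of the stabilizer, so the step you flag as the main obstacle is exactly the imported input), and then applies Theorem~\ref{Mass_inner}~(2) with Proposition~\ref{compare}. The one slip is the labeling of the parahoric: in the paper's conventions the lattice $\calL_t$ of \eqref{L_t} only exists in $\calV$ when $t\equiv n-1 \pmod 2$, and the Vollaard--Wedhorn vertex lattice of orbit type $t$ is $\calL_{n-t}$, so the stabilizer is $\calP_{n-t}$ rather than $\calP_t$; since $\lambda_p(\calP_t)=\lambda_p(\calP_{n-t})$ by the symmetry of the formula in Proposition~\ref{compare}, this does not affect the final count, and your consistency checks at $t=1$, $t=n$, $t=n-1$ are valid.
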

\begin{proof}
The group $J_b$ is isomorphic to $\GU(\calV, \phi)$ with $d(\calV)=p^{n-1}$.  
By \eqref{phi}, we have  
\begin{align*}
\calV 
\simeq 
\begin{dcases}
\mathbb H^{{(n-1)}/{2}} \oplus 
\bbV_1
& {\text{if $n$ is odd}}; 
\\ 
\mathbb H^{n/2-1} \oplus 
\bbV_1 \oplus \bbV_p
& {\text{if $n$ is even}}. 
\end{dcases}
\end{align*} 
Further, the lattice $\calL_{n-t}$  is of orbit type $t$ in the sense of \cite{Vollaard}. 
   We regard its stabilizer   $\calP_{n-t}$ as a subgroup of $J_b(\Q_p)$. 
  By \cite[Proposition 6.3]{Vollaard}, 
  there is a  bijection 
  \begin{align*}
  I(\Q) \backslash J_b(\Q_p) \cdot  \bfG(\A_f^p)/ \calP_{n-t} \cdot {\rm K}^p(N) \xrightarrow{\sim} \Irr(\overline{\M}_{\K}^{(t)}). 
  \end{align*}
    Hence the assertion follows from Theorem~\ref{Mass_inner} (2). 
  \end{proof}
  \subsubsection{}\label{ss:cpt}
  %{Connected components of the basic locus}
%a finite, admissible rpcd \red{spread out} and 
By \cite[Theorem 6.4.1.1]{Lan},  there is a toroidal compactification $\mathbf M_{{\rm K}}^{{\rm tor}}$ of the integral model $\mathbf M_{{\rm K}}$, 
%associated with a compatible choice 
%of admissible smooth rational polyhedral cone decomposition data for $\bfM_{\K}$ in the sense of  Lan \cite[Def.  6.3.3.4]{Lan}. 
%Then $\mathbf M_{{\rm K}}^{{\rm tor}}$ 
which is proper and smooth  over $O_{\bfE, (p)}$. 
This   implies that (see \cite[Corollary  6.4.1.2]{Lan}) 
%there is a bijection 
\[ \pi_0(\M_{\K}) \simeq  \pi_0(\Sh_{\K}(\bfG, X)_{\C}). 
 \]
When $rs=0$, the set $\Sh_{\K}(\bfG, X)_{\C}$ is discrete; see Example \ref{rs=0}.  
When $rs >0$, the map in \eqref{eq:comp_doublecoset}  induces a bijection   \begin{equation}\label{eq:c_d}
 \pi_0(\Sh_{\K}(\bfG, X)_{\C}) \simeq D(\Q)_\infty\backslash  D(\A_f)/
 \nu(\K)=D(\Q)_\infty\backslash  D(\A_f)/
 D(\Zp)  \nu(\K^p(N)). 
 \end{equation}
Here $D(\Q)_\infty$ is the intersection of $D(\Q)$ with the connected component $D(\R)^0$ of $D(\R)$. 
Further, $D(\Z_p)$ is the unique maximal open compact subgroup of $D(\Q_p)$, which satisfies that $D(\Z_p)=\nu(\bfG(\Z_p))$ by Lemma \ref{D=nu}. 
The  cardinality of $\pi_0(\Sh_{\K}(\bfG,X)_{\C})$ is given in Theorem \ref{connected} (2).

Let $J_b^0$ be the stabilizer in $J_b(\Q_p)=I(\Q_p)$ of  a connected component of $X_{\mu}(b)$. 
Then  $J_b(\Q_p)$ acts transitively on $\pi_0(X_{\mu}(b))$, and hence 
$\pi_0(X_{\mu}(b)) \simeq J_b(\Q_p)/J_b^0$. 
The Rapoport-Zink  uniformization therefore   induces a bijection 
\begin{equation}\label{eq:bas}
I(\Q) \backslash 
I(\A_f) 
/J_b^0 {\K}^p(N) \simeq \pi_0(\M_{\K}^{\bas}). 
\end{equation} 
Note that this bijection together with the description  of $J_b^0$ in Proposition  \ref{stab} (2)  gives  a generalization of the result of  Vollaard and Wedhorn in   \cite[Proposition 6.4]{Vollaard}, where they considered the case $p$ is inert in $E$ and 
% case $p$  arbitrary signatures with $rs>0$. 
%for the case
$(r, s)=(1, n-1)$. 

Let $\pi_0(\Sh_{\bfG(\Z_p)}(\bfG, X)_{\C})\coloneqq  
\varprojlim\limits_{N} \pi_0(\Sh_{\bfG(\Z_p)\K^p(N)}(\bfG, X)_{\C})$ where $N$ runs through all prime-to-$p$ positive integers.  
We define  similarly $\pi_0(\M_{\bfG(\Z_p)}^{\bas})$ and %\coloneqq  \varprojlim\limits_{N} 
%\pi_0(\M_{{\rm I}_p \K^p(N)}^{\bas})$, and  
$\M_{\bfG(\Z_p)}^e(k)$. 
%with $\pi_0(\Sh_{{\rm I}_p\K^p(N)}(\bfG, X)_{\C})$  replaced accordingly with  $\pi_0(\M_{{\rm I}_p \K^p(N)}^{\bas})$ and $\M_{\K_p\K^p(N)}^e(k)$.  
\begin{lemma}\label{trans}
Assume that $rs>0$. 
Then 
the group $\bfG(\A_f^p)$ acts transitively on $\pi_0(\Sh_{\bfG(\Z_p)}(\bfG, X)_{\C})$,  $\pi_0(\M_{\bfG(\Z_p)}^{\bas})$, and $\M_{\bfG(\Z_p)}^e(k)$. 
\end{lemma}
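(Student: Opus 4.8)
The plan is to reduce all three transitivity statements to the single case of $\M^e_{\bfG(\Z_p)}(k)$ and to settle that case using the density result of Lemma~\ref{I:dense}. First I would record the relevant descriptions at infinite prime-to-$p$ level. By Theorem~\ref{inner}(4), for each $N$ with $p\nmid N$ there is a $\bfG(\A_f^p)$-equivariant bijection $\M^e_{\K}(k)\simeq I(\Q)\backslash I(\A_f)/{\rm I}_p^e\K^p(N)$, in which $\bfG(\A_f^p)=I(\A_f^p)$ acts by right translation on the prime-to-$p$ component; passing to the limit over $N$ gives
\[
\M^e_{\bfG(\Z_p)}(k)\;\simeq\;I(\Q)\backslash\bigl(J_b(\Q_p)/{\rm I}_p^e\times \bfG(\A_f^p)\bigr).
\]
Similarly, \eqref{eq:bas} together with Section~\ref{ss:cpt} (where Lan's smooth toroidal compactification gives $\pi_0(\M_{\K})\simeq\pi_0(\Sh_{\K}(\bfG,X)_{\C})$) yields $\bfG(\A_f^p)$-equivariant identifications $\pi_0(\M^{\bas}_{\bfG(\Z_p)})\simeq I(\Q)\backslash\bigl(J_b(\Q_p)/J_b^0\times\bfG(\A_f^p)\bigr)$ and $\pi_0(\Sh_{\bfG(\Z_p)}(\bfG,X)_{\C})\simeq D(\Q)_{\infty}\backslash\bigl(D(\Q_p)/D(\Z_p)\times D(\A_f^p)\bigr)$, the last with $\bfG(\A_f^p)$ acting through $\nu$; here $\nu(\bfG(\A_f^p))=D(\A_f^p)$ by \eqref{eq:nu_ell} and Lemma~\ref{D=nu}.

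Next I would exhibit $\bfG(\A_f^p)$-equivariant surjections
\[
\M^e_{\bfG(\Z_p)}(k)\twoheadrightarrow\pi_0(\M^{\bas}_{\bfG(\Z_p)})\twoheadrightarrow\pi_0(\M_{\bfG(\Z_p)})=\pi_0(\Sh_{\bfG(\Z_p)}(\bfG,X)_{\C}).
\]
Under the descriptions above, the first arrow is the evident projection of double coset spaces: it is well defined because ${\rm I}_p^e=\Stab_{J_b(\Q_p)}(M)$ is compact, hence contained in $J_b^0$ by Proposition~\ref{stab}(2), and it incarnates the inclusion $\M^e\subseteq\M^{\bas}$ under the Rapoport--Zink uniformization of Theorem~\ref{inner}. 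The second arrow is induced by $\nu_I\colon I\to D$, using $\nu_I({\rm I}_p^e)\subseteq D(\Z_p)$, $\nu_I(I(\Q))=D(\Q)_{\infty}$ (Lemma~\ref{sim:I}) and $\nu_I(I(\A_f))=D(\A_f)$ (Lemma~\ref{D=nu} and \eqref{eq:nuI}); alternatively it exists because the basic Newton stratum meets every geometrically connected component of $\M_{\K}$. It therefore suffices to prove that $\bfG(\A_f^p)$ acts transitively on $\M^e_{\bfG(\Z_p)}(k)$.

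For this, the $\bfG(\A_f^p)$-orbit of the base point $[(\overline{1},1)]$, where $\overline{1}$ denotes the class of ${\rm I}_p^e$, is $\{[(\overline{1},g^p)] : g^p\in I(\A_f^p)\}$; so transitivity is equivalent to the statement that for every $y_p\in J_b(\Q_p)=I(\Q_p)$ there is $\gamma\in I(\Q)$ with $\gamma_p y_p\in{\rm I}_p^e$, i.e.\ $\gamma_p\in {\rm I}_p^e\, y_p^{-1}$. Since ${\rm I}_p^e$ is open in $I(\Q_p)$, the set ${\rm I}_p^e\, y_p^{-1}$ is open and nonempty, so Lemma~\ref{I:dense} (density of $I(\Q)$ in $I(\Q_p)$) produces such a $\gamma$. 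This gives transitivity on $\M^e_{\bfG(\Z_p)}(k)$, hence on all three spaces in the statement.

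I expect the main difficulty to be bookkeeping rather than substance: one must verify that the uniformization isomorphisms of Theorem~\ref{inner}(3)--(4), of \eqref{eq:bas}, and the comparison $\pi_0(\M_{\K})=\pi_0(\Sh_{\K}(\bfG,X)_{\C})$ of Section~\ref{ss:cpt} are compatible with the obvious projections of double coset spaces and with the $\bfG(\A_f^p)$-actions, and that passing to the limit over $N$ is harmless. If one wishes to bypass the fact that the basic locus meets every connected component, the transitivity on $\pi_0(\Sh_{\bfG(\Z_p)}(\bfG,X)_{\C})$ can be checked on its own: by strong approximation for $\bfG^{\der}$ (valid since $rs>0$) it reduces to surjectivity of the localization map $D(\Q)_{\infty}\to D(\Q_p)/D(\Z_p)$, which holds because $p$ is unramified in $E$ and every ideal class of $E$ has a representative prime to $p$, so that arbitrary $p$-adic valuations are realized by elements of $E^\times$ (and of $E^1$, via Hilbert~90), together with Lemmas~\ref{sim:I} and~\ref{D=nu}.
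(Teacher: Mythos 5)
Your proposal is correct in substance and rests on exactly the same engine as the paper's proof: density of global points in the $p$-adic points, so that the $p$-component double coset collapses. The paper simply applies this directly three times: $I(\Q)$ is dense in $I(\Q_p)$ (Lemma \ref{I:dense}), so $I(\Q)\backslash I(\Q_p)/U_p$ is a singleton for \emph{any} open subgroup $U_p$, which combined with Theorem \ref{inner}(4) and \eqref{eq:bas} handles $\M_{\bfG(\Z_p)}^e(k)$ and $\pi_0(\M_{\bfG(\Z_p)}^{\bas})$ at once; and $D(\Q)_\infty$ is dense in $D(\Q_p)$ (weak approximation, via \eqref{D_infty} and \eqref{eq:nu_ell}), which combined with \eqref{eq:c_d} handles $\pi_0(\Sh_{\bfG(\Z_p)}(\bfG,X)_{\C})$. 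Your treatment of $\M^e$ is literally the paper's argument. Where you deviate is in propagating to the other two spaces by equivariant surjections out of $\M^e(k)$. For $\pi_0(\M^{\bas})$ this is fine but an unnecessary detour: since ${\rm I}_p^e\subset J_b^0$ and $J_b^0$ is open, the same density argument applied to \eqref{eq:bas} gives transitivity directly, with no compatibility bookkeeping. For $\pi_0(\Sh)$, be careful: your primary route invokes surjectivity of $\pi_0(\M^{\bas}_{\K})\to\pi_0(\M_{\K})$, i.e.\ that the basic locus meets every connected component, but in this paper that surjectivity is \emph{deduced from} Lemma \ref{trans} (see the sentence immediately following it), so within the paper's logical order that route is circular unless you import the fact from elsewhere. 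Your fallback is what makes the argument complete, and it is valid: by \eqref{eq:c_d} and $\nu(\bfG(\A_f^p))=D(\A_f^p)$, transitivity reduces to $D(\Q_p)=D(\Q)_\infty\cdot D(\Z_p)$, which follows from realizing arbitrary valuations above $p$ by elements of $E^\times$ (every ideal class contains an ideal prime to $p$), and of $E^1$ via Hilbert 90 in the even case; this is a slightly weaker, but sufficient, version of the paper's density statement for $D(\Q)_\infty$ in $D(\Q_p)$. So: same key lemma and mechanism, marginally different packaging, with one route you should discard in favor of your own fallback.
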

\begin{proof}
The group $D(\Q)_{\infty}$ is dense in $D(\Q_p)$ 
by \eqref{D_infty} and \eqref{eq:nu_ell}. 
Hence the set $D(\Q)_{\infty} \backslash D(\Q_p)/ D(\Z_p)$ is a singleton. 
It follows from  \eqref{eq:c_d}  that $\bfG(\A_f^p)$ acts transitively on $\pi_0(\Sh_{\bfG(\Z_p)}(\bfG, X)_{\C})$. 

Similarly, the set $I(\Q) \backslash I(\Q_p) /U_p$ is a singleton for any open  subgroup $U_p$ of $I(\Q_p)$,  since $I(\Q)$ is dense in $I(\Q_p)$. 
It follows from  \eqref{eq:bas} and  Theorem \ref{inner} (4)  that 
the group $\bfG(\A_f^p)=I(\A_f^p)$ acts transitively on  $\pi_0(\M_{\bfG(\Z_p)}^{\bas})$ and  $\M_{\bfG(\Z_p)}^e(k)$. 
\end{proof} 
The natural
$\bfG(\A_f^p)$-equivariant map $\pi_0(\M_{\K}^{\bas}) \to \pi_0(\M_{\K})$ is therefore surjective. 
\begin{thm}\label{thm:con_comp}
%{\rm (1)}
%There is a  bijection 
%\[ I(\Q) \backslash I(\A_f)/ J_b^{0}  \K^p(N) \simeq  \pi_0(\M_{\K}^{\bas}).\]
Assume that $rs>0$. Then there is a bijection 
\[ \pi_0(\M_{\K}^{\bas}) 
\simeq \pi_0(\M_{\K}),  \]
unless $p$ is inert in $E$ and $(r, s)=(1, 1)$ or $p$ is split in $E$ and $\gcd(r, s)=1$, in which cases we have 
\[ 
\lvert \pi_0(\M_{\K}^{\bas}) \rvert = 
\lvert {\Irr}(\M_{\K}^{\bas})\rvert / \rho^{\bas} . \]
\end{thm}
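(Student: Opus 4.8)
The plan is to compare the two double-coset descriptions already assembled in the excerpt: the one for $\pi_0(\M_{\K})$ coming from \eqref{eq:c_d} together with Theorem~\ref{connected}~(2), and the one for $\pi_0(\M_{\K}^{\bas})$ coming from the Rapoport--Zink uniformization \eqref{eq:bas} together with the explicit description of the stabilizer $J_b^0$ in Proposition~\ref{stab}~(2). The key point is that the map $\nu=\mathrm{nrd}\times\mathrm{sim}:I\to D$ sends the uniformization picture to the $\pi_0$ picture, and the fibre of the induced $\bfG(\A_f^p)$-equivariant surjection $\pi_0(\M_{\K}^{\bas})\to\pi_0(\M_{\K})$ is controlled by whether $\nu(J_b^0)=D(\Z_p)$. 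So first I would set up the commutative square relating \eqref{eq:bas} to \eqref{eq:c_d} via $\nu$, observing (as in Lemma~\ref{trans}) that $I(\Q)$ is dense in $I(\Q_p)$ and $D(\Q)_\infty$ is dense in $D(\Q_p)$, so that both sides become homogeneous under $\bfG(\A_f^p)=I(\A_f^p)$ and it suffices to compute the fibre over a single point, namely the index $[D(\Z_p):\nu(J_b^0)\cdot D(\Z_p)]$, or more precisely the order of the image of $\nu(J_b^0)$ in $D(\Q_p)/(D(\Q)_\infty\text{-closure})\cdot D(\Z_p)$.

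Next I would compute $\nu(J_b^0)$ explicitly in each case using Proposition~\ref{stab}~(2). When $p$ is inert, $J_b^0=\{g\in\GU(\calV,\phi)(\Q_p)\mid v_p(\Sim g)=0\}$; since $\bfN_{E_p/\Q_p}(\mathrm{nrd}(g))=\Sim(g)^n$ and $E_p/\Q_p$ is unramified, the condition $v_p(\Sim g)=0$ forces $v_p(\mathrm{nrd}(g))=0$ when $n$ is odd, while when $n$ is even $\mathrm{nrd}(g)$ can still have any valuation subject to $v_p(\bfN(\mathrm{nrd}(g)))=nv_p(\Sim g)$ — here one must check carefully whether $\mathrm{nrd}(J_b^0)$ hits all of $O_{E_p}^1\cdot(\text{things of valuation }0)$, i.e. whether $\nu(J_b^0)=D(\Z_p)$. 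The upshot, matching the classification in Lemma~\ref{cpt} and the exceptional list, is that $\nu(J_b^0)=D(\Z_p)$ — hence the fibre is trivial and $\pi_0(\M_{\K}^{\bas})\simeq\pi_0(\M_{\K})$ — except precisely when $J_b^{\der}(\Q_p)$ is compact, i.e. when $p$ is inert and $(r,s)=(1,1)$ or $p$ is split and $\gcd(r,s)=1$; in those cases $\mathrm{nrd}$ is "small" (its image in $D$ misses part of $D(\Z_p)$) and the map $\pi_0(\M_{\K}^{\bas})\to\pi_0(\M_{\K})$ has nontrivial, constant fibres.

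For the exceptional cases I would then argue that the fibre cardinality equals $\rho^{\bas}$. The cleanest route: by \eqref{eq:irr} and the fact that all stabilizers ${\rm I}_p^Z$ of irreducible components are conjugate maximal-volume parahorics (He--Zhou--Zhu, Nie), we have $\lvert\Irr(\M_{\K}^{\bas})\rvert=\rho^{\bas}\cdot\lvert I(\Q)\backslash I(\A_f)/{\rm I}_p^Z\K^p(N)\rvert$ where $\rho^{\bas}=\lvert J_b(\Q_p)\backslash\Irr(X_\mu(b))\rvert$ by Proposition~\ref{Orbits}. On the other hand $\pi_0(\M_{\K}^{\bas})$ is given by \eqref{eq:bas} with the group $J_b^0$. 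In the exceptional cases $J_b^{\der}(\Q_p)$ is compact, so $X_\mu(b)$ is $0$-dimensional and discrete; every irreducible component is a point, the $J_b(\Q_p)$-action on $\pi_0(X_\mu(b))$ and on $\Irr(X_\mu(b))$ coincide, and $J_b^0={\rm I}_p^Z$ (the stabilizer of a component of $X_\mu(b)$ is exactly the stabilizer of the corresponding point, which is the unique maximal parahoric up to the relevant adjustment). Therefore $\lvert\pi_0(\M_{\K}^{\bas})\rvert=\lvert I(\Q)\backslash I(\A_f)/J_b^0\K^p(N)\rvert=\lvert I(\Q)\backslash I(\A_f)/{\rm I}_p^Z\K^p(N)\rvert=\lvert\Irr(\M_{\K}^{\bas})\rvert/\rho^{\bas}$, as claimed. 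I expect the main obstacle to be the bookkeeping in the inert, $n$ even case of the nonexceptional statement: one must verify that $\mathrm{nrd}$ restricted to $J_b^0=\{v_p(\Sim g)=0\}$ surjects onto the $O_{E_p}^1$-part of $D(\Z_p)$ (equivalently, that the relevant Kneser/weak-approximation-type argument gives $\nu(J_b^0)=D(\Z_p)$), so that the fibre really is trivial there and the exceptional list is exactly the two stated cases.
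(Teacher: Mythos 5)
There is a genuine gap, and it sits at the heart of both halves of your argument. You propose to measure the fibre of the surjection $\pi_0(\M_{\K}^{\bas})\to\pi_0(\M_{\K})$ by whether $\nu(J_b^0)$ fills $D(\Z_p)$, and you assert that in the exceptional cases the image of $\nu(J_b^0)$ "misses part of $D(\Z_p)$". Both claims are incorrect. In fact $\nu(J_b^0)=D(\Z_p)$ in \emph{every} case: when $p$ is inert, $J_b^0$ is the full preimage under $\nu$ of $\{(x,c): v_p(c)=0\}$ (the defining condition involves only $\Sim$), $\nu:J_b(\Q_p)\to D(\Q_p)$ is surjective by Kneser's theorem, and $E_p^1=O_{E_p}^1$ since $E_p/\Q_p$ is unramified; when $p$ is split, ${\rm nrd}(\GL_m(O_B))=\Z_p^{\times}$. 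The paper's proof even relies on the exact sequence $1\to I^{\der}(\Q_p)\to J_b^0\xrightarrow{\nu} D(\Z_p)\to 1$ precisely in the exceptional (compact) case. Consequently the dichotomy cannot be detected at the level of $D$ at all: surjectivity of $\nu$ on stabilizers says nothing about injectivity of $\pi_0(\M_{\K}^{\bas})\to\pi_0(\M_{\K})$. What actually controls the fibres is strong approximation for the simply connected group $I^{\der}$ relative to $S=\{\infty,p\}$: the fibre over $\nu(g)$ is a quotient of $I^{\der}(\Q)\backslash I^{\der}(\A_f^p)/\bigl(I^{\der}(\A_f)\cap gJ_b^0\K^p(N)g^{-1}\bigr)$, and this is a single point exactly when $I^{\der}(\R)\times I^{\der}(\Q_p)$ is noncompact, i.e.\ (as $I^{\der}(\R)$ is compact) when $J_b^{\der}(\Q_p)$ is noncompact; this is where Lemma \ref{cpt} and the exceptional list genuinely enter. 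Your proposal contains no substitute for this step, so the nonexceptional bijectivity is not established.

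In the exceptional case your justification also fails as stated: it is not true that $X_{\mu}(b)$ is zero-dimensional whenever $J_b^{\der}(\Q_p)$ is compact. For $p$ split and $\gcd(r,s)=1$ with $\min(r,s)\geq 2$ (e.g.\ $(r,s)=(2,3)$) the class $b$ is superbasic but $X_{\mu}(b)$ has positive dimension, so "every irreducible component is a point and $J_b^0={\rm I}_p^Z$ by inspection" does not apply. The correct route, which your conclusion happens to match, is: compactness of $I^{\der}(\Q_p)$ together with the exact sequence above makes $J_b^0$ a compact open subgroup; since parahoric subgroups lie in the kernel of the Kottwitz homomorphism they act trivially on $\pi_0(X_{\mu}(b))$, so $J_b^0$ contains every parahoric and is therefore the unique maximal parahoric, whence $J_b^0={\rm I}_p^Z$ for every irreducible component $Z$; then \eqref{eq:irr} gives $\lvert\Irr(\M_{\K}^{\bas})\rvert=\rho^{\bas}\cdot\lvert I(\Q)\backslash I(\A_f)/J_b^0\K^p(N)\rvert=\rho^{\bas}\cdot\lvert\pi_0(\M_{\K}^{\bas})\rvert$. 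You should replace the dimension argument by this containment argument, and replace the "$\nu(J_b^0)$ versus $D(\Z_p)$" criterion by the strong approximation argument for $I^{\der}$.
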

\begin{proof}
%since 
% $I(\A_f)=\bfG(\A_f)$ acts transitively on $\pi_0(\M_{\K})=\pi_0(\Sh_{\K}(\bfG, X)_{\C})$. 
The projection $\nu$ induces a surjective map 
\[ \nu : \pi_0(\M_{\K}^{\bas})\simeq 
I(\Q) \backslash 
I(\A_f) 
/J_b^0 {\K}^p(N) \to \nu(I(\Q)) 
\backslash 
D(\A_f) / \nu(J_b^0) \cdot \nu(\K^p(N)). 
\]
By equality \eqref{D_infty} and Lemma \ref{sim:I}, we have  $\nu(I(\Q))=D(\Q)_{\infty}$. Further, Proposition \ref{stab} implies that $\nu(J_b^0)=D(\Z_p)$. 
It follows from \eqref{eq:c_d} that the double coset space in the RHS has the same cardinality as the set 
$\pi_0(\Sh_{\K}(\bfG, X)_{\C}) \simeq \pi_0(\M_{\K})$.

%and its $\{p, \infty\}$-component $I^{\der}_{\Q_p} \times I^{\der}_{\R}$ is not compact unless $p$ is inert in $E$ and $(r, s)=(1, 1)$ or $p$ is inert in $E$ and $\gcd(r, s)=1$. 

On the other hand, 
for each $g \in I(\A_f)$, there is a surjective map
\begin{equation}\label{eq:fib}
I^{\der}(\Q) 
\backslash I^{\der}(\A_f^p)/(I^{\der}(\A_f) \cap g J_b^0 {\K}^p(N)g^{-1})
\to 
I(\Q) \backslash
I(\Q) I^{\der}(\A_f) g J_b^0 {\K}^p(N)/J_b^0 {\K}^p(N) = \nu^{-1}(\nu(g)). 
\end{equation}
The subgroup $J_b^0$ is normal in $I(\Q_p)$ and  contains  $I^{\der}(\Q_p)= 
\{g \in I(\Q_p) \mid {\rm nrd}(g)=1, \Sim(g)=1\}$. 
Hence $I^{\der}(\A_f) \cap g J_b^0 {\K}^p(N)g^{-1}=I^{\der}(\Q_p)\cdot I^{\der}(\A_f^p) \cap g  {\K}^p(N)g^{-1}$.

Suppose that  $I^{\der}({\Q_p})$ is not compact.  
The strong approximation theorem \cite{Kneser} can then  be applied to the simply connected group $I^{\der}$ and the non-compact group  $I^{\der}({\R})\times I^{\der}(\Qp)$. 
Hence 
the LHS of \eqref{eq:fib} consists of a single element. 
It follows that the map $\nu$ is  bijective, and in particular  $\pi_0(\M_{\K}^{\bas})$ and $\pi_0(\M_{\K})$ have  the same cardinality. 
%by \eqref{eq:bas}. 
Thus the surjective map 
$\pi_0(\M_{\K}^{\bas}) \to \pi_0(\M_{\K})$ in this case is  bijective.  

Suppose that $I^{\der}(\Q_p)$ is compact. 
Then $J_b^0$ is also compact 
since we have  an exact sequence $1 \to I^{\der}(\Q_p) \to J_b^0 \xrightarrow{\nu} D(\Z_p) \to 1$. 
Further, $J_b^0$ contains any parahoric subgroup of $J_b(\Q_p)$, and hence it is the unique maximal parahoric subgroup. 
It follows from 
%\eqref{eq:bas} and 
\eqref{eq:irr} that 
\[ \lvert \pi_0(\M_{\K}^{\bas}) \rvert =
\lvert I(\Q) \backslash I(\A_f) /J_b^0 \K^p(N) \rvert= 
\lvert \Irr(\M_{\K}^{\bas}) \rvert/ \rho^{\bas}. \]

 The assertion thus follows from Lemma \ref{cpt}. 
\end{proof}

\begin{remark}\label{rem:equi-distributed} 
In an alternative definition, 
$I^\der(\Qp)$ is compact if and only if the corresponding basic element $b\in \bfG(L)$ is superbasic, see  
%Hamacher-Viehmann~
\cite[Section 4]{HV}.
Lemma \ref{trans} implies that 
irreducible  components of $\M_{\K}^{\bas}$ and points of $\M_{\K}^e(k)$ are equally distributed to each connected component $Y$ of  $\M_{\K}^{\bas}$.  
Therefore, 
in the superbasic case we have that 
\begin{align*}
    \lvert \Irr(Y) \rvert= 
    \rho^{\bas} \quad \text{and} \quad 
    \lvert (\M_{\K}^e \cap Y)(k) \rvert 
    = \lambda^e_p/\lambda_p^{\bas}. 
\end{align*}
\end{remark}
\begin{eg}[Shimura sets]\label{rs=0}
  Assume that 
  $(r,s)=(0, n)$ or $(n, 0)$. 
In this case, 
the group $\bfG(\R)$   
is compact modulo center, and  the complex Shimura variety $\Sh_{\K}(\bfG, X)_{\C} = \bfG(\Q) \backslash \bfG(\A_f)/\K$ for an open compact subgroup $\K \subset \bfG(\A_f)$ is a discrete set. 
 %hence
%the arguments  for a mass of the inner form $I$  can be applied to  $\bfG_{\Q}$. 
%In more detail, the mass of $\bfG_{\Q}$ with respect to a compact open subgroup  of $\bfG(\A_f)$ is defined as in \eqref{eq:mass}, and it is computed by Lemma  \ref{sim:I},  Propositions   \ref{mass:sim}, 
%\ref{compare}, 
%and Theorem~\ref{Mass_inner}. 
%Moreover, 
%for the compact open subgroup $\K(N)$ with $N\geq 3$, the mass $\Mass(\bfG, \K(N))$ equals the cardinality of the set 
%$$, which is further equals the cardinality of the set $\Sh_{\K(N)}(\bfG, X)$. 
Let $p$ be a prime which is unramified in $E$, and let $\K\coloneqq \bfG(\Z_p)  \K^p(N)$ with $N\geq 3$ and $p \nmid N$.
%Then  is represented by a scheme over  $O_{E, (p)}$. 
Then the set $\M_{\K}=\bfM_{\K} \otimes k$ has the same  cardinality  as the set  $\Sh_{\K}(\bfG, X)_{\C}$. 
Further we have $\M_{\K}=\M_{\K}^{\bas}=\M_{\K}^e$, 
whose cardinality is given by Theorem~\ref{intro} with $\lambda_p^{\bas}=\lambda_p^e=\rho^{\bas}=1$.

When $(r,s)=(0,1)$ or $(1,0)$,  
%Theorem~\ref{intro} and 
by the relation $L(0,\chi)=2h(E)/|\mu_E|$ we have  that  
\begin{equation}\label{eq:10}
 |\M_{\K}^e(k)|=[\bfG(\wh \Z^p):\K^p(N)]\cdot h(E)/|\mu_E|.   
\end{equation}
\end{eg} 
We use the following lemma in the examples below. 
\begin{lemma} \label{lm:C(Fp2)}
Let $q$ be a power of $p$ and $H\subset \bbP^n$ $(n\ge 1)$ be the Fermat hypersurface defined by $X_0^{q+1}+\dots +X_n^{q+1}=0$. Then 
\begin{equation}\label{eq:HFq2}
|H(\F_{q^2})|=\frac{(q^{n+1}+(-1)^n)(q^n+(-1)^{n+1})}{q^2-1}.    
\end{equation}
\end{lemma}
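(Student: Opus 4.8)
The goal is to count the $\F_{q^2}$-points of the Fermat hypersurface $H\subset \mathbb P^n$ cut out by $X_0^{q+1}+\dots+X_n^{q+1}=0$. The key observation is that the map $x\mapsto x^{q+1}=\bfN_{\F_{q^2}/\F_q}(x)$ sends $\F_{q^2}^\times$ onto $\F_q^\times$ with all fibers of size $q+1$, and sends $0$ to $0$. So the problem reduces to a counting problem over $\F_q$: I would first count the number $N$ of solutions $(y_0,\dots,y_n)\in\F_q^{n+1}$ of the linear-looking but norm-constrained system $y_0+\dots+y_n=0$ with the additional data of how many coordinates are zero, weighting each solution by $(q+1)^{\#\{i:y_i\neq 0\}}$ (each nonzero $y_i$ has $q+1$ preimages, each zero $y_i$ has a unique preimage $0$), then pass to projective coordinates by subtracting the all-zero tuple and dividing by $|\F_{q^2}^\times|=q^2-1$.

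\textbf{Main computation.} First I would compute the affine count $A=\#\{(x_0,\dots,x_n)\in\F_{q^2}^{n+1}: \sum x_i^{q+1}=0\}$. Using character sums (the standard Gauss/Jacobi sum evaluation of diagonal equations, e.g.\ as in Ireland--Rosen or Weil's original paper) is the cleanest route: write the indicator of $\sum_i x_i^{q+1}=0$ as $\frac1{q^2}\sum_{\psi}\psi(\sum_i x_i^{q+1})$ over additive characters $\psi$ of $\F_{q^2}$, interchange sums, and use that $\sum_{x\in\F_{q^2}}\psi(tx^{q+1})$ is a Gauss sum for the norm character times a multiplicative character. Since $q+1$ is exactly the order of the kernel of the norm, these Gauss sums are particularly simple — essentially $-q$ up to a sign — and one gets a closed form. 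Concretely this yields
\begin{equation}\label{eq:affineA}
A = q^{2n} + (-1)^n\frac{(q^{n+1}+(-1)^{n+1})q^n}{\,q-1\,}\cdot\frac{q-1}{q}\cdot(\text{sign})\,,
\end{equation}
which I would simplify carefully; the point is that $A-1$ (removing the origin) is divisible by $q^2-1$ and the quotient is exactly the right-hand side of \eqref{eq:HFq2}. An alternative, possibly cleaner for the write-up, is an induction on $n$: relate $|H_n(\F_{q^2})|$ to $|H_{n-1}(\F_{q^2})|$ by slicing off the hyperplane $X_n=0$ versus $X_n\neq 0$, observing that for $X_n\neq 0$ one can normalize $X_n$ and count solutions of $\sum_{i<n}x_i^{q+1}=-1$, which is an affine Fermat count that again reduces to the norm-fiber bookkeeping over $\F_q$. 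The base case $n=1$ is immediate: $X_0^{q+1}+X_1^{q+1}=0$ in $\mathbb P^1$ has $q+1$ solutions, matching $\frac{(q^2-1)(q-1)}{q^2-1}\cdot$ wait — one checks directly $(q^2+ (-1))(q+1)/(q^2-1) = q+1$. Good.

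\textbf{Expected obstacle.} The only genuine subtlety is getting all the signs and the divisibility right in the Gauss-sum evaluation (or, in the inductive approach, correctly tracking the $\pm$ in $q^n+(-1)^{n+1}$ versus $q^{n+1}+(-1)^n$ through the recursion). This is bookkeeping rather than a conceptual difficulty; I would pin it down by checking small cases ($n=1$ and $n=2$) against the claimed formula \eqref{eq:HFq2} before trusting the general expression. Note that for the application in this paper only the case $q=p$ (and the values $n$ arising as $r-1$, $s-1$, etc.) is needed, and the formula \eqref{eq:HFq2} is then fed into the count of $\mathcal M_{\rm K}^e(k)$ and $\Irr(\mathcal M_{\rm K}^{\rm bas})$ via the point counts of the finite unitary groups in Table~\ref{table:finite}; so in the proof I would simply state \eqref{eq:HFq2} and verify it by the character-sum computation, remarking that $H$ is exactly the Deligne--Lusztig variety for $\mathrm U_{n+1}$ whose point count is classical.
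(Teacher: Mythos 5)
Your primary route (additive characters plus Gauss sums for the multiplicative characters of order dividing $q+1$ over $\F_{q^2}$) is genuinely different from the paper's proof, which is completely elementary: the paper sets $S_n=|H(\F_{q^2})|$ and $A_n=\#\{(x_0,\dots,x_{n-1})\in\bbA^n(\F_{q^2}): x_0^{q+1}+\dots+x_{n-1}^{q+1}+1=0\}$, slices off the hyperplane $X_n=0$ to get $S_n=S_{n-1}+A_n$ (hence $S_n=A_1+\cdots+A_n$), and then, using exactly your observation that each element of $\F_q^\times$ has $q+1$ preimages under the norm $x\mapsto x^{q+1}$, derives the recursion $A_n=A_{n-1}+(q+1)(q^{2n-2}-A_{n-1})=q^{2n-1}+q^{2n-2}-qA_{n-1}$, solves it to get $A_n=q^{2n-1}+(-q)^{n-1}$, and sums the geometric series. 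So the ``alternative'' induction you sketch in one sentence is precisely the paper's argument; your character-sum branch is the classical Weil count for diagonal hypersurfaces (here the Gauss sums are pure, equal to $\pm q$, which is why the Hermitian hypersurface is maximal/minimal), and it buys generality (arbitrary coefficients and degrees) at the cost of importing Gauss/Jacobi-sum machinery that the three-line recursion avoids.

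The point to flag is that, as written, neither branch is actually carried out, and the entire content of the lemma is exactly the bookkeeping you defer: your displayed expression for the affine count $A$ contains an unresolved ``(sign)'' and does not simplify as it stands, and the key identity --- that $\sum_{x\in\F_{q^2}}\psi(tx^{q+1})$ evaluates via Gauss sums each equal to $\pm q$, with the signs conspiring to give $A_n=q^{2n-1}+(-q)^{n-1}$ --- is asserted rather than proved or precisely cited. Checking $n=1,2$ confirms the formula but does not establish it. To make this a proof you should either push the Jacobi-sum evaluation to the end (citing the standard purity statement for characters trivial on $\F_q^\times$), or complete the induction: prove the fiber count of the norm map, deduce the recursion $A_n=q^{2n-1}+q^{2n-2}-qA_{n-1}$, solve it, and sum --- at which point you have reproduced the paper's proof verbatim. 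Either completion is routine, but one of them must be written down.
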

\begin{proof}
Let 
\begin{align*}
    S_n & :=\# \{[x_0 : \cdots  :  x_n]\in \bbP^n(\F_{p^2}) \mid  x_0^{q+1}+\dots+ x_{n-1}^{q+1}+x_n^{q+1}=0 \,\}=\# H(\F_{q^2}), 
    \\ 
    A_n & :=\# \{(x_0,\dots, x_{n-1})\in \bbA^n(\F_{q^2}) \mid  x_0^{q+1}+\dots+x_{n-1}^{q+1}+1=0 \,\}.
\end{align*}
 We separate the cases where $x_n=0$ or $x_n=1$; this gives $S_n=S_{n-1}+A_n$ and $S_n=A_1+\cdots+A_n$. 
 By solving the equation $-X_{n-1}^{q+1}=X_0^{q+1} +\cdots + X_{n-2}^{q+1}+1$ in $\bbA^n(\F_{q^2})$, we obtain
\begin{align*}
   A_n & =\#\{x_0^{q+1}+\cdots+x_{n-2}^{q+1}+1=0\}+\#\{ x_0^{q+1}+\cdots+x_{n-2}^{q+1}+1\neq 0 \}\cdot (q+1)
   \\
   & 
   =A_{n-1}+(q+1)(q^{2n-2}-A_{n-1})=q^{2n-1}+q^{2n-2}-q A_{n-1}.\end{align*}
Put $B_n:=A_n-q^{2n-1}$, then $B_1=A_1-q=1$ and $B_n=(-q)\cdot B_{n-1}$. Therefore, $B_n=(-q)^{n-1}$ and $A_n=q^{2n-1}+(-q)^{n-1}$. We compute
\[ \begin{split}
S_n&=A_1+\dots+A_n=\frac{q(q^{2n}-1)}{q^2-1}+\frac{(-q)^n-1}{-q-1} 
%&=\frac{q^{2n+1}+(-q)^{n+1}+(-q)^{n}-1}{q^2-1}
=\frac{(q^{n+1}+(-1)^n)(q^n+(-1)^{n+1})}{q^2-1}. 
\end{split} 
 \]

\end{proof}

\begin{eg}[Picard modular surfaces]\label{Picard}
Assume that $(r, s)=(1, 2)$. 
In this case,  the  complex Shimura variety $\bfM_
{{\rm K},\C}=\mathbf M_{\rm K} \otimes \C$ is of dimension two,  and called a Picard modular surface. 
When $p$ is inert in $E$,  $\mathcal M_{\rm K}^{\rm bas}$ coincides the supersingular locus, which is of dimension   one, and  $\mathcal M_{\rm K}^{e}$ coincides with the superspecial locus. 
Now we fix a connected component $\bfM_{{\rm K},\C}^{0}$ of $\bfM_{{\rm K},\C}$, and let $\calM_{\rm K}^0$ be the corresponding connected component of $\calM_{\rm K}$ through a smooth compactification $\ol{\bfM}_{\rm K}$ of $\bfM_{\rm K}$.
Write $\mathcal N$ for the number of irreducible components of  $\mathcal M_{\rm K}^{\rm bas}$ which are contained in $\mathcal M_{\rm K}^0$. 
When $p$ is inert in $E$, De Shalit and Goren \cite{DG} proved that 
\begin{equation}\label{eq:N}
    3\mathcal N=c_2(\bfM_{{\rm K},\C}^{0}),
\end{equation}
where $c_2(\bfM_{{\rm K},\C}^{0})$ is the top Chern class of the smooth model $\ol{\bfM_{{\rm K},\C}^{0}}$ of $\bfM_{{\rm K},\C}^{0}$, which depends only on $\bfM_{{\rm K},\C}^{0}$. 
They also showed that %irreducible components of $\mathcal M_{\rm K}^{\rm bas}$ intersect at  superspecial points, and 
under the condition that $N$ is sufficiently large depending on $p$ (see~\cite[Theorem 2.1(iii)]{DG}), the number of superspecial points in  $\mathcal M_{\rm K}^0$ is equal to 
\begin{equation}\label{eq:MK0e}
   \frac{c_2(\bfM_{{\rm K},\C}^{0})}{3} \cdot \frac{p^3+1}{p+1}. 
\end{equation}
The class $c_2(\bfM_{{\rm K},\C}^{0})$ had been computed explicitly by Holzapfel  \cite[Main Theorem 5A.4.7]{Holzapfel}\footnote{Formula  \cite[(5A.4.3), p. 325]{Holzapfel}  (as well as  \cite[(1.12) and (1.14)]{DG}) 
contains an unnecessary factor `$3$' in its RHS, 
although it is computed correctly in the proof \cite[line 16, p. 326]{Holzapfel}.}:  
\begin{equation}
    \label{eq:c2MK0}
  c_2(\bfM_{{\rm K},\C}^{0})= [ \bfG^{\der}(\wh\Z) :\K^{\der}(N)]\cdot \frac{|d_E|^{5/2}}{32\pi^3} \cdot L(3, \chi), 
\end{equation}
where we write $\bfG^{\der}(\wh\Z)\coloneqq \bfG(\wh\Z) \cap \bfG^{\der}(\A_f)$ and $\K^{\der}(N)\coloneqq \ker \Big(\bfG^{\der}(\wh\Z) \to \bfG(\wh\Z/N\wh\Z)\Big)$.  
Formulas  \eqref{eq:N}, \eqref{eq:MK0e},  \eqref{eq:c2MK0}, and the functional equation imply that 
\begin{equation}\label{eq:N_MKe0}
\calN=-[\bfG^{\der}(\wh\Z) :  
\K^{\der}(N)]\cdot \frac{1}{48} \cdot L(-2, \chi), \quad \text{and} \quad |(\calM^0_{\rm K}\cap \calM^e_{\rm K})(k)|=\calN \cdot (p^2-p+1).   
\end{equation}

On the other hand, Theorem~\ref{intro} shows that if $p$ is inert in $E$ then 
\begin{equation}\label{eq:12inert}
\begin{split}
|\Irr(\calM_{\rm K}^{\bas})|&=-[\bfG(\wh \Z^p):\K^p(N)]\cdot \frac{1}{48} \frac{h(E)}{|\mu_E|}\cdot L(-2,\chi), \\ |\calM_{\rm K}^{e}(k)|&=-[\bfG(\wh \Z^p):\K^p(N)] \cdot \frac{1}{48} \frac{h(E)}{|\mu_E|}\cdot L(-2,\chi)\cdot (p^2-p+1).     
\end{split}
\end{equation}
Here we have an equality 
%the formulas \eqref{eq:12inert}, using 
\[ [\bfG(\wh\Z^p):  \K^p(N)]=
[\bfG(\wh\Z):  \K(N)]=[\bfG^{\der}(\wh \Z):\K^{\der}(N)]\cdot [D(\widehat{\Z}) : \nu(\K(N))].\] 
Further, Theorem~\ref{connected} shows that  $\calM_{\K}$ has 
$[D(\widehat{\Z}) : \nu(\K(N))] \cdot \lvert \mu_E \rvert^{-1} \cdot  h(E)$ connected components. 
From these results one deduces \eqref{eq:N_MKe0}.  

 In this case, every irreducible component of $\M_{\K}^{\bas}$ is isomorphic to the Fermat curve $C: X_0^{p+1}+X_1^{p+1}+ X_2^{p+1}=0$ of degree $p+1$ and it has $|C(\F_{p^2})|=p^3+1$ superspecial points. Every superspecial point is contained in $p+1$ irreducible components; see \cite[Theorem 4]{Vollaard1}. 

When $p$ is split in $E$, the basic locus $\M_{\K}^{\bas}$ is zero-dimensional and we have
\begin{equation}\label{eq:12split}
|\Irr(\calM_{\rm K}^{\bas})|=|\calM_{\rm K}^{e}(k)|=-[\bfG(\wh \Z^p):\K^p(N)]\cdot \frac{1}{48} \frac{h(E)}{|\mu_E|}\cdot L(-2,\chi)\cdot (p-1)(p^2-1).     
\end{equation}

\end{eg}

\begin{eg}[$n=2$ or $n=4$] For simplicity we assume that $2$ is unramified in $E$; this implies that $\kappa_2=1$ in \eqref{kappa_ell}. Set $S_E^{\rm qs}:=\{\ell|d_E: \bfG^1_{\Q_\ell} \text{ is quasi-split} \}$ and $S_E^{\rm nq}:=\{\ell|d_E: \bfG^1_{\Q_\ell} \text{ is not quasi-split} \}$, and set
\[ \begin{split}
    L(2,E,N)&:=[\bfG(\wh \Z):\K(N)]\cdot \frac{1}{2^w\cdot 12}\frac{h(E)}{|\mu_E|}\cdot \prod_{\ell\in S_E^{\rm qs}} (\ell+1) \prod_{\ell\in S_E^{\rm nq}} (\ell-1), \quad \text{and} \\
    L(4,E,N)&:=-[\bfG(\wh \Z):\K(N)]\cdot \frac{1}{2^w\cdot 5760}\frac{h(E)}{|\mu_E|}\cdot L(-2,\chi)\cdot \prod_{\ell\in S_E^{\rm qs}} (\ell^2+1) \prod_{\ell\in S_E^{\rm nq}} (\ell^2-1).
\end{split} \]

For $(r,s)=(1,1)$, the basic locus $\M_{\K}^{\bas}$ is zero-dimensional and we have (for both $p$ inert and split)
\begin{equation}\label{eq:11}
|\M_{\K}^e(k)|=|\Irr(\M_{\K}^{\bas})|=L(2,E,N) \cdot (p-1). 
\end{equation}

For $(r,s)=(1,3)$, if $p$ is inert in $E$, then $\M_{\K}^{\bas}$ is one-dimensional and we have
\begin{equation}\label{eq:13inert}
  |\M_{\K}^e(k)|=|\Irr(\M_{\K}^{\bas})|=L(4,E,N) \cdot (p-1)(p^2+1).
\end{equation}
In this case, every irreducible component of $\M_{\K}^{\bas}$ is isomorphic to the Fermat curve $C: X_0^{p+1}+X_1^{p+1}+ X_2^{p+1}=0$ of degree $p+1$ and it has $|C(\F_{p^2})|=p^3+1$ superspecial points. Every superspecial point is contained in $p^3+1$ irreducible components; see \cite[Example G, p.~595]{Vollaard}.

If $p$ is split in $E$, then $\M_{\K}^{\bas}$ is zero-dimensional and we have
\begin{equation}\label{eq:13split}
  |\M_{\K}^e(k)|=|\Irr(\M_{\K}^{\bas})|=L(4,E,N) \cdot (p-1)(p^2-1)(p^3-1). 
\end{equation} 

For $(r,s)=(2,2)$, if $p$ is inert in $E$, then $\M_{\K}^{\bas}$ is two-dimensional and we have
\begin{equation}\label{eq:22inert}
  |\M_{\K}^e(k)|=L(4,E,N) \cdot (p^2-p+1)(p^2+1), \quad |\Irr(\M_{\K}^{\bas})|=2 L(4,E,N). 
\end{equation}
In this case, every irreducible component of $\M_{\K}^{\bas}$ is isomorphic to the Fermat surface $S: X_0^{p+1}+X_1^{p+1}+ X_2^{p+1}+X_3^{p+1}=0$ of degree $p+1$ and it has $|S(\F_{p^2})|=(p^2+1)(p^3+1)$ superspecial points; see \cite{HP0} .

If $p$ is split in $E$, then $\M_{\K}^{\bas}$ is 1-dimensional and we have
\begin{equation}\label{eq:22split}
  |\M_{\K}^e(k)|=|\Irr(\M_{\K}^{\bas})|=L(4,E,N) \cdot (p-1)(p^3-1).
\end{equation} 
In this case, every irreducible component of $\M_{\K}^{\bas}$ is isomorphic to $\bbP^1$ and it
has $p^2+1$ superspecial points. Every superspecial point is contained in $p^2+1$ irreducible components \cite{Fox}.   
\end{eg} 

\begin{eg}[Basic EO strata for $\GU(2,2)$]\label{2,2}  Assume that $p$ is inert in $E$. By \cite{HP0}, the EO stratification agrees with the Bruhat-Tits stratification on the basic locus. There are two 2-dimensional EO strata, one $1$-dimensional EO stratum, and the unique $0$-dimensional EO stratum. The union of all basic EO strata is the basic locus $\M_{\K}^{\bas}$. 

Let $\M_{\K}^w$ be one of the two  $2$-dimensional basic EO strata.
Then 
the closure of every irreducible component of $\M_{\K}^w$ is  an irreducible component of $\M_{\K}^{\bas}$. 
Hence 
the stabilizer in  $J_b(\Q_p)$ of an irreducible component  of its preimage in the Rapoport-Zink space is  an open compact subgroup with maximal volume. 
This implies that $\M_{\K}^{w}$ has a multiple of $L(4,E,N)$ irreducible components. 
From  \eqref{eq:22inert} it  follows that   $\M_{\K}^w$  has $L(4,E,N)$ irreducible components.

We know the number of the points in $\calM_{\K}^e(k)$ from \eqref{eq:22inert}. Every irreducible component of the closure $X$ of the $1$-dimension EO stratum is isomorphic to $\bbP^1$ and has $p^2+1$ superspecial points. On the other hand, every superspecial point is contained in $p+1$ irreducible components of $X$. Using the incidence relation, the $1$-dimensional basic EO stratum has $L(4,E,N) \cdot  (p^3+1)$ irreducible components. 

Assume that $p$ is split in $E$. By \cite{Fox}, the basic locus is the union of the $1$-dimensional basic EO stratum and the unique $0$-dimensional EO stratum. Thus, by \eqref{eq:22split} both the $1$-dimensional stratum and the $0$-dimensional stratum have $L(4,E,N) \cdot (p-1)(p^3-1)$ irreducible components.

\end{eg}
\section{Upper bound on the number of Hecke eigensystems of  mod $p$ automorphic forms}\label{sec:bound}
Let $\mathscr D$ and   $p$ be as in Section ~\ref{moduli}, and let $\K \coloneqq \bfG(\Z_p) \K^p(N)$ where $N\geq 3$ and $p\nmid N$. 
Let $[\mu]$ be the conjugacy class of the cocharacter $\mu_h$ defined by $\mathscr D$. 
We can
find a representative $\mu$ of $[\mu]$ that extends to a cocharacter  $\mu : \bbG_{{\rm m}, W(k)} \to \bfG_{W(k)}$. 
Let $L(\mu)$ be the centralizer of $\mu$ in $\bfG_{W(k)}$. 
Then $L(\mu)$ is a reductive group over $W(k)$ \cite[A.6]{VW}. 
We fix a maximal torus $T$ of the $k$-group  $L(\mu) \otimes_{W(k)} k$,  and a Borel subgroup $B$  containing $T$. 
Let $X^*(T)^+$ denote the set of dominant weights with respect to $B$. To each $\xi \in X^*(T)^+$, 
one can associate  a  vector bundle  $\mathscr V(\xi)$  on $\mathcal M_{{\rm K}}$, called the \emph{automorphic bundle of weight $\xi$}; see 
\cite[Definition  6.7]{Lan2} and  \cite[Section 4.1]{TY}. 
Let $\bfM_{\K}^{\rm tor}$ be a  toroidal compactification of $\bfM_{\K}$, 
%as in Section \ref{ss:cpt}, 
 and let $\mathcal M_{{{\rm K}}}^{\rm tor}:=\mathbf M_{\rm K}^{\rm tor} \otimes k$. 
%as in Section  \ref{ss:cpt}. 
By  \cite[Section 6B]{Lan2}, 
 there exits a canonical extension  of $\mathscr V(\xi)$ to $\mathcal M_{{{\rm K}}}^{\rm tor}$, denoted by $\mathscr V^{{\rm can}}(\xi)$. 
 The space of \emph{mod $p$ automorphic forms}  is then  defined as
\[ 
\mathcal A(\mathscr D, N)\coloneqq \bigoplus_{\xi \in X^*(T)^+} 
H^0(\mathcal M_{{{\rm K}}}^{\rm tor}, \mathscr{V}^{can}(\xi)). 
\] 
We remark that 
 $\mathcal M_{{{\rm K}}}$ is compact or has the  boundary with codimension  larger than one, and hence  
Koecher's principle holds for $\mathcal M_{{\rm K}}$ by \cite[Theorem~2.3]{Lan3}:
\[H^0(\mathcal M_{{\rm K}}^{\rm tor}, \mathscr{V}^{\rm{can}}(\xi) )
\simeq 
H^0(\mathcal  M_{{\rm K}}, \mathscr{V}(\xi)).\]
The space $\mathcal A(\mathscr D, N)$ admits an action of the unramified Hecke algebra 
\[\mathcal H\coloneqq 
{\bigotimes_{\ell \neq p}}'  
\mathcal H_{\ell}({\bfG}_{ \Q_{\ell}}, {{\rm K}}_{\ell} ; \Z_p).\] 
We say that a \emph{system 
of Hecke eigenvalues 
$(b_T) _{T \in{\mathcal H}} \in k^{\mathcal H}$ appears in} $\mathcal A(\mathscr D, N)$  if there exists an element $f \in \mathcal A(\mathscr D, N)$ such that $T f = b_T f$ for all
$T \in \mathcal  H$. 
\begin{thm}\label{bound}
Let $\mathcal  N(\mathscr D, N)$  denote  the  number of the systems of prime-to-$p$ Hecke eigenvalues appearing in $\mathcal A(\mathscr D, N)$. 
Then  
\[\mathcal N(\mathscr D, N) \leq 
 [ \mathbf G(\wh{\Z}^p) : \K^p(N)] \cdot \varepsilon  
 \cdot \prod_{j=1}^n L(1-u, \chi^j) \cdot  \prod_{\ell \mid d_E} \kappa_{\ell} \cdot 
\lambda^e_p \cdot \nu_p \]
where $\varepsilon$,  $\kappa_{\ell}$, $\lambda_p^e$ are as defined in  \eqref{epsilon},  \eqref{kappa_ell},  \eqref{lambda^e}, and   $\nu_p$ is given by 
\begin{align*}
\nu_p=
\begin{dcases}
 p^{(r(r-1)+s(s-1))/{2}} 
 \cdot 
 p^{n-2}(p-1)(p+1)^2 & \ {\rm if} \ p \ {\rm is \ inert \ and} \  rs \neq 0;
 \\
 p^{{n(n-1)}/{2}} \cdot p^{n-1}(p-1)(p+1) & \ {\rm if} \ p \ {\rm is \ inert \ and} \ rs=0; 
 \\
  p^{{n(m-1)}/{2}}\cdot p^{n-{n}/{m}} \cdot (p^{{n}/{m}}-1) & \ {\rm if} \ p \ {\rm is \ split}. 
  \end{dcases}
 \end{align*} 
\end{thm}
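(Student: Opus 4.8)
The strategy is to mimic the classical mod $p$ Jacquet--Langlands type argument of Serre, Ghitza, Reduzzi, and the authors' earlier work \cite{TY}. By \cite[Theorem~2.3]{Lan3} (Koecher's principle) we may work with $H^0(\mathcal M_{\rm K}, \mathscr V(\xi))$ on the (possibly non-compact) open part. The key input is that every system of prime-to-$p$ Hecke eigenvalues appearing in $\mathcal A(\mathscr D, N)$ already appears when one restricts all the automorphic forms to the $0$-dimensional Ekedahl--Oort stratum $\mathcal M_{\rm K}^e(k)$. This is precisely the content of the mod $p$ correspondence: restriction to the superspecial/minimal locus is injective on Hecke eigensystems, because the automorphic bundles $\mathscr V(\xi)$, restricted to $\mathcal M_{\rm K}^e$, realize all irreducible $L(\mu)_k$-representations (or rather all their Frobenius twists) and the Hecke action is preserved. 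I would cite \cite[Theorem~1.1]{TY} (or its $\GU(r,s)$ predecessor \cite{Reduzzi} when $p$ is inert) for this injectivity statement, taking care to verify its hypotheses hold in the present PEL setting; this is where the bulk of the conceptual work lies, though it is essentially quotation.

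\textbf{Key steps.} First, I would record that $\mathcal N(\mathscr D, N)$ is bounded above by the dimension of the space of functions on $\mathcal M_{\rm K}^e(k)$ that are ``algebraic modular forms'' — more precisely, by $\sum_{\xi} \dim H^0(\mathcal M_{\rm K}^e, \mathscr V(\xi)|_{\mathcal M_{\rm K}^e})$ modulo the relations coming from Frobenius; but since each eigensystem is detected already by a single $k$-valued function, one gets the cruder but sufficient bound $\mathcal N(\mathscr D, N) \le |\mathcal M_{\rm K}^e(k)| \cdot (\text{number of irreducible } L(\mu)_k\text{-representations up to the relevant twist})$. Second, I would identify the two factors. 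The cardinality $|\mathcal M_{\rm K}^e(k)|$ is given exactly by Theorem~\ref{intro}, equation \eqref{intro:e}: it equals $[\mathbf G(\widehat\Z^p):\K^p(N)]\cdot \varepsilon \cdot \prod_{j=1}^n L(1-j,\chi^j)\cdot \prod_{\ell\mid d_E}\kappa_\ell\cdot \lambda_p^e$. Third, the remaining combinatorial factor — the number of relevant irreducible representations of the reductive group $L(\mu)$ over $\F_p$ (the Levi centralizing $\mu$) — is what must equal the quantity called $\nu_p$. When $p$ is inert, $L(\mu)\otimes k \simeq \GL_{r}\times \GL_{s}\times \Gm$ (with an extra twist by $\sigma$ identifying it with a unitary-type Levi over $\F_{p^2}$); when $p$ is split, $L(\mu)\otimes k$ is a product $\GL_r\times \GL_s$ glued along the determinant. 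In each case the number of dominant weights giving distinct mod $p$ representations, cut down by the action of the $p$-th power Frobenius (which in the compact/superspecial setting has order bounded explicitly), is a Weyl-group/root-system count that produces exactly the formula for $\nu_p$ displayed in the theorem. The exponents $p^{(r(r-1)+s(s-1))/2}$ etc. are the $p$-parts of the orders of these finite Levi groups, and the factors $p^{n-2}(p-1)(p+1)^2$, $p^{n-1}(p-1)(p+1)$, $p^{n-n/m}(p^{n/m}-1)$ record the prime-to-$p$ contributions together with the size of the relevant cocharacter lattice quotient.

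\textbf{Carrying it out.} Concretely, I would: (i) invoke \cite[Theorem~1.1]{TY} to reduce to bounding the dimension of the $\F_p$-algebra of functions on the finite set $\mathcal M_{\rm K}^e(k)$ with values twisted by $L(\mu)$-representations, observing that an eigensystem that occurs, occurs already in $C(\mathcal M_{\rm K}^e(k), k)\otimes (\text{irreducible } L(\mu)_k\text{-rep})$; (ii) use Proposition~\ref{CL}(1) to know $\mathcal M_{\rm K}^e(k)$ is a single central leaf, so the Hecke-module structure is transparent and the Rapoport--Zink / $p$-adic uniformization description of Theorem~\ref{inner}(4) applies; (iii) plug in $|\mathcal M_{\rm K}^e(k)|$ from \eqref{intro:e}; (iv) compute $\nu_p$ as the product of $|\overline{L}(\mu)(\F_p)|_{p'}\cdot p^{N(\overline{L}(\mu))}$-type quantities using Table~\ref{table:finite} and the explicit description of $L(\mu)$ in each of the three cases (inert with $rs\ne 0$, inert with $rs=0$, split), tracking the extra $(p+1)$ or $(p-1)$ factors that come from the center / similitude direction and the determinant-gluing. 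Step (i) — verifying that the mod $p$ Jacquet--Langlands statement of \cite{TY} applies verbatim, including the precise normalization of automorphic bundles and the surjectivity onto irreducible $L(\mu)_k$-modules after Frobenius twist — is the main obstacle; everything after it is bookkeeping with finite group orders and root data. I would also remark that the bound is genuinely an inequality because distinct weights $\xi$ may give the same eigensystem and because the boundary (in the non-compact case) can only decrease the number of eigensystems, by Koecher and the exactness of restriction.
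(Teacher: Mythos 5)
Your overall skeleton matches the paper's: reduce via the mod $p$ Jacquet--Langlands correspondence of \cite{TY} to algebraic modular forms on the inner form $I$, bound the number of eigensystems by $\lvert \mathcal M_{\rm K}^e(k)\rvert$ times a purely local factor at $p$, and plug in \eqref{intro:e}. But there are two genuine gaps in how you produce the factor $\nu_p$. First, you take the weights to be irreducible representations of the Levi $L(\mu)_k$ (i.e.\ of a $\GL_r\times\GL_s$-type group attached to $\bfG$ at $p$). The correspondence actually runs through irreducible $k$-representations of the finite group $I(p)\simeq \overline{\rm I}{}_p^e(\F_p)$, the reductive quotient of the special fiber of the maximal parahoric ${\rm I}_p^e\subset J_b(\Q_p)$ stabilizing the Dieudonn\'e module of a point of $\M_\K^e$ (Proposition~\ref{J_Minert}); this is ${\rm G}({\rm U}_r\times{\rm U}_s)(\F_p)$ when $p$ is inert and $\GL_m(\F_{p^{n/m}})\times\F_p^\times$ when $p$ is split. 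In the split case this is emphatically not your ``$\GL_r\times\GL_s$ glued along the determinant'': for $(r,s)=(2,2)$ it is $\GL_2(\F_{p^2})\times\F_p^\times$, and the exponents $p^{n(m-1)/2}$, $p^{n-n/m}$, $(p^{n/m}-1)$ in $\nu_p$ visibly come from this group, not from the Levi. So your computation plan for $\nu_p$ is based on the wrong group.

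Second, your ``cruder bound'' $\mathcal N(\mathscr D,N)\le \lvert\M_\K^e(k)\rvert\cdot(\text{number of irreducible weights})$ is not justified and does not yield $\nu_p$. An eigensystem occurring in weight $V_\tau$ need not occur in functions alone; the correct estimate, which is what the paper uses (via \cite[Corollary 5.7]{TY}), is
\begin{equation*}
\mathcal N(\mathscr D,N)\ \le\ \lvert \mathcal M_{\rm K}^e(k)\rvert\cdot \sum_{V_\tau\in\Irr_k(I(p))}\dim_k V_\tau ,
\end{equation*}
and $\nu_p$ is precisely an upper bound for $\sum_\tau\dim_k V_\tau$, not for the number of irreducibles (in the split case the number of irreducibles is $p^{\,n-n/m}(p^{n/m}-1)(p-1)$, while the extra factor $p^{n(m-1)/2}$ in $\nu_p$ is a bound on $\dim_k V_\tau$ by the order of a $p$-Sylow subgroup). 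Establishing this requires input you have not supplied: in the inert case Reduzzi's estimate for $\sum_\tau\dim_k V_\tau$ for ${\rm G}({\rm U}_r\times{\rm U}_s)(\F_p)$, and in the split case the count of $p$-regular classes as $p^{l}\cdot\lvert Z(\F_p)\rvert$ (Carter) together with the bound $\dim_k V_\tau\le\lvert p\text{-Sylow}\rvert$ for groups with a split $BN$-pair (Curtis). The ``Weyl-group/root-system count of dominant weights cut down by Frobenius'' you sketch would not reproduce these numbers, so as written the proof of the stated inequality with the stated $\nu_p$ does not go through.
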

\begin{proof}
We fix a point $(A, \iota, \lambda, \bar{\eta})$ of the $0$-dimensional EO  stratum $\mathcal  M_{{{\rm K}}}^{e}(k)$. 
Let $I$ be the associated  algebraic group over $\Q$. 
  Then $I(\Q_p) \simeq J_b(\Q_p)$ for an element $b$ representing the basic class $[b] \in B(\bfG_{\Q_p}, \mu)$. 
  Let  $(A[p^{\infty}], \iota, \lambda)$ be the  associated   $p$-divisible group with $\mathscr D$-structure, and 
let $M$ be its  \dieu module. 
 %and let $I$ be the associated algebraic group  over $\Q$. 
 By Proposition  \ref{J_Minert},  the stabilizer ${\rm I}_p^e$ of $M$  in $I(\Q_p)$ is a maximal parahoric subgroup. 
Let $\underline{{\rm I}}_p^e$ be its smooth model over $\Z_p$ and 
  $\overline{{\rm I}}_p^{e}$ be the  maximal reductive quotient of the special fiber $\underline{{\rm I}}_p^e \otimes_{\Z_p} \F_p$. 
Then 
 \begin{align*}
\overline{{\rm I}}_p^{e}  \simeq \begin{cases} 
 {\rm G}({\rm U}_r \times {\rm U}_s)  & \text{if $p$ is inert in $E$};
 \\ 
 (\Res_{\F_q/\F_p}\GL_m)  \times {\mathbb G}_{\rm m, \F_p} & 
 {\text{if $p$ is split in $E$,}}
 \end{cases}
  \end{align*}
  where ${\rm G}({\rm U}_r \times {\rm U}_{s})$ denotes the group of pairs of matrices  $(g_1,  g_2) \in  \GU_r \times \GU_{s}$ 
having the same similitude factor, and $q \coloneqq  p^{n/m}$. 

On the other hand, the group  $\underline{{\rm I}}_p^e(\F_p)$     acts on the $k$-vector space $M/pM$, preserving the subspace ${\sfV}M/pM$. We write $I(p)$ for the image of the induced homomorphism   $\underline{{\rm I}}_p^e(\F_p) \to \GL_k({\sfV}M/pM) \times \GL_k(M/{\sfV}M)$. 
Then a  computation in the proof of Proposition  \ref{J_Minert} shows that $I(p) \simeq \overline{{\rm I}}_p^{e}(\F_p)$. 
%its image, denoted by  $I(p)$,  is 
 % 
 % Then 
 %denote the group of $\F_p$-values of $\overline{{\rm K}}_p^{e}$.  
% Let $I(p)\coloneqq \overline{{\rm K}}_p^{e}(\F_p)$, and  

 Let 
  ${\rm Irr}_k(I(p))$ denote   the set of isomorphism classes of simple $k[I(p)]$-modules over $k$.
By \cite[Theorem 0.1]{TY},  the systems of prime-to-$p$ Hecke eigenvalues appearing in the space $\mathcal A(\mathscr D, N)$ 
  are the same as those appearing in  the space    of \emph{algebraic modular forms} on $I$  with varying weights  $V_{\tau}\in {\rm Irr}_k(I(p))$. 
As a corollary, we have an inequality (\cite[Corollary 5.7]{TY})
\begin{align}\label{eq:bound}
\mathcal N(\mathscr D, N) \leq  \lvert \mathcal  M_{\rm K}^e(k) \rvert \cdot  \sum_{V_{\tau}\in  \Irr_k(I(p))} \dim_k V_{\tau}.  
\end{align} 

When $p$ is inert in $E$,   Reduzzi  showed that (\cite[Section ~5.2 (5)]{Reduzzi}): 
 \begin{align}\label{diminert}
 \sum_{V_\tau \in \Irr_k(I(p))}\dim_k V_{\tau} \leq 
 \begin{dcases}
 p^{({r(r-1)+s(s-1)})/{2}} 
 \cdot 
 p^{n-2}(p-1)(p+1)^2 & \ {\rm if} \ rs \neq 0; 
 \\
 p^{{n(n-1)}/{2}} \cdot p^{n-1}(p-1)(p+1) & \ {\rm if} \ rs=0.
 \end{dcases}
  \end{align}
  
Now we  
assume  that $p$ is split in $E$. We  first compute  the cardinality of ${\rm Irr}_k(I(p))$.     
By  \cite[Corollary 3 of Theorem~42]{SerreL}, 
 this cardinality  equals  the number of $p$-regular conjugacy classes of $I(p)$, where an element of $I(p)$  is said to be $p$-regular if its order is prime to $p$. 
 Further, this number 
 is equal to $p^l \cdot\lvert Z(\F_{p}) \rvert$ where 
 $Z$ be the center of $\overline{{\rm I}}_p^e$ and $l$ be the semisimple rank of $\overline{{\rm I}}_p^{e}$  (\cite[Theorem 3.7.6 (ii)]{Carter}).
We have  $Z(\F_p) \simeq \F_q^{\times} \times \F_{p}^{\times}$, and 
%The group $Z(\F_p)$ is isomorphic to the center  of  the finite group $I(p)$ (\cite[Proposition 3.6.8]{Carter}), which is further isomorphic to  
the derived subgroup of $\overline{{\rm I}}_p^{e}$ is isomorphic to $\Res_{\F_q/\F_p}\SL_{m, \F_q}$, whose   rank is 
$l =({n}/{m}) \cdot (m-1)=n-{n}/{m}.$ 
It follows that 
\begin{align}\label{irr}
\lvert \Irr_k (I(p))\rvert =
p^{n-n/m}\cdot (q-1)(p-1).
\end{align}

Next we give an upper bound of $\dim_k V_{\tau}$. 
The group $I(p)$ is a finite group of Lie type, and hence  has a structure of a split $BN$-pair of characteristic $p$ by \cite[Section~1.18]{Carter}. 
It follows from 
 \cite[Corollaries 3.5 and 5.11]{Curtis} that 
 the dimension of  a simple $k[I(p)]$-module $V_{\tau}$ is  no larger than the order of a $p$-Sylow subgroup of $I(p)=\GL_m(\F_q) \times \F_p^{\times}$, which is  equal to  $q^{{m(m-1)}/{2}}$. 
 Hence we have 
\begin{align}\label{dimVsplit}
\dim_{k}V_{\tau} \leq q^{{m(m-1)}/{2}}=p^{{n(m-1)}/{2}}.
\end{align} 

Formulas (\ref{eq:bound}), (\ref{diminert}), (\ref{irr}), (\ref{dimVsplit}), and \eqref{intro:e} imply the  assertion.
 \end{proof}
 \begin{cor}[Asymptotics]\label{asy}
 If we fix $N\geq 3$ and $n=\dim_E V$,  then 
 \[ \mathcal N(\mathscr D, N)=O(p^{{n(n+1)}/{2}+1})  \quad  {\text{as}}  \quad p \to \infty.\]
 \end{cor}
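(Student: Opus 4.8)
The plan is to deduce the asymptotic bound directly from the explicit inequality of Theorem~\ref{bound} by tracking the $p$-power growth of each factor appearing on its right-hand side. First I would fix $N \geq 3$ and $n = \dim_E V$, so that $\varepsilon$, the conductor $d_E$, the product $\prod_{\ell \mid d_E} \kappa_\ell$, and the index $[\mathbf G(\widehat{\Z}^p) : \K^p(N)]$ are all constants independent of $p$ (the latter because the level away from $p$ is fixed). Likewise the $L$-values $\prod_{j=1}^n L(1-j,\chi^j)$ are fixed rational numbers. Hence only the two factors $\lambda_p^e$ and $\nu_p$ contribute to the growth in $p$, and it suffices to show $\lambda_p^e \cdot \nu_p = O(p^{n(n+1)/2 + 1})$.

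Next I would estimate $\lambda_p^e$. In the split case $\lambda_p^e = \lambda_p^{\rm bas} = \bigl(\prod_{h=1}^n (p^h-1)\bigr)\bigl(\prod_{i=1}^m (p^{(n/m)i}-1)\bigr)^{-1}$; the numerator has degree $n(n+1)/2$ in $p$ and the denominator has degree $(n/m)\cdot m(m+1)/2 = n(m+1)/2$, so $\lambda_p^e = O(p^{n(n+1)/2 - n(m+1)/2})$. In the inert case $\lambda_p^e = \bigl(\prod_{h=1}^n (p^h - (-1)^h)\bigr)\bigl(\prod_{i=1}^r (p^i-(-1)^i)\prod_{j=1}^s(p^j-(-1)^j)\bigr)^{-1}$, whose numerator has degree $n(n+1)/2$ and denominator degree $r(r+1)/2 + s(s+1)/2$, giving $\lambda_p^e = O(p^{n(n+1)/2 - r(r+1)/2 - s(s+1)/2})$. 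Then I would add the degree of $\nu_p$: in the inert case with $rs \neq 0$, $\deg \nu_p = \tfrac{r(r-1)+s(s-1)}{2} + (n-2) + 3 = \tfrac{r(r-1)+s(s-1)}{2} + n+1$; in the inert case with $rs=0$, $\deg \nu_p = \tfrac{n(n-1)}{2} + n$; in the split case $\deg \nu_p = \tfrac{n(m-1)}{2} + (n - n/m) + n/m = \tfrac{n(m-1)}{2} + n$.

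A short arithmetic check then finishes the argument. In the split case the total degree is $\bigl(\tfrac{n(n+1)}{2} - \tfrac{n(m+1)}{2}\bigr) + \bigl(\tfrac{n(m-1)}{2} + n\bigr) = \tfrac{n(n+1)}{2} - n + n = \tfrac{n(n+1)}{2}$, which is $\le \tfrac{n(n+1)}{2}+1$. In the inert case with $rs=0$ the total degree is $\bigl(\tfrac{n(n+1)}{2} - \tfrac{n(n+1)}{2}\bigr) + \bigl(\tfrac{n(n-1)}{2}+n\bigr) = \tfrac{n(n-1)}{2}+n = \tfrac{n(n+1)}{2}$. In the inert case with $rs \neq 0$, using $r+s=n$, one has $\tfrac{r(r+1)+s(s+1)}{2} - \tfrac{r(r-1)+s(s-1)}{2} = \tfrac{2r+2s}{2} = n$, so the total degree is $\bigl(\tfrac{n(n+1)}{2} - \tfrac{r(r+1)+s(s+1)}{2}\bigr) + \bigl(\tfrac{r(r-1)+s(s-1)}{2}+n+1\bigr) = \tfrac{n(n+1)}{2} - n + n + 1 = \tfrac{n(n+1)}{2}+1$, which is the extremal case and accounts for the $+1$ in the stated exponent. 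Since every factor on the right-hand side of Theorem~\ref{bound} is a polynomial in $p$ with bounded-degree growth, the product is $O(p^{n(n+1)/2+1})$, proving the corollary. The only mild subtlety — hardly an obstacle — is to be careful that the ``constant'' implied in $O(\cdot)$ may depend on $N$ and $n$ (as the statement allows) and to confirm that the denominators in $\lambda_p^e$ genuinely divide the numerators as polynomials in $p$, which follows from the integrality remark after Proposition~\ref{compare}; thus all quotients are honest polynomials and the degree bookkeeping above is valid.
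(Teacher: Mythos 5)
Your proposal is correct and is essentially the paper's (implicit) argument: Corollary~\ref{asy} is deduced from Theorem~\ref{bound} purely by degree bookkeeping in $p$, all other factors being independent of $p$ once $\mathscr D$ and $N$ are fixed. One harmless slip: in the inert case with $rs=0$ one has $\deg\nu_p=\tfrac{n(n-1)}{2}+n+1$ (not $\tfrac{n(n-1)}{2}+n$), so that case also attains the extremal total degree $\tfrac{n(n+1)}{2}+1$; this does not affect the stated bound.
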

 \begin{remark}
 Ghitza \cite{Ghitza2}  gave an explicit upper bound for the number of the systems of Hecke eigenvalues of mod $p$ Siegel modular forms using   Hashimoto-Ibukiyama-Ekedahl's mass formula \cite{HI, Ekedahl}. 
 For  automorphic forms on $\M_{\K}$  
  with inert $p$, 
Reduzzi \cite{Reduzzi} has  proved   that 
 $ 
  \mathcal N(\mathscr D, N)=O(p^{n^2+n-rs+1})$ as $p \to \infty$. 
  He observed that the  superspecial locus of $\M_{\K}$ (which is  $\M_{\K}^e$) is  embedded into the one of a Siegel modular variety,  and used  Hashimoto-Ibukiyama-Ekedahl's mass formula.  
 One sees that his bound is improved by  Corollary  \ref{asy}.   
  \end{remark}

\end{document}